\documentclass[11pt]{article}

\usepackage[latin1]{inputenc}
\usepackage{t1enc}
\usepackage{amsmath, amssymb, amsfonts, amsthm, amsopn,epsfig}
\usepackage[none]{hyphenat}
\usepackage{float}
\usepackage{graphicx}
\usepackage{caption}
\usepackage[toc,page]{appendix}
\usepackage{epstopdf}
\usepackage{etoolbox}
\usepackage[colorlinks=true,backref]{hyperref}
\usepackage{dsfont}
\usepackage{tikz}
\usepackage{array}
\usepackage{rotating} 
\usetikzlibrary{calc,positioning,arrows,shapes,decorations.markings,decorations.pathreplacing,matrix,patterns}
\tikzstyle{vertex}=[circle,draw=black,fill=black,inner sep=0,minimum size=3pt,text=white,font=\footnotesize]
\hypersetup{
	colorlinks=true,
	linkcolor=blue,
	filecolor=magenta,
	urlcolor=cyan,
	citecolor=blue
}
\usepackage{cleveref}
\usepackage[top=21mm, bottom=22mm, left=21mm, right=21mm]{geometry}
\usepackage{comment}

\theoremstyle{plain}
\newtheorem{theorem}{Theorem}[section]

\newtheorem{corollary}[theorem]{Corollary}
\newtheorem{claim}[theorem]{Claim}
\newtheorem{lemma}[theorem]{Lemma}

\newtheorem{question}[theorem]{Question}

\theoremstyle{definition}

\DeclareMathOperator{\disc}{disc}

\DeclareMathOperator{\rank}{rank}
\DeclareMathOperator{\surp}{surp}
\DeclareMathOperator{\trace}{tr}
\DeclareMathOperator{\bw}{bw}
\DeclareMathOperator{\dfc}{dfc}

\newcommand{\cF}{\mathcal{F}}

\newcommand{\bR}{\mathbb{R}}
\newcommand{\bZ}{\mathbb{Z}}

\newcommand{\eps}{\varepsilon}

\DeclareMathOperator{\Cay}{Cay}

\newcommand{\hide}[1]{}

\title{From small eigenvalues to large cuts, and Chowla's cosine problem}
\author{Zhihan Jin\thanks{Department of Mathematics, ETH Z\"urich, Switzerland. Email: {\tt \{zhihan.jin, aleksa.milojevic\}@math.ethz.ch}. Research supported in part by SNSF grant 200021-228014.}
\and Aleksa Milojevi\'c\footnotemark[1]
\and 
Istv\'an Tomon\thanks{Ume\r{a} University, \emph{e-mail}: \texttt{istvantomon@gmail.com}, Research supported in part by the Swedish Research Council grant VR 2023-03375.}
\and
Shengtong Zhang\thanks{Stanford University, \emph{e-mail}: \texttt{stzh1555@stanford.edu}. This work was partially supported by the National Science Foundation under Grant No. DMS-1928930, while the author was in residence at the Simons Laufer Mathematical Sciences Institute in Berkeley, California, during the Spring 2025 semester. This work was also partially supported by NSF Award DMS-2154129.}
}

\date{}

\begin{document}

\maketitle

\begin{abstract}
    \noindent
    We prove that every graph with average degree $d$ and smallest adjacency eigenvalue $|\lambda_n|\leq d^{\gamma}$ contains a clique of size $d^{1-O(\gamma)}$. A simple corollary of this yields  the first polynomial bound for Chowla's cosine problem (1965): for every finite set $A\subseteq \mathbb{Z}_{>0}$, the minimum of the cosine polynomial satisfies $$\min_{x\in [0, 2\pi]}\sum_{a\in A}\cos(ax)\leq -|A|^{1/10-o(1)}.$$
    Another application makes significant progress on the problem of MaxCut in $H$-free graphs initiated by Erd\H{o}s and Lov\'asz in the 1970's. We show that every $m$-edge graph with no clique of size $m^{1/2-\delta}$ has a cut of size at least $m/2+m^{1/2+\eps}$ for some $\eps=\eps(\delta)>0$. 
\end{abstract}

\section{Introduction}

A central theme in spectral graph theory is understanding the interplay between the structural properties of a graph and its spectrum. Here and throughout, the spectrum of the graph refers to the set of eigenvalues of its adjacency matrix. Some of the most prominent results highlighting the connection between eigenvalues and structural properties are the expander mixing lemma \cite{AC88}, which relates eigenvalues to pseudorandomness, and Hoffman's bound \cite{H21}, which uses the smallest eigenvalue to bound the independence number. 

In this paper, we study graphs whose smallest eigenvalue is not very negative.
The prime examples of such graphs are complete graphs, where every eigenvalue is at least $-1$.
In fact, all graphs that are close to disjoint unions of cliques have their smallest eigenvalue small in absolute value. 
Our main technical contribution is a converse to this observation: we prove that even the mild restriction $|\lambda_n|\leq n^{1/4-o(1)}$ on the smallest eigenvalue $\lambda_n$ of an $n$-vertex graph forces the graph to be close to a disjoint union of cliques. 
Moreover, the exponent $1/4$ is best possible.

We further obtain an analogue of this result for sparse graphs. We show that graphs of average degree $d$ and smallest eigenvalue $|\lambda_n|\leq d^{\gamma}$ contain cliques of size $d^{1-O(\gamma)}$. Perhaps surprisingly, this purely graph-theoretic statement has a powerful application to Chowla's cosine problem \cite{C65}, a classical problem from harmonic analysis. It implies that if $A$ is a finite set of positive integers, then the cosine polynomial
$$f(x)=\sum_{a\in A}\cos(ax)$$
attains values as small as $-|A|^{\Omega(1)}$. A more detailed discussion of Chowla's cosine problem is presented in Section~\ref{subsec:intro_chowla}.

Our methods are also applicable in the study of graphs with small maximum cut, leading to substantial progress on a celebrated conjecture of Alon, Bollob\'as, Krivelevich, and Sudakov \cite{ABKS}. This is a central problem in the study of the maximum cut in $H$-free graphs, a topic initiated by Erd\H{o}s and Lov\'asz \cite{erdos} in the 1970's. The conjecture asserts that for any fixed graph $H$, every $H$-free graph $G$ with $m$ edges has a cut of size at least $m/2+m^{3/4+\eps_H}$ for some $\eps_H>0$. 
Over the decades, even the weaker question of whether one can guarantee a cut of size at least $m/2+m^{1/2+\eps}$, for some fixed $\eps>0$, remained wide open.
% It was a long-standing problem whether one can guarantee any $\eps>0$ such that $\eps_H>\eps$. 
We not only resolve this, but also prove a stronger statement: if $G$ contains no clique of size $m^{1/2-\delta}$, then $G$ admits a cut of size at least $$m/2+m^{1/2+\eps}$$ for some $\eps=\eps(\delta)>0$. A more detailed discussion of the maximum cut problem is provided in Section~\ref{subsec:intro_maxcut}.

Finally, our results have further implications about the second eigenvalue. A classical theorem of Alon and Boppana~\cite{alon-boppana} lower bounds the second-largest eigenvalue $\lambda_2$ of $d$-regular $n$-vertex graphs by $$\lambda_2\geq 2\sqrt{d-1}\left(1-\frac{1}{\lfloor D/2\rfloor}\right),$$ where $D$ is the diameter of the graph. This estimate becomes trivial for $D\leq 3$, which can already occur for $d \gg n^{1/3}$. We extend the Alon-Boppana bound to dense graphs: we show that if a regular graph is far from being a Tur\'an graph, then $$\lambda_2\geq n^{1/4-o(1)}$$ and the exponent $1/4$ is best possible. 

Our proofs introduce novel spectral and linear-algebraic techniques based on subspace compressions of matrices and the use of Hadamard products, which may be of independent interest.

\subsection{Chowla's cosine problem}\label{subsec:intro_chowla}

In 1948, in the study of certain Dedekind zeta functions, Ankeny and Chowla came across the following question (see \cite{C52}): is it true that for every $K>0$ and sufficiently large $n>0$, if $a_1, \dots, a_n$ are distinct positive integers, then the minimum of the function $f(x)=\cos(a_1 x)+\cdots+\cos (a_n x)$ is less than $-K$? Soon thereafter, Uchiyama and Uchiyama \cite{UU60} answered this question affirmatively, but with poor quantitative dependencies, by observing the connection to Cohen's work \cite{C60} on Littlewood's $L_1$-problem. This problem asks to show that for each $n$-element set $A\subseteq \mathbb{Z}$, the $L_1$-norm of the Fourier transform of $\mathds{1}_A$ is bounded below by $\Omega(\log n)$, i.e. \[\|\widehat{\mathds{1}_A}\|_1 = \int_{0}^{1}\Big|\sum_{a\in A}e^{2\pi i a x}\Big|\,d\,x=\Omega(\log n).\]
Any lower bound on Littlewood's $L_1$-problem gives a comparable upper bound for Chowla's cosine problem, see \cite{R73} for a detailed derivation. 

In 1965, Chowla \cite{C65} revisited the problem and made a more precise conjecture, today known as \textit{Chowla's cosine problem}: show that for an $n$-element set $A$ of positive integers,
$$\min_{x\in [0, 2\pi]} f(x)=\min_{x\in [0, 2\pi]} \sum_{a\in A}\cos(a x) \leq -\Omega(\sqrt{n}).$$ In case $A$ can be written as $A=B-B$, where $B$ is a Sidon set, one has $\min_{x\in [0, 2\pi]} f(x)=-\Theta(\sqrt{n})$ (see \cite{M19} for a detailed proof), so if the conjecture is true, the bound $-\Omega(\sqrt{n})$ is the best possible. 

The subsequent decades saw a persistent interest in this problem, and the bounds of Uchiyama and Uchiyama were improved by Roth \cite{R73} in 1973, who showed that $\min_{x} f(x)\leq -\Omega(\sqrt{\log n/\log\log n})$. Then the resolution of the Littlewood $L_1$-problem in 1981 by Konyagin \cite{K81} and McGehee, Pigno and Smith \cite{MPS81} improved this to $\min_{x}f(x)\leq -\Omega(\log n)$. It was Bourgain \cite{B86} who first broke this logarithmic barrier, and his method was further refined in 2004 by Ruzsa \cite{R04} to give the previously best known bound $\min_x f(x)\leq -\exp\big(\Omega(\sqrt{\log n})\big)$. Chowla's cosine problem is also highlighted as problem number 81 on Green's 100 problems list \cite{green}. Here, we give the first polynomial bound.

\begin{theorem}\label{thm:MAIN_CHOWLA}
For any finite set $A$ of positive integers, there exists $x\in [0,2\pi]$ such that 
$$\sum_{a\in A}\cos(ax)\leq -|A|^{1/10-o(1)}.$$
\end{theorem}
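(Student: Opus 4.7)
The plan is to deduce Theorem~\ref{thm:MAIN_CHOWLA} from the paper's main spectral result (a graph with average degree $d$ and $|\lambda_n|\leq d^\gamma$ contains a clique of size $d^{1-O(\gamma)}$) via a Cayley graph construction. Let $A\subseteq\mathbb{Z}_{>0}$ with $|A|=n$, set $M:=-\min_{x\in[0,2\pi]}f(x)$, and suppose for contradiction that $M\leq n^{1/10-\delta}$ for some $\delta>0$. Fixing a large prime $N\gg\max(A)$, I would form the Cayley graph $G=\Cay(\mathbb{Z}_N,A\cup(-A))$, which is $2n$-regular with eigenvalues $\lambda_\xi=2f(2\pi\xi/N)$; by taking $N$ large, $|\lambda_n(G)|\leq 2M+o_N(1)$. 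Setting $d=2n$ and $\gamma=\log(2M)/\log d$, the main theorem would then yield a clique $S\subseteq\mathbb{Z}_N$ in $G$ of size $|S|\geq c\cdot d^{1-C\gamma}\geq c'n/M^{C'}$ for absolute constants $c,c',C,C'$.

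The clique condition reads $S-S\subseteq\{0\}\cup(A\cup(-A))\pmod N$. Translating so that $0\in S$ and lifting to $\mathbb{Z}$ (possible for $N$ large enough to avoid wraparound), I would obtain a subset $S\subseteq\{0\}\cup A\cup(-A)$ of $\mathbb{Z}$ with $S-S\subseteq\{0\}\cup A\cup(-A)$. Next, I would extract a Sidon subset $S'\subseteq S$ of size $|S'|\geq c_1\sqrt{|S|}$ via the Koml\'os-Sulyok-Szemer\'edi theorem. Since $S'\setminus\{0\}\subseteq A\cup(-A)$, the classical cosine bound for Sidon sets (which follows from Rudin's lower bound $\|\sum_{s\in S'}\cos(sx)\|_1\gtrsim\sqrt{|S'|}$ together with the mean-zero property) would give $\min_x\sum_{s\in S'}\cos(sx)\leq-c_2\sqrt{|S'|}$. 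Writing this sum in terms of $T:=\{|s|:s\in S'\setminus\{0\}\}\subseteq A$, we have $\sum_{s\in S'}\cos(sx)=\sum_{t\in T}N(t)\cos(tx)+[0\in S']$ where $N(t)\in\{1,2\}$ (and the Sidon property of $S'$ forces at most one pair $\{t,-t\}\subseteq S'$, so $N(t)=2$ for at most one $t$), giving $\min_x\sum_{t\in T}\cos(tx)\leq-c_3\sqrt{|T|}$ with $|T|\sim|S'|$.

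The main obstacle is the final transfer from the sub-sum bound over $T\subseteq A$ to a bound on $f=\sum_{a\in A}\cos(ax)$, since $|A\setminus T|\sim n$ could swamp the gain at the minimizer $x^*$ of $\sum_{t\in T}\cos$. My expected resolution is either a probabilistic argument locating an $x$ at which $\sum_{t\in T}\cos(tx)$ is very negative while $\sum_{a\in A\setminus T}\cos(ax)$ is close to its mean $0$, or a more refined use of the full clique structure rather than merely its Sidon subset; either route must lose only a subpolynomial factor. Once this transfer is accomplished, combining $|S|\geq c'n/M^{C'}$ with $|S'|\geq c_1\sqrt{|S|}$ would yield $M\geq c_4\sqrt{|T|}\geq c_5 n^{1/4}/M^{C'/4}$, hence $M\geq c_6 n^{1/(C'+4)-o(1)}$. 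For the main theorem's constant $C'\leq 6$, this becomes $M\geq n^{1/10-o(1)}$, contradicting the assumption.
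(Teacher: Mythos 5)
Your reduction to the paper's spectral result via the Cayley graph $\Cay(\mathbb{Z}_N,A\cup(-A))$ is exactly the paper's reduction, and the step of applying Theorem~\ref{thm:MAIN_SE3} to extract a large clique $S$ (a set with $S-S\subseteq\{0\}\cup A\cup(-A)$ after lifting) is also how the paper begins. However, from that point the proof diverges, and your chosen route has a genuine gap that you yourself flag and do not close.

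You propose to extract a Sidon subset $S'\subseteq S$, invoke Rudin's $L^1$ lower bound for Sidon sets to get $\min_x\sum_{t\in T}\cos(tx)\leq -c\sqrt{|T|}$ with $T\subseteq A$, and then transfer this bound on the sub-sum to a bound on the full sum $f=\sum_{a\in A}\cos(ax)$. That transfer step is the entire difficulty, and neither of your suggested fixes works as stated. The ``probabilistic'' idea (find $x$ where $\sum_T\cos$ is very negative while $\sum_{A\setminus T}\cos$ is near its mean $0$) fails a priori because the set $\{x:\sum_T\cos(tx)\leq -c\sqrt{|T|}\}$ can have tiny measure, and there is no decorrelation guarantee: on that exceptional set $\sum_{A\setminus T}\cos(ax)$ can be systematically large and positive, since the frequencies in $T$ and $A\setminus T$ are not independent. (Indeed, any attempt to show ``the minimum of a sub-sum controls the minimum of the full sum'' would, if true in general, make Chowla's problem trivial by extracting a Sidon subset of $A$ directly, which is known not to suffice.) Your second suggestion, ``a more refined use of the full clique structure,'' is precisely what is needed, but you do not say how.

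The paper's actual argument avoids this transfer entirely. Rather than trying to deduce a lower bound on $|\min f|$ from the clique, it derives a direct contradiction with the assumption $|\lambda_n|<d^\gamma$ by a propagation argument that uses the vertex-transitivity of the Cayley graph. The two ingredients are: (1) a pigeonhole step (Claim~\ref{claim:clique_int}) producing a nonzero $t$ with $|(t+S)\cap S|\geq |S|(|S|-1)/d$; and (2) a forbidden-induced-subgraph step (Claim~\ref{claim:H_k}) showing that the graph $H_k$ --- a $2k$-clique with an extra vertex joined to exactly half of it --- has smallest eigenvalue $<-\sqrt{k/2}$, so $G$ contains no induced $H_k$ for $k=2d^{2\gamma}$. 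Combining these via induction along translates $\ell t+S$, one shows (Claim~\ref{claim:chowla_2}) that \emph{every} vertex of $G$ sends at least $|S|-k$ edges into $S$, which forces $\Omega(n|S|)>d|S|$ edges incident to $S$, contradicting $d$-regularity. This route is self-contained, purely graph-theoretic past the reduction, and does not pass through any $L^1$-norm estimate or Sidon-subset extraction, so the transfer problem you encounter never arises.
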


 We now say a few words about the proof of Theorem~\ref{thm:MAIN_CHOWLA}. The key ingredient of the proof is the following graph-theoretic result.

\begin{theorem}\label{thm:MAIN_SE3}
For every $\gamma\in (0,1/10)$, the following holds for every sufficiently large $d$. Let $G$ be a graph of average degree $d$ and assume that the smallest eigenvalue $\lambda_n$ of $G$ satisfies $|\lambda_n|\leq d^{\gamma}$. Then $G$ contains a clique of size at least $d^{1-4\gamma}$.
\end{theorem}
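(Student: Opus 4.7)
The starting point is that $M := A + \lambda I$, where $\lambda := |\lambda_n|$, is positive semidefinite. Via Cholesky this provides vectors $u_1,\dots,u_n \in \mathbb{R}^n$ with $\|u_i\|^2 = \lambda$ and $\langle u_i,u_j\rangle = A_{ij} \in \{0,1\}$ for $i \neq j$; a clique of size $k$ is exactly a collection of $k$ such vectors whose pairwise inner products all equal $1$. Two consequences will be used throughout: Cauchy interlacing hands down the bound $\lambda_n \geq -\lambda$ to every induced subgraph, and the Schur product theorem gives that $M^{\circ k} = A + \lambda^k I$ remains PSD for every integer $k \geq 1$ (since the off-diagonal entries of $M$ are $\{0,1\}$-valued).

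\textbf{Local density.} The first step is to extract the local consequence of PSDness. For any vertex $v$, the principal submatrix $M[\{v\}\cup N(v)]$ is PSD; its Schur complement about the $(v,v)$ entry is $\lambda I + A[N(v)] - \tfrac{1}{\lambda}\mathbf{1}\mathbf{1}^T$, which tested against $\mathbf{1}$ yields
\[
    e(G[N(v)]) \;\geq\; \frac{d_v(d_v-\lambda^2)}{2\lambda}.
\]
Hence $G[N(v)]$ has average degree $\gtrsim d^{1-\gamma}$ when $d_v \geq d$, and its smallest eigenvalue is $\geq -\lambda$ by interlacing. Summing over $v$ gives $\Omega(nd^2/\lambda)$ triangles, so some edge $uv$ has common neighborhood $W = N(u)\cap N(v)$ of size $\gtrsim d^{1-\gamma}$, with $G[W]$ of average degree $\gtrsim d^{1-2\gamma}$ and smallest eigenvalue still $\geq -\lambda$.

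\textbf{Why naive iteration is insufficient.} These observations suggest a greedy recursion: grow a clique one or two vertices at a time by descending into the common neighborhood, each step dividing the current average degree by $\lambda$ and preserving the eigenvalue bound. This must stop once the common neighborhood has average degree $\approx \lambda^2 = d^{2\gamma}$, permitting only $\log d/\log\lambda = 1/\gamma$ steps and a clique of size $O(1/\gamma)$. Packaging this as a functional inequality $f(\gamma) \geq (1-\gamma)\,f(\gamma/(1-\gamma))$ shows that each recursion layer eats an additive $\gamma$ from the target exponent, so one cannot close to $1-4\gamma$ this way; a genuinely global ingredient is required.

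\textbf{The global argument.} To close the gap, the plan is to use the paper's two spectral primitives. I would exploit the PSDness of $M^{\circ 2} = A + \lambda^2 I$ together with subspace compressions of $M$ against test subspaces supported on a candidate set $S$, chosen by averaging the local-density bound until $|S| \approx d^{1-4\gamma}$; the goal is to arrange, via the right compression, that the resulting PSD inequality forces $A[S] = J - I$ and thus certifies $S$ as a clique. The main obstacle I foresee is that both subspace compression and Hadamard products destroy the $\{0,1\}$-valued structure of $A$, so the argument must preserve some form of combinatorial rigidity alongside the spectral PSD condition. I expect the bulk of the technical work to lie in identifying the correct invariant (perhaps a rank bound or a low-order moment of the vector family $\{u_i\}$ after projection) that survives these algebraic operations and ultimately distinguishes an actual clique from a merely dense subset.
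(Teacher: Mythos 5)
Your Phase 0 computation is correct and is essentially the paper's own opening step: compressing $M=A+\lambda I$ via a Schur complement on $\{v\}\cup N(v)$ gives $e(G[N(v)])\gtrsim d^2/\lambda$, exactly as in the proof of \Cref{thm:MAIN_SE3} in Section~\ref{sect:cliques}. You are also right that iterating this local bound alone tops out at a clique of size $O(1/\gamma)$, so a genuinely global ingredient is needed. The gap is that your third paragraph is a research plan, not an argument, and the direction it gestures towards does not lead to a proof. Hoping that a compression of a single PSD matrix against a subspace supported on a candidate set $S$ of size $\approx d^{1-4\gamma}$ will force $A[S]=J-I$ is a far stronger demand than the theorem makes: nothing in that set-up distinguishes a genuine clique from a merely dense $S$, and you acknowledge as much when you write that the ``bulk of the technical work'' of identifying the right surviving invariant remains.

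The paper's actual global mechanism is built not around a single PSD shift but around the identity $A=A\circ A$. Writing $A=\sum_i\lambda_iv_iv_i^T$ and comparing $\trace_W$ of both sides of $A=A\circ A$ for the subspace $W=\langle v_i\circ v_j:\lambda_i,\lambda_j\geq T\rangle$ yields the recursion $4nS_{T^2/(2n)}\geq S_T^2$ on tail sums of positive eigenvalues (\Cref{lemma:main_smallest_eigenalue}), and iterating it (\Cref{lemma:recursion}) shows that almost all of $\sum\lambda_i^2$ is carried by a few large eigenvalues. That tail-sum concentration, not an invariant attached to a candidate vertex set, is the quantity that survives the algebraic operations and drives the three-phase densification: the cubic moment counts triangles and iterates up to a subgraph of positive constant density (Section~\ref{sect:dens1}); a low-rank Frobenius approximation plus an ultra-strong regularity partition and cherry-counting shows such graphs are close to disjoint unions of cliques, producing a subgraph of density $1-o(1)$ (Section~\ref{sect:dens2}); and on that near-complete subgraph the triple Hadamard product $(B+|\lambda_n|I)^{\circ 3}$ with $B=A-\lambda_1v_1v_1^T$, tested against $\mathds{1}$, forces the complement to have average degree $O(|\lambda_n|^2)$, so Tur\'an's theorem supplies the clique (Section~\ref{sect:dens3}). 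None of this machinery is present in, or recoverable from, the sketch in your final paragraph, which is where the proof would actually have to happen.
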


We embed $A$ into the group $\mathbb{Z}/n\mathbb{Z}$ for a sufficiently large prime $n$ and consider the Cayley graph $G=\Cay(\mathbb{Z}/n\mathbb{Z}, A\cup -A)$. It is well-known that the eigenvalues of Cayley graphs correspond to the Fourier coefficients of the generating set, and thus the smallest eigenvalue $\lambda_n$ satisfies 
$$\lambda_n=\sum_{a\in A\cup -A}\exp(2\pi i ak/n)=2\sum_{a\in A}\cos(2\pi a k/n)$$ for some $k\in \mathbb{Z}/n\mathbb{Z}$. Hence, $\frac{1}{2}\lambda_n\geq \min_x f(x)$, so a lower bound on $\min_x f(x)$ yields an upper bound on $|\lambda_n|$. Then by Theorem \ref{thm:MAIN_SE3}, $G$ contains very large cliques. However, by appealing to the transitive symmetry of the Cayley graph, we show that the existence of large cliques forces large $|\lambda_n|$.

\medskip

We conclude this section by noting that cosine polynomials are the subject of a number of other interesting problems. An old problem of Littlewood \cite{L68} asks to study the minimum number of zeros of $f(x)=\sum_{a\in A} \cos(a x)$ in the interval $[0, 2\pi]$, where $A$ is a set of $n$ positive integers. Although Littlewood conjectured that this number is linear in $n$, Borwein, Erd\'elyi, Ferguson and Lockhart \cite{BEFL08} showed that there are integers $a_1, \dots, a_n$ such that $f(x)$ has at most $n^{5/6+o(1)}$ zeros. This result was later improved to $O((n\log n)^{2/3})$ by Ju\v skevi\v cius and Sahasrabudhe \cite{JS21} and, independently, by Konyagin \cite{K20}. A complementary bound has been proven by Sahasrabudhe \cite{S19} and Erd\'elyi \cite{E16, E20}, who showed that $f(x)$ always has at least $(\log\log\log n)^{1/2-\eps}$ roots, which was later improved to $(\log\log n)^{1-o(1)}$ by Bedert \cite{B24}. Another problem about trigonometric polynomials, asked by Littlewood \cite{L66} and Erd\H{o}s \cite{E57}, concerns the existence of ``flat'' polynomials, i.e. polynomials $f(z)=\sum_{k=0}^n \eps_k z^k$ with coefficients $\eps_k=\pm 1$ such that $|f(z)|=\Theta(\sqrt{n})$ for all $|z|=1$. Writing $z=\cos\theta+i\sin\theta$ shows that this problem is about controlling the size of trigonometric polynomials  $f(z)=\sum_{k=0}^n \eps_k (\cos k\theta+i\sin k\theta)$. The existence of such functions was proved only very recently by Balister, Bollob\'as, Morris, Sahasrabudhe, and Tiba \cite{BBMST24}.

In \Cref{sec: chowla for groups}, we discuss further extensions of Chowla's problem in arbitrary finite groups. 
The proof of Theorem \ref{thm:MAIN_CHOWLA} is presented in Section \ref{sect:chowla}, and the proof of Theorem \ref{thm:MAIN_SE3} is presented in Section \ref{sect:cliques}.

\medskip

\noindent
\textbf{Note added after publication.} Very recently, Bedert \cite{B25} achieved a result similar to our \Cref{thm:MAIN_CHOWLA}, proving that for every $A\subset \mathbb{Z}_{>0}$, one has $\min_x \sum_{a\in A}\cos(ax)\leq -\Omega(|A|^{1/7-o(1)})$. Interestingly, the methods of Bedert are fundamentally different from ours. The proof in \cite{B25} is Fourier-analytic, compared to our spectral and graph-theoretic approach.

\subsection{Maximum Cut}\label{subsec:intro_maxcut}

Given a graph $G$, a \emph{cut} in $G$ is a partition $(U,V)$ of the vertex set together with all the edges having exactly one endpoint in both parts. The \emph{size} of the cut is the number of its edges. The \emph{maximum cut} (or \emph{MaxCut}) of $G$, denoted by $\mbox{mc}(G)$, is the largest possible size of a cut. The MaxCut is among the most extensively studied graph parameters, lying at the intersection of theoretical computer science \cite{EMS,GW95,KKMO}, extremal combinatorics \cite{AlonMaxCut,ABKS,Edwards1,erdos} and probabilistic graph theory \cite{CGHS,CLMS,DMS}. In theoretical computer science, one is usually interested in efficient approximations of the maximum cut, while in extremal combinatorics the emphasis is on establishing sharp bounds in terms of various graph parameters, such as the number of vertices or edges. 

A simple probabilistic argument shows that every graph with $m$ edges has a cut of size at least $m/2$. Indeed, if one chooses a cut uniformly at random, its expected size is exactly $m/2$. The constant $1/2$ is best possible in general, and therefore it is often more natural to measure the \emph{surplus} of a graph $G$, defined as $\surp(G)=\mbox{mc}(G)-m/2$. A fundamental result of Edwards \cite{Edwards1,Edwards2} asserts that any graph $G$ with $m$ edges satisfies $\mbox{mc}(G)\geq \frac{m}{2}+\frac{\sqrt{8m+1}-1}{8}$ or, equivalently, that $\surp(G)\geq \frac{\sqrt{8m+1}-1}{8}$, which is sharp when $G$ is a clique on an odd number of vertices. 

In general, if $G$ is a disjoint union of constantly many cliques, then its maximum cut is $m/2+O(\sqrt{m})$. This naturally raises the question: can one improve this bound if $G$ is far from being a disjoint union of cliques? One way to ensure that a graph is far from a disjoint union of cliques is to forbid some fixed graph $H$ as a subgraph. The study of the MaxCut, and in particular the surplus, in $H$-free graphs was initiated by Erd\H{o}s and Lov\'asz in the 1970's \cite{erdos}. A landmark result in the area is due to Alon \cite{AlonMaxCut}, who proved that if a graph $G$ has $m$ edges and contains no triangles, then $\surp(G)=\Omega(m^{4/5})$, and this bound is tight. Two natural generalizations of this result are to consider graphs without short cycles, and graphs that avoid complete graphs $K_r$. 

The surplus in graphs without short cycles has been studied extensively \cite{ABKS,AKS05,BJS,GJS}, with tight bounds obtained in \cite{BJS,GJS}. 
On the other hand, determining the minimum surplus in $K_r$-free graphs is much more difficult. 
Alon, Bollob\'as, Krivelevich, and Sudakov \cite{ABKS} proved that for every $r$, there exists $\eps_r>0$ such that every $K_r$-free graph has surplus at least $m^{1/2+\eps_r}$. 
This was improved by Carlson, Kolla, Li, Mani, Sudakov, and Trevisan \cite{CKLMST}, and further strengthened by Glock, Janzer, and Sudakov \cite{GJS}, who proved $\surp(G)=\Omega_r\big(m^{\frac{1}{2}+\frac{3}{4r-2}}\big)$. Nevertheless, these bounds seem far from optimal: Alon, Bollob\'as, Krivelevich and Sudakov conjectured in \cite{ABKS} that the true bound should be $\surp(G)\geq m^{3/4+\eps_r}$ for some $\eps_r>0$. This conjecture remains open. For many years, it was a tantalizing open problem to find any absolute constant $\eps>0$, independent of $r$, such that every $K_r$-free graph has surplus $\Omega_r(m^{1/2+\eps})$. Glock, Janzer and Sudakov \cite{GJS} write \textit{``Arguably, the main open problem is to decide whether there exists a positive absolute constant $\eps$ such that any $K_r$-free graph with $m$ edges has surplus $\Omega_r(m^{1/2+\eps})$.''} Our next main result not only resolves this problem, but also shows that one can guarantee such a surplus by forbidding extremely large cliques as well.

\begin{theorem}\label{thm:MAIN_MC1}
For every $\delta>0$ there exists $\eps>0$ such that the following holds for every sufficiently large $m$. Let $G$ be a graph with $m$ edges such that $G$ contains no clique of size $m^{1/2-\delta}$. Then $G$ has a cut of size at least $\frac{m}{2}+m^{1/2+\eps}$. 
\end{theorem}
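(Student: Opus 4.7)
The plan is to derive Theorem~\ref{thm:MAIN_MC1} from Theorem~\ref{thm:MAIN_SE3} by converting a lower bound on the smallest eigenvalue $|\lambda_n|$ into a lower bound on the surplus via a spectral rounding argument. First I would pass to a nearly-regular subgraph: by a standard dyadic decomposition on vertex degrees, one finds $G' \subseteq G$ with $|E(G')| = \Omega(m/\log m)$ in which all degrees lie in $[d, Cd]$ for some $d$ and constant $C$, and $G'$ is still $K_s$-free for $s := m^{1/2-\delta}$. I then choose $\gamma = \gamma(\delta) > 0$ small enough that $d^{1-4\gamma} \geq s$ throughout the range of $d$ of interest (equivalently $d \geq s^{1/(1-4\gamma)}$, only slightly above $m^{1/2-\delta}$). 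Since $G'$ has no clique of size $d^{1-4\gamma}$, the contrapositive of Theorem~\ref{thm:MAIN_SE3} yields $|\lambda_n(G')| > d^\gamma$.

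Next I convert this eigenvalue bound into a surplus bound. Using the eigenvector $\mathbf{v}$ of $\lambda_n(G')$, one decomposes its entries into $O(\log |V(G')|)$ dyadic scales and concentrates on the scale carrying most of the Rayleigh quotient $\mathbf{v}^{\top} A_{G'} \mathbf{v}$. Rounding this (nearly flat) portion to $\pm 1$ produces a cut of $G'$ with surplus at least a constant multiple of $|\lambda_n(G')| \cdot |V(G')|/\log^{O(1)} m$. Using $|V(G')| = 2|E(G')|/d$, this gives
$$\surp(G') \geq \Omega\!\left(\frac{d^{\gamma-1} m}{\log^{O(1)} m}\right),$$
and since $d \leq \sqrt{2m}$ (because $n \geq \sqrt{2m}$ for any simple graph with $m$ edges), this is at least $m^{(1+\gamma)/2}/\log^{O(1)} m \geq m^{1/2 + \gamma/3}$ for $m$ sufficiently large. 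Finally, randomly assigning the vertices of $V(G) \setminus V(G')$ to the two sides extends the cut to all of $G$ with expected surplus at least $\surp(G')$, so we may take $\eps = \gamma/3$ in the dense case.

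The main obstacle is the sparse regime, where the average degree of $G'$ falls below $s^{1/(1-4\gamma)}$ and Theorem~\ref{thm:MAIN_SE3} becomes vacuous. In this regime $|V(G')|$ is substantially larger than $\sqrt{m}$, and my plan is to densify $G'$ --- for instance, by passing to the subgraph induced by the common neighborhood of a suitably chosen vertex, which is $K_{s-1}$-free and tends to be denser relative to its size --- and iterating until Theorem~\ref{thm:MAIN_SE3} becomes applicable at some level of the recursion. Alternatively, a direct combinatorial argument combining Edwards-type inequalities with the clique constraint $K_s$-free should yield the required $m^{1/2+\eps}$ surplus in this very sparse regime. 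Executing this densification (or structural) step carefully, so that the gains of Theorem~\ref{thm:MAIN_SE3} survive the recursion without degrading the final exponent, is the key technical hurdle.
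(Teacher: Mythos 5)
Your plan has a fundamental gap at the step where you convert a lower bound on $|\lambda_n|$ into a lower bound on the surplus. You claim that rounding (a dyadic slice of) the eigenvector for $\lambda_n(G')$ gives a cut of $G'$ with $\surp(G') \geq \Omega\bigl(|\lambda_n(G')|\cdot|V(G')|/\log^{O(1)} m\bigr)$, but the inequality between $\surp$ and $|\lambda_n|$ runs the \emph{other} way: the paper's Claim~\ref{claim:surplus_and_lambdan} shows $\surp(G) \leq |\lambda_n|\,n/4$, and there is no comparable lower bound. The graph $H_k$ from Claim~\ref{claim:H_k} (a clique on $2k$ vertices plus one vertex adjacent to half of it) is a concrete obstruction: it is $2$-balanced, has $n=2k+1$ vertices, $|\lambda_n|=\Omega(\sqrt{k})$, so $|\lambda_n|\,n = \Omega(k^{3/2})$, yet its surplus is only $\Theta(k)$, a full polynomial factor too small. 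The reason your rounding heuristic fails is that the best dyadic scale of the eigenvector may be supported on a small set of vertices; rounding its entries to $\pm 1$ gives a cut only of that small induced subgraph, not of all of $G'$. The correct ``rounding'' statement is the Charikar--Wirth bound (Lemma~\ref{claim:groth}), which gives $\surp(G)\geq\Omega(\surp^*(G)/\log n)$ where $\surp^*(G)$ is the SDP relaxation --- but $\surp^*(G)$ is in general much smaller than $|\lambda_n|\,n$ (Claim~\ref{claim: surp* lambdan} is an upper bound, not a lower bound). Consequently a contrapositive of Theorem~\ref{thm:MAIN_SE3} tells you nothing usable about $\surp(G')$.

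Because of this, the paper does \emph{not} deduce Theorem~\ref{thm:MAIN_MC1} from Theorem~\ref{thm:MAIN_SE3}. Instead, all three densification phases (Lemmas~\ref{lemma:dens1_main_surplus}, Corollary~\ref{cor:dens2_main_MaxCut}, Theorem~\ref{theorem:dens3_main}(b)) are re-proved with the input $\surp^*(G) \leq n^{1+\gamma}$ replacing $|\lambda_n| \leq n^{\gamma}$, with the moment bounds of Lemma~\ref{lemma:surp_star} taking over the role that $|\lambda_n|$ played in bounding the negative spectrum. This gives Theorem~\ref{thm:clique_MaxCut}: a small-surplus graph of polynomially large density contains a clique of size $n^{1-2\gamma-\delta}$. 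Your treatment of the sparse regime is also left vague, but here the paper has a short trick you are missing: after deleting isolated vertices, the Erd\H{o}s--Gy\'arf\'as--Kohayakawa result gives $\surp(G)\geq n/6$; combined with the assumed bound $\surp(G)<m^{1/2+\eps}$ this forces $n<6m^{1/2+\eps}$, so $G$ automatically has edge density at least $n^{-O(\eps)}$ and there is no genuinely sparse regime. No recursion or neighborhood densification of the kind you sketch is needed; that extra ``Phase 0'' step is required only in the proof of Theorem~\ref{thm:MAIN_SE3}, where the eigenvalue bound itself is used to densify, a route that is unavailable here.
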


In the very extreme case, Balla, Hambardzumyan, and Tomon \cite{BHT} recently showed that graphs with clique number $o(\sqrt{m})$ already have surplus $\omega(m^{1/2})$. Despite the similarity between this result and the previous theorem, there is no implication between the two due to the hidden dependencies. The methods achieving these results are also very different, despite both being algebraic in nature. 

There is a close relationship between the MaxCut of a graph $G$ and its smallest eigenvalue. It is well known that $\surp(G)\leq |\lambda_n|n$ (see e.g. Claim \ref{claim:surplus_and_lambdan} for a short proof). %For many algebraically defined graph families, we also have $\surp(G)=\Theta(|\lambda_n| n)$, but in general these quantities can be far apart. 
A good way to think about the surplus is as a robust version of the smallest eigenvalue: in many natural cases $\surp(G)=\Theta(|\lambda_n|n)$, but $\surp(G)$ is much less sensitive to local modifications and harder to study.

The proof of Theorem \ref{thm:MAIN_MC1} is presented in Section \ref{sect:cliques}. 
In \Cref{sect:MaxCut_stability}, we discuss a variant of Theorem \ref{thm:MAIN_MC1} which shows that graphs that are far from the disjoint union of cliques also have large surplus.

\subsection{Smallest eigenvalue}

A central topic of spectral graph theory is understanding the structure of graphs whose adjacency matrix has large (i.e. not very negative) smallest eigenvalue. Let $G$ be an $n$-vertex graph and let $\lambda_n$ denote the smallest eigenvalue of its adjacency matrix. Probably the best-known theorem in spectral graph theory involving the smallest eigenvalue is the celebrated Hoffman bound (see e.g. \cite{H21}), which states that $\lambda_n$ controls the independence number of the graph. In particular, if $G$ is an $n$-vertex $d$-regular graph, then $\alpha(G)\leq \frac{n|\lambda_n|}{|\lambda_n|+d}$. The Expander Mixing Lemma is of equal importance, stating that the maximum of $|\lambda_n|$ and the second-largest eigenvalue $\lambda_2$ determines the expansion and mixing properties of the graph \cite{AC88}. Moreover, as we mentioned before, $\lambda_n$ controls the maximum cut.

 A simple consequence of the Cauchy's interlacing theorem is that if $G$ is non-empty, then $\lambda_n\leq -1$ with equality if and only if $G$ is the disjoint union of cliques. In the 1970's, Cameron, Goethals, Seidel, and Shult \cite{CGSS} gave a complete characterization of graphs satisfying $|\lambda_n|\leq 2$, which are exactly generalized line graphs and some sporadic examples with at most 36 vertices. Koolen, Yang, and Yang \cite{KYY} obtained a partial characterization in the case $|\lambda_n|\leq 3$ using integral lattices. 

Beyond these specific values, much less is known. Hoffman \cite{H73} studied structural properties of graphs with $\lambda_n\geq -\lambda$, for some fixed constant $\lambda$, and his work was extended by Kim, Koolen, and Yang \cite{KKY}, who proved the following structure theorem for regular graphs satisfying $|\lambda_n|\leq \lambda$: one can find dense induced subgraphs $Q_1,\dots,Q_c$ in $G$ such that each vertex lies in at most $\lambda$ of $Q_1,\dots,Q_c$, and almost all edges are covered by the union of $Q_1,\dots,Q_c$. However, the proof of this is based on certain forbidden subgraph characterizations and Ramsey-theoretic arguments, and the results are no longer meaningful if $\lambda$ grows faster than polylogarithmically in $n$. 

For highly structured graphs, such as strongly regular graphs (SRGs), it is known \cite{N79} that if $|\lambda_n|$ is at most a small polynomial of the average degree, then the graph belongs to one of two special families (see also \cite{KLMP}). However, these results rely on the highly structured nature of SRGs. We refer the interested reader to the survey of Koolen, Cao, and Yang \cite{KCY} for a general overview of the topic.

Many of these results show that the property of having small $|\lambda_n|$ and the existence of large trivial substructures, such as cliques, are interconnected. However, such results were previously only known when $|\lambda_n|$ is bounded by a constant, or growing very slowly with $n$. We prove that this phenomenon already starts to appear when $|\lambda_n|<n^{1/4-o(1)}$, and we show that graphs with smallest eigenvalue below this threshold are close to a trivial structure: the disjoint union of cliques. The exponent $1/4$ is also sharp, a celebrated construction of de Caen~\cite{deCaen} related to equiangular lines provides a graph with smallest eigenvalue $|\lambda_n|=\Theta(n^{1/4})$ which is far from the disjoint union of cliques. We say that an $n$-vertex graph $G$ is \emph{$\delta$-close} to some family of graphs $\cF$ if one can change at most $\delta n^2$ edges to non-edges and vice versa to transform $G$ into a member of $\cF$. 

\begin{theorem}\label{thm:MAIN_SE1}
Let $\gamma\in (0, 1/4)$, $\delta>0$, and let $n$ be sufficiently large. If $G$ is an $n$-vertex graph with $|\lambda_n|\leq n^{\gamma}$, then $G$ is $\delta$-close to the vertex-disjoint union of cliques. 
\end{theorem}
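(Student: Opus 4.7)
The plan is to iteratively peel off near-cliques from $G$ until the remaining graph has only $o(\delta n^2)$ edges. By Cauchy interlacing, the hypothesis $|\lambda_n|\leq n^\gamma$ is inherited by every induced subgraph (removing vertices only moves the smallest eigenvalue closer to $0$), so any structural consequence of the bound can be reapplied at each stage. As a preprocessing step, I would declare every vertex of degree at most $\delta n/2$ to be a singleton part; this contributes at most $\delta n^2/2$ to the edit distance and lets me assume $G$ has minimum degree $\Omega(\delta n)$, and in particular average degree of the same order.

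At each iteration, I would first extract a ``seed'' clique $K_0$ from the current graph, and then enlarge and clean it up to a set $\tilde{K}$ that induces a near-complete graph and has few edges to the remainder. The cleanup would add outside vertices adjacent to all but $o(|K_0|)$ members of $K_0$ and discard vertices of $K_0$ with too many internal non-neighbors; by design, any vertex outside $\tilde{K}$ then has only a small fraction of its neighborhood inside $\tilde{K}$, so removing $\tilde{K}$ loses few edges per remaining vertex. Summing the edit-distance contributions (missing edges inside each $\tilde{K}_i$, spurious edges between $\tilde{K}_i$'s, and edges at termination) across iterations via a telescoping/geometric-series argument over the successive sizes of $\tilde{K}_i$ should bound the total by $O(\delta n^2)$.

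The principal obstacle is producing the seed clique $K_0$ across the full range $\gamma\in(0,1/4)$. \Cref{thm:MAIN_SE3} only applies when $|\lambda_n|\leq d^{1/10-\epsilon}$ with $d$ the current average degree; given $|\lambda_n|\leq n^\gamma$, this demands $d\geq n^{10\gamma}$, which is feasible only when $\gamma<1/10$. For $\gamma\in[1/10,1/4)$ a dedicated dense-graph argument is needed. I expect to work with the PSD Gram decomposition $A + |\lambda_n|I = YY^\top$, whose rows $Y_i$ form $n$ vectors of squared length $|\lambda_n|$ with pairwise inner products in $\{0,1\}$, and to cluster these vectors by near-parallelism into candidate cliques. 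This is presumably where the Hadamard-product and subspace-compression machinery advertised in the introduction enters, amplifying the small nonzero off-diagonal inner products $1/|\lambda_n|$ so that the latent clique structure becomes detectable right up to the sharp threshold $\gamma=1/4$, where de Caen's construction shows the result must fail.
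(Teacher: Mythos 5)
Your iterative clique-peeling strategy is essentially the approach the paper uses for the \emph{stronger} Theorem~\ref{thm:MAIN_SE2} (polynomial closeness, see Section~\ref{sect:stab}), but that argument only covers $\gamma\in(0,1/6)$. Theorem~\ref{thm:MAIN_SE1} asserts the range $\gamma\in(0,1/4)$, and the paper reaches this via an entirely different mechanism: the recursive spectral inequality from Lemma~\ref{lemma:main_smallest_eigenalue} feeds into Lemma~\ref{lemma:recursion} to show that all but a constant number of eigenvalues carry $o(n^2)$ of the second moment (Lemma~\ref{lemma:dens2_finding_low_rank_approximation}), so $A$ is Frobenius-close to a bounded-rank matrix; this yields an ultra-strong regularity partition (Lemma~\ref{lemma:dens2_low_rank_approx}), and a cherry-counting argument plus the cherry removal lemma (Lemmas~\ref{lemma:dens2_3parts}, \ref{lemma:dens2_removal}) finishes. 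Crucially, the $\gamma<1/4$ threshold comes from the exponent $1-4\gamma$ in the second-moment bound, a calculation that never produces or requires an explicit clique.

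The gap in your proposal is concrete and you correctly diagnose part of it: relying on Theorem~\ref{thm:MAIN_SE3} to seed cliques caps $\gamma$ at $1/10$, and even the dense refinement (Theorem~\ref{thm:clique1}) saturates at roughly $\gamma<1/6$ because the triangle-counting densification of Lemma~\ref{lemma:dens1_triangle_counting} breaks there. But the resolution you sketch for the range $\gamma\in[1/10,1/4)$ --- clustering the Gram vectors of $A+|\lambda_n|I$ by near-parallelism --- is not a proof and faces a fundamental obstacle exactly where it is needed: when $|\lambda_n|$ is near $n^{1/4}$, the normalized Gram vectors have pairwise inner products in $\{0,\,1/|\lambda_n|\}$, i.e.\ nearly orthogonal across the board, and de Caen's construction shows that at $\gamma=1/4$ these vectors need not cluster at all. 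There is no evident ``amplification'' step that turns this into detectable clique structure, and nothing in your sketch explains why a near-parallelism clustering would survive up to the sharp threshold. You would need to import the low-rank/regularity machinery (or an argument of comparable strength) rather than a peeling scheme, because the peeling route genuinely cannot reach $\gamma=1/4$. Separately, the cleanup-and-telescoping bookkeeping for the edit distance is asserted but not carried out; this is a secondary concern compared to the missing seed-clique argument, but it is not automatic.
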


\noindent
The previous theorem only guarantees $o(1)$-closeness in the case $|\lambda_n|\le n^{1/4-o(1)}$. Note that one cannot expect a substantially stronger result than $o(1)$-closeness, as illustrated by the following example. Suppose $G$ is the disjoint union of $n^{4\varepsilon}$ copies of the graph constructed by de Caen~\cite{deCaen}, each of size $n^{1-4\varepsilon}$. Then $G$ satisfies $|\lambda_n|\le n^{1/4-\varepsilon}$ and, as mentioned above, one must add or remove $\Omega(n^{4\varepsilon}(n^{1-4\varepsilon})^2)$ edges to make $G$ a disjoint union of cliques. Therefore, one cannot hope to prove that $G$ is closer than $n^{-4\varepsilon}$ to a union of cliques. By imposing a slightly stronger bound on $|\lambda_n|$, we can indeed establish polynomial proximity to a disjoint union of cliques.

\begin{theorem}\label{thm:MAIN_SE2}
    For every $\gamma\in (0, 1/6)$, there exists $\alpha>0$ such that for every sufficiently large $n$ we have the following. 
    If $G$ is an $n$-vertex graph with $|\lambda_n|\leq n^{\gamma}$, then $G$ is $n^{-\alpha}$-close to the vertex-disjoint union of cliques.
\end{theorem}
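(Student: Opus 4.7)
Since $\gamma < 1/6 < 1/4$, my plan is to start by invoking \Cref{thm:MAIN_SE1} with a sufficiently small constant $\delta_0 > 0$. This yields a partition $V(G) = V_1 \sqcup \cdots \sqcup V_k$ and the corresponding disjoint-union-of-cliques graph $H$ on $V(G)$ with $B_0 := |E(G) \triangle E(H)| \leq \delta_0 n^2$. Set $D := A(G) - A(H)$; this is a symmetric $\{-1,0,+1\}$-matrix with $\|D\|_F^2 = 2B_0$, whose $+1$ entries record cross-edges between distinct parts and whose $-1$ entries record missing edges inside parts. The goal is to sharpen the qualitative bound $\delta_0 n^2$ to $B_0 \leq n^{2-\alpha}$ for some $\alpha = \alpha(\gamma) > 0$, which is exactly the desired $n^{-\alpha}$-closeness.

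To extract a quantitative bound, I plan to exploit the positive semidefiniteness $A(G) + LI \succeq 0$ (with $L := n^\gamma$) by testing against well-chosen vectors. For block-constant test vectors of the form $x = \sum_r c_r \mathbf{1}_{V_r}/\sqrt{|V_r|}$, the PSD constraint reduces to the positive semidefiniteness of a symmetric $k \times k$ ``profile matrix'' $\Pi$ with off-diagonal entries $\Pi_{rs} = E_{rs}/\sqrt{|V_r||V_s|}$ (where $E_{rs}$ counts the cross-edges between $V_r$ and $V_s$) and diagonal entries involving $|V_r|$, $L$, and the missing-edge count $M_r$ inside $V_r$. A separate family of mean-zero test vectors supported on a single part similarly controls the $M_r$. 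Combining the PSD-ness of $\Pi$ with trace and Frobenius-norm constraints should yield an aggregate bound of the form $B_0 = O(n L^{C})$ for an absolute constant $C$. Where applicable --- namely on parts whose induced average degree exceeds $L^{10}$ --- \Cref{thm:MAIN_SE3} can also be invoked to refine the within-part clique structure and further cut down the missing-edge count $M_r$.

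For this estimate to translate into $n^{-\alpha}$-closeness, one needs $1 + C\gamma < 2$, i.e. $\gamma < 1/C$; the target $C = 6$ is what produces the sharp threshold $\gamma < 1/6$ in the hypothesis. The main obstacle, and where the argument is likely most delicate, is the global aggregation of the PSD inequalities: a term-by-term pairwise application of $\Pi \succeq 0$ only yields the trivial bound $E_{rs}^2 \le \Pi_{rr}\Pi_{ss}|V_r||V_s|$, which is essentially $E_{rs} \leq |V_r||V_s|$ and is useless. Extracting the sharp power $L^6$ will require a more sophisticated subspace-compression or Hadamard-product argument of the sort the authors flag in the introduction as one of the novel technical contributions of the paper. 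Pinning down the exponent $C=6$ (rather than any larger constant) is the step I expect to be the technical heart of the proof.
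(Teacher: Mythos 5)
You flag the central quantitative step---extracting $B_0 = O(nL^6)$ from positive semidefiniteness of the profile matrix $\Pi$---as the hard part, and you do not carry it out. That is the gap. There is also a structural problem with your starting point: the partition delivered by \Cref{thm:MAIN_SE1} comes out of a cherry-removal argument (\Cref{lemma:dens2_removal}), so its parts have uncontrolled, widely varying sizes and the number of parts can be $\Theta(n)$. Testing PSD-ness against block-constant vectors on such an amorphous partition is unlikely to give anything useful without first controlling the part sizes; a $2\times 2$ minor of $\Pi$ recovers only $E_{rs}^2 \leq \Pi_{rr}\Pi_{ss}|V_r||V_s|$, which as you yourself note is trivial, and it is unclear how to aggregate across the $\binom{k}{2}$ pairs to beat this. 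The losses hidden in the constant $\delta_0$ of \Cref{thm:MAIN_SE1} are also not of a form you can unwind after the fact.

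The paper's actual proof of \Cref{thm:MAIN_SE2} does not route through \Cref{thm:MAIN_SE1} at all. In \Cref{lemma:many_cliques} it constructs a controlled partition from scratch by greedily pulling out cliques of size exactly $\sqrt{n}$ via \Cref{thm:clique1}, proving the leftover covers at most $n^{2-\alpha}$ edges. Cross-edges between pairs of cliques are then handled by a much more elementary forbidden-subgraph argument: since $|\lambda_n|\le n^\gamma$, Cauchy interlacing together with \Cref{claim:H_k} rules out any induced copy of $H_k$ for $k=2|\lambda_n|^2$, and \Cref{lemma:bipartite_eigenvalue} uses this to force the bipartite graph between any two equal-size cliques to be nearly complete or nearly empty---no PSD aggregation on a profile matrix is involved. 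The auxiliary graph on the cliques is then cherry-free by \Cref{lemma:dens2_3parts}, so it is itself a union of cliques, and the $n^{-\alpha}$-closeness follows. Finally, the threshold $\gamma < 1/6$ enters through the Phase~1 densification step (\Cref{lemma:dens1_triangle_counting}) feeding into \Cref{thm:clique1}, not through an aggregated PSD bound with exponent $6$ as you guessed.
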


While these results provide good structural understanding of somewhat dense graphs with small $|\lambda_n|$, they are no longer meaningful for sparse graphs. 
In fact, it is impossible to formulate any reasonable extension of the previous theorem even for moderately sparse graphs, as the following example show. 
All line graphs satisfy $|\lambda_n|\leq 2$, but the line graph of the complete graph $K_t$ has $\Theta(t^2)$ vertices and $m=\Theta(t^3)$ edges and it is not possible to add or remove $o(m)$ edges to get a disjoint union of cliques.

Despite this, we recall that \Cref{thm:MAIN_SE3} shows that large cliques, of size $d/|\lambda_n|^4$, emerge in graphs of any sparsity, whenever $|\lambda_n|\leq d^{\gamma}$ for $\gamma\in (0, 1/10)$. This extends the result of Yang and Koolen \cite{YK}, who showed that if $d$ is exponentially large compared to $|\lambda_n|$, then $G$ must contain a clique of size $d/|\lambda_n|^3$. This suggests that such graphs might be close to the blow-up of much smaller graphs, and shows that trivial structures start to appear at any sparsity, assuming $|\lambda_n|$ is sufficiently small. 
We prove \Cref{thm:MAIN_SE1} in \Cref{sect:dens2} and \Cref{thm:MAIN_SE2} in \Cref{sect:stab}.

\subsection{Alon--Boppana theorem} \label{sec: intro alon--boppana}

The Alon--Boppana theorem \cite{alon-boppana} is a cornerstone result of spectral graph theory. It states that if $G$ is an $n$-vertex $d$-regular graph, then the second-largest eigenvalue $\lambda_2$ of the adjacency matrix is at least $$\lambda_2\geq 2\sqrt{d-1}-o_n(1).$$  In its precise formulation, the Alon--Boppana theorem states that if $D$ is the diameter of $G$, then $\lambda_2\geq 2\sqrt{d-1}-\frac{2\sqrt{d-1}}{\lfloor D/2\rfloor}.$
In particular, if $D\rightarrow \infty$, which is satisfied in the case $d=n^{o(1)}$, one gets the former lower bound. 
For fixed $d$, families of graphs satisfying $\max\{|\lambda_n|,\lambda_2\}\leq 2\sqrt{d-1}$ are called Ramanujan graphs, and their existence is known for many different values of $d$ \cite{ramanujan}. 
A breakthrough of Friedman \cite{Friedman} shows that random $d$-regular graphs are close to being Ramanujan. Since the spectral gap $d-\lambda_2$ controls the expansion properties of graphs, the Ramanujan graphs are optimal expanders. For this reason, such graphs are of great interest in the design of resilient networks, with countless further applications in theoretical computer science and extremal combinatorics.

In the case where the diameter $D$ is at most three, which can already happen if $d\approx n^{1/3}$, the Alon--Boppana bound is no longer meaningful. Also, one cannot hope for the bound $\lambda_2=\Omega(\sqrt{d})$ to hold unconditionally; for example the complete bipartite graph has $\lambda_2=0$. Recently, a number of authors \cite{Balla21,BRST,Ihringer,RST} studied the second eigenvalue in the case of denser graphs, and uncovered some highly unexpected behavior of its extremal value. In particular, \cite{Balla21,RST} (see \cite{BRST} for a short note) proved that 
$$\lambda_2=\begin{cases}\Omega(d^{1/2}) &\mbox{ if } d\leq n^{2/3},\\
                        \Omega(n/d) &\mbox{ if } d\in [n^{2/3},n^{3/4}],\\
                        \Omega(d^{1/3}) &\mbox{ if } d\in [n^{3/4},(1/2-\eps)n].
            \end{cases}$$
Moreover, these bounds are (essentially) sharp in the first two regimes, and also in case $d=\Omega(n)$ \cite{DHJP}. As we observed earlier, if $d=n/2$, we might have $\lambda_2=0$ by the complete bipartite graph. In general, when $d=(1-1/r)n$ for some positive integer $r$, the \emph{Tur\'an graph} $T_r(n)$ (the complete $r$-partite graph with parts of size $n/r$) is $d$-regular and satisfies $\lambda_2=0$.

However, what happens when $d$ is not of the form $(1-1/r)n$ or $G$ is far from a Tur\'an graph? The methods of \cite{RST} and related papers no longer apply when $d>n/2$, and there are no obvious further obstructions for having large second eigenvalue. In \cite{RST}, it was conjectured that the answer to the second question is $\Omega(n^{1/4})$, which is sharp by the equiangular lines construction of de Caen \cite{deCaen}. Considering complements, Theorem \ref{thm:MAIN_SE1} immediately implies an almost complete solution of this conjecture. If $G$ is a regular graph with second eigenvalue $\lambda_2$, then the complement of $G$ has smallest eigenvalue $-\lambda_2-1$.

\begin{theorem}\label{thm:MAIN_AB}
Let $\gamma\in (0, 1/4)$, $\delta>0$, and let $n$ be sufficiently large with respect to $\gamma,\delta$. If $G$ is an $n$-vertex $d$-regular graph with $\lambda_2\leq n^{\gamma}$, then $G$ is $\delta$-close to a Tur\'an graph. Thus, if $\lambda_2\leq n^{\gamma}$, then $$\frac{d}{n}\in \left\{1-\frac{1}{r}:r\in \mathbb{Z}^+\right\}+[-\delta,\delta].$$ 
\end{theorem}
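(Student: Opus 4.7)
The plan is to deduce Theorem~\ref{thm:MAIN_AB} from Theorem~\ref{thm:MAIN_SE1} by passing to the complement and then extracting a Tur\'an partition from the approximate clique decomposition, using $d$-regularity to force the parts to be balanced.

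First I use the standard spectral identity for regular graphs: $A_{\bar G}=J-I-A_G$, and since the all-ones vector is a common eigenvector of $A_G$ and $J$, the spectrum of $\bar G$ consists of $n-1-d$ together with $-(1+\lambda_i(G))$ for $i=2,\dots,n$. Because $\lambda_2\ge\lambda_3\ge\cdots\ge\lambda_n$, the smallest eigenvalue of $\bar G$ is exactly $-(1+\lambda_2(G))$, and the hypothesis $\lambda_2\le n^\gamma$ gives $|\lambda_n(\bar G)|\le n^{\gamma'}$ for any fixed $\gamma'\in(\gamma,1/4)$ and $n$ large. Fixing a small $\delta_1=\delta_1(\delta)>0$ to be chosen later and applying Theorem~\ref{thm:MAIN_SE1} to $\bar G$ yields that $\bar G$ is $\delta_1$-close to some disjoint union of cliques $K_{n_1}\sqcup\cdots\sqcup K_{n_r}$, and hence $G$ is $\delta_1$-close to the complete multipartite graph $H=K_{n_1,\dots,n_r}$ on the same vertex set.

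Next I use $d$-regularity to force the parts of $H$ to be balanced. From $|E(G)\triangle E(H)|\le\delta_1 n^2$ we get $\sum_v|d_G(v)-d_H(v)|\le 2\delta_1 n^2$, and Markov's inequality supplies a set $B$ of at most $2\sqrt{\delta_1}\,n$ vertices such that $|d_G(v)-d_H(v)|\le\sqrt{\delta_1}\,n$ for every $v\notin B$. For such $v$ in part $P_i$, $d_H(v)=n-n_i$ and $d_G(v)=d$, so $n_i=(n-d)\pm\sqrt{\delta_1}\,n$. Calling a part \emph{good} if it meets $V\setminus B$, I note that bad parts contain at most $|B|$ vertices in total. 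If $r'$ denotes the number of good parts, summing the good part sizes gives $r'=\tfrac{n}{n-d}\bigl(1+O(\sqrt{\delta_1})\bigr)$, which (for $\delta_1$ small compared to $(n-d)/n$) uniquely determines $r'$ as the nearest integer to $n/(n-d)$ and yields $d/n=1-1/r'+O(\sqrt{\delta_1})$.

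Finally, I transform $H$ into the Tur\'an graph $T_{r'}(n)$: reassign all bad-part vertices to good parts and then shift at most $O(\sqrt{\delta_1}\,n)$ further vertices so every part has size $\lceil n/r'\rceil$ or $\lfloor n/r'\rfloor$. Each vertex move toggles at most $n$ edges, contributing $O(\sqrt{\delta_1}\,n^2)$ modifications in total, so $G$ ends up $O(\sqrt{\delta_1})$-close to $T_{r'}(n)$, and choosing $\delta_1$ as a suitably small polynomial in $\delta$ yields the claimed $\delta$-closeness; the statement $d/n\in\{1-1/r\}+[-\delta,\delta]$ then follows immediately by comparing average degrees. The main obstacle I anticipate is the very dense regime $n-d=o(\sqrt{\delta_1}\,n)$, where the uncertainty in each $n_i$ dwarfs $n/r'$; there I would argue separately by noting that $\bar G$ must have $o(n^2)$ edges, so $G$ is $o(1)$-close to $K_n=T_n(n)$ and $d/n$ is within $o(1)$ of $1-1/n$.
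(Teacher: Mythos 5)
Your proposal is correct and follows the route the paper itself indicates (take the complement, note $\lambda_n(\bar G) = -1-\lambda_2(G)$, apply Theorem~\ref{thm:MAIN_SE1}, then use $d$-regularity to force the multipartite parts to be balanced); the paper treats the last step as immediate, whereas you work it out, which is a reasonable thing to do. One small bookkeeping slip: for the number of good parts $r'$ to be pinned down by the bound $n_i = (n-d)\pm\sqrt{\delta_1}\,n$, you actually need $(n-d)/n \gtrsim \delta_1^{1/4}$ (the uncertainty in $r'$ is on the order of $\sqrt{\delta_1}\,n^2/(n-d)^2$), not merely $\delta_1 \ll (n-d)/n$ as you write, and correspondingly the ``very dense'' fallback should kick in below the threshold $(n-d)/n \lesssim \delta_1^{1/4}$ rather than at $n-d = o(\sqrt{\delta_1}\,n)$; since $\bar G$ then has at most $O(\delta_1^{1/4}n^2)$ edges, your $K_n$-argument still applies, so after adjusting the threshold the two cases cover everything and your choice of $\delta_1$ as a small power of $\delta$ absorbs the change.
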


\section{Proof overview and organization}

First, we outline the proof of \Cref{thm:MAIN_SE1}, which states that if a graph $G$ has smallest eigenvalue $|\lambda_n|\leq n^{\gamma}$ for some $\gamma\in (0, 1/4)$, then $G$ is $\delta$-close to a disjoint union of cliques. Let $A$ be the adjacency matrix of $G$. 
In order to exploit the fact that $A$ is a $0/1$-matrix, we study the identity $A=A\circ A$, where $\circ$ denotes the entry-wise or Hadamard product (so that $(A\circ B)_{ij}=A_{ij}B_{ij}$, see Section \ref{sect:prelim} for formal definitions). Writing $A=\sum_{i=1}^n \lambda_i v_iv_i^T$ for the spectral decomposition, we get that 
\begin{equation}\label{eq: hadamard decomposition}
    \sum_{i=1}^n \lambda_i v_iv_i^T=\sum_{i=1}^n\sum_{j=1}^n \lambda_i\lambda_j (v_i\circ v_j)(v_i\circ v_j)^T.
\end{equation}
But how to use this identity? An instructive case is when $G$ is a Cayley graph of a finite abelian group $(\Gamma,+)$. In this case, the eigenvalues can be indexed by the group elements, and (\ref{eq: hadamard decomposition}) reduces to a clean convolution relation: $\lambda_a = \frac{1}{n}\sum_{b+c=a} \lambda_b\lambda_c$ for all $a \in \Gamma$. 
This identity also follows from special properties of the characters of the group, which are also the eigenvectors of $G$ (see Section 1.4.9 of Brouwer and Haemers \cite{BH12} for further details). 
If $\lambda_n$ is not very negative, we can almost ignore the negative terms in the sum. Hence, roughly speaking, this convolution relation shows that large eigenvalues reinforce each other, i.e. if $\lambda_b,\lambda_c \ge T$, then $\lambda_{b+c} \gtrsim T^2/n$. 
This motivates the definition $S_T=\sum_{\lambda_i\ge T}\lambda_i$, the spectral weight above threshold $T$. 
Summing over all $\lambda_b\ge T$ and $\lambda_c \ge T$, the above observation gives that $$S_{T^2/n} = \sum_{\lambda_a \ge T^2/n} \lambda_a\gtrsim \frac{1}{n}\sum_{\lambda_b,\lambda_c \ge T}\lambda_b\lambda_c = \frac{1}{n}S_T^2 .$$
This heuristic can be converted into a formal argument and generalized to arbitrary graphs, yielding the following curious recursive inequality on the sum of large eigenvalues: for all $T \ge 2|\lambda_n|\sqrt{n}$,
\begin{equation}\label{eq:recursion}
    4nS_{\frac{T^2}{2n}}\geq S_T^2.
\end{equation}
To derive (\ref{eq:recursion}) in general graphs, we use the notion of subspace-compression of matrices. We compress both sides of (\ref{eq: hadamard decomposition}) onto the subspace $W$ spanned by the vectors $v_i\circ v_j$ where $\lambda_i,\lambda_j \ge T$; see \Cref{sect:main_se} for a detailed argument. We then use the recursive inequality (\ref{eq:recursion}) to show that the contribution of small eigenvalues in the quadratic sum of all eigenvalues is negligible; we show this in \Cref{sect:recursion}. But this means that $A$ can be well-approximated in Frobenius norm by a low-rank positive semidefinite matrix. However, this is only possible if $G$ is close to a disjoint union of cliques, which we prove in \Cref{sect:dens2}.

\medskip

\noindent
Now we discuss the proof of Theorem \ref{thm:MAIN_SE2}, which states that if a graph $G$ has $|\lambda_n|\leq n^{\gamma}$ for some $\gamma\in (0, 1/6)$, then $G$ is $n^{-\alpha}$-close to a disjoint union of cliques. The bottleneck in the previous argument is its last step, where we show that if $A$ is well-approximated by a low-rank matrix, then $G$ must be close to a union of cliques. Our argument requires the rank of the approximation to be constant, which we cannot achieve if the graph $G$ is sparse. To overcome this, we first show that either $G$ is already $n^{-\alpha}$-sparse (in which case $G$ is $n^{-\alpha}$-close to the empty graph), or $G$ contains a very large clique.  We then repeatedly pull out large cliques, which gives enough structure to easily conclude the desired result. In order to find large cliques, we use a density-increment strategy, which is divided into three phases. We use $\eps$ to denote a small positive constant depending only on $\gamma$ and $\alpha$.
 \begin{description}
    \item[Phase 1.] Using (\ref{eq:recursion}), we show that $G$ contains an unusually high number of triangles. We count triangles by the cubic sum of eigenvalues, and we argue that this sum is large because most of the mass of the quadratic sum of eigenvalues is concentrated on the few largest eigenvalues. Having many triangles means that we can find a vertex whose neighbourhood is much denser than $G$. We repeat this process until we find an induced subgraph $G_1\subseteq G$ on $n^{1-\eps}$ vertices of positive constant density. This phase of the argument requires $\gamma<1/6$. The details are given in Section \ref{sect:dens1}.

    \item[Phase 2.] Due to Cauchy's interlacing theorem, $G_1$ also lacks very negative eigenvalues. Hence, Theorem \ref{thm:MAIN_SE1} applies to $G_1$, implying that $G_1$ is $o(1)$-close to a disjoint union of cliques. Therefore, using that $G_1$ has positive constant edge density, we show that $G_1$ contains a linear-sized induced subgraph $G_2\subseteq G_1$ of edge density $1-o(1)$. This step is explained in Section \ref{sect:dens2}.

    \item[Phase 3.] For very dense graphs, we employ a new method, inspired by the work of R\"aty, Sudakov and Tomon \cite{RST}. We prove that if $G_3$ is a somewhat regular induced subgraph of $G_2$, then the complement of $G_3$ must have average degree $O(|\lambda_n|^2)$, assuming $|\lambda_n|\ll |V(G_3)|^{1/4}$. Thus by Tur\'an's theorem, $G_3$ contains a clique of size $\Omega(|V(G_3)|/|\lambda_n|^2)$. In order to prove this, we analyse the triple Hadamard product of certain positive semidefinite shifts of the adjacency matrix. This can be found in Section \ref{sect:dens3}.
 
\end{description}
\noindent
We put together all of these ingredients in Section~\ref{sect:stab} to provide the proof of Theorem~\ref{thm:MAIN_SE3}.

Next, we discuss \Cref{thm:MAIN_SE3}, which states that any graph with average degree $d$ and smallest eigenvalue $|\lambda_n|\leq d^{\gamma}$ contains a clique of size $d^{1-O(\gamma)}$. Note that the methods discussed above only apply to somewhat dense graphs, whose average degree is $n^{1-\alpha}$ for some small $\alpha$. Therefore, as the first step in proving \Cref{thm:MAIN_SE3}, we introduce another densification method, allowing us to move to density at least $1/|\lambda_n|$, which will be sufficient assuming $\lambda_n$ is small with respect to the average degree. Then, we apply the previous three densification steps to conclude the proof. More precisely, we prove the following.

\begin{description}
    \item[Phase 0.] If $G$ has average degree $d$, then we show that $G$ contains a subgraph on $d$ vertices of edge density $\Omega(1/|\lambda_n|)$. This follows by picking a vertex $x$ with a set of $d$ neighbours $S$, and then analyzing the inequality $v^TAv\geq \lambda_n \|v\|_2^2$ for an appropriately chosen $v$ with support $\{x\}\cup S$. This can be found in \Cref{sect:cliques}.
\end{description}

In order to prove our results concerning graphs with small maximum cut, that is, Theorem \ref{thm:MAIN_MC1}, we follow the same steps. In Section \ref{sect:MaxCut}, we present a toolkit that gives various lower bounds on MaxCut based on the negative eigenvalues of the graph. With the help of these, instead of having a bound on $|\lambda_n|$, we can bound the linear, quadratic, and cubic sum of the negative eigenvalues. This allows us to transfer most of the machinery developed for graphs with bounded smallest eigenvalue to graphs with bounded MaxCut, but with the cost of incurring some losses quantitatively.

\section{Chowla's cosine problem}\label{sect:chowla}

In this section, we give a short proof of \Cref{thm:MAIN_CHOWLA}, assuming Theorem~\ref{thm:MAIN_SE3}. We begin the section by recalling some standard notation. Let $\Gamma$ be a finite group, $A\subset \Gamma$ be a symmetric subset (i.e. a set satisfying $A=A^{-1}$), and let $G=\Cay(\Gamma, A)$. Recall that $\Cay(\Gamma,A)$ is the \emph{Cayley graph} on $\Gamma$ generated by $A$, that is, the graph on vertex set $\Gamma$ in which $x,y\in \Gamma$ are joined by an edge if $xy^{-1}\in A$. 
%In case $G$ is abelian, we use $+$ to denote the group operation. 
If $\Gamma$ is abelian, it is well known that the eigenvalues of $G$ are the values of the discrete Fourier transform $\widehat{\mathds{1}_{A}}$. 
In the special case $\Gamma=\mathbb{Z}/n\mathbb{Z}$, this gives that the eigenvalues of the Cayley graph are
$$\sum_{a\in A}e^{\frac{2 \pi i}{n}\cdot a\xi} = \sum_{a \in A} \cos\left(\frac{2\pi a\xi}{n} \right)$$
for $\xi \in \mathbb{Z}/n\mathbb{Z}$. We restate Theorem \ref{thm:MAIN_CHOWLA} for the reader's convenience.

\begin{theorem}
For any finite set $A$ of positive integers, there exists $x\in [0,2\pi]$ such that 
$$\sum_{a\in A}\cos(ax)\leq -\Omega(|A|^{1/10-o(1)}).$$
\end{theorem}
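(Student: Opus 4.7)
The plan is a short contrapositive argument via Theorem~\ref{thm:MAIN_SE3}. Fix $\gamma\in(0,1/10)$ and suppose for contradiction that $\min_{x\in[0,2\pi]} f(x)\ge -|A|^{\gamma}$ for an arbitrarily large finite set $A\subset\mathbb{Z}_{>0}$. Choose a prime $n$ vastly larger than $\max(A)$, view $A\subseteq\mathbb{Z}/n\mathbb{Z}$ (an injection with $A\cap -A=\emptyset$ mod $n$), and form the Cayley graph $G=\Cay(\mathbb{Z}/n\mathbb{Z},A\cup -A)$, which is vertex-transitive and $d$-regular with $d=2|A|$. Its eigenvalues are $\lambda_{\xi}=2\sum_{a\in A}\cos(2\pi a\xi/n)$, and since the trigonometric polynomial $y\mapsto 2\sum_{a}\cos(ay)$ is Lipschitz with constant $O(|A|\max(A))$, taking $n$ sufficiently large ensures that the discrete grid $\{2\pi\xi/n:\xi\in\mathbb{Z}/n\mathbb{Z}\}$ resolves the continuous minimum of $f$ to within an additive $1$. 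Hence the smallest adjacency eigenvalue of $G$ satisfies $|\lambda_n|\le 2|A|^{\gamma}+1\le d^{\gamma}$ once $|A|$ is large.

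Now I would apply Theorem~\ref{thm:MAIN_SE3} to $G$ with this $\gamma$: the graph contains a clique of size at least $K:=d^{1-4\gamma}$. By vertex-transitivity, translate this clique so that $0$ is one of its vertices; then for any two distinct clique vertices $c,c'$, the nonzero difference $c-c'\in\mathbb{Z}/n\mathbb{Z}$ lies in $A\cup -A$, and all $K(K-1)$ such differences are distinct. Therefore
\[
2|A|\;=\;|A\cup -A|\;\ge\;K(K-1)\;\ge\;\tfrac12 K^2\;=\;\tfrac12 d^{\,2-8\gamma}.
\]
Substituting $d=2|A|$ gives $|A|^{1-8\gamma}=O(1)$, which for $\gamma<1/8$ (and in particular for any $\gamma<1/10$) is impossible when $|A|$ is large. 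This completes the contradiction, so for every $\gamma<1/10$ we must have $\min_{x}f(x)\le -|A|^{\gamma}$ once $|A|$ is sufficiently large, i.e.\ $\min_{x}f(x)\le -|A|^{1/10-o(1)}$.

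The genuine mathematical content is entirely packaged in Theorem~\ref{thm:MAIN_SE3}; granted that theorem, the remaining work is essentially arithmetic on exponents. The only analytic step is Step~1, which replaces the continuous minimum of $f$ on $[0,2\pi]$ by an honest eigenvalue of a finite Cayley graph, and this is dispatched by the Lipschitz estimate combined with a free choice of $n$, so I do not regard it as a serious obstacle. It is worth noting that the ceiling $1/10$ in the final exponent is inherited entirely from the hypothesis $\gamma\in(0,1/10)$ of Theorem~\ref{thm:MAIN_SE3}: the pigeonhole step $K(K-1)\le 2|A|$ above would by itself tolerate any $\gamma<1/8$, so any future improvement to the admissible range in Theorem~\ref{thm:MAIN_SE3} would automatically improve the exponent in Chowla's problem via the same argument.
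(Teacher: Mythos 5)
The argument collapses at the pigeonhole step. You claim that after translating the clique $S$ to contain $0$, ``all $K(K-1)$ such differences are distinct,'' but a clique in $\Cay(\mathbb{Z}/n\mathbb{Z},A\cup -A)$ is simply a set $S$ with $S-S\subseteq (A\cup -A)\cup\{0\}$; there is no reason for the map $(c,c')\mapsto c-c'$ to be injective on pairs of distinct elements. For instance, with $A=\{1,\dots,k\}$ the set $S=\{0,1,\dots,k\}$ is a clique, but the nonzero differences of $S$ are only the $2k$ values $\pm 1,\dots,\pm k$, not $k(k+1)$ distinct values. If your inequality $K(K-1)\le 2|A|$ were correct, applying it to that $S$ would give $k(k+1)\le 2k$, i.e.\ $k\le 1$, which is absurd. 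Without injectivity, the only thing you can honestly conclude from the translation is $|S\setminus\{0\}|\le |A\cup -A|$, i.e.\ $K\le d+1$, which is far too weak to contradict $K=d^{1-4\gamma}$. In short, cliques in this Cayley graph are not Sidon sets, and Theorem~\ref{thm:MAIN_SE3} gives no control over which clique you get, so you cannot arrange for one.

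This is not a detail that can be patched by adjusting exponents; it is the reason why the paper's derivation of \Cref{thm:MAIN_CHOWLA} from \Cref{thm:MAIN_SE3} is not a one-line pigeonhole. The actual argument (Section~\ref{sect:chowla}) exploits the Cayley structure differently: it first finds, by averaging, a translate $t\in A\cup -A$ with $|(t+S)\cap S|\ge |S|(|S|-1)/d$ (\Cref{claim:clique_int}); then it identifies the forbidden induced subgraph $H_k$ (a $2k$-clique plus a vertex meeting exactly half of it) and shows via Cauchy interlacing that $G$ contains no induced $H_k$ for $k=2d^{2\gamma}$ (\Cref{claim:H_k}); and finally it propagates by induction along the cosets $\ell t+S$ to conclude that \emph{every} vertex of $G$ sends at least $|S|-k$ edges to $S$ (\Cref{claim:chowla_2}). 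That last statement then contradicts $d$-regularity once $n$ is chosen much larger than $d|S|$, because $S$ would be incident to $\Omega(n|S|)>d|S|$ edges. None of this is recovered by a difference-counting argument. As a smaller point, your bound $2|A|^{\gamma}+1\le d^{\gamma}$ with $d=2|A|$ is also false as stated (since $2>2^{\gamma}$); it can be repaired by replacing $\gamma$ with a slightly larger $\gamma'$ still below $1/10$, but the central gap above is what needs to be addressed.
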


\begin{proof}
    Let $n>4\max A$ be a prime, and let $G=\Cay(\mathbb{Z}/n\mathbb{Z}, A\cup -A)$.
    Then $G$ is an $n$-vertex $d$-regular graph with $d=2|A|$. 
    Every $\xi\in \mathbb{Z}/n\mathbb{Z}$ corresponds to an eigenvalue of $G$ given by
    $$\lambda_\xi=\sum_{a\in A\cup -A}e^{2\pi i a \xi /n}= 2\sum_{a \in A} \cos\left(\frac{2\pi a\xi}{n} \right).$$
    Hence, if we denote by $\lambda_n$ the smallest eigenvalue of $G$, then there exists $x=\frac{2\pi \xi}{n}$ such that $$\sum_{a\in A}\cos(ax)=\frac{1}{2}\lambda_n.$$
    Let $\gamma=1/10-\eps$ for any fixed $\eps>0$. Our aim is to show that $|\lambda_n|\geq d^{\gamma}$ for $d$ sufficiently large. 
    Assume to the contrary that $|\lambda_n|< d^{\gamma}$, then by Theorem \ref{thm:MAIN_SE3}, $G$ contains a clique $S$ of size $|S|\geq d^{1-4\gamma}=d^{3/5+4\eps}$. We now argue that this is impossible with the help of two auxiliary claims. 
    
    \begin{claim}\label{claim:clique_int}
        There exists a non-zero $t\in \mathbb{Z}/n\mathbb{Z}$ such that $|(t+S)\cap S|\geq |S|(|S|-1)/d$.
    \end{claim}
    
    \begin{proof}
        As $S$ is a clique in $G$, we have $S-S\subset A\cup -A\cup\{0\}$. 
        By averaging, there exists some $t\in A\cup -A$ such that $s'-s=t$ holds for at least $\frac{|S|(|S|-1)}{2|A|}=\frac{|S|(|S|-1)}{d}$ pairs $(s,s')\in S\times S$. 
        Hence, for at least $|S|(|S|-1)/d$ values of $s\in S$ we have $s+t\in S$, and therefore $|(t+S)\cap S|\geq |S|(|S|-1)/d$.
    \end{proof}
    
    In the second auxiliary claim, we identify a simple forbidden induced subgraph of $G$. For a positive integer $k$, let $H_k$ be the graph that is formed by a clique of size $2k$, and an additional vertex connected to exactly half of the vertices of the clique. We show that the smallest eigenvalue of $H_k$ is $-\Omega(\sqrt{k})$. We remark that $H_k$ and its relatives have been studied in connection to the smallest eigenvalue problem for a long time, see e.g. \cite{H73}.
    
    \begin{claim}\label{claim:H_k}
        The smallest eigenvalue $\mu$ of $H_k$ satisfies $\mu<-\sqrt{k/2}$. 
    \end{claim}
    
    \begin{proof}
        Let $V(H_k)=X\cup Y\cup \{x_0\}$, where $X\cup Y$ is a clique of size $2k$, and $X$ is the neighbourhood of $x_0$. Then we have $|X|=|Y|=k$. Let $B$ be the adjacency matrix of $H_k$ and let $v\in \mathbb{R}^{V(H_k)}$ be the vector defined as 
        $$v(x)=\begin{cases} \frac{1}{\sqrt{2}}  &\mbox{if } x=x_0\\
        -\frac{1}{2\sqrt{k}} &\mbox{if } x\in X\\
        \frac{1}{2\sqrt{k}}  &\mbox{if } y\in Y.\end{cases}$$
        Then $\|v\|_2=1$ and thus \[\mu\leq v^TBv=2\sum_{xy\in E(G)}v(x)v(y)=-\sqrt{\frac{{k}}{2}}-\frac{1}{2}<-\sqrt{\frac{k}{2}}.\hfill \qedhere\]
    \end{proof}
    
    By Claim \ref{claim:H_k} and Cauchy's interlacing theorem (cf. Section~\ref{sect:prelim}), $G$ does not contain $H_k$ as an induced subgraph for $k=2d^{2\gamma}=2d^{1/5-2\eps}$.
    As $G[S]$ is a clique with $|S|> 2k$, each vertex of $G$ sends either at most $k$ edges to $S$, or at least $|S|-k$ edges. We prove that every vertex in $G$ must send at least $|S|-k$ edges to $S$. This easily leads to a contradiction for $n$ sufficiently large: this implies that there are at least $(n-|S|)(|S|-k)\geq \frac{n}{2}\cdot \frac{|S|}{2} > d|S|$ edges with an endpoint in $S$, contradicting that $G$ is $d$-regular.
    
    \begin{claim}\label{claim:chowla_2}
        Every $v\in V(G)$ sends at least $|S|-k$ edges to $S$.
    \end{claim}
    
    \begin{proof}
        Let $t\in \mathbb{Z}/n\mathbb{Z}$ be a non-zero element such that $|(t+S)\cap S|\geq \frac{|S|(|S|-1)}{d}$, whose existence is guaranteed by Claim \ref{claim:clique_int}. We prove by induction on $\ell$ that every vertex of $\ell t+S$ sends at least $|S|-k$ edges to $S$. As every vertex $v\in V(G)$ is contained in some $\ell t+S$, this finishes the proof. The base case $\ell=0$ is trivial, so let $\ell\geq 1$. By our induction hypothesis and translation invariance, every vertex $v\in \ell t+S$ sends at least $|S|-k$ edges to $t+S$. But then $v$ sends at least 
        $$|S\cap (t+S)|-k=\frac{|S|^2}{2d}-k\geq \frac{1}{2}d^{1/5+8\eps}-2d^{1/5-2\eps}>2d^{1/5-2\eps}=k$$ edges to $S\cap (t+S)$, and in particular, more than $k$ edges to $S$. Therefore, as $G$ contains no induced copy of $H_k$, $v$ must send at least $|S|-k$ edges to $S$, and we are done. 
    \end{proof}
\end{proof}

\section{Preliminaries}\label{sect:prelim}

We recall some basic facts and standard notation from linear algebra and graph theory. The \emph{edge density} of an $n$-vertex graph $G$ is $m/\binom{n}{2}$, where $m=e(G)$ is the number of edges. Given a subset $U$ of the vertices, $G[U]$ denotes the subgraph of $G$ induced on vertex set $U$. Also, if $V\subset V(G)$ is disjoint from $U$, then $G[U,V]$ is the bipartite subgraph of $V(G)$ induced between $U$ and $V$. The \emph{complement} of $G$ is denoted by $\overline{G}$. The \emph{maximum degree} of $G$ is denoted by $\Delta(G)$, and the average degree by $d(G)$. We will often identify the set of vertices of $G$ with $[n]=\{1,2,\dots,n\}$.

The \emph{MaxCut} of $G$, denoted by $\mbox{mc}(G)$, is the maximum size of a cut, where a \emph{cut} is a partition $(U,V)$ of the vertices into two parts, with all the edges having exactly one endpoint in both parts. The size of a cut is the number of its edges. The \emph{surplus} of $G$ is defined as $\surp(G)=\mbox{mc}(G)-m/2$, where $m$ is the number of edges of $G$. Note that $\surp(G)$ is always nonnegative. A useful property of the surplus is that if $G_0$ is an induced subgraph of $G$, then $\surp(G_0)\leq \surp(G)$, see e.g. \cite{GJS}.

Given an $n\times n$ real symmetric matrix $M$, we denote by $\lambda_1(M)\geq\dots\geq \lambda_n(M)$ the eigenvalues of $M$ with multiplicity. 
If $G$ is an $n$-vertex graph whose adjacency matrix is $A$, then we denote by $\lambda_i=\lambda_i(A)$ the eigenvalues of $A$, sometimes also calling them the eigenvalues of $G$.
We also denote by $v_1, \dots, v_n$ a corresponding orthonormal basis of eigenvectors (all vectors in this paper will be column vectors by default).
By the Perron--Frobenius theorem, we may take $v_1$ to be a vector with non-negative entries, which we call the \emph{principal eigenvector} of $A$. Furthermore, the corresponding eigenvalue satisfies $d(G)\leq \lambda_1\leq \Delta(G)$. See the survey \cite{eigenvalue_survey} as a general reference on the principal eigenvector.

An important and useful fact about spectra of graphs is \textit{Cauchy's interlacing theorem}.
In the case of graphs, it states that if $G$ is an $n$-vertex graph with eigenvalues $\lambda_1\geq \dots\geq \lambda_n$ and $G'\subseteq  G$ is an induced subgraph on $n-1$ vertices with eigenvalues $\mu_1\geq  \dots \geq \mu_{n-1}$, then we have \[\lambda_1\geq \mu_1\geq \lambda_2\geq \mu_2\geq \dots\geq \mu_{n-1}\geq \lambda_n.\]
For a proof of this result, see e.g. \cite{fisk04}. Crucially, this implies that if $G$ is a graph with smallest eigenvalue $\lambda_n$ and $G'\subseteq G$ is an induced subgraph of $G$ with smallest eigenvalue $\mu_k$, then $\mu_k\geq \lambda_n$.

Given two $n\times n$ matrices $A$ and $B$, their scalar product is defined as $$\langle A,B\rangle=\trace(AB^T)=\sum_{1\leq i,j\leq n} A_{i,j}B_{i,j}.$$
The \emph{Frobenius-norm} of an $n\times n$ matrix $A$ is
$$\|A\|_F^2=\langle A,A\rangle=\sum_{i,j=1}^{n}A_{i,j}^2.$$
If $A$ is symmetric with eigenvalues  $\lambda_1,\dots,\lambda_n$, then we also have $$\|A\|_F^2= \langle A,A\rangle = \trace(A^2)=\sum_{i=1}^n\lambda_i^2.$$

The \emph{Hadamard product} (also known as entry-wise product) of $A$ and $B$ is the $n\times n$ matrix $A\circ B$ defined as $(A\circ B)_{i,j}=A_{i,j}B_{i,j}$. We denote the $k$-term Hadamard product $A\circ\dots\circ A$ by $A^{\circ k}$. A useful feature of the Hadamard product, which is a key component of our arguments, is that it preserves positive semidefiniteness.
\begin{theorem}[Schur product theorem] \label{theorem: schur}
If $A$ and $B$ are positive semidefinite matrices, then $A\circ B$ is also positive semidefinite.
\end{theorem}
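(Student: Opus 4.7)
The plan is to exhibit $A \circ B$ as a Gram matrix, which immediately forces positive semidefiniteness. First, I would invoke the standard fact that any $n\times n$ positive semidefinite matrix $M$ admits a factorization $M = X^T X$ (via the spectral decomposition or Cholesky factorization). Applying this to $A$ and $B$, I can write $A_{ij} = \langle x_i, x_j\rangle$ and $B_{ij} = \langle y_i, y_j\rangle$, where $x_i$ and $y_i$ denote the $i$-th columns of the corresponding factorizations and the inner product is the standard one.

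Next, I would compute the entries of the Hadamard product via the Kronecker product identity $\langle a, b\rangle \langle c, d\rangle = \langle a\otimes c, b\otimes d\rangle$, yielding
$$(A \circ B)_{ij} = \langle x_i, x_j\rangle \langle y_i, y_j\rangle = \langle x_i \otimes y_i, \, x_j \otimes y_j\rangle.$$
This exhibits $A\circ B$ as the Gram matrix of the vectors $\{x_i \otimes y_i\}_{i=1}^n$, so $A\circ B = Z^T Z$ with $Z$ the matrix whose $i$-th column is $x_i \otimes y_i$. Hence $A\circ B$ is positive semidefinite, as required.

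An equivalent route, which also works cleanly, is to start from spectral decompositions $A = \sum_k \lambda_k u_k u_k^T$ and $B = \sum_l \mu_l w_l w_l^T$ with $\lambda_k, \mu_l \ge 0$, and expand
$$A \circ B = \sum_{k,l} \lambda_k \mu_l \, (u_k \circ w_l)(u_k \circ w_l)^T,$$
using the entrywise identity $u_k(i) u_k(j) \cdot w_l(i) w_l(j) = (u_k \circ w_l)(i) \cdot (u_k \circ w_l)(j)$. This expresses $A\circ B$ as a non-negative combination of rank-one positive semidefinite matrices, so it is positive semidefinite.

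There is no real obstacle here: this is a classical identity whose only mildly non-trivial ingredient is the tensor-product identity (or equivalently, the pointwise expansion in the second approach), both of which are routine. The proof is short enough that either approach can be executed in a few lines, and I would likely present the first one, since it makes the combinatorial structure of the argument most transparent and foreshadows the way Hadamard products will be used throughout the paper.
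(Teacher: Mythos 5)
The paper does not prove the Schur product theorem; it is stated as a classical fact (Theorem~\ref{theorem: schur}) and then used as a black box. Both routes you sketch are correct and standard, so there is no gap. It is worth noting that your second route — expanding $A = \sum_k \lambda_k u_k u_k^T$, $B = \sum_l \mu_l w_l w_l^T$ and writing $A\circ B = \sum_{k,l}\lambda_k\mu_l (u_k\circ w_l)(u_k\circ w_l)^T$ via the identity $(xy^T)\circ(uv^T) = (x\circ u)(y\circ v)^T$ — is precisely the computation the paper performs repeatedly (e.g.\ in the Hadamard decomposition in the proof overview and in Lemmas~\ref{lemma:main_smallest_eigenalue} and~\ref{lemma: degree bounds for balanced graphs}), so presenting that version would align most naturally with the paper's machinery, whereas the Gram-matrix/Kronecker route is equally valid but plays no further role here.
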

We also exploit the simple observation that if $A$ is an adjacency matrix, then $A=A\circ A$. Another useful identity involving the Hadamard product is that if $x,y,u,v$ are vectors, then $$(xy^T)\circ(uv^T)=(x\circ u) (y\circ v)^T.$$
Here, we use the Hadamard product for vectors: for $u,v \in \mathbb{R}^n$, their Hadamard product vector $u\circ v \in \mathbb{R}^{n}$ is defined by $(u\circ v)(i):=u(i)v(i)$ for all $i \in [n]$.

Throughout our proofs, we omit the use of floors and ceilings whenever they are not crucial. 

\section{Spectral lower bounds for the surplus}\label{sect:MaxCut}

In this section, we present bounds on the MaxCut of a graph in terms of its spectrum. These inequalities are crucial for transferring our results for the smallest eigenvalue to the MaxCut setting.

\begin{claim}\label{claim:surplus_and_lambdan}
For an $n$-vertex graph $G$ with smallest eigenvalue $\lambda_n$, we have $\surp(G)\leq |\lambda_n|n/4$. 
\end{claim}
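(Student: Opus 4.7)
The plan is to reduce the claim to the Rayleigh--Ritz variational characterization of $\lambda_n$ via the standard $\pm 1$ encoding of cuts. First, given a cut $(U,V)$ of the $n$-vertex graph $G$, I associate the sign vector $x \in \{-1,+1\}^n$ with $x_i=+1$ for $i\in U$ and $x_i=-1$ for $i\in V$. Then for every edge $ij$, the quantity $\tfrac{1}{2}(1-x_ix_j)$ equals $1$ when $ij$ crosses the cut and $0$ otherwise.

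Next, summing this indicator over $E(G)$ and using the identity $x^T A x = 2\sum_{ij\in E(G)} x_ix_j$, where $A$ is the adjacency matrix of $G$, I would rewrite the number of cut edges as
\[
e(U,V) \;=\; \frac{m}{2} - \frac{1}{4}\, x^T A x.
\]
Taking the maximum over all $x\in \{-1,+1\}^n$ turns this into
\[
\surp(G) \;=\; \mathrm{mc}(G) - \frac{m}{2} \;=\; -\frac{1}{4}\min_{x \in \{-1,+1\}^n} x^T A x.
\]

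Finally, I would apply the spectral lower bound $x^T A x \ge \lambda_n \|x\|_2^2$, which holds for every $x\in\mathbb{R}^n$ by the variational characterization of the smallest eigenvalue. Since $\|x\|_2^2 = n$ for any $x\in\{-1,+1\}^n$, this gives $\min_{x\in\{-1,+1\}^n} x^T A x \ge \lambda_n n$. Noting that $\lambda_n \le 0$ (the case where $G$ has no edges is trivial, and otherwise $\trace(A)=\sum_i \lambda_i = 0$ forces $\lambda_n<0$), we have $-\lambda_n = |\lambda_n|$, and so $\surp(G) \le |\lambda_n|n/4$, as required.

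There is no genuine obstacle here: the argument is a textbook two-line application of Rayleigh--Ritz after the $\pm 1$ encoding. The only mild point to note is the sign convention in the last step to ensure that $-\lambda_n = |\lambda_n|$, which is why I would briefly dispose of the empty-graph case separately.
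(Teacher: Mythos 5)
Your proof is correct and takes essentially the same route as the paper: encode cuts by $\pm 1$ vectors, write the surplus as $-\tfrac14 \min_{x\in\{-1,1\}^n} x^T A x$, and bound via the Rayleigh quotient $x^TAx\ge \lambda_n\|x\|_2^2$ with $\|x\|_2^2=n$. The paper dispenses with the sign discussion by observing directly that $-x^TAx\le |\lambda_n|\|x\|_2^2$ (which holds regardless of the sign of $\lambda_n$), but this is a cosmetic difference.
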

\begin{proof}
Let $A$ be the adjacency matrix of $G$. We can assign a vector with entries $\pm 1$ to each cut $V(G)=X\cup Y$, by setting $x_u=1$ if $u\in X$ and $x_u=-1$ otherwise. Then, the surplus of this cut equals $\frac{1}{2}\big(e(X, Y)-e(X)-e(Y)\big)=-\frac{1}{2}\sum_{\{u, v\}\subseteq V(G)} x_uA_{uv}x_v=-\frac{1}{4}\sum_{u, v\in V(G)} x_uA_{uv}x_v$. 
Hence, we have 
\[\surp(G)=\frac{1}{4}\max_{x\in \{-1,1\}^n}-x^T Ax=\frac{1}{4}\max_{x\in [-1,1]^n}-x^T Ax.\]
Note that $-x^TAx\leq |\lambda_n|\|x\|_2^2$ for every vector $x\in \bR^n$, so $\surp(G)\leq \frac{1}{4} |\lambda_n| \sqrt{n}^2=|\lambda_n|n/4$.
\end{proof}

The key ingredient of the above proof is the relation $\surp(G)=\frac{1}{4}\max_{x\in [-1,1]^n}-x^T Ax$.
This can also be written as $\surp(G)=\frac{1}{4}\max_{x\in [-1, 1]^n}\langle -A, xx^T\rangle$, where we observe that $xx^T$ is a positive-semidefinite matrix with diagonal entries bounded by $1$. Based on this, we define the semidefinite relaxation of the surplus as follows. Given an $n$-vertex graph $G$ with adjacency matrix $A$, define $$\surp^*(G)=\max_{X} -\langle A,X\rangle,$$ where the maximum is taken over all $n\times n$ positive semidefinite matrices $X$ such that $X_{i,i}\leq 1$ for every $i\in [n]$. The following inequality between $\surp(G)$ and $\surp^*(G)$ can be found in \cite{RT}, and it is a simple application of the graph Grothendieck inequality of Charikar and Wirth \cite{CW04}. 

\begin{lemma}[\cite{RT}]\label{claim:groth}
For every graph $G$, we have $\surp^*(G)\geq \surp(G)\geq \Omega\Big(\frac{\surp^*(G)}{\log n}\Big)$.
\end{lemma}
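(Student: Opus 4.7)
The plan is to prove the two inequalities separately, in each case leveraging the $\pm 1$-reformulation of $\surp(G)$ recorded in \Cref{claim:surplus_and_lambdan}. The easy inequality $\surp^*(G)\ge \surp(G)$ will follow by exhibiting a rank-one feasible matrix for the SDP. The harder inequality $\surp(G)=\Omega(\surp^*(G)/\log n)$ will be handled by invoking the Charikar--Wirth rounding of \cite{CW04} as a black box.

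For the easy direction, I would start from the identity
\[\surp(G)=\tfrac14\max_{x\in[-1,1]^n}\langle -A,xx^T\rangle\]
proved in \Cref{claim:surplus_and_lambdan}, and note that for any $x\in[-1,1]^n$ the matrix $X:=xx^T$ is positive semidefinite with $X_{ii}=x_i^2\le 1$, so it is feasible for the SDP defining $\surp^*(G)$. Hence $4\surp(G)\le \surp^*(G)$, which is stronger than what the claim demands.

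For the hard direction, I would let $X^*$ be an optimal feasible matrix and factor it as $X^*=V^TV$, where the columns $v_1,\dots,v_n$ have Euclidean norm at most $1$. The Charikar--Wirth procedure then produces, from a single Gaussian sample $g\sim\mathcal{N}(0,I_n)$, a random vector $x\in\{-1,1\}^n$ defined via a suitably truncated sign of $\langle v_i,g\rangle$. Their main theorem in \cite{CW04} guarantees that for any symmetric matrix $M$ with zero diagonal,
\[\mathbb{E}\bigl(-x^T M x\bigr)\ \ge\ \frac{c}{\log n}\cdot \langle -M, X^*\rangle\]
for an absolute constant $c>0$. Applied to $M=A$, which has zero diagonal, and combined with the representation $\surp(G)=\tfrac14\max_{x\in\{-1,1\}^n}(-x^TAx)$ from \Cref{claim:surplus_and_lambdan}, picking the best realization of the randomized rounding yields $\surp(G)\ge \Omega(\surp^*(G)/\log n)$.

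The main potential obstacle is purely bookkeeping: matching the conventions of \cite{CW04}, where the SDP is typically phrased with equality constraints $X_{ii}=1$ rather than the inequality $X_{ii}\le 1$, and where the objective is often stated for positive rather than negative quadratic forms. Both adjustments are routine and absorbed into the constant $c$; no new idea beyond the Charikar--Wirth rounding is required, which is why the paper attributes the lemma to \cite{RT} as a simple consequence of \cite{CW04}.
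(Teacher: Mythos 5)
The paper does not supply its own proof of this lemma; it cites \cite{RT} and explains that the lemma is a simple application of the Charikar--Wirth graph Grothendieck inequality. Your proposal reconstructs exactly that argument: the easy direction $\surp^*(G)\geq\surp(G)$ by noting that rank-one matrices $xx^T$ with $x\in[-1,1]^n$ are feasible for the SDP (which in fact yields the stronger $\surp^*(G)\geq 4\surp(G)$), and the hard direction by applying the Charikar--Wirth rounding to $M=-A$. Your observation on the bookkeeping point is also correct and worth spelling out in one line if you write this up: since $A$ has zero diagonal, the SDP with constraints $X_{ii}\le 1$ and the SDP with $X_{ii}=1$ have the same optimal value, because padding the diagonal of a feasible $X$ up to $1$ preserves positive semidefiniteness and leaves $\langle -A, X\rangle$ unchanged, so the \cite{CW04} theorem (stated for unit-norm vectors) applies directly. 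The proposal is correct and matches the approach the paper indicates.
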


The semidefinite relaxation $\surp^*(G)$ allows us to obtain lower bounds on the surplus using the negative eigenvalues of a graph $G$.  Parts of the following lemma and similar bounds can be also found in \cite{RST,RT}. Given a graph $G$, let 
$$\Delta^*(G):=\min\{\Delta(G),\Delta(\overline{G})\}.$$

\begin{lemma}\label{lemma:surp_star}
There exists an absolute constant $c>0$ such that the following holds. Let $G$ be a graph on $n$ vertices with eigenvalues $\lambda_i=\lambda_i(G)$, and let $\Delta^*=\Delta^*(G)$. Then
\begin{itemize}
    \item[(i)] $\surp^*(G)\geq \displaystyle\sum_{\lambda_i<0}|\lambda_i|$
    \item[(ii)] $\surp^*(G)\geq\frac{c}{\sqrt{\Delta^*+1}}\displaystyle\sum_{\lambda_i<0}\lambda_i^2$
    \item[(iii)]  $\surp^*(G)\geq \frac{c}{\Delta^*+1}\displaystyle\sum_{\lambda_i<0}|\lambda_i|^3$.
\end{itemize}
\end{lemma}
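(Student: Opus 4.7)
The plan is to construct, for each of (i)-(iii), an explicit positive semidefinite matrix $X$ with $X_{kk}\le 1$ that certifies the required lower bound via $\surp^*(G)\ge -\langle A,X\rangle$. Write the spectral decomposition $A=\sum_i\lambda_iv_iv_i^T$ and split $A=A_+-A_-$ where $A_\pm:=\sum_{\pm\lambda_i>0}|\lambda_i|v_iv_i^T$ are both PSD. Two useful identities drive the argument: $A_+A_-=0$ (so that $A^2=A_+^2+A_-^2$ and $-\trace(A\cdot A_-^2)=\trace(A_-^3)$), and $(A_+)_{kk}=(A_-)_{kk}$ for every $k$, coming from $A_{kk}=0$.

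For (i), take $X:=\sum_{\lambda_i<0}v_iv_i^T$, the orthogonal projection onto the negative eigenspace. Then $X\succeq 0$, $X_{kk}=\sum_{\lambda_i<0}v_i(k)^2\le 1$, and $-\langle A,X\rangle=\sum_{\lambda_i<0}|\lambda_i|$. For (ii), take $X:=A_-/\max_k(A_-)_{kk}$: this is PSD with $X_{kk}\le 1$, and $-\langle A,X\rangle=\sum_{\lambda_i<0}\lambda_i^2/\max_k(A_-)_{kk}$. For (iii), take $X:=A_-^2/\max_k(A_-^2)_{kk}$: this is PSD with $X_{kk}\le 1$, and $-\langle A,X\rangle=\trace(A_-^3)/\max_k(A_-^2)_{kk}=\sum_{\lambda_i<0}|\lambda_i|^3/\max_k(A_-^2)_{kk}$.

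The remaining task is to bound $\max_k(A_-)_{kk}=O\bigl(\sqrt{\Delta^*+1}\bigr)$ and $\max_k(A_-^2)_{kk}=O(\Delta^*+1)$. The elementary bound uses $A^2$: since both $(A_+^2)_{kk},(A_-^2)_{kk}\ge 0$ and $A^2=A_+^2+A_-^2$, we get $(A_-^2)_{kk}\le(A^2)_{kk}=\deg_G(k)\le\Delta(G)$ and hence $(A_-)_{kk}^2\le(A_-^2)_{kk}\le\Delta(G)$; this already proves (ii) and (iii) with $\Delta(G)$ in place of $\Delta^*(G)$. To upgrade $\Delta(G)$ to $\Delta^*(G)$, subtract the Perron rank-one piece: set $\tilde A:=A-\lambda_1 v_1v_1^T$, so that $\tilde A_-=A_-$ (since $\lambda_1\ge 0$ contributes only to $A_+$), while $(\tilde A^2)_{kk}=\deg_G(k)-\lambda_1^2v_1(k)^2$. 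In the regular case, $v_1=\mathbf 1/\sqrt n$ and $\lambda_1=d$, so $(\tilde A^2)_{kk}=d(n-d)/n\le\min\{d,\,n-d\}\le\Delta^*(G)+1$, as needed. For general graphs, the Perron--Frobenius identity $\lambda_1v_1(k)=\sum_{j\sim k}v_1(j)$, combined with entrywise control on $v_1$, should deliver the analogous bound.

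The main anticipated obstacle is the $\Delta^*$ refinement for highly irregular graphs: when $v_1$ has very uneven entries, the Perron--Frobenius lower bound on $\lambda_1^2 v_1(k)^2$ may be too weak to absorb the full $\deg_G(k)$ contribution in $(\tilde A^2)_{kk}$. Plausible remedies include replacing $A_-$ by the centered witness $\pi A_-\pi$ with $\pi:=I-\tfrac1n\mathbf 1\mathbf 1^T$ to cleanly eliminate the Perron direction, localizing to an almost-regular induced subgraph, or appealing to the reference \cite{RST,RT} cited in the paper, from which parts of this inequality are imported.
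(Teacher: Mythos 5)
Your witnesses for (i) and (iii) and the PSD-domination $A_-^2\preceq A^2$ (equivalently $(A_-^2)_{kk}\le(A^2)_{kk}=\deg_G(k)$) are exactly what the paper uses, and the identity $(\tilde A^2)_{kk}=\deg_G(k)-\lambda_1^2v_1(k)^2$ is a nice clean way to phrase the paper's second case, where it writes $B=A-\lambda_1 v_1v_1^T$ and bounds $(B^2)_{kk}$. Your direct witness $X=A_-/\max_k(A_-)_{kk}$ for (ii) is a legitimate alternative to the paper's route, which instead obtains (ii) from (i) and (iii) by Cauchy--Schwarz applied to the sequences $(|\lambda_i|^3)_{\lambda_i<0}$ and $(|\lambda_i|)_{\lambda_i<0}$; that shortcut avoids having to control $\max_k(A_-)_{kk}$ separately.

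The genuine gap is the case $\Delta^*=\Delta(\overline G)$, which is where the real content lives and which you explicitly leave at ``should deliver the analogous bound.'' The paper closes it in two moves that you do not reproduce. First, a dichotomy: if the edge density of $\overline G$ is at least $1/10$ then $\Delta(\overline G)=\Omega(n)$ and, since we are in the case $\Delta(\overline G)\le\Delta(G)$, also $\Delta^*=\Omega(n)$, so the $\Delta(G)$-version you already proved is strong enough. Second, when $\overline G$ is sparse (density $<1/10$), the paper's Lemma~\ref{lemma:max_entry} gives two-sided entrywise control on the Perron eigenvector, $\frac{1-3\bar\Delta/n}{\sqrt n}\le v_1(k)\le\frac{1+2p+2/n}{\sqrt n}$, which is precisely what turns $\deg_G(k)-\lambda_1^2v_1(k)^2$ into $O(\bar\Delta+1)$. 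This density preprocessing is not optional: the entrywise Perron bound is simply false without it. Of your proposed remedies, the centered witness $\pi A_-\pi$ with $\pi=I-\frac1n\mathbf 1\mathbf 1^T$ does not work, since $\pi$ kills $\mathbf 1$ rather than $v_1$ (these differ unless $G$ is regular), and after conjugation $-\langle A,\pi A_-\pi\rangle$ is no longer $\sum_{\lambda_i<0}\lambda_i^2$; passing to an almost-regular induced subgraph also changes the spectrum and does not straightforwardly transfer the witness back to $G$.
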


Before we prove Lemma~\ref{lemma:surp_star}, we briefly discuss  two preliminary results. First, we show that the entries of eigenvectors corresponding to large eigenvalues are smoothly distributed. Then, we show that the entries of the principal eigenvector are especially well behaved.

\begin{lemma}\label{lemma:smooth_eigenvectors}
Let $G$ be an $n$-vertex graph, and let $\lambda$ be an eigenvalue with normalized eigenvector $v$. Then
$$\|v\|_{\infty}\leq \frac{\sqrt{n}}{|\lambda|}.$$
\end{lemma}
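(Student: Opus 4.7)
The plan is to exploit the eigenvector equation $Av = \lambda v$ coordinate-wise, and bound each coordinate by applying Cauchy--Schwarz against the neighbourhood indicator vector.

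Fix any vertex $i \in [n]$. The eigenvector equation gives
\[
\lambda v(i) = (Av)(i) = \sum_{j : ij \in E(G)} v(j).
\]
I would then upper-bound the right-hand side in absolute value as
\[
|\lambda|\cdot |v(i)| \le \sum_{j=1}^n |v(j)| \le \sqrt{n}\,\|v\|_2 = \sqrt{n},
\]
where the middle step uses Cauchy--Schwarz (comparing $|v|$ with the all-ones vector in $\mathbb{R}^n$) and the last step uses that $v$ is normalized. Dividing both sides by $|\lambda|$ and taking the maximum over $i$ yields $\|v\|_\infty \le \sqrt{n}/|\lambda|$, as desired.

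There is no real obstacle here: the argument is a one-line application of Cauchy--Schwarz to the eigenvalue equation. The only minor subtlety is ensuring $\lambda \neq 0$, which is automatic since the inequality is vacuous (or trivially true via the convention $\sqrt{n}/0 = +\infty$) when $\lambda = 0$; in the relevant applications the statement will be invoked with $|\lambda|$ bounded away from zero.
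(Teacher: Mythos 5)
Your proof is correct and is essentially the same as the paper's: both start from $\lambda v(i)=\sum_{j\sim i}v(j)$, extend the sum over neighbours to a sum over all vertices, and compare the $\ell_1$ and $\ell_2$ norms of $v$. The paper phrases that last step as the AM--QM inequality, while you phrase it as Cauchy--Schwarz against the all-ones vector, but these are the same estimate.
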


\begin{proof}
    For every $b\in [n]$, we have $\lambda v(b)=\sum_{b\sim i}v(i)$, where we use $x\sim y$ to denote that $x$ is connected to $y$ by an edge in $G$. 
    By the inequality between the arithmetic and quadratic mean,  
    $$\frac{1}{n}\left|\sum_{b\sim i}v(i)\right|\leq \frac{1}{n}\sum_{i=1}^n |v(i)|\leq \sqrt{\frac{\sum_i v(i)^2}{n}}=\frac{1}{\sqrt{n}},$$ where we used that $\sum_{i=1}^nv_1(i)^2=1$. 
    Hence, $|\lambda| |v(b)|\leq \sqrt{n}$, i.e. $|v(b)| \le \sqrt{n}/|\lambda|$.
\end{proof}

\begin{lemma}\label{lemma:max_entry}
    Let $G$ be a graph on $n > 10$ vertices, whose complement has edge density $p\leq 1/10$ and maximum degree $\Bar{\Delta}=\Delta(\overline{G})$. If $v_1$ is the principal eigenvector of $G$, then for each $i\in [n]$ we have \[\frac{1-3\Bar{\Delta}/n}{\sqrt{n}} \leq v_1(i)\leq \frac{1+2p+2/n}{\sqrt{n}}.\]
\end{lemma}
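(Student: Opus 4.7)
The plan is to exploit the identity $A = J - I - B$, where $J$ is the all-ones $n \times n$ matrix and $B$ is the adjacency matrix of the complement $\overline{G}$. Applying this to $Av_1 = \lambda_1 v_1$ and setting $S := \mathbf{1}^T v_1 = \sum_i v_1(i)$ gives the master identity $(\lambda_1 + 1)v_1 = S\mathbf{1} - Bv_1$, and reading off the $i$-th coordinate,
\[
(\lambda_1 + 1)v_1(i) \;=\; S - \sum_{j \in N_{\overline{G}}(i)} v_1(j).
\]
The upper bound will come from discarding the non-negative correction term, and the lower bound from bounding it by $\bar{\Delta} \cdot M$, where $M := \max_i v_1(i)$.

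Next I would establish three preliminary estimates. First, Rayleigh's principle applied to $\mathbf{1}$ gives $\lambda_1 \geq 2e(G)/n = (1-p)(n-1)$, hence $\lambda_1 + 1 \geq n - p(n-1) \geq n - \bar{\Delta}$, using that the average degree $p(n-1)$ of $\overline{G}$ is bounded by its maximum $\bar{\Delta}$. Second, $\lambda_1 \leq \Delta(G) \leq n - 1$, so $\lambda_1 + 1 \leq n$. Third, substituting $A = J - I - B$ into $v_1^T A v_1 = \lambda_1$ yields $\lambda_1 = S^2 - 1 - v_1^T B v_1$; since $v_1 \geq 0$ entrywise (Perron--Frobenius) and $B \geq 0$ entrywise, $v_1^T B v_1 \geq 0$, and hence the key inequality $S^2 \geq \lambda_1 + 1$.

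For the upper bound, the master identity together with $\sum_{j \in N_{\overline{G}}(i)} v_1(j) \geq 0$ gives $v_1(i) \leq S/(\lambda_1 + 1) \leq \sqrt{n}/(\lambda_1 + 1) \leq \sqrt{n}/(n - p(n-1))$, where the middle step is Cauchy--Schwarz. Rewriting this last expression as $\frac{1}{\sqrt{n}} \cdot \frac{1}{1 - p(1 - 1/n)}$ and applying the elementary bound $1/(1 - x) \leq 1 + 2x$ (valid since $p \leq 1/10$) yields $v_1(i) \leq (1 + 2p)/\sqrt{n}$, which is stronger than the claimed $(1 + 2p + 2/n)/\sqrt{n}$. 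The same argument shows $M \leq S/(\lambda_1 + 1)$.

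For the lower bound, the master identity combined with the control on $M$ gives
\[
v_1(i) \;\geq\; \frac{S - \bar{\Delta}M}{\lambda_1 + 1} \;\geq\; \frac{S}{\lambda_1 + 1}\left(1 - \frac{\bar{\Delta}}{\lambda_1 + 1}\right).
\]
Using $S \geq \sqrt{\lambda_1 + 1}$ and $\lambda_1 + 1 \leq n$, the prefactor satisfies $S/(\lambda_1 + 1) \geq 1/\sqrt{\lambda_1 + 1} \geq 1/\sqrt{n}$. Using $\lambda_1 + 1 \geq n - \bar{\Delta}$, the correction factor is at least $1 - \bar{\Delta}/(n - \bar{\Delta})$; for $\bar{\Delta} \leq n/3$ this is $\geq 1 - 3\bar{\Delta}/(2n) \geq 1 - 3\bar{\Delta}/n$, yielding the claim, while for $\bar{\Delta} > n/3$ the claimed lower bound is already non-positive and therefore trivial. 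The main subtlety is the inequality $S^2 \geq \lambda_1 + 1$: this ties $S$ to $\lambda_1$ and is what prevents $S$ from being appreciably smaller than $\sqrt{n}$ in the nearly-complete regime; without it, the lower bound would degrade by a factor depending on $p$.
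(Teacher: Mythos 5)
Your proof is correct; I checked each step, including the edge cases (the trivial range $\bar\Delta > n/3$, and the requirement that the multiplied lower bounds be non-negative in the range $\bar\Delta \le n/3$). However, your route is genuinely different from the paper's, and I think it is worth noting the contrast.

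The paper's proof leans on its preceding Lemma, which bounds $\|v_1\|_\infty \le \sqrt{n}/\lambda_1$ via the arithmetic--quadratic mean, then works directly with $\lambda_1 v_1(i) = \sum_{k\sim i} v_1(k)$: the upper bound is just the $\ell_\infty$ estimate combined with $\lambda_1 \ge d$, and for the lower bound it derives $S := \mathbf{1}^T v_1 \ge \lambda_1/\sqrt{n}$ from $1 = \|v_1\|_2^2 \le \|v_1\|_\infty S$ and then subtracts the contribution of the $\le\bar\Delta+1$ non-neighbours (including $i$ itself) at $\|v_1\|_\infty$ apiece. You instead pass through the identity $A = J - I - B$, which shifts the equation to $(\lambda_1+1)v_1(i) = S - \sum_{j\in N_{\overline{G}}(i)} v_1(j)$ and makes $v_1(i)$ a clean average of $S$ over a "defect" of size $\le\bar\Delta$, with no off-by-one from the diagonal. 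The key move you use in place of the $\ell_\infty$ lemma is the Rayleigh-quotient identity $\lambda_1 = S^2 - 1 - v_1^T B v_1 \le S^2 - 1$, which gives $S \ge \sqrt{\lambda_1+1}$ directly and ties $S$ to $\lambda_1$ without ever mentioning $\|v_1\|_\infty$; Cauchy--Schwarz then handles the upper bound $S\le\sqrt{n}$. Both proofs exploit the same underlying fact (near-completeness forces $v_1$ to be nearly constant), but your version is somewhat more self-contained, avoids an extra lemma, and incidentally gives the slightly sharper upper bound $(1+2p)/\sqrt{n}$. The paper's version has the advantage that the $\ell_\infty$ bound it uses is needed elsewhere anyway, so it reuses machinery; yours is the more economical argument if this lemma were standalone.
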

\begin{proof}
    Let $d=d(G)=(1-p)(n-1)$ be the average degree of $G$, and recall that $\lambda_1\geq d$. 
    By \Cref{lemma:smooth_eigenvectors},
    $$v_1(i)\leq \frac{\sqrt{n}}{\lambda_1}\leq \frac{\sqrt{n}}{d}=\frac{\sqrt{n}}{(1-p)(n-1)}\leq \frac{1+2p+2/n}{\sqrt{n}}.$$
    In the last inequality, we used that $p<1/10$ and $n > 10$.
    To prove the lower bound, we may assume that $\Bar{\Delta} \ge 1$.
    Observe that 
    $$1=\sum_{i=1}^n v_1(k)^2\leq \|v_1\|_{\infty} \sum_{k=1}^n v_1(k)\le \frac{\sqrt{n}}{|\lambda_1|}\sum_{k=1}^n v_1(k),$$
    which implies that $\sum_{k=1}^n v_1(k)\geq \frac{\lambda_1}{\sqrt{n}}$. 
    Then, using the identity $Av_1=\lambda_1 v_1$,
    $$  \lambda_1v_1(i)
        =\sum_{k\sim i}v_1(k)
        \geq \sum_{k=1}^nv_1(k)- (\Bar{\Delta}+1) \|v_1\|_{\infty}
        \geq\frac{\lambda_1}{\sqrt{n}}-\frac{(\Bar{\Delta}+1)\sqrt{n}}{\lambda_1}
        = \frac{\lambda_1}{\sqrt{n}}\Big(1-\frac{(\Bar{\Delta}+1)n}{\lambda_1^2}\Big)\!\!
        \geq \frac{\lambda_1}{\sqrt{n}}\Big(1-\frac{3\Bar{\Delta}}{n}\Big),$$
    where we used $\lambda_1^2\geq d^2\geq (9/10)^2(n-1)^2\ge (2/3)n^2$ in the end. 
    Cancelling $\lambda_1$ gives $v_1(i)\geq \frac{1-3\Bar{\Delta}/n}{\sqrt{n}}$.
\end{proof}

\begin{proof}[Proof of \Cref{lemma:surp_star}.]
We begin by showing the inequalities \textit{(i)} and \textit{(iii)}, which we then combine to derive \textit{(ii)}. 
Let $v_1,\dots,v_n$ be an orthonormal basis of eigenvectors corresponding to the eigenvalues $\lambda_1,\dots,\lambda_n$; so $A = \sum_{i=1}^n \lambda_i v_i v_i^T$.
The inequalities \textit{(i)} and \textit{(iii)} will be shown by plugging in the appropriate test matrix $X$ in the formula $\surp^*(G)= \max_X -\langle A, X\rangle$. Observe that, if we choose $X=\sum_{i=1}^n\alpha_i v_iv_i^T$ for some real numbers $\alpha_1,\dots,\alpha_n$, then
$$\langle A,X\rangle=\sum_{i=1}^n\sum_{j=1}^n\alpha_i\lambda_j \langle v_iv_i^T,v_jv_j^T\rangle=\sum_{i=1}^n\sum_{j=1}^n\alpha_i\lambda_j \langle v_i,v_j\rangle^2=\sum_{i=1}^n\alpha_i\lambda_i.$$

\textit{(i)} Let $X=\sum_{\lambda_i<0} v_iv_i^T$. Then $X$ is positive semidefinite, and as $v_1,\dots,v_n$ is an orthonormal basis,  $$X_{j,j}=\sum_{\lambda_i<0}v_i(j)^2\leq\sum_{i=1}^n v_i(j)^2=\sum_{i=1}^n \langle v_i, \mathbf{e}_j\rangle^2= \|\mathbf{e}_j\|^2=1.$$ Therefore, \[\surp^*(G)\geq -\langle A,X\rangle=\sum_{\lambda_i<0}|\lambda_{i}|.\]

\textit{(iii)} Let $\beta=\frac{1}{120(\Delta^*+1)}$, and $X=\beta\sum_{\lambda_i<0} \lambda_i^2v_iv_i^T$. Then $X$ is positive semidefinite. It is enough to prove that the diagonal entries of $X$ are bounded by $1$, as then $\surp^*(G)\geq -\langle A,X\rangle =\beta\sum_{\lambda_i<0}|\lambda_{i}|^3$. 

First, consider the (easier) case $\Delta^*=\Delta(G)$.
Observe that $$\beta A^2-X=\beta\sum_{\lambda_i>0}\lambda_i^2v_iv_i^T$$ is positive semidefinite, so the diagonal entries of $X$ are dominated by those of $\beta A^2$, which are simply the degrees in $G$.
So, $X_{i,i} \le \beta(A^2)_{i,i}\leq \beta \Delta\leq 1$, as claimed.

Next, consider the case $\Delta^*=\Delta(\overline{G})$. 
We may assume that the edge density of $\overline{G}$ is less than $1/10$, otherwise $\Delta^*=\Omega(n)$, and the previous case implies $\surp^*(G)=\Omega\big(\frac{1}{n}\sum_{\lambda_i<0}|\lambda_i|^3\big)$. 
To show that  $X_{i,i}\leq 1$, we analyse the matrix $B=A-\lambda_1v_1v_1^T$.
Since $$\beta B^2-X=\beta\sum_{i\neq 1,\lambda_i>0}\lambda_i^2v_iv_i^T,$$ we have that $\beta B^2-X$ is positive semidefinite.
This means $X_{i,i} \le (\beta B^2)_{i,i}$ for every $i\in [n]$. Therefore, it is enough to show that $(B^2)_{i,i}\leq 1/\beta=120(\Delta^*+1)$.

To this end, we first bound the entries of $B$. We denote by $p$ the density of $\overline{G}$, and observe that $p\leq \frac{\Delta^* n/2}{\binom{n}{2}}= \frac{\Delta^*}{n-1}$. 
Then, \Cref{lemma:max_entry} implies that for any $i, j\in [n]$ we have 
\[ 1-\frac{7(\Delta^* + 1)}{n}
    \leq (n-1)(1-p)\left(\frac{1-3\Delta^*/n}{\sqrt{n}}\right)^2
    \leq \lambda_1 v_1(i)v_1(j)\leq n\left(\frac{1+2p+2/n}{\sqrt{n}}\right)^2
    \leq 1+\frac{7(\Delta^*+1)}{n}.\]
Therefore, for every $i, j\in [n]$, if $ij\in E(G)$ and $A_{i, j}=1$, then $|B_{i, j}|\leq \frac{7(\Delta^*+1)}{n}$. 
Otherwise, we have $|B_{i,j}|\leq 1+7(\Delta^*+1)/n\leq 8$. 
From this, we have 
$$(B^2)_{i,i}=\sum_{j=1}^n (B_{i,j})^2\leq 64(\Delta^*+1)+n\frac{49(\Delta^*+1)^2}{n^2}\leq 120(\Delta^*+1).$$
\textit{(ii)} We show that \textit{(i)} and \textit{(iii)} can be combined to give the desired lower bound on $\surp^*(G)$. Namely, we have
\[\surp^*(G)^2\geq \beta\left(\sum_{\lambda_i<0}|\lambda_i|^3\right)\left(\sum_{\lambda_i<0}|\lambda_i|\right)\geq \beta\left(\sum_{\lambda_i<0}\lambda_i^2\right)^2.\]
Note that the first inequality is the combination of \textit{(i)} and \textit{(iii)}, while the second one is simply the Cauchy--Schwartz inequality applied to the sequences $(|\lambda_i|^3)_{\lambda_i<0}$ and $(|\lambda_i|)_{\lambda_i<0}$. Taking square roots then proves \textit{(ii)}.
\end{proof}

Finally, we remark two simple, but important properties of $\surp^*(\cdot)$, that will be used repeatedly.

\begin{claim}
If $G'$ is an induced subgraph of $G$, then $\surp^*(G')\leq \surp^*(G)$.
\end{claim}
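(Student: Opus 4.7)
The plan is to use the variational definition of $\surp^*$ directly and show that any feasible solution for $G'$ lifts to a feasible solution for $G$ with the same objective value. Let $G'$ be an induced subgraph of $G$ on vertex set $U \subseteq V(G)$, let $A$ and $A'$ denote the adjacency matrices of $G$ and $G'$ respectively, and note that $A'$ is precisely the principal submatrix of $A$ indexed by $U$.

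Take any PSD matrix $X' \in \mathbb{R}^{U \times U}$ with $X'_{i,i}\le 1$ for all $i \in U$, achieving the optimum $-\langle A', X'\rangle = \surp^*(G')$. Extend $X'$ to an $n\times n$ matrix $X \in \mathbb{R}^{V(G)\times V(G)}$ by setting $X_{i,j} = X'_{i,j}$ for $i,j \in U$ and $X_{i,j} = 0$ otherwise. Then $X$ is positive semidefinite, since for any $y \in \mathbb{R}^{V(G)}$ the quadratic form $y^T X y$ equals $(y|_U)^T X' (y|_U) \ge 0$, where $y|_U$ denotes the restriction of $y$ to $U$. Moreover $X_{i,i} \le 1$ for every $i \in V(G)$ (equal to $X'_{i,i} \le 1$ if $i \in U$ and equal to $0$ otherwise), so $X$ is a feasible matrix in the definition of $\surp^*(G)$.

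Finally, since the entries of $X$ outside the block $U \times U$ vanish, we have
\[
\langle A, X\rangle \;=\; \sum_{i,j \in V(G)} A_{i,j} X_{i,j} \;=\; \sum_{i,j \in U} A'_{i,j} X'_{i,j} \;=\; \langle A', X'\rangle.
\]
Therefore $\surp^*(G) \ge -\langle A, X\rangle = -\langle A', X'\rangle = \surp^*(G')$, which is the claim. There is no real obstacle here: the argument is just the standard padding-by-zeros trick, relying on the fact that both positive semidefiniteness and the diagonal bound are preserved by zero extension, and that $\langle A,X\rangle$ depends only on the $U\times U$ block of $X$.
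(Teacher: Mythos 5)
Your proof is correct and is essentially the paper's proof: extend the optimal feasible $X'$ for $G'$ by zero-padding to a feasible $X$ for $G$, observe the inner product is unchanged, and conclude $\surp^*(G)\ge -\langle A,X\rangle = -\langle A',X'\rangle = \surp^*(G')$. You spell out the feasibility checks (positive semidefiniteness and the diagonal bound for the zero-extended matrix) that the paper leaves implicit, but the argument is the same.
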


\begin{proof}
    Write $A'$ for the adjacency matrix of $G'$, and let $X'\in \mathbb{R}^{V(G')\times V(G')}$ be a matrix such that $X'$ is positive semidefinite, $X'_{i,i}\leq 1$ for every $i\in V(G')$, and $\surp^*(G')=-\langle A',X'\rangle$. 
    Then, write $A$ for the adjacency matrix of $G$ and let $X\in \mathbb{R}^{V(G)\times V(G)}$ be the matrix that agrees with $X'$ on every entry $(x,y)\in V(G')\times V(G')$, and zero everywhere else. Then \[\surp^*(G)\geq -\langle A,X\rangle=-\langle A',X'\rangle=\surp^*(G').\hfill\qedhere\]
\end{proof}

\begin{claim}\label{claim: surp* lambdan}
If $G$ is an $n$-vertex graph with smallest eigenvalue $\lambda_n$, then $\surp^*(G)\leq |\lambda_n|n$.
\end{claim}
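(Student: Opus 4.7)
The plan is to use the semidefinite characterization $\surp^*(G)=\max_X -\langle A,X\rangle$, where $A$ is the adjacency matrix of $G$ and the maximum is taken over PSD matrices $X$ with $X_{i,i}\leq 1$. The key observation is that the matrix $A-\lambda_n I$ has all eigenvalues at least $0$, so it is positive semidefinite. Since the inner product (in the Frobenius sense) of two PSD matrices is nonnegative, for any feasible $X$ in the definition of $\surp^*(G)$ we have $\langle A-\lambda_n I, X\rangle \geq 0$, which rearranges to $\langle A, X\rangle \geq \lambda_n \trace(X)$.

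From here the rest is immediate. We may assume $\lambda_n \leq 0$, since otherwise $A$ itself is PSD, forcing $\langle A, X\rangle\geq 0$ for every feasible $X$, hence $\surp^*(G)=0$ and the claim is trivial. Assuming $\lambda_n\leq 0$, the previous inequality gives
\[
-\langle A, X\rangle \leq -\lambda_n \trace(X) = |\lambda_n|\, \trace(X) \leq |\lambda_n|\, n,
\]
where the final step uses $X_{i,i}\leq 1$ for every $i\in[n]$. Taking the maximum over admissible $X$ yields $\surp^*(G)\leq |\lambda_n|n$, as desired.

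There is no real obstacle here; the bound is a one-line consequence of the PSD shift $A-\lambda_n I\succeq 0$ combined with the trace bound $\trace(X)\leq n$. It is worth remarking that this proof gives a slightly stronger inequality $\surp^*(G)\leq |\lambda_n|\,\trace(X^*)$ for the optimizer $X^*$, which is the SDP analogue of the bound $\surp(G)\leq |\lambda_n|n/4$ from Claim~\ref{claim:surplus_and_lambdan} (the factor of $4$ there comes from the $\pm 1$-valued constraint on the cut vector, which is not present in the relaxation).
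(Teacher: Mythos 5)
Your proof is correct and is essentially the same argument as the paper's: the paper expands $A$ in its spectral decomposition and bounds each term using $\lambda_i - \lambda_n \ge 0$ together with $\langle v_i v_i^T, X\rangle \ge 0$, which is term-by-term exactly your observation that $A-\lambda_n I\succeq 0$ and that the Frobenius inner product of two PSD matrices is nonnegative. The packaging differs slightly (PSD shift plus PSD--PSD nonnegativity, vs.\ an explicit sum over eigenvectors), but the underlying computation is identical.
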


\begin{proof}
Let $X\in \mathbb{R}^{n\times n}$ be a positive semidefinite matrix such that $X_{i,i}\leq 1$ for every $i\in [n]$. Let $A=\sum_{i=1}^n\lambda_iv_iv_i^T$ be the spectral decomposition of $A$, then 
$$ -\langle A,X\rangle 
    =-\sum_{i=1}^n \lambda_i\langle v_iv_i^T,X\rangle
    \leq \sum_{i=1}^n |\lambda_n| \langle v_iv_i^T,X\rangle=|\lambda_n|\langle I,X\rangle\leq |\lambda_n|n.$$
In the first inequality, we used that $\langle v_iv_i^T,X\rangle=v_i^T X v_i \ge 0$ as $X$ is positive semidefinite.
\end{proof}

\section{Main lemmas}\label{sect:main_lemmas}

An important component of the proofs of our main results is the notion and properties of the \emph{subspace compression} of matrices. This is a special instance of the compression of linear operators, see the book of Halmos \cite{Halmos} as a general reference.

\medskip

\noindent
\textbf{$W$-compression and $W$-trace.} Let $W<\mathbb{R}^n$ be a subspace. 
We denote by $\Pi_{W}$ the orthogonal projection matrix onto $W$.  
It is easy to check that $\Pi_W$ is symmetric.
Given an $n\times n$ symmetric matrix $M$, the \emph{$W$-compression} of $M$ is the symmetric matrix $$M_W:=\Pi_{W}M\Pi_{W}.$$ Furthermore, the $W$-trace of $M$ is
$$\trace_W(M):=\trace(M_W).$$
Clearly, $\trace_W$ is a linear functional. Observe that if $M=uu^T$, then $M_W=(\Pi_Wu)(\Pi_Wu)^T$ and thus 
$$\trace_W(u u^T)=\|\Pi_Wu\|_2^2.$$ Finally, given an orthonormal basis $w_1,\dots,w_d$ of $W$, the $W$-trace can be calculated as 
$$\trace_W(M)=\sum_{i=1}^d w_i^TMw_i.$$
From this equality, it also follows that $\trace_W(I)=\dim(W)$. 
We present an upper bound on the $W$-trace that will be used later.

\begin{lemma}\label{lemma:W-trace_bound}
$|\trace_W(M)|\leq \dim(W)^{1/2}\|M\|_F.$
\end{lemma}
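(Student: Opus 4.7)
The plan is to rewrite $\trace_W(M)$ as a Frobenius inner product against $\Pi_W$ and then apply the Cauchy--Schwarz inequality. First, I would use the cyclic property of the trace together with the fact that $\Pi_W$ is symmetric and idempotent ($\Pi_W^2 = \Pi_W$) to observe that
\[\trace_W(M) = \trace(\Pi_W M \Pi_W) = \trace(\Pi_W^2 M) = \trace(\Pi_W M) = \langle \Pi_W, M\rangle.\]
(Equivalently, picking an orthonormal basis $w_1,\dots,w_d$ of $W$ and using the identity $\trace_W(M) = \sum_i w_i^T M w_i = \sum_i \langle w_iw_i^T, M\rangle$ with $\Pi_W = \sum_i w_iw_i^T$ gives the same conclusion.)

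Next, I would apply the Cauchy--Schwarz inequality with respect to the Frobenius inner product to obtain
\[|\trace_W(M)| = |\langle \Pi_W, M\rangle| \leq \|\Pi_W\|_F \cdot \|M\|_F.\]
Finally, I would compute $\|\Pi_W\|_F$ directly: since $\Pi_W$ is symmetric and idempotent,
\[\|\Pi_W\|_F^2 = \trace(\Pi_W^T \Pi_W) = \trace(\Pi_W^2) = \trace(\Pi_W) = \dim(W),\]
using the earlier identity $\trace_W(I) = \dim(W)$ (or equivalently, that the nonzero eigenvalues of $\Pi_W$ are all $1$ with multiplicity $\dim(W)$). Combining these two bounds yields the claimed inequality.

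There is no real obstacle here; the only subtlety worth flagging is the reduction from a double compression $\Pi_W M \Pi_W$ to a single-sided product $\Pi_W M$ inside the trace, which is what allows Cauchy--Schwarz to be applied cleanly in one shot.
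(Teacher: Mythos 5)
Your proof is correct, and it takes a genuinely cleaner route than the paper's. The paper expands $M=\sum_i \mu_i v_iv_i^T$ spectrally, writes $\trace_W(M)=\sum_i\mu_i\|\Pi_Wv_i\|_2^2$, then upper-bounds this by first using the crude estimate $\|\Pi_Wv_i\|_2^2\leq \|\Pi_Wv_i\|_2$ (since $\Pi_W$ is a contraction) and then applying Cauchy--Schwarz to the sequences $(|\mu_i|)_i$ and $(\|\Pi_Wv_i\|_2)_i$, using $\sum_i\|\Pi_Wv_i\|_2^2=\trace(\Pi_W)=\dim(W)$. You instead bypass the spectral decomposition of $M$ entirely: the idempotence of $\Pi_W$ together with cyclicity of the trace collapses $\trace(\Pi_W M\Pi_W)$ to the single Frobenius inner product $\langle\Pi_W,M\rangle$, and a single application of matrix Cauchy--Schwarz finishes the job once you compute $\|\Pi_W\|_F^2=\trace(\Pi_W)=\dim(W)$. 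Your version is shorter and avoids the somewhat lossy intermediate step; the paper's version has the minor expository advantage of making the eigenvalues of $M$ visible, which matches the spectral flavor of the surrounding arguments, but the two routes land on exactly the same bound.
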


\begin{proof}
Let $M=\sum_{i=1}^n\mu_i v_iv_i^T$ be the spectral decomposition of $M$. Then 
\begin{align*}
|\trace_W(M)|&=\left|\sum_{i=1}^n\mu_i \trace_W(v_iv_i^T)\right|=\left|\sum_{i=1}^n\mu_i \|\Pi_W v_i\|_2^2\right|\leq \sum_{i=1}^n|\mu_i|\cdot \|\Pi_W v_i\|_2\\
&\leq \left(\sum_{i=1}^n \mu_i^2\right)^{1/2}\cdot \left(\sum_{i=1}^n \|\Pi_W v_i\|_2^2\right)^{1/2}= \|M\|_F\dim(W)^{1/2}.
\end{align*}
Here, the first inequality uses that $\|\Pi_W v_i\|\leq 1$ for every $i\in [n]$, and the second inequality is due to the Cauchy--Schwartz inequality.
\end{proof}

\bigskip

The importance of the $W$-compression and $W$-trace is that it allows us to analyse the contribution of the large eigenvalues of a matrix, by choosing an appropriate subspace $W$. Given a graph $G$ with adjacency matrix $A$, eigenvalues $\lambda_1\geq \dots\geq \lambda_n$ and a positive real number $T$, we write 
$$S_T(G)=\sum_{i: \lambda_i\geq T}\lambda_i.$$
If the graph $G$ is clear from the context, we simply write $S_T$ instead of $S_T(G)$. Furthermore, let $N_T=N_T(G)$ denote the number of eigenvalues at least $T$. We will use repeatedly  that 
$$N_T\leq \frac{S_T}{T}.$$
The next lemma gives a simple upper bound on the trace of the $W$-compression of $A$.

\begin{lemma}\label{lemma:trace_compression}
Let $G$ be an $n$-vertex graph with adjacency matrix $A$ and let $W<\mathbb{R}^n$. Then for every $K>0$,
$$\trace_W(A)\leq S_K+K\dim(W).$$
\end{lemma}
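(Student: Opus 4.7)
The plan is to expand $\trace_W(A)$ via the spectral decomposition of $A$, split the resulting sum at the threshold $K$, and bound each part separately. Using $A=\sum_{i=1}^n \lambda_i v_iv_i^T$ together with the identity $\trace_W(uu^T)=\|\Pi_W u\|_2^2$ recalled just before the lemma, the first step is to rewrite
\[
\trace_W(A)=\sum_{i=1}^n \lambda_i\,\|\Pi_W v_i\|_2^2.
\]
Two basic facts will carry the argument: (a) $0\leq \|\Pi_W v_i\|_2^2\leq 1$, since each $v_i$ is a unit vector and $\Pi_W$ is an orthogonal projection; and (b) $\sum_{i=1}^n \|\Pi_W v_i\|_2^2=\trace_W(I)=\dim(W)$, which follows from $\sum_i v_iv_i^T=I$ (the $v_i$ form an orthonormal basis) and linearity of $\trace_W$.

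Next, I would partition the indices according to whether $\lambda_i\geq K$ or $\lambda_i<K$. On the high-eigenvalue side, each term $\lambda_i\,\|\Pi_W v_i\|_2^2$ is nonnegative and, by (a), bounded by $\lambda_i$, so this piece contributes at most $\sum_{i:\lambda_i\geq K}\lambda_i=S_K$. On the low-eigenvalue side, the bound $\lambda_i\leq K$ combined with $\|\Pi_W v_i\|_2^2\geq 0$ and $K>0$ yields $\lambda_i\,\|\Pi_W v_i\|_2^2\leq K\,\|\Pi_W v_i\|_2^2$; summing and using (b) bounds this piece by $K\dim(W)$. Adding the two estimates gives the claimed inequality $\trace_W(A)\leq S_K+K\dim(W)$.

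I do not anticipate a real obstacle. The only point that requires a moment of care is that many of the $\lambda_i$ with $\lambda_i<K$ may be negative; but since $K>0$, the termwise bound $\lambda_i\,\|\Pi_W v_i\|_2^2\leq K\,\|\Pi_W v_i\|_2^2$ continues to hold on those indices, and in fact the negative-eigenvalue contributions are non-positive and hence only make the left-hand side smaller. A slightly slicker packaging of the same idea is to write $A=A_++A_-$ with $A_+:=\sum_{\lambda_i\geq K}\lambda_i v_iv_i^T$ and observe that $A_+\succeq 0$ gives $\trace_W(A_+)\leq \trace(A_+)=S_K$, while $KI-A_-\succeq 0$ gives $\trace_W(A_-)\leq K\dim(W)$; either presentation works.
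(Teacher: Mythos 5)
Your proof is correct and matches the paper's essentially line for line: expand $\trace_W(A)=\sum_i \lambda_i\|\Pi_W v_i\|_2^2$, bound the indices with $\lambda_i\geq K$ by $S_K$ using $\|\Pi_W v_i\|_2^2\leq 1$, and bound the remaining (possibly negative) terms by $K\sum_i\|\Pi_W v_i\|_2^2=K\dim(W)$. The alternative $A_+,A_-$ packaging you mention is a valid restatement of the same split, not a different argument.
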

\begin{proof}
We have
\[\trace_W(A)=\sum_{i=1}^n \lambda_i \|\Pi_W v_i\|_2^2\leq \sum_{\lambda_i\geq K}\lambda_i+K\sum_{i=1}^n \|\Pi_W v_i\|_2^2=S_K+K\dim(W).\hfill\qedhere\]
\end{proof}

\subsection{Main lemma -- smallest eigenvalue version}\label{sect:main_se}

The following lemma is the heart of our argument. It shows a curious recursive relation between the sums of the largest eigenvalues, under the assumption that there are no very negative eigenvalues in the graph. Later, in \Cref{sect:recursion}, we show how to use this relation to conclude that the quadratic sum of all but the large eigenvalues are negligible. This is, in turn, equivalent to saying that the adjacency matrix is well approximated in the Frobenius norm by the part of the spectral decomposition corresponding to the large eigenvalues, which we will discuss in more detail in \Cref{sect:dens2}.

\begin{lemma}\label{lemma:main_smallest_eigenalue}
Let $G$ be an $n$-vertex graph. If $T\geq 2|\lambda_n|\sqrt{n}$, then
\begin{equation}\label{eqn:S_T bound}
    4nS_{\frac{T^2}{2n}}\geq S_T^2.
\end{equation}
\end{lemma}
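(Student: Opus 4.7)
The plan is to exploit the Hadamard self-identity $A=A\circ A$ (valid since $A$ is a zero-diagonal $0/1$ matrix) via a subspace compression. Starting from the spectral decomposition $A=\sum_i\lambda_iv_iv_i^T$, this identity yields
\[
A \;=\; \sum_{i,j}\lambda_i\lambda_j(v_i\circ v_j)(v_i\circ v_j)^T.
\]
I will set $I^+=\{i:\lambda_i\geq T\}$ and choose $W:=\spn\{v_i\circ v_j:i,j\in I^+\}\subseteq\mathbb{R}^n$. Then $\dim W\leq N_T^2\leq S_T^2/T^2$, and every ``good'' pair $i,j\in I^+$ automatically satisfies $v_i\circ v_j\in W$, so its $W$-trace contribution equals $\|v_i\circ v_j\|_2^2$ exactly.

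Next, I will compute the $W$-trace on both sides. For the left-hand side, \Cref{lemma:trace_compression} with $K=T^2/(2n)$ gives
\[
\trace_W(A)\;\leq\;S_{T^2/(2n)}+\frac{T^2}{2n}\cdot\dim W\;\leq\;S_{T^2/(2n)}+\frac{S_T^2}{2n}.
\]
For the right-hand side, the good pairs collectively contribute $\sum_{i,j\in I^+}\lambda_i\lambda_j\|v_i\circ v_j\|^2 = \sum_k B_{kk}^2 \geq S_T^2/n$ (by Cauchy--Schwarz), where $B:=\sum_{\lambda_i\geq T}\lambda_iv_iv_i^T$ satisfies $\trace(B)=S_T$. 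Bad pairs with $\lambda_i\lambda_j\geq 0$ contribute nonnegatively, while bad pairs with $\lambda_i\lambda_j<0$ contribute at least $-2\trace(P\circ M)$ (using $\|\Pi_W(v_i\circ v_j)\|^2\leq\|v_i\circ v_j\|^2$), where $P,M$ are the positive and negative spectral parts of $A$. The key algebraic fact is that $A_{kk}=0$ forces $P_{kk}=M_{kk}$, whence
\[
\trace(P\circ M)\;=\;\sum_k P_{kk}^2\;\leq\;|\lambda_n|\cdot\trace(P)\;\leq\;n|\lambda_n|^2\;\leq\;T^2/4,
\]
invoking the hypothesis $T\geq 2|\lambda_n|\sqrt{n}$. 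Hence $\trace_W(A\circ A)\geq S_T^2/n - T^2/2$.

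Matching the two sides yields $S_{T^2/(2n)}\geq S_T^2/(2n) - T^2/2$. To reach the target $4nS_{T^2/(2n)}\geq S_T^2$, I will pair this compression bound with the trivial monotonicity $S_{T^2/(2n)}\geq S_T$ (valid since $T^2/(2n)\leq T$) via a short case analysis based on whether $S_T$ is small or large compared to $n$.

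The main obstacle will be controlling the bad pairs with opposite-sign eigenvalues. The identity $P_{kk}=M_{kk}$, a direct consequence of the zero diagonal of $A$, is the essential ingredient that converts the hypothesis $|\lambda_n|\leq T/(2\sqrt{n})$ into an $O(T^2)$ error term; without it one would lose an additional factor of the spectral radius and the recursion would not close. The case analysis in the final step is also delicate, since both the trivial bound and the compression bound individually fall short in a narrow intermediate regime, and additional spectral constraints (such as $\lambda_1\geq 2m/n$ together with $\sum_i\lambda_i=0$) appear to be needed to rule this regime out.
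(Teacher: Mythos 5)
Your overall strategy --- compress the identity $A=A\circ A$ onto $W=\spn\{v_i\circ v_j:\lambda_i,\lambda_j\ge T\}$, use $\dim W\le S_T^2/T^2$, and compare against \Cref{lemma:trace_compression} --- is exactly the paper's, and your treatment of the ``good'' pairs via Cauchy--Schwarz reproduces \Cref{claim:hadamard_sum}. But the handling of the opposite-sign cross terms has a genuine gap, and the final case analysis cannot close it.

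When you bound the opposite-sign pairs by dropping the projection, your error term becomes
\[
2\,\trace(P\circ M)\;=\;2\sum_k P_{kk}M_{kk}\;\le\;2n|\lambda_n|^2\;\le\;T^2/2,
\]
and this quantity is \emph{absolute} --- it does not scale with $\dim W$. That is fatal: under the hypothesis $T\ge 2|\lambda_n|\sqrt n$, we have $S_T\le\sum_{\lambda_i>0}\lambda_i=\sum_{\lambda_i<0}|\lambda_i|\le n|\lambda_n|\le T\sqrt n/2$, hence $S_T^2/n\le T^2/4<T^2/2$. So your lower bound $\trace_W(A\circ A)\ge S_T^2/n-T^2/2$ is always negative, and the compression step delivers nothing. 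Consequently the compression branch of your case analysis is \emph{vacuous}, and you would need the trivial branch $S_{T^2/(2n)}\ge S_T$ to prove $S_T\le 4n$ in all cases --- which is false in general (for instance, graphs with $|\lambda_n|\approx n^{1/4}$ can have $S_T$ as large as $n^{5/4}$ while $T\ge 2n^{3/4}$). You cannot patch this with $\lambda_1\ge 2m/n$ or $\sum_i\lambda_i=0$; those facts are already folded into the estimate $S_T\le n|\lambda_n|$ and do not exclude the intermediate regime.

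The key device you are missing is the positive-semidefinite shift: the paper writes
\[
A\circ A\;=\;(A+|\lambda_n|I)^{\circ 2}\;-\;\lambda_n^2 I,
\]
where $A+|\lambda_n|I\succeq 0$. Expanding the Hadamard square then produces \emph{only nonnegative coefficients} $(\lambda_i+|\lambda_n|)(\lambda_j+|\lambda_n|)\ge 0$, so every cross term can be discarded for a lower bound, and the \emph{entire} error is the diagonal correction $-\lambda_n^2I$, whose $W$-trace is exactly $-\lambda_n^2\dim W$. Since $\dim W\le S_T^2/T^2$ and $\lambda_n^2\le T^2/(4n)$, this error is $\le S_T^2/(4n)$, i.e.\ a small fraction of the main term $S_T^2/n$, and the recursion closes cleanly without any case analysis. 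Your observation $P_{kk}=M_{kk}$ is correct but insufficient precisely because it only yields the absolute bound $T^2/2$, not one proportional to $\dim W$.
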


\noindent
The proof this lemma is prepared by a technical result, which will be used later as well.

\begin{claim}\label{claim:hadamard_sum}
Let $v_1,\dots,v_n$ be an orthonormal basis of eigenvectors of $A$ corresponding to the eigenvalues $\lambda_1\geq \dots\geq \lambda_n$. Then:
    $$\sum_{\lambda_i,\lambda_j\geq T}\lambda_i\lambda_j\|v_i\circ v_j\|_2^2\geq\frac{S_T^2}{n}.$$
\end{claim}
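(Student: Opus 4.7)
The plan is to expand the Hadamard-norm in coordinates, rearrange into the square of a single function, and then apply Cauchy--Schwarz (equivalently, the power-mean inequality between the $\ell_1$ and $\ell_2$ norms on $\mathbb{R}^n$). The entire proof should be a short calculation, with no real obstacle beyond correctly recognizing the Cauchy--Schwarz configuration.

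Concretely, I would first write, for every $k\in[n]$,
\[
\|v_i\circ v_j\|_2^2 \;=\; \sum_{k=1}^n v_i(k)^2 v_j(k)^2,
\]
and then swap the order of summation to obtain
\[
\sum_{\lambda_i,\lambda_j\geq T}\lambda_i\lambda_j\,\|v_i\circ v_j\|_2^2
\;=\; \sum_{k=1}^n \Bigl(\sum_{\lambda_i\geq T}\lambda_i\, v_i(k)^2\Bigr)^{\!2}.
\]
This is the key observation that turns the pairwise-in-$(i,j)$ sum into a sum of squares.

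Next, I would apply Cauchy--Schwarz in the form $\sum_{k=1}^n x_k^2 \geq \frac{1}{n}\bigl(\sum_{k=1}^n x_k\bigr)^{2}$ to the inner quantities $x_k := \sum_{\lambda_i\geq T}\lambda_i v_i(k)^2$. This gives
\[
\sum_{k=1}^n \Bigl(\sum_{\lambda_i\geq T}\lambda_i v_i(k)^2\Bigr)^{\!2}
\;\geq\; \frac{1}{n}\Bigl(\sum_{k=1}^n \sum_{\lambda_i\geq T}\lambda_i v_i(k)^2\Bigr)^{\!2}.
\]
Finally, switching the order of summation in the right-hand side and using that each $v_i$ is a unit vector (so $\sum_{k=1}^n v_i(k)^2 = 1$), the inner double sum collapses to $\sum_{\lambda_i\geq T}\lambda_i = S_T$, which yields the claimed bound $S_T^2/n$.

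There is no substantive obstacle: the only thing one must notice is that $\|v_i\circ v_j\|_2^2$ has precisely the bilinear form needed to collapse the double sum into a square (making the use of Schur/positive semidefiniteness, or the full Hadamard identity \eqref{eq: hadamard decomposition}, unnecessary for this particular step). One may alternatively present the argument as computing $\sum_k (Du)(k)^2 \geq \frac{1}{n}\langle \mathbf{1}, Du\rangle^2$ where $u(k) = 1$ and $D$ is the diagonal of the positive-semidefinite matrix $\sum_{\lambda_i\geq T} \lambda_i v_i v_i^T$ in the standard basis, but the coordinate-wise expansion above is the most direct route.
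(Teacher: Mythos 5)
Your proof is correct and is essentially identical to the paper's: the same coordinate-wise expansion of $\|v_i\circ v_j\|_2^2$, the same rearrangement into $\sum_k\bigl(\sum_{\lambda_i\ge T}\lambda_i v_i(k)^2\bigr)^2$, and the same application of the quadratic-mean/arithmetic-mean (Cauchy--Schwarz) inequality followed by $\sum_k v_i(k)^2=1$. Nothing to add.
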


\begin{proof} 
We can write
\begin{equation} \nonumber
    \sum_{\lambda_i,\lambda_j\geq T}\lambda_i\lambda_j\|v_i\circ v_j\|_2^2
    = \sum_{\lambda_i,\lambda_j\geq T}\lambda_i\lambda_j\sum_{k=1}^n v_i(k)^2 v_j(k)^2
    = \sum_{k=1}^n\left(\sum_{\lambda_i\geq T} \lambda_i v_{i}(k)^2\right)^2\geq \frac{1}{n}\left(\sum_{\lambda_i\geq T}\sum_{k=1}^n\lambda_i v_i(k)^2\right)^2,
\end{equation}
where the inequality comes from the inequality between the quadratic and arithmetic mean. Since the eigenvectors are of unit length, we have $\sum_{k=1}^n v_i(k)^2=1$ for all $i$. Hence, the right-hand-side is equal to $\frac{1}{n}S_T^2$, finishing the proof.
\end{proof}

\begin{proof}[Proof of \Cref{lemma:main_smallest_eigenalue}]
The main idea is to analyse the identity $A=A\circ A$ using the spectral decomposition of $A$. In fact, (\ref{eqn:S_T bound}) can be deduced by considering the $W$-traces of both sides of this identity for an appropriately chosen $W$. 
More precisely, in order to isolate the contribution of large eigenvalues, we define $W$ to be the subspace generated by those vectors $v_i\circ v_j$, where $\lambda_i$ and $\lambda_j$ are both at least $T$, i.e. 
$$W=\langle v_i\circ v_j : \lambda_i,\lambda_j\geq T\rangle.$$

Then, \Cref{lemma:trace_compression} applied with $K=\frac{T^2}{2n}$ implies that $\trace_W(A)\leq S_{\frac{T^2}{2n}}+\frac{T^2}{2n}\dim(W)$. On the other hand, we can lower-bound $\trace_W(A\circ A)$ as follows.

\begin{claim}
\[\trace_W(A\circ A)\geq \frac{S_T^2}{n}-\lambda_n^2\dim(W).\]
\end{claim}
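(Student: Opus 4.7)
The plan is to use a Hadamard-product version of the difference-of-squares identity, so that the Claim splits into a main term bounded via Claim~\ref{claim:hadamard_sum} and an error term that is controlled by the Schur product theorem together with the zero-diagonal structure of the adjacency matrix.

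First I would write $B:=\sum_{\lambda_i\ge T}\lambda_i v_iv_i^T$ and $C:=A-B=\sum_{\lambda_i<T}\lambda_i v_iv_i^T$. Since the Hadamard product is commutative, we obtain the identity
\[ A\circ A - B\circ B = (A-B)\circ(A+B)=C\circ(A+B). \]
It is therefore enough to show that (1) $\trace_W(B\circ B)\ge S_T^2/n$, and (2) $\trace_W(C\circ(A+B))\ge -\lambda_n^2\dim(W)$.

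Bound (1) is immediate from the definition of $W$: since $v_i\circ v_j\in W$ whenever $\lambda_i,\lambda_j\ge T$, we have $\trace_W(B\circ B)=\sum_{\lambda_i,\lambda_j\ge T}\lambda_i\lambda_j\|v_i\circ v_j\|_2^2\ge S_T^2/n$ by Claim~\ref{claim:hadamard_sum}. For bound (2), the key observation is that both $C+|\lambda_n|I$ and $(A+B)+|\lambda_n|I$ are positive semidefinite: the smallest eigenvalue of $C$ is $\lambda_n$, and $A+B=2B+C$ has smallest eigenvalue $\min(2T,\lambda_n)=\lambda_n$. By the Schur product theorem, their Hadamard product is positive semidefinite, and expanding gives
\[ 0\preceq C\circ(A+B)+|\lambda_n|\bigl(\text{diag}(C)+\text{diag}(A+B)\bigr)+\lambda_n^2 I. \]
The critical cancellation is $\text{diag}(C)+\text{diag}(A+B)=2\,\text{diag}(A)=0$, since the adjacency matrix has zero diagonal. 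Hence $C\circ(A+B)\succeq -\lambda_n^2 I$, and taking the $W$-trace of both sides yields (2).

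The main point --- and arguably the only non-routine step --- is to guess the correct pair of positive-semidefinite shifts whose Schur product makes the linear cross-terms vanish. Once this identity is spotted, the proof reduces to a one-line application of the Schur product theorem and Claim~\ref{claim:hadamard_sum}; it is essential here that $A$ has zero diagonal, so that the argument would need to be modified for graphs with loops or for weighted matrices.
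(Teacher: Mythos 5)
Your proof is correct, but it takes a genuinely different route from the one in the paper. The paper uses a single positive-semidefinite shift: it writes $A\circ A=(A+|\lambda_n|I)^{\circ 2}-\lambda_n^2 I$, expands $A+|\lambda_n|I=\sum_i(\lambda_i+|\lambda_n|)v_iv_i^T$ with all coefficients nonnegative, takes the $W$-trace term by term, drops the nonnegative terms with $\lambda_i<T$ or $\lambda_j<T$, and uses $(\lambda_i+|\lambda_n|)(\lambda_j+|\lambda_n|)\geq\lambda_i\lambda_j$ together with $\Pi_W(v_i\circ v_j)=v_i\circ v_j$ for the surviving indices; the zero-diagonal of $A$ enters exactly once, to kill the cross-term $2|\lambda_n|\,A\circ I$. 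You instead split $A=B+C$ at the threshold $T$, use the Hadamard difference-of-squares $A\circ A-B\circ B=C\circ(A+B)$, get the main term $S_T^2/n$ from $B\circ B$ directly, and bound the error term $C\circ(A+B)\succeq-\lambda_n^2 I$ by applying the Schur product theorem to $(C+|\lambda_n|I)\circ(A+B+|\lambda_n|I)$, again relying on the zero diagonal to make the linear cross-terms cancel (via $\mathrm{diag}(C)+\mathrm{diag}(A+B)=2\,\mathrm{diag}(A)=0$). Both arguments hinge on the same two ingredients --- Claim~\ref{claim:hadamard_sum} and the vanishing diagonal --- and both are sound; the paper's single-shift version is a bit leaner since it needs only one PSD matrix rather than two (your check that $A+B=2B+C$ has smallest eigenvalue $\lambda_n$ is correct, though the phrase ``$\min(2T,\lambda_n)$'' is shorthand: the positive block of $A+B$ has eigenvalues $\geq 2T$ and the remaining block has eigenvalues down to $\lambda_n$, so the overall minimum is $\lambda_n$), whereas your version makes the Schur-product mechanism explicit and isolates the error bound $C\circ(A+B)\succeq -\lambda_n^2 I$ as a clean, reusable statement.
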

\begin{proof}
We can rewrite $A\circ A$ as
\begin{align*}
A\circ A&=(A+|\lambda_n|I)\circ (A+|\lambda_n|I)-\lambda_n^2I=\left(\sum_{i=1}^n (\lambda_i+|\lambda_n|)v_iv_i^T\right)^{\circ 2}-\lambda_n^2 I\nonumber\\
&=\sum_{i,j}(\lambda_i+|\lambda_n|)(\lambda_j+|\lambda_n|)(v_i\circ v_j)(v_i\circ v_j)^T-\lambda_n^2 I.
\end{align*}
Thus, the $W$-trace of $A\circ A$ can be computed as follows
\begin{align*}
    \trace_W(A\circ A)
    &=\sum_{i,j}(\lambda_i+|\lambda_n|)(\lambda_j+|\lambda_n|)\trace_W\big((v_i\circ v_j)(v_i\circ v_j)^T\big)-\lambda_n^2 \trace_W(I)\nonumber\\
    &=\sum_{i,j}(\lambda_i+|\lambda_n|)(\lambda_j+|\lambda_n|)\big\|\Pi_W v_i\circ v_j\big\|_2^2 - \lambda_n^2\dim(W).
\end{align*}
Note that if $\lambda_i,\lambda_j\geq T$, then $v_i\circ v_j\in W$, so $\Pi_W v_i\circ v_j=v_i\circ v_j$ and 
$$(\lambda_i+|\lambda_n|)(\lambda_j+|\lambda_n|)\big\|\Pi_W v_i\circ v_j \big\|_2^2\geq \lambda_i\lambda_j \|v_i\circ v_j\|_2^2.$$ 
Also, each term in the sum is nonnegative, so if $\lambda_i<T$ or $\lambda_j<T$, we simply lower bound the contribution of $(\lambda_i+|\lambda_n|)(\lambda_j+|\lambda_n|)\|\Pi_W v_i\circ v_j\|_2^2$ by 0. 
Finally, using \Cref{claim:hadamard_sum}, we get \[\trace_W(A\circ A)\geq \sum_{\lambda_i,\lambda_j\geq T}\lambda_i\lambda_j\|v_i\circ v_j\|_2^2-\lambda_n^2 \dim(W)\geq  \frac{S_T^2}{n}-\lambda_n^2\dim(W). \hfill\qedhere\]
\end{proof}
We now complete the proof of inequality (\ref{eqn:S_T bound}). We have 
\[S_{\frac{T^2}{2n}}+\frac{T^2}{2n}\dim(W)\geq \trace_W(A)= \trace_W(A\circ A)\geq \frac{S_T^2}{n}-\lambda_n^2\dim(W).\]
Note that $W$ has at most $N_T^2$ generators, so $\dim(W)\leq N_T^2=\frac{S_T^2}{T^2}.$ Finally, $|\lambda_n|^2\leq T^2/4n$ and thus the previous inequality implies
\[S_{\frac{T^2}{2n}}\geq \frac{S_T^2}{n}-\Big(\frac{T^2}{2n}+\lambda_n^2\Big)\dim(W)\geq \frac{S_T^2}{n} -\frac{3T^2}{4n} \cdot\frac{S_T^2}{T^2}=\frac{S_T^2}{4n}. \hfill\qedhere\]
\end{proof}

\subsection{Main lemma -- MaxCut version}

In this section, we present a variant of the previous lemma for graphs with small MaxCut. 
We employ a similar strategy as in the proof of \Cref{lemma:main_smallest_eigenalue}, but, instead of writing $A=(A+|\lambda_n|I)^{\circ 2}-|\lambda_n| I$, we write $A=(A+E)\circ (A+E)-2A\circ E-E\circ E$, where $E$ is the contribution of the negative eigenvalues. Then, most of the proof comes down to showing that if the surplus is small, then $\trace_W(A\circ E)$ and $\trace_W(E\circ E)$ are also small for an appropriately chosen $W$.
For technical reasons, we prove it in terms of $\surp^*(G)$ rather than $\surp(G)$.

\begin{lemma}\label{lemma:main_MaxCut}
For every $\gamma \in (0,1/6)$, there exists a constant $C>0$ such that the following holds.
Let $G$ be an $n$-vertex graph such that $\surp^*(G)\leq n^{1+\gamma}$. 
% Let $T\geq Cn^{1-\frac{1}{24}+\frac{\gamma}{4}}$ for some sufficiently large absolute constant $C>0$.
Then, for every $T\geq Cn^{1-\frac{1}{24}+\frac{\gamma}{4}}$,
$$4nS_{\frac{T^2}{2n}}\geq S_T^2.$$
\end{lemma}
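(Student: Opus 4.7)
My plan is to mirror the proof of \Cref{lemma:main_smallest_eigenalue}, but to replace the shift by $|\lambda_n|I$ (which we no longer control) with the actual negative part of the spectrum. Write $A=\sum_i\lambda_iv_iv_i^T$ for the spectral decomposition and set
$$E \;:=\; \sum_{\lambda_i<0}|\lambda_i|\,v_iv_i^T,$$
so that both $E$ and $A+E=\sum_{\lambda_i\geq 0}\lambda_iv_iv_i^T$ are positive semidefinite. Since $A\circ A=A$, expanding $(A+E)\circ(A+E)=A\circ A+2A\circ E+E\circ E$ yields the matrix identity
$$A \;=\; (A+E)\circ(A+E)\,-\,2\,A\circ E\,-\,E\circ E.$$
I then set $W:=\spn\{v_i\circ v_j:\lambda_i,\lambda_j\geq T\}$, as in \Cref{lemma:main_smallest_eigenalue}, and take $\trace_W$ of both sides.

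For the main term, since $A+E$ has only nonnegative spectral coefficients, the argument of \Cref{claim:hadamard_sum} applies verbatim to give
$$\trace_W\!\bigl((A+E)\circ(A+E)\bigr)\;\geq\;\sum_{\lambda_i,\lambda_j\geq T}\lambda_i\lambda_j\,\|v_i\circ v_j\|_2^2\;\geq\;\frac{S_T^2}{n}.$$
For the error, I rewrite $2A\circ E+E\circ E=2(A+E)\circ E-E\circ E$ and invoke the Schur product theorem, which shows that both $(A+E)\circ E$ and $E\circ E$ are PSD. Thus $\trace_W(E\circ E)\geq 0$ may be discarded, and
$$\trace_W\!\bigl((A+E)\circ E\bigr)\;\leq\;\trace\!\bigl((A+E)\circ E\bigr)\;=\;\sum_k(A+E)_{kk}E_{kk}\;=\;\sum_kE_{kk}^2,$$
where the last step uses $A_{kk}=0$. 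Combined with $\sum_kE_{kk}^2\leq\|E\|_F^2=\sum_{\lambda_i<0}\lambda_i^2$ and \Cref{lemma:surp_star}(ii) with the trivial bound $\Delta^*\leq n$, this quantity is $O(\sqrt{n}\,\surp^*(G))=O(n^{3/2+\gamma})$. Applying \Cref{lemma:trace_compression} at $K=T^2/(2n)$ together with $\dim(W)\leq N_T^2\leq S_T^2/T^2$, I obtain
$$S_{T^2/(2n)}\;\geq\;\frac{S_T^2}{2n}\,-\,O(n^{3/2+\gamma}).$$

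The remaining step, which I expect to be the chief obstacle, is to translate this estimate into the recursion $4nS_{T^2/(2n)}\geq S_T^2$ under the hypothesis $T\geq Cn^{1-1/24+\gamma/4}$. The trivial bound $S_{T^2/(2n)}\geq S_T$ immediately handles $S_T\leq 4n$, while the displayed analytic estimate works whenever $S_T\geq c\,n^{5/4+\gamma/2}$. The intermediate regime, from which the specific exponent $23/24+\gamma/4$ ultimately arises, should be handled by combining the alternative bound $\dim(W)\leq n$ (which beats $S_T^2/T^2$ as soon as $S_T\geq T\sqrt{n}$) with a finer control of $\sum_kE_{kk}^2$ coming from \Cref{lemma:surp_star}(iii); the precise balance of $T$, $S_T$, $\dim(W)$, and $\sum_kE_{kk}^2$ under this threshold is the delicate case analysis I would carry out to close the argument.
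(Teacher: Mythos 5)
Your decomposition $A=(A+E)\circ(A+E)-2(A+E)\circ E+E\circ E$ and the observation that $(A+E)\circ E$ is PSD by Schur, so that $\trace_W((A+E)\circ E)\le\trace((A+E)\circ E)=\sum_kE_{kk}^2$, is a clean idea, but it discards too much: it produces an error bound $O(\sqrt{n}\,\surp^*(G))=O(n^{3/2+\gamma})$ that is an absolute constant and does not scale with $S_T^2$ (or equivalently with $\dim W$). After subtracting $K\dim W\le S_T^2/(2n)$ you land at $S_{T^2/(2n)}\ge\frac{S_T^2}{2n}-O(n^{3/2+\gamma})$, and the two cases you list do not cover the range $4n<S_T\le n^{1+\gamma}$: the trivial bound $S_{T^2/(2n)}\ge S_T$ only works for $S_T\le 4n$, while the analytic estimate needs $S_T\gtrsim n^{5/4+\gamma/2}$, which is strictly larger than the a priori ceiling $S_T\le\sum_{\lambda_i>0}\lambda_i\le\surp^*(G)\le n^{1+\gamma}$ whenever $\gamma<1/2$. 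So the case "$S_T\ge cn^{5/4+\gamma/2}$" is vacuous, and the intermediate regime is exactly the content of the lemma.

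The fixes you propose do not close this. The bound $\dim W\le n$ never beats $\dim W\le S_T^2/T^2$ here, since for $T\ge Cn^{23/24+\gamma/4}$ and $S_T\le n^{1+\gamma}$ one has $S_T^2/T^2\le n^{1/12+3\gamma/2}/C^2\ll n$; and \Cref{lemma:surp_star}(iii) gives $\sum_{\lambda_i<0}|\lambda_i|^3=O(n\,\surp^*(G))$, which by Cauchy--Schwarz reproduces the same $\sum\lambda_i^2=O(\sqrt{n}\,\surp^*(G))$ as part (ii), so there is no finer control on $\sum_kE_{kk}^2$ to be had. What the paper does instead is keep the projection and make the error proportional to $\dim W$: it introduces the cutoff set $J$ of indices with $E_{ii}>\beta$, replaces $W_0$ by $W=\Pi_Y W_0$ (where $Y$ kills coordinates in $J$), and then bounds $\trace_W(2A\circ E+E\circ E)$ via \Cref{lemma:W-trace_bound} as $O(\dim(W)\,\beta\,n^{1/4}Q^{1/2})\le O\bigl(\tfrac{S_T^2}{T^2}\beta n^{1/4}Q^{1/2}\bigr)$. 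Because this scales with $S_T^2$, one can compare it directly to the main term $S_T^2/n$ and the whole recursion closes uniformly in $S_T$; the price is the loss from discarding $J$ in the main term, and tuning $\beta=Q^{1/4}n^{7/8}/T$ is what produces the exponent $1-\tfrac{1}{24}+\tfrac{\gamma}{4}$. This cutoff-and-project step is the missing idea in your argument.
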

\begin{proof}
Let $Q=\surp^*(G)$. 
As in the proof of \Cref{lemma:main_smallest_eigenalue}, we analyse the identity $A=A\circ A$. Let $E$ be the ``negative part'' of $A$, that is, $$E:=\sum_{\lambda_i<0}|\lambda_i|v_iv_i^T.$$ 
Then we can rewrite $A=A\circ A$ as
\begin{equation}\label{equ:mainlemma1}
A=(A+E)\circ (A+E)-2A\circ E-E\circ E.
\end{equation}
The proof revolves around choosing an appropriate subspace $W$ and bounding the $W$-traces of both sides. The terms $2A\circ E$ and $E\circ E$ constitute as error terms, for which we show that their contribution to the $W$-trace is negligible.

Let $W_0<\mathbb{R}^n$ be the subspace generated by those vectors $v_i\circ v_j$, where $\lambda_i$ and $\lambda_j$ are at least $T$, i.e.
$$W_0=\langle v_i\circ v_j : \lambda_i,\lambda_j\geq T\rangle.$$
The subspace $W_0$ is almost what we want. However, when bounding the error term $\trace_{W_0}(E\circ E)$, the large entries of $E$ have a non-negligible contribution. In order to overcome this, we introduce a cut-off 
$$\beta:=\frac{Q^{1/4}n^{7/8}}{T}>1.$$
Let $J\subset[n]$ be the set of indices $i$ such that $E_{i,i}>\beta$. Note that as $E$ is positive semidefinite, we have $\max_{i,j}|E_{i,j}|=\max_{i,i}E_{i,i}$, so $|E_{i,j}|\leq \beta$ for every $i,j\notin J$. Moreover, $|J|$ is small.

\begin{claim}
$|J|\leq Q/\beta.$
\end{claim}
\begin{proof}
    By Lemma \ref{lemma:surp_star} (i), we have 
        $\sum_{i=1}^n E_{i,i}=\trace(E)=\sum_{\lambda_i<0}|\lambda_i|\leq Q.$
    Hence, the sum $\sum_{i=1}^n E_{i,i}$ contains at most $Q/\beta$ terms larger than $\beta$.
\end{proof}

Let $Y<\mathbb{R}^n$ be the subspace of vectors that vanish on $J$, that is, 
$$Y:=\{y\in\mathbb{R}^n:\forall i\in J, y(i)=0\}.$$ Finally, define $$W:=\Pi_Y(W_0).$$
Note that $$\dim(W)\leq \dim(W_0)\leq N_T^2\leq \frac{S_T^2}{T^2}.$$
Consider the trace of the $W$-compressions of both sides of (\ref{equ:mainlemma1}). 
Let $K=\frac{T^2}{2n}$. 
Then \Cref{lemma:trace_compression} implies
$$\trace_W(A)\leq S_K+K\dim(W)\leq S_{K}+K\frac{S_T^2}{T^2}= S_{\frac{T^2}{2n}}+\frac{S_T^2}{2n}.$$
On the other hand, the term $\trace_W((A+E)\circ (A+E))$ can be lower bounded as follows.
\begin{claim}
    \[\trace_W((A+E)\circ (A+E))\geq \frac{S_T^2}{n}-\frac{Q  S_T^2 n^2}{\beta T^4}.\]    
\end{claim}
\begin{proof}
Since we have 
\begin{equation*}
(A+E)\circ (A+E)=  \bigg(\sum_{\lambda_i>0}\lambda_iv_iv_i^T\bigg)\circ \bigg(\sum_{\lambda_i>0}\lambda_iv_iv_i^T\bigg) =\sum_{\lambda_i,\lambda_j>0}\lambda_i\lambda_j (v_i\circ v_j)(v_i\circ v_j)^T,
\end{equation*}
we can write write
$$\trace_W((A+E)\circ (A+E))=\sum_{\lambda_i,\lambda_j>0} \lambda_i\lambda_j \|\Pi_W v_i\circ v_j\|_2^2\geq \sum_{\lambda_i,\lambda_j\geq T} \lambda_i\lambda_j \|\Pi_W v_i\circ v_j\|_2^2.$$
Here, by definition, we have $v_i\circ v_j\in W_0$, so $\Pi_W (v_i\circ v_j)=\Pi_Y (v_i\circ v_j)$. Thus,
$$\|\Pi_W (v_i\circ v_j)\|_2^2=\|\Pi_Y (v_i\circ v_j)\|_2^2=\|v_i\circ v_j\|_2^2-\sum_{k\in J} (v_i(k)v_j(k))^2.$$
By \Cref{lemma:smooth_eigenvectors}, the entries of $v_i$ and $v_j$ are bounded as $|v_i(k)|,|v_j(k)|\leq \frac{\sqrt{n}}{T}$, so we get
$$\big\|\Pi_W v_i\circ v_j \big\|_2^2
    =\|v_i\circ v_j\|_2^2-\sum_{k\in J} (v_i(k)v_j(k))^2
    \geq \|v_i\circ v_j\|_2^2-\frac{|J|n^2}{T^4}.$$
With this bound, we get 
\begin{align*}
    \trace_W((A+E)\circ (A+E))&\geq \sum_{\lambda_i,\lambda_j\geq T} \lambda_i\lambda_j\left(\|v_i\circ v_j\|_2^2-\frac{|J|n^2}{T^4}\right)\\
    &\geq \bigg(\,\sum_{\lambda_i,\lambda_j\geq T}\lambda_i\lambda_j\|v_i\circ v_j\|_2^2\,\bigg)-S_T^2\frac{|J|n^2}{T^4}
    \geq  \frac{S_T^2}{n}-\frac{Q  S_T^2 n^2}{\beta T^4}.
\end{align*}
Here, the second inequality follows by writing $\sum_{\lambda_i,\lambda_j\geq T}\lambda_i\lambda_j=S_T^2$, and the last inequality follows by Claim \ref{claim:hadamard_sum}, and writing  $|J|\leq Q/\beta$.
\end{proof}

Finally, we bound $\trace_W(E\circ A)$ and $\trace_W(E\circ E)$. First, we have 
$$\|E\circ A\|_F\leq \|E\|_F=\bigg(\sum_{\lambda_i<0}\lambda_i^2\bigg)^{1/2}=O(n^{1/4}Q^{1/2}),$$
where the last equality follows from \Cref{lemma:surp_star} (ii). 
By \Cref{lemma:W-trace_bound}, we have
$$\trace_W(A\circ E)=\dim(W)^{1/2} \|E\circ A\|_F\leq O\big(\dim(W)^{1/2}n^{1/4}Q^{1/2}\big).$$
Now consider $\trace_W(E\circ E)$.
Let $E'=E_Y$ be the $Y$-compression of $E$.
Then $E'_{i,j}=E_{i,j}$ if $i,j\not\in J$, and $E'_{i,j}=0$ otherwise. 
Recall that $|E_{i,j}| \le \beta$ for every $i,j \notin J$. We acquire
\begin{equation} \nonumber
    \|E'\circ E'\|_F
    =\bigg(\,\sum_{i,j\not\in J} E_{i,j}^4\,\bigg)^{1/2}
    \leq \beta \bigg(\,\sum_{i,j\not\in J}E_{i,j}^2\,\bigg)^{1/2}
    \leq \beta \|E\|_F=O(\beta n^{1/4}Q^{1/2}).
\end{equation}
From this, using \Cref{lemma:W-trace_bound} and that $W=\Pi_{Y} W_0$,
$$\trace_W(E\circ E)=\trace_W(E'\circ E')
    \leq \dim(W)^{1/2}\|E'\circ E'\|_F
    \leq O(\dim(W)^{1/2}\beta n^{1/4}Q^{1/2}).$$
Hence, the total contribution from the error terms can be bounded as
\begin{align*}
    2\trace_W(A\circ E)+\trace_W(E\circ E)
    &\leq O\big(\dim(W)^{1/2}\beta n^{1/4}Q^{1/2} \big)\\
    &\leq  O\big(\dim(W)\beta n^{1/4}Q^{1/2} \big)
    \leq O\!\left(\frac{S_T^2\beta n^{1/4}Q^{1/2}}{T^2}\right).
\end{align*}
In  the second inequality, we upper bounded $\dim(W)^{1/2}$ by $\dim(W)$, which is quite wasteful, but it simplifies upcoming calculations. Putting everything together, we proved that 
\begin{align*}
\trace_W(A\circ A)&=\trace_W((A+E)\circ (A+E))-2\trace_W(A\circ E)-\trace_W(E\circ E)\\
&\geq \frac{S_T^2}{n}-\frac{QS_T^2n^2}{\beta T^4}-O\left(\frac{S_T^2\beta n^{1/4}Q^{1/2}}{T^2}\right)=S_T^2\left(\frac{1}{n}-\frac{Qn^2}{\beta T^4}-O\left(\frac{\beta n^{1/4}Q^{1/2}}{T^2}\right)\right).
\end{align*}
The parameter $\beta$ was chosen such that the two negative terms have the same order of magnitude. After substituting $\beta=\frac{Q^{1/4}n^{7/8}}{T}$, we get
$$\trace_W(A\circ A) \ge S_T^2\left(\frac{1}{n}-O\left(\frac{Q^{3/4}n^{9/8}}{T^3}\right)\right)\geq \frac{3S_T^2}{4}.$$
Here, the last inequality holds by our assumptions that $Q\leq n^{1+\gamma}$ and $T\geq Cn^{1-\frac{1}{24}+\frac{\gamma}{4}}$. 
Now, comparing the left-hand-side and right-hand-side of (\ref{equ:mainlemma1}), we conclude the desired inequality by 
\begin{align}
S_{\frac{T^2}{2n}} + \frac{S_T^2}{2n} &\ge \trace_W(A) = \trace_W(A\circ A)
\geq \frac{3S_T^2}{4n}. \qedhere
\end{align}
\end{proof}

\subsection{Recursion}\label{sect:recursion}

In this section, we show how to use the inequality $S_T^2\leq 4nS_{\frac{T^2}{2n}}$ to gain insight into the global structure of the spectrum. This section can be summed up in the motto ``\textit{large eigenvalues carry all the mass in the second moment}''.

\begin{lemma}\label{lemma:recursion}
Let $n$ be an integer, $0<\gamma<q<1$ be fixed parameters, and let $\lambda_1\geq\dots\geq \lambda_n$ be the eigenvalues of an $n$-vertex graph $G$. Assume that $\sum_{\lambda_i>0}\lambda_i\leq n^{1+\gamma}$ and $S_T^2\leq 4nS_{\frac{T^2}{2n}}$, for every $T\geq 2n^{1-q}$. Then for every $\kappa\in [0,1]$,
\begin{equation}\label{eqn:second_moment_bound}
    \sum_{0\leq \lambda_i\leq \kappa n}\lambda_i^2\leq 50\kappa^{1-\gamma/q}n^{2}.
\end{equation}
\end{lemma}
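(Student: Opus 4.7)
The plan is to iterate the recursion to extract a pointwise bound on $S_T$, and then convert this into a bound on the second moment via a layer-cake argument.

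For $T \in [2n^{1-q}, 2n)$, I define $T_0 := T$ and $T_{j+1} := T_j^2/(2n)$, which satisfies $T_j = 2n(T/(2n))^{2^j}$. Iterating the hypothesis $k$ times, which is valid as long as $T_0, \ldots, T_{k-1} \geq 2n^{1-q}$, yields $S_T \leq (4n)^{1-2^{-k}} S_{T_k}^{2^{-k}}$. I then take $k$ to be the largest integer with $T_{k-1} \geq 2n^{1-q}$, which forces $2^k \geq q \log n/\log(2n/T)$; combined with the trivial bound $S_{T_k} \leq \sum_{\lambda_i > 0}\lambda_i \leq n^{1+\gamma}$, this produces the clean estimate
$$S_T \leq 4n\,(2n/T)^{\gamma/q}.$$

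The second step is a layer-cake computation. The identity $\lambda_i^2 = \int_0^{\lambda_i} 2t\,dt$ together with $t N_t \leq S_t$, where $N_t = |\{i : \lambda_i \geq t\}|$, yields
$$\sum_{0 \leq \lambda_i \leq \kappa n} \lambda_i^2 \leq \int_0^{\kappa n} 2 S_t\,dt.$$
I split this integral at $t_0 := 2n^{1-q}$ (assuming $\kappa n \geq t_0$; the complementary case is treated below). On $[0, t_0]$ the uniform bound $S_t \leq n^{1+\gamma}$ contributes at most $4n^{2-q+\gamma}$. On $[t_0, \kappa n]$, plugging in the estimate from the previous step gives
$$\int_{t_0}^{\kappa n} 2 S_t\,dt \leq 8n(2n)^{\gamma/q} \int_{t_0}^{\kappa n} t^{-\gamma/q}\,dt \leq \frac{8 \cdot 2^{\gamma/q}}{1-\gamma/q} \kappa^{1-\gamma/q} n^2,$$
where the final step uses $\gamma/q < 1$.

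Finally, the $4n^{2-q+\gamma}$ contribution is dominated by $50 \kappa^{1-\gamma/q} n^2$ whenever $\kappa \geq c n^{-q}$ for a suitable $c = c(\gamma, q)$; for smaller $\kappa$ I instead apply the coarser estimate $\sum_{0 \leq \lambda_i \leq \kappa n} \lambda_i^2 \leq \kappa n \cdot \sum_{\lambda_i > 0}\lambda_i \leq \kappa n^{2+\gamma}$, which a direct comparison of exponents shows is $\leq 50 \kappa^{1-\gamma/q} n^2$ whenever $\kappa \leq 50^{q/\gamma} n^{-q}$. Since $50^{q/\gamma}$ far exceeds $c$, the two regimes overlap and cover all of $[0, 1]$. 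The main obstacle will be executing the first step precisely: an off-by-one choice of $k$ costs a factor of $2$ in the exponent of $(2n/T)$, which would ruin the polynomial decay in $\kappa$ we need; this is the reason for pushing the iteration all the way down to the boundary $T_{k-1} \approx 2n^{1-q}$ rather than stopping earlier.
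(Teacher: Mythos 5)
Your approach is genuinely different from the paper's: you iterate the recursion to a single uniform pointwise bound $S_T \leq 4n(2n/T)^{\gamma/q}$ and then integrate, while the paper keeps the sharper bound $S_{\kappa_i n}\leq 8\kappa_i^{-\gamma/2q}n$ at the specific thresholds $\kappa_i n = 2n^{1-q/2^i}$ and sums over the corresponding bands. Your derivation of the pointwise bound is correct, as is the layer-cake reduction $\sum_{0\leq\lambda_i\leq\kappa n}\lambda_i^2\leq \int_0^{\kappa n}2S_t\,dt$ and the two-regime handling of the $[0,t_0]$ contribution. However, there is a real quantitative gap: the integral $\int_{t_0}^{\kappa n}t^{-\gamma/q}\,dt$ contributes a factor $\frac{1}{1-\gamma/q}$, so your final constant is roughly $\frac{8\cdot 2^{\gamma/q}}{1-\gamma/q}$, which diverges as $\gamma/q\to 1$ and exceeds $50$ once $\gamma/q$ passes about $0.84$. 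The lemma claims the absolute constant $50$, and the paper does invoke it in regimes where $\gamma/q$ is close to $1$ (e.g.\ with $q=1/4$ and $\gamma\to 1/4$, or $q=1/30$ and $\gamma'\to 1/30$), so this is not merely a cosmetic loss. Your closing paragraph bounds the $4n^{2+\gamma-q}$ term against $50\kappa^{1-\gamma/q}n^2$ but never addresses the second term, which is where the problem lies.

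The underlying reason for the loss is instructive: your uniform pointwise bound is tight only near the upper end of each band. At the thresholds $T=\kappa_i n$, the iteration count satisfies $2^k = 2^{i+1}$, which is exactly \emph{twice} the lower bound $q\log n/\log(2n/T) = 2^i$ you use; so there $S_T$ is actually $\lesssim 4n(2n/T)^{\gamma/2q}$, a square-root better than your uniform estimate. The paper's band decomposition with $\kappa_{\ell-1}=\kappa_\ell^2/2$ is set up precisely to evaluate $S$ at these favorable points: the per-band contribution becomes $\lesssim\kappa_\ell^{1-\gamma/q}n^2$ and the quantities $\kappa_\ell^{1-\gamma/q}$ form a geometric series with ratio $1/2$, uniformly in $\gamma,q$ (using only $\kappa^{1-\gamma/q}<1/2$), giving the absolute constant. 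Replacing your uniform power-law bound by this step-function bound tied to the iteration depth, and summing rather than integrating, recovers the $50$.
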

\begin{proof}
First, observe that if $\kappa\leq 2n^{-q}$, the statement is easy to prove. 
Indeed, using that $0 < \gamma < q < 1$, 
$$\sum_{0\leq \lambda_i
    \leq \kappa n}\lambda_i^2
    \leq \kappa n\sum_{\lambda_i>0}\lambda_i
    \leq \kappa n^{2+\gamma}
    \leq \kappa n^2\cdot (2/\kappa)^{\gamma/q}
    \leq 2^{\gamma/q}\kappa^{1-\gamma/q}n^2
    \leq 50\kappa^{1-\gamma/q}n^2.$$

In the rest of the proof, we assume $\kappa>2n^{-q}$. 
Define an increasing sequence $\kappa_{-1}, \kappa_0, \kappa_1, \dots$ by $\kappa_{-1}=2n^{-2q}$ and $\kappa_{i}=\sqrt{2\kappa_{i-1}}$ for $i\geq 0$. Then $\kappa_i=2n^{-q/2^i}$ for $i\geq -1$, explaining our reason to start the indexing from $-1$. We also highlight that $\kappa>\kappa_{-1}$.

First, we show inductively that $S_{\kappa_i n}\leq 8\kappa_i^{-\gamma/2q}n$. 
For $i=-1$ this is straightforward, since $$S_{\kappa_{-1}n}\leq \sum_{\lambda_j>0}\lambda_j\leq n^{1+\gamma}\leq 8\kappa_{-1}^{-\gamma/2q}n.$$
For $i\geq 0$, we have \[S_{\kappa_i n}\leq \sqrt{4nS_{\kappa_i^2n/2}}=\sqrt{4nS_{\kappa_{i-1}n}}\leq \sqrt{4n\cdot 8\kappa_{i-1}^{-\gamma/2q}n}=\sqrt{32(\kappa_i^2/2)^{-\gamma/2q}n^2}\leq 8\kappa_i^{-\gamma/2q}n,\]
where in the first inequality we use that $\kappa_i n\geq 2n^{1-q}$ for $i\geq 0$, so $S_T^2\leq 4nS_{\frac{T^2}{2n}}$ holds for $T=\kappa_i n$,  and in the second inequality we use the induction hypothesis for $S_{\kappa_{i-1}n}$.

Next, we prove (\ref{eqn:second_moment_bound}). 
We may assume $\kappa^{1-\gamma/q}\leq 1/2$, since the statement is otherwise trivial. 
Let $p$ be the largest integer such that $\kappa_p\leq \kappa$, for which we have $\kappa < \sqrt{2\kappa_p}$, i.e. $\kappa_p> \kappa^2/2$. 
Then, we can decompose the sum $\sum_{0\leq \lambda_i\leq \kappa n}\lambda_i^2$ as follows
\begin{align*}
    \sum_{0\leq \lambda_i\leq \kappa n}\lambda_i^2
    &=\underbrace{\sum_{\kappa_p n< \lambda_i\leq \kappa n}\lambda_i^2}_{\Sigma_p}
        +\underbrace{\sum_{\kappa_{p-1}n< \lambda_i\leq \kappa_p n}\lambda_i^2}_{\Sigma_{p-1}}
        +\cdots+
        \underbrace{\sum_{0< \lambda_i\leq \kappa_{-1} n}\lambda_i^2}_{\Sigma_{-2}}.
\end{align*}
We can bound $\Sigma_p$ as  $$\Sigma_p\leq \kappa n\cdot S_{\kappa_p n}\leq \kappa n\cdot 8\kappa_p^{-\gamma/2q}n\leq 16\kappa^{1-\gamma/q} n^2,$$ where in the last step we used that $\kappa_p> \kappa^2/2$. 
Similarly, for any $-1\leq \ell\leq p-1$, we have $$\Sigma_{\ell-1}\leq \kappa_{\ell}n\cdot S_{\kappa_{\ell-1}n}\leq \kappa_{\ell}n\cdot 8\kappa_{\ell-1}^{-\gamma/2q}n\leq 16\kappa_{\ell}^{1-\gamma/q}n^2,$$ where we again used that $\kappa_{\ell-1}=\kappa_{\ell}^2/2$. 
Finally, using that $\kappa_{-1}\le 2n^{-q}$, we have
$$\Sigma_{-2}\leq \kappa_{-1}n\sum_{\lambda_i>0}\lambda_i\leq \kappa_{-1}n^{2+\gamma}\leq 2\kappa_{-1}^{1-\gamma/q} n^{2}.$$
Combining all of this, we obtain
\[\sum_{0\leq \lambda_i\leq \kappa n}\lambda_i^2\leq 16\kappa^{1-\gamma/q}n^2+\sum_{\ell\leq p}16\kappa_{\ell}^{1-\gamma/q}n^2+2\kappa_{-1}^{1-\gamma/q}n^2.\]
We note that $\kappa_{\ell}^{1-\gamma/q}\leq 2^{-(p-\ell)}\kappa^{1-\gamma/q}$, which follows by simple induction and our assumption that $\kappa^{1-\gamma/q}<1/2$. 
Thus, the right-hand-side of the above inequality is less than $50\kappa^{1-\gamma/q}n$.
\end{proof}

\section{Densification --- Phase 1}\label{sect:dens1}

\Cref{lemma:recursion}, combined with earlier results, has a number of powerful consequences.
One of them asserts that a graph with large smallest eigenvalue (or small surplus) contains a large subgraph of positive density.
We prove this via the following density increment argument.
We apply \Cref{lemma:recursion} to show that the cubic sum of eigenvalues is large, which in turn coincides with six times the number of triangles.
But if a graph has too many triangles, then some vertex has a very dense neighbourhood.
So we can repeatedly pass to this neighbourhood to eventually obtain a large subgraph with constant density.
The main step of this argument is presented in the next lemma.

\begin{lemma}\label{lemma:dens1_triangle_counting}
Let $\gamma\in (0,1/6)$, $C>2$, and let $G$ be a $n$-vertex graph with  edge density $p>n^{-1/2}$, $\Delta(G)\leq Cpn$, and smallest eigenvalue $\lambda_n$ satisfying $|\lambda_n|\leq n^\gamma$. Then $G$ has a subgraph on at least $pn$ vertices of edge density at least $c_0 p^{2\gamma/(1-4\gamma)}$ for some $c_0=c_0(\gamma,C)>0$.
\end{lemma}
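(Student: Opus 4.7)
The plan is to count triangles in $G$ via the cubic trace $\trace(A^3)=\sum_i\lambda_i^3$, show that this trace is large because the second-moment mass of the spectrum is concentrated on the top eigenvalues, and then average over vertex neighbourhoods to extract the dense subgraph.

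First, I would set $q:=\tfrac12-\gamma$. Since $|\lambda_n|\le n^\gamma$, \Cref{lemma:main_smallest_eigenalue} gives $S_T^2\le 4nS_{T^2/2n}$ for every $T\ge 2n^{1-q}$, while $\sum_{\lambda_i>0}\lambda_i=\sum_{\lambda_i<0}|\lambda_i|\le n|\lambda_n|\le n^{1+\gamma}$. Thus \Cref{lemma:recursion} applies and yields
$$\sum_{0\le \lambda_i\le \kappa n}\lambda_i^2\;\le\;50\,\kappa^{(1-4\gamma)/(1-2\gamma)}\,n^2\qquad\text{for every }\kappa\in[0,1].$$
The natural choice is $\kappa:=c_1\,p^{(1-2\gamma)/(1-4\gamma)}$ for $c_1=c_1(\gamma)>0$ small enough that the right-hand side is at most $\tfrac{1}{10}pn^2$. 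Since $\sum_i\lambda_i^2=2m\ge\tfrac{1}{2}pn^2$ and $\sum_{\lambda_i<0}\lambda_i^2\le |\lambda_n|\sum_{\lambda_i<0}|\lambda_i|\le n^{1+2\gamma}$, which is $o(pn^2)$ under $p>n^{-1/2}$ and $\gamma<1/4$, I would conclude $\sum_{\lambda_i\ge \kappa n}\lambda_i^2\ge \tfrac{1}{5}pn^2$.

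Next, I would transfer this concentration to the cubic moment:
$$\trace(A^3)\;\ge\;\sum_{\lambda_i\ge \kappa n}\lambda_i^3\;-\;\sum_{\lambda_i<0}|\lambda_i|^3\;\ge\;\kappa n\cdot \tfrac{1}{5}pn^2\;-\;|\lambda_n|\sum_{\lambda_i<0}\lambda_i^2\;\ge\;\tfrac{1}{5}\kappa pn^3-n^{1+3\gamma}.$$
A short calculation shows $\kappa pn^3\gg n^{1+3\gamma}$ precisely when $12\gamma^2-8\gamma+1>0$, i.e.\ $\gamma<1/6$; this is exactly the restriction in the lemma, and it is the main technical point of the proof. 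Thus the number of triangles satisfies $T=\tfrac16\trace(A^3)\gtrsim_\gamma \kappa pn^3$.

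Finally, from $\sum_v e(G[N(v)])=3T\gtrsim \kappa pn^3$, some vertex $v$ has $e(G[N(v)])\gtrsim \kappa pn^2$. If $|N(v)|\ge pn$, then $G[N(v)]$ is already a subgraph on at least $pn$ (and at most $Cpn$) vertices. Otherwise I would extend $N(v)$ to an arbitrary $pn$-vertex superset $U$, obtaining an induced subgraph $G[U]$ with $|U|=pn$ and $e(G[U])\ge e(G[N(v)])$. In either case the resulting subgraph has at least $pn$ vertices and density at least
$$\frac{e(G[N(v)])}{\binom{Cpn}{2}}\;\gtrsim\;\frac{\kappa pn^2}{C^2p^2n^2}\;=\;\frac{1}{C^2}\cdot\frac{\kappa}{p}\;\gtrsim\;p^{(1-2\gamma)/(1-4\gamma)-1}\;=\;p^{2\gamma/(1-4\gamma)},$$
with implicit constant depending only on $\gamma$ and $C$, which is the desired bound. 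The recursion exponent $(1-4\gamma)/(1-2\gamma)$ dictates the final exponent $2\gamma/(1-4\gamma)$ of $p$ in the density; the restriction $\gamma<1/6$ is forced solely by the need to dominate the negative contribution to the cubic trace.
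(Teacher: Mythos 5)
Your proposal is correct and follows essentially the same route as the paper: setting $q=\tfrac12-\gamma$, invoking the recursion from \Cref{lemma:main_smallest_eigenalue,lemma:recursion} to concentrate the quadratic spectral mass on eigenvalues above $\kappa n$ with $\kappa \asymp p^{(1-2\gamma)/(1-4\gamma)}$, lower-bounding the cubic trace to count triangles, and then averaging over vertex neighbourhoods (capped at $\Delta(G)\le Cpn$) to find the dense subgraph. The identification of $\gamma<1/6$ as the threshold for dominating the $n^{1+3\gamma}$ error term and the exponent arithmetic yielding $p^{2\gamma/(1-4\gamma)}$ both match the paper exactly.
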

\begin{proof}
    We may and will assume $n$ is sufficiently large.
    Let $m=p\binom{n}{2}$ denote the number of edges. Let $\lambda_1\geq\dots\geq\lambda_{n}$ be the eigenvalues of $G$, then $$\sum_{\lambda_i>0}\lambda_i=\sum_{\lambda_i<0}|\lambda_i|\leq n|\lambda_n|\leq n^{1+\gamma}.$$
    By Lemma \ref{lemma:main_smallest_eigenalue}, we also have
    $$4nS_{\frac{T^2}{2n}}\geq S_T^2$$
    for every $T\geq 2n^{1/2+\gamma}$. 
    So applying Lemma \ref{lemma:recursion} with $q=1/2-\gamma>\gamma$ and the sequence of positive eigenvalues, we get that for every $\kappa\in [0,1]$,
    $$\sum_{0<\lambda_i\leq \kappa n}\lambda_i^2\leq c\kappa^{1-\gamma/q}n^{2}$$
    for an absolute constant $c>1$. 
    Write $u:=\gamma/q<1/2$ and set $\kappa:=(8c)^{-1/(1-u)}p^{1/(1-u)}$.
    Then $\kappa\leq 1$ and the right-hand-side equals $pn^2/8$. On the other hand, we have $$\sum_{\lambda_i<0} \lambda_i^2\leq n\lambda_n^2\leq n^{1+2\gamma}<pn^2/8,$$ so
    $$\sum_{\lambda_i\leq \kappa n} \lambda_i^2\leq pn^2/4.$$
    But $\sum_{i=1}^n\lambda_i^2=\|A\|_F^2=2m$, so we conclude that
    $$\sum_{\lambda_i>\kappa n}\lambda_i^2\geq 2m-\sum_{\lambda_i\leq H}\lambda_i^2\geq pn^2/2.$$
    Writing $N$ for the number of triangles, we have
    $$6N=\sum_{i=1}^n\lambda_i^3
        \geq \kappa n\sum_{\lambda_i> \kappa n}\lambda_i^2-\sum_{\lambda_i<0}|\lambda_i|^3
        \geq \kappa pn^3/2-n|\lambda_n|^3\geq c'p^{\frac{2-u}{1-u}}n^{3}-n^{1+3\gamma}$$
    for some constant $c'>0$ depending only on $\gamma$. 
    Here, using that $\gamma<1/6$, $u<1/2$ and $p>n^{-1/2}$, we have $n^{1+3\gamma}< \frac{c'}{2}p^{\frac{2-u}{1-u}}n^{3}$. 
    Hence, we get
    $$N\geq \frac{c'}{12}p^{\frac{2-u}{1-u}}n^{3}.$$
    Counting triangles by vertices, we observe that there is a vertex $v\in V(G)$ whose neighbourhood $X$ contains at least $\frac{3N}{n}$ edges.
    In addition, $|X|\leq \Delta(G)\leq Cpn$. 
    Now, let $X'$ be an arbitrary superset of $X$ containing $\max(pn,|X|)$ elements.
    Then, the edge density of $G[X']$ is 
    $$\frac{3N/n}{\binom{|X'|}{2}}
        > \frac{6N/n}{|X'|^2} 
        \ge \frac{c'p^{\frac{2-u}{1-u}}n^{2}/2}{C^2p^2n^2}
        = \frac{c'}{2C^2}p^{\frac{u}{1-u}}.$$
    As $\frac{u}{1-u}=\frac{\gamma}{q-\gamma}=\frac{2\gamma}{1-4\gamma}$, this finishes the proof.
\end{proof}

In the next lemma, we show how to handle the case when $G$ has some vertices of too large degrees.

\begin{lemma}\label{lemma:dens1_high_degrees}
Let $C>2$ and $G$ be an $n$-vertex graph of average degree $d$ with $\surp(G)\leq \frac{dn}{100}$. Then either 
\begin{itemize}
    \item[(i)] $G$ contains a subgraph on $n/C$ vertices of average degree at least $Cd/5$.
    \item[(ii)]$G$ contains a subgraph on at least $n/2$ vertices with average degree at least $d/4$, and maximum degree at most $Cd$.
\end{itemize}
\end{lemma}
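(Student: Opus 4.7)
The plan is to combine a simple high-degree partition of the vertex set with the surplus bound. Let $U = \{v \in V(G) : \deg_G(v) > Cd\}$ and $W = V \setminus U$. Since each $v \in U$ contributes more than $Cd$ to the degree sum, $|U| \cdot Cd < \sum_v \deg_G(v) = dn$, so $|U| < n/C$. Because $C > 2$, this gives $|W| > n(1 - 1/C) > n/2$. By definition, $\Delta(G[W]) \leq Cd$, so if additionally $e(G[W]) \geq d|W|/8$, then $G[W]$ has average degree at least $d/4$ on more than $n/2$ vertices, and $G[W]$ witnesses (ii).

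In the remaining case $e(G[W]) < d|W|/8 \leq dn/8$, I would show that $e(G[U]) \geq dn/10$, which yields (i): one pads $U$ to a set $S \subseteq V(G)$ with $|S| = n/C$ by adjoining arbitrary additional vertices, and observes $e(G[S]) \geq e(G[U]) \geq dn/10$, so the average degree of $G[S]$ is at least $2 \cdot (dn/10)/(n/C) = Cd/5$.

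The inequality $e(G[U]) \geq dn/10$ is where the surplus assumption enters. Assuming $e(G[U]) < dn/10$ for contradiction, the edge decomposition $m = e(G[U]) + e(U, W) + e(G[W])$ forces
\[
    e(U, W) > \frac{dn}{2} - \frac{dn}{10} - \frac{dn}{8} = \frac{11dn}{40}.
\]
The cut $(U, W)$ then has size greater than $11dn/40$, so $\surp(G) \geq 11dn/40 - m/2 = dn/40$, which violates the hypothesis $\surp(G) \leq dn/100$. I do not foresee any real obstacle here; the only delicate point is to calibrate the thresholds $d|W|/8$ and $dn/10$ so that the residual case is forced to produce a cut of size strictly greater than $dn/4 + dn/100$, and the simple arithmetic above does exactly that.
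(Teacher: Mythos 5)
Your proof is correct and follows essentially the same route as the paper's: partition by the high-degree threshold $Cd$, observe that $G[W]$ has bounded maximum degree and either already has average degree $d/4$ (case (ii)), or else few internal edges, in which case the low-surplus hypothesis caps the cut $(U,W)$ and forces $e(G[U])$ to be large, yielding (i) after padding $U$ to size $n/C$. The only cosmetic difference is that you phrase the final step as a contradiction with the explicit calibration $e(G[U]) < dn/10$, whereas the paper computes $e(G[X]) \ge dn/2 - dn/8 - 0.26dn > dn/10$ directly from $e(G[X,Y]) \le m/2 + \surp(G)$.
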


\begin{proof}
    Let $X\subset V(G)$ be the set of vertices of degree more than $Cd$, then $|X|\leq n/C$. 
    Let $Y=V(G)\setminus X$. 
    The maximum degree of $G[Y]$ is at most $Cd$, so if $G[Y]$ has average degree at least $d/4$, then (ii) holds.
    Otherwise, $G[Y]$ has at most $nd/8$ edges. 
    Moreover, $e(G[X,Y])\leq \frac{e(G)}{2} +\surp(G)\le \frac{dn}{4}+\frac{dn}{100} = 0.26dn$.
    Hence, $G[X]$ contains at least $dn/2-dn/8-0.26dn>dn/10$ edges. 
    Let $X_0$ be any superset of $X$ of size $n/C$.
    Then, $X_0$ has average degree at least $Cd/5$, so $G[X_0]$ satisfies (i).
\end{proof}

As mentioned before, we will repeatedly apply \Cref{lemma:dens1_triangle_counting,lemma:dens1_high_degrees} to pass to denser neighbourhoods and eventually obtain a large subgraph with constant density.
This is our main result for the smallest eigenvalue in this section.
However, to make the constants work nicely, we adopt a slightly different proof by considering a subgraph that maximises a carefully chosen potential function.
If its density is $o(1)$, \Cref{lemma:dens1_triangle_counting,lemma:dens1_high_degrees} guarantee a subgraph whose potential function is even higher, which is impossible.
\begin{lemma}\label{lemma:dens1_main}
    Let $\gamma,\eps,\rho > 0$ satisfy $\rho < \frac{1}{2}$, $\eps + 6\gamma < 1$ and $\frac{\rho}{\eps} + \frac{2\gamma}{1-\eps-4\gamma} < 1$. 
    Then there exist $c_1=c_1(\gamma,\eps,\rho)>0$ and $n_0=n_0(\gamma,\eps,\rho)$ such that the following holds for every $n > n_0$.
    Let $G$ be an $n$-vertex graph with edge density $p$ and smallest eigenvalue $\lambda_n$ such that $p>n^{-\rho}$ and $|\lambda_n|\leq n^\gamma$.
    Then $G$ has a subgraph on at least $n^{1-\eps}$ vertices with edge density at least $c_1$. 
\end{lemma}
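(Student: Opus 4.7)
The plan is to employ a density-increment argument through a potential function, following the hint that accompanies the statement. Set $u^\ast := 2\gamma/(1-\eps-4\gamma)$, which is less than $1$ thanks to $\eps+6\gamma<1$, and let $\eta := 1 - u^\ast - \rho/\eps > 0$ by the main hypothesis. Choose the exponent $\tau := \rho/\eps + \eta/2$ so that $\rho/\eps < \tau < 1-u^\ast$. Fix a large absolute constant $C>2$ (depending on $\tau$) and a small constant $c_1>0$, both to be chosen in terms of $c_0$, $\tau$, and $\eta$. Define the potential
\[
\Phi(H) := p(H)\,|V(H)|^{\tau}
\]
for induced subgraphs $H$ of $G$, and let $H^\ast$ be a maximizer. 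The goal is to show $p(H^\ast) \ge c_1$; the size bound $|V(H^\ast)|\ge n^{1-\eps}$ will then come for free from maximality.

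First I would extract three structural facts about $H^\ast$ directly from its maximality. Comparing $\Phi(H^\ast) \ge \Phi(G) = pn^\tau$ with $\Phi(H^\ast) \le |V(H^\ast)|^\tau$ gives $|V(H^\ast)| \ge p^{1/\tau}n \ge n^{1-\rho/\tau}$, and since $\tau > \rho/\eps$ this yields $|V(H^\ast)| \ge n^{1-\eps+\Omega(\eta)}$. The same comparison produces the sharper density bound $p(H^\ast) \ge p\,(n/|V(H^\ast)|)^\tau$; a brief case split on whether $\tau \le 1/2$ or $\tau > 1/2$, combined with $\rho<1/2$ and the size bound, shows $p(H^\ast)\,|V(H^\ast)|^{1/2} \to \infty$, so in particular $p(H^\ast) > |V(H^\ast)|^{-1/2}$ for $n$ large. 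Finally, Cauchy interlacing together with $\eps+6\gamma<1$ gives $|\lambda_{|V(H^\ast)|}(H^\ast)| \le |V(H^\ast)|^{\gamma'}$ with $\gamma' := \gamma/(1-\eps)+o(1) < 1/6$.

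Now assume $p(H^\ast) < c_1$ and aim for a contradiction by producing an induced subgraph of $G$ with strictly larger $\Phi$. Split on the max degree of $H^\ast$. If $\Delta(H^\ast) \le C\,p(H^\ast)|V(H^\ast)|$, then every hypothesis of \Cref{lemma:dens1_triangle_counting} is met for $H^\ast$, yielding $H' \subseteq H^\ast$ with $|V(H')| \ge p(H^\ast)|V(H^\ast)|$ and $p(H') \ge c_0\,p(H^\ast)^{u^\ast}$, whence
\[
\frac{\Phi(H')}{\Phi(H^\ast)} \;\ge\; c_0\,p(H^\ast)^{u^\ast+\tau-1} \;=\; c_0\,p(H^\ast)^{-\eta/2},
\]
which exceeds $1$ once $c_1$ is small enough. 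If instead $\Delta(H^\ast) > C\,p(H^\ast)|V(H^\ast)|$, I would invoke \Cref{lemma:dens1_high_degrees}; its hypothesis $\surp(H^\ast) \le d(H^\ast)|V(H^\ast)|/100$ is verified using \Cref{claim:surplus_and_lambdan} and the density bound from Step~1 (using $2\gamma+\eps<1$). Outcome~(i) produces $H'$ with $|V(H')|=|V(H^\ast)|/C$ and $p(H')\ge C^2p(H^\ast)/5$, so $\Phi(H')/\Phi(H^\ast) \ge C^{2-\tau}/5 > 1$ for $C$ sufficiently large (valid since $\tau<1$). Outcome~(ii) returns $H''$ with $|V(H'')|=|V(H^\ast)|/2$, density $\ge p(H^\ast)/2$, and max degree $\le 4C\,p(H'')|V(H'')|$; applying \Cref{lemma:dens1_triangle_counting} to $H''$ then yields an $H'$ with $\Phi(H')/\Phi(H^\ast) \ge c_0'\,p(H^\ast)^{-\eta/2}$ for an absolute constant $c_0'$, again contradicting maximality for $c_1$ small. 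In every case the assumption $p(H^\ast)<c_1$ is violated, finishing the proof.

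The main obstacle is establishing the density threshold $p(H^\ast) > |V(H^\ast)|^{-1/2}$ needed to apply \Cref{lemma:dens1_triangle_counting} at $H^\ast$; this is exactly where the potential-function approach wins over a naive iteration, because the sharper bound $p(H^\ast) \ge p\,(n/|V(H^\ast)|)^\tau$ emerges only from the full maximization rather than from tracking density through an iteration. The precise choice $\tau = \rho/\eps + \eta/2$ makes three competing constraints simultaneously feasible — the size bound $|V(H^\ast)| \ge n^{1-\eps}$ forces $\tau > \rho/\eps$, while the potential-improvement step requires $u^\ast + \tau < 1$ — and these can coexist precisely under the stated hypothesis $\rho/\eps + 2\gamma/(1-\eps-4\gamma) < 1$.
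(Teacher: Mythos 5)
Your proposal is correct and follows essentially the same strategy as the paper: maximize a potential of the form $p(H)\cdot|V(H)|^{\tau}$, read off the size and density thresholds from maximality, and derive a contradiction via \Cref{lemma:dens1_high_degrees} and \Cref{lemma:dens1_triangle_counting}. The only cosmetic differences are that you take $\tau = \rho/\eps + \eta/2$ (a bit above $\rho/\eps$) and then need a short case split to verify $p(H^\ast) > |V(H^\ast)|^{-1/2}$, whereas the paper fixes $\tau = \rho/\eps$ exactly and gets the cleaner bound $p(H) > v(H)^{-\rho}$ directly (which suffices since $\rho < 1/2$), and the paper always passes through \Cref{lemma:dens1_high_degrees} first rather than branching on $\Delta(H^\ast)$ upfront.
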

\begin{proof}
    We may and will assume that $n$ is sufficiently large. 
    For a graph $H$, we write $v(H)$ for the number of its vertices, $d(H)$ for its average degree, and $p(H)$ for its density.

    Let $\gamma' \in (\gamma,\frac{1-\eps}{6})$ be any constant such that $\frac{\rho}{\eps} + \frac{2\gamma'}{1-\eps-4\gamma'} < 1$; then $|\lambda_n| \le n^{\gamma'}/2$.
    Let $H$ be an induced subgraph of $G$ that maximizes the function $$v(H)^{\rho/\eps}\cdot p(H).$$ 
    We show that $H$ is the desired subgraph. 
    Due to the maximality,  $v(H)^{\rho/\eps}\cdot p(H) \ge v(G)^{\rho/\eps}\cdot p(G)>n^{\rho/\eps-\rho}$.
    Since $p(H) \le 1$, we get $v(H) \geq  n^{1-\eps}$, as desired. 
    In what follows, we lower bound the density of $H$.
    Recall that Cauchy's interlacing theorem ensures that the smallest eigenvalue of $H$ is at least $-n^{\gamma'}/2$.
    
    First, we show that $p(H) > v(H)^{-\rho}$.
    Indeed, if $p(H) \le v(H)^{-\rho}$, then $$v(H)^{\rho/\eps-\rho}\ge v(H)^{\rho/\eps}\cdot p(H) \ge v(G)^{\rho/\eps}\cdot p(G)>n^{\rho/\eps-\rho}.$$
    This contradicts $v(H) \le n$. 
    Therefore, $p(H) > v(H)^{-\rho}$. 
    Next, we prove that $d(H) > n^{3\gamma'}$. Recall that $\rho<\frac{1}{2},\gamma'<\frac{1-\eps}{6}$ and $v(H) > n^{1-\eps}$ is sufficiently large.
    We have $$
        d(H) = \big(v(H)-1 \big)p(H) > \big(v(H)-1 \big)v(H)^{-\rho}>v(H)^{1/2}> n^{(1-\eps)/2}>n^{3\gamma'}.
    $$
    Now apply \Cref{lemma:dens1_high_degrees} to $H$ with $C=5$.
    The requirement $\surp(H) \le \frac{1}{100}d(H) v(H)$ trivially holds as by \Cref{claim:surplus_and_lambdan}, we have $\surp(H) \le n^{\gamma'}\cdot v(H)/4< \frac{1}{100}d(H)v(H)$.
    By \Cref{lemma:dens1_high_degrees}, either (i) $H$ contains an induced subgraph $H_0$ such that $v(H_0) = v(H)/5$ and $d(H_0) \ge d(H)$, or (ii) $H$ contains an induced subgraph $H_1$ such that $v(H_1) \ge v(H)/2, d(H_1) \ge d(H)/4$ and $\Delta(H_1) \le 5d(H)$.
    
    Assume that (i) holds. Using that $v(H)>n^{1-\eps}$ is sufficiently large, we have $$p(H_0) = \frac{d(H_0)}{v(H_0)-1} \ge \frac{d(H)}{v(H)/5-1}=(1-o(1))5p(H).$$
    Therefore, $$v(H_0)^{\rho/\eps}\cdot p(H_0)
        \ge 5^{-\rho/\eps}v(H)^{\rho/\eps}\cdot (1-o(1))5p(H)
        = (1-o(1)){5^{1-\rho/\eps}}\cdot v(H)^{\rho/\eps}\cdot p(H).$$
    As $\rho/\eps < 1$, this contradicts the maximality of $H$. 
   
    Hence, (ii) must hold. 
    Then, we apply \Cref{lemma:dens1_triangle_counting} to $H_1$ with $C=20$.
    To be able to apply this lemma, we first verify the requirements on $p(H_1)$, $\Delta(H_1)$ and the smallest eigenvalue. Since $\rho< 1/2$ and $v(H_1) \ge v(H) / 2$, we have $p(H) > v(H)^{-\rho} > 8\cdot v(H_1)^{-1/2}$. 
    Thus, $$p(H_1) = \frac{d(H_1)}{v(H_1)-1} \ge \frac{d(H)}{4v(H)}\ge \frac{p(H)}{8}>\frac{1}{8}v(H)^{-\rho}>v(H_1)^{-1/2}.$$
    Furthermore, $\Delta(H_1) \le 5d(H) \le 20d(H_1)$.
    In addition, the smallest eigenvalue of $H_1$ has absolute value at most $n^{\gamma'}/2$. 
    As $v(H_1) \ge v(H)/2 > n^{1-\eps}/2$ and $\gamma'/(1-\eps) < 1$, this is at most $\big(2v(H_1)\big)^{\gamma'/(1-\eps)}/2 < v(H_1)^{\gamma'/(1-\eps)}$.
    Here, $\frac{\gamma'}{1-\eps} <1/6$ as $\gamma' < \frac{1-\eps}{6}$.   
    Therefore, we can apply \Cref{lemma:dens1_triangle_counting} to get an induced subgraph $H_2$ of $H_1$ such that $v(H_2) \ge p(H_1)v(H_1)$ and $p(H_2) \ge c_0 \cdot p(H_1)^{\frac{2\gamma'/(1-\eps)}{1-4\gamma'/(1-\eps)}}=c_0p(H_1)^{\frac{2\gamma'}{1-\eps-4\gamma'}}$, where $c_0=c_0(\gamma',\eps) > 0$.
    Hence, $$
        v(H_2)^{\rho/\eps}\cdot p(H_2)
        \ge \big[(p(H_1)v(H_1) \big]^{\rho/\eps}\cdot \Big[c_0 \cdot p(H_1)^{\frac{2\gamma'}{1-\eps-4\gamma'}}\Big]
        = c_0\cdot v(H_1)^{\rho/\eps}\cdot p(H_1)^{\frac{\rho}{\eps}+\frac{2\gamma'}{1-\eps-4\gamma'}}.
    $$
    Recall that $v(H_1) \ge v(H)/2$ and $p(H_1) \ge p(H)/5$. Hence,
   $$
        v(H_2)^{\rho/\eps}\cdot p(H_2)
        \ge c_0'\cdot v(H)^{\rho/\eps}\cdot p(H)^{\frac{\rho}{\eps}+\frac{2\gamma'}{1-\eps-4\gamma'}}    
    $$
    for some  $c_0'=c_0'(\gamma',\eps,\rho)>0$.  Since $H$ maximizes the function $v(H)^{\rho/\eps}\cdot p(H)$, we have 
    $$        v(H)^{\rho/\eps}\cdot p(H)
        \ge v(H_2)^{\rho/\eps}\cdot p(H_2)
        \ge c_0'\cdot v(H)^{\rho/\eps}\cdot p(H)^{\frac{\rho}{\eps}+\frac{2\gamma'}{1-\eps-4\gamma'}}.$$
    Using the condition $1>\frac{\rho}{\eps}+\frac{2\gamma'}{1-\eps-4\gamma'}$, this implies that $p(H) \ge (c_0')^{1/\big(1-\frac{\rho}{\eps}-\frac{2\gamma'}{1-\eps-4\gamma'}\big)}$.
    This completes the proof by taking $c_1=(c_0')^{1/\big(1-\frac{\rho}{\eps}-\frac{2\gamma'}{1-\eps-4\gamma'}\big)}$.
\end{proof}

Next, we prove a counterpart of \Cref{lemma:dens1_main} for the surplus.
For this purpose, we require a surplus version of \Cref{lemma:dens1_triangle_counting} as follows.
As the proofs are more or less the same, with only some parameters changed, we only highlight the key differences.
\begin{lemma}\label{lemma:dens1_triangle_count_surplus}
    Let $\gamma\in (0,1/60)$, $C>2$, and let $G$ be a $n$-vertex graph with  edge density $p>n^{-1/3}$, $\Delta(G)\leq Cpn$, and $\surp(G)\leq n^{1+\gamma}$. 
    Then $G$ has a subgraph on at least $pn$ vertices of edge density at least $c_0 p^{4/5}$ for some $c_0=c_0(\gamma,C)>0$.
\end{lemma}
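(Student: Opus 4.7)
The plan is to mirror the proof of Lemma~\ref{lemma:dens1_triangle_counting}, replacing every use of the smallest eigenvalue with its surplus counterpart. First, Lemma~\ref{claim:groth} gives $\surp^*(G)\le O(\log n)\cdot \surp(G)\le n^{1+\gamma+o(1)}$, and Lemma~\ref{lemma:surp_star}(i) then controls the positive spectral mass $\sum_{\lambda_i>0}\lambda_i = \sum_{\lambda_i<0}|\lambda_i| \le n^{1+\gamma+o(1)}$. Fix any $\gamma^*\in(\gamma,1/60)$ to absorb this $o(1)$ slack. Apply Lemma~\ref{lemma:main_MaxCut} with $\gamma^*<1/6$ to deduce that $4nS_{T^2/(2n)}\ge S_T^2$ for every $T\ge C_0\, n^{1-1/24+\gamma^*/4}$, and feed this into Lemma~\ref{lemma:recursion} with $q=1/24-\gamma^*/4$. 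The condition $\gamma^*<1/60$ is exactly what guarantees $\gamma^*<q$, and the recursion outputs
$$\sum_{0<\lambda_i\le \kappa n}\lambda_i^2\le 50\,\kappa^{1-u}n^2 \qquad \text{for every } \kappa\in[0,1], \text{ where } u=\gamma^*/q.$$

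Choose $\kappa=c\,p^{1/(1-u)}$ with $c$ a small constant so that the right side is $\le pn^2/8$. For the negative part, Lemma~\ref{lemma:surp_star}(ii) together with $\Delta^*\le\Delta(G)\le Cpn$ gives
$$\sum_{\lambda_i<0}\lambda_i^2 \,\le\, O\!\big(\sqrt{pn+1}\big)\cdot \surp^*(G)\,\le\, O\!\big(p^{1/2}n^{3/2+\gamma+o(1)}\big),$$
which is at most $pn^2/8$ thanks to $p>n^{-1/3}$ and $\gamma<1/60$. Combined with $\sum_i\lambda_i^2=2m\ge pn^2/2$, these estimates yield $\sum_{\lambda_i>\kappa n}\lambda_i^2\ge pn^2/4$. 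Meanwhile, Lemma~\ref{lemma:surp_star}(iii) bounds the negative cubic mass by $\sum_{\lambda_i<0}|\lambda_i|^3\le O(pn)\cdot n^{1+\gamma+o(1)}$, which is dominated by $\kappa n\cdot pn^2/8$ because $p>n^{-1/3}$ forces $\kappa\gg n^{-1+\gamma+o(1)}$ in the relevant range of $u$. Counting triangles $N$ by the cube of eigenvalues,
$$6N \,=\,\sum_i \lambda_i^3 \,\ge\, \kappa n\cdot\tfrac{pn^2}{4} - O\!\big(pn^{2+\gamma+o(1)}\big) \,\ge\, c'\,p^{(2-u)/(1-u)}\,n^3.$$

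The rest is identical to the original proof. Some vertex $v$ has $\ge 3N/n$ edges in its neighbourhood $X\subseteq V(G)$, and $|X|\le\Delta(G)\le Cpn$. Let $X'\supseteq X$ be any superset of size $\max(pn,|X|)\le Cpn$, so that the edge density of $G[X']$ is at least
$$\Omega\!\left(\frac{N/n}{|X'|^2}\right) \,=\, \Omega\!\left(p^{(2-u)/(1-u)-2}\right) \,=\, \Omega\!\left(p^{u/(1-u)}\right).$$
A direct calculation shows that $\gamma^*\le 1/60$ forces $q\ge 3/80$ and $u\le 4/9$, hence $u/(1-u)\le 4/5$; since $p\le 1$, we conclude that $G[X']$ has edge density at least $c_0 p^{4/5}$, completing the proof.

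The main obstacle is bookkeeping. Passing from $\surp$ to $\surp^*$ loses a logarithmic factor, so one must absorb this into the margin $\gamma^*\in(\gamma,1/60)$ and verify that $\gamma^*$ still lies within the parameter windows of Lemma~\ref{lemma:main_MaxCut} ($<1/6$) and Lemma~\ref{lemma:recursion} ($<q$). The other delicate step is checking that the threshold $p>n^{-1/3}$---which is stricter than the $p>n^{-1/2}$ needed in Lemma~\ref{lemma:dens1_triangle_counting}---suffices to absorb the error terms from the negative eigenvalues, both of which now scale with $\Delta^*=O(pn)$ through Lemma~\ref{lemma:surp_star}(ii)(iii) rather than with a single number $|\lambda_n|$.
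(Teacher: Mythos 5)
Your proof follows the same strategy as the paper's: convert $\surp$ to $\surp^*$ via Lemma~\ref{claim:groth}, absorb the $\log n$ factor into a slack parameter $\gamma^*\in(\gamma,1/60)$, invoke Lemma~\ref{lemma:surp_star}(i) for the positive mass, get the recursion from Lemma~\ref{lemma:main_MaxCut}, apply Lemma~\ref{lemma:recursion}, handle the negative eigenvalue contributions via Lemma~\ref{lemma:surp_star}(ii)--(iii), and finish by the triangle-count/density-increment argument of Lemma~\ref{lemma:dens1_triangle_counting}. The paper fixes $q=3/80$ while you take $q=1/24-\gamma^*/4$, and you use the sharper estimate $\Delta^*\le Cpn$ (rather than the cruder $\Delta^*\le n$) in bounding the negative second and third moments; both variants lead to the same exponent $u/(1-u)\le 4/5$.

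Two small technical remarks. First, with $q=1/24-\gamma^*/4$ taken at exact equality, the hypothesis of Lemma~\ref{lemma:recursion} (recursion for all $T\ge 2n^{1-q}$) is only guaranteed by Lemma~\ref{lemma:main_MaxCut} if its constant $C_0\le 2$; to be safe one should take $q$ strictly smaller than $1/24-\gamma^*/4$ (which the paper does, picking $q=3/80$ so that $q<1/24-\gamma^*/4$ holds for all $\gamma^*<1/60$). Second, the assertion that $\gamma^*<1/60$ is ``exactly what guarantees $\gamma^*<q$'' is imprecise: with your $q$, the inequality $\gamma^*<q$ only requires $\gamma^*<1/30$; the tighter threshold $1/60$ is needed instead to ensure $q>3/80$ and hence $u=\gamma^*/q<4/9$. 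Neither point affects the validity of the argument in substance.
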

\begin{proof}
    Assume $n$ is sufficiently large.
    By \cref{claim:groth}, $\surp^*(G) = O(\surp(G)\log n) < n^{1+\gamma'}$ for some constant $\gamma' \in (\gamma,1/60)$.
    Then, \Cref{lemma:surp_star} (i) implies
    $$\sum_{\lambda_i>0}\lambda_i= \sum_{0<\lambda_i}|\lambda_i|\leq \surp^*(G)\leq n^{1+\gamma'}.$$
    By \Cref{lemma:main_MaxCut}, we also have
    $$4nS_{\frac{T^2}{2n}}\geq S_T^2$$
    for every $T>C_0n^{1-\frac{1}{24}+\frac{\gamma'}{4}}$.
    So we can apply \Cref{lemma:recursion} with $q=3/80>\gamma'$ and the sequence of positive eigenvalues to get that for every $\kappa \leq 1$,
    $$\sum_{0<\lambda_i\leq \kappa n}\lambda_i\leq c\kappa^{1-\gamma'/q}n^{2}.$$
    Furthermore, by \Cref{lemma:surp_star} (ii), we have
    $$\sum_{\lambda_i<0}\lambda_i^2\leq O\big(n^{1/2}\surp^*(G) \big)=O(n^{3/2+\gamma'}).$$
    Write $u:=\gamma'/q<4/9$, and set $\kappa:=(8c)^{-1/(1-u)}p^{1/(1-u)}$.
    We get $\sum_{\lambda_i<\kappa n}\lambda_i^2\leq pn^{2}/4$.
    Thus, if $N$ is the number of triangles, then
    $$6N=\sum_{i=1}^n\lambda_i^3\geq \kappa n\sum_{\lambda_i\geq H}\lambda_i^2-\sum_{\lambda_i<0}|\lambda_i|^3\geq \kappa pn^3/2-O\big(n\surp^*(G) \big)\geq c'p^{\frac{2-u}{1-u}}n^{3}-n^{2+\gamma'}.$$
    Here, we used \Cref{lemma:surp_star} (iii) in the second inequality.
    The rest of the proof is identical to that of \Cref{lemma:dens1_triangle_counting}.
\end{proof}

Now, we state the counterpart of \Cref{lemma:dens1_main} for surplus -- the other main result of this section.
The proof of \Cref{lemma:dens1_main_surplus} follows almost identically to that of \Cref{lemma:dens1_main}, except that we use \Cref{lemma:dens1_triangle_count_surplus} in place of \Cref{lemma:dens1_triangle_counting}.
\begin{lemma}\label{lemma:dens1_main_surplus}
    Let $\gamma\in (0,1/60)$, $\eps\in (0,1/2)$.
    Then there exist $\rho=\rho(\gamma,\eps)>0$ and $c_1=c_1(\gamma,\eps)>0$ such that the following holds for every $n>n_0(\gamma,\eps)$. 
    Let $G$ be an $n$-vertex graph with edge density $p>n^{-\rho}$ and $\surp(G)\leq n^{1+\gamma}$.
    Then $G$ has a subgraph on at least $n^{1-\eps}$ vertices with edge density at least~$c_1$.
\end{lemma}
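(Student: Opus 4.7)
The plan is to follow the proof of \Cref{lemma:dens1_main} essentially verbatim, with \Cref{lemma:dens1_triangle_count_surplus} replacing \Cref{lemma:dens1_triangle_counting} as the densification step, and the monotonicity of the surplus under passing to induced subgraphs (noted in \Cref{sect:prelim}) playing the role of Cauchy's interlacing theorem for the smallest eigenvalue.

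Since the conclusion is monotone increasing in $\eps$, I may assume $\eps$ is small enough that there exists $\gamma'\in(\gamma,1/60)$ with $(1-\eps)(1+\gamma')>1+\gamma$; concretely, $\eps<(1-60\gamma)/61$ suffices. The purpose of this constraint is the following inheritance property: every induced subgraph $H'\subseteq G$ with $v(H')\geq n^{1-\eps}/2$ satisfies
$$\surp(H')\leq \surp(G)\leq n^{1+\gamma}\leq v(H')^{1+\gamma'},$$
which is precisely the surplus hypothesis required by \Cref{lemma:dens1_triangle_count_surplus} with parameter $\gamma'$. I then pick $\rho=\rho(\gamma,\eps)>0$ small enough that $\rho<\min(1/3,\eps/5)$; in particular $\rho/\eps+4/5<1$.

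Now I mimic the proof of \Cref{lemma:dens1_main}. Let $H$ be an induced subgraph of $G$ maximising the potential $\Phi(H):=v(H)^{\rho/\eps}\cdot p(H)$. From $\Phi(H)\geq \Phi(G)>n^{\rho/\eps-\rho}$ and $p(H)\leq 1$ I deduce $v(H)\geq n^{1-\eps}$, while the same short argument as in \Cref{lemma:dens1_main} gives $p(H)>v(H)^{-\rho}$. Assume for contradiction that $p(H)<c_1$ for some constant $c_1$ to be fixed at the end. Apply \Cref{lemma:dens1_high_degrees} to $H$ with $C=5$: its surplus hypothesis $\surp(H)\leq d(H)v(H)/100$ follows from $\surp(H)\leq n^{1+\gamma}$ and $d(H)>v(H)^{1-\rho}$, and alternative (i) would produce a subgraph of strictly larger $\Phi$-value, contradicting maximality. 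This yields an induced subgraph $H_1\subseteq H$ with $v(H_1)\geq v(H)/2$, $d(H_1)\geq d(H)/4$, and $\Delta(H_1)\leq 5d(H)\leq 20d(H_1)$. Feeding $H_1$ into \Cref{lemma:dens1_triangle_count_surplus} with parameter $\gamma'$ and $C=20$ -- whose remaining hypotheses $p(H_1)>v(H_1)^{-1/3}$ and the surplus bound are verified exactly as in \Cref{lemma:dens1_main} -- produces an induced subgraph $H_2\subseteq H_1$ with $v(H_2)\geq p(H_1)v(H_1)$ and $p(H_2)\geq c_0\cdot p(H_1)^{4/5}$.

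Substituting these bounds gives
$$\Phi(H_2)\geq c_0'\cdot v(H)^{\rho/\eps}\cdot p(H)^{\rho/\eps+4/5}$$
for a positive constant $c_0'=c_0'(\gamma,\eps)$. Since $\Phi(H_2)\leq \Phi(H)$ and $\rho/\eps+4/5<1$, this forces $p(H)\geq c_1:=(c_0')^{1/(1-\rho/\eps-4/5)}$, contradicting the assumption $p(H)<c_1$. The main technical subtlety in this adaptation is the initial reduction to small $\eps$: the hypothesis $\surp(G)\leq n^{1+\gamma}$ with $\gamma<1/60$ does not automatically descend to an induced subgraph in the form required by \Cref{lemma:dens1_triangle_count_surplus} unless $\eps$ is small enough to leave $\gamma'$ some slack below $1/60$. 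Apart from this parameter bookkeeping, the proof is a direct transcription of that of \Cref{lemma:dens1_main}.
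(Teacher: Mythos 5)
Your proof is correct and follows exactly the route the paper indicates for this lemma, namely transcribing the proof of Lemma~\ref{lemma:dens1_main} with Lemma~\ref{lemma:dens1_triangle_count_surplus} replacing Lemma~\ref{lemma:dens1_triangle_counting} and subgraph-monotonicity of the surplus replacing Cauchy interlacing. Your explicit reduction to $\eps<(1-60\gamma)/61$ (so that the surplus hypothesis $\surp(G)\leq n^{1+\gamma}$ descends to $\surp(H_1)\leq v(H_1)^{1+\gamma'}$ with some $\gamma'<1/60$) correctly fills in a parameter detail the paper omits, since unlike Lemma~\ref{lemma:dens1_main} this statement imposes no a priori constraint tying $\eps$ and $\gamma$ together.
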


% \begin{proof}
% The proof of this is almost identical to the proof of Lemma \ref{lemma:dens1_main}, but we use Lemma \ref{lemma:dens1_triangle_count_surplus} instead of Lemma \ref{lemma:dens1_triangle_counting}. We omit further details.
% \end{proof}

\section{Densification --- Phase 2}\label{sect:dens2}

In this section, we prove that graphs of positive constant density and large smallest eigenvalue (or small surplus) are $o(1)$-close to the disjoint union of cliques. In particular, this implies that such graphs must contain subgraphs of density $1-o(1)$. 

In the first step of the proof, we use our main lemmas, Lemma \ref{lemma:main_smallest_eigenalue} and \ref{lemma:main_MaxCut}, to show that the adjacency matrix of a graph is close to a constant-rank matrix, in Frobenius norm. 

In the second step, we will use this approximation to show that $G$ admits an \emph{ultra-strong regularity partition}. Ultra-strong regularity partition is a partition of $V(G)$ where almost all pairs of parts have very high or very low density of edges between them. These are closely related to Szemer\'edi's regularity lemma, but they provide substantially stronger quantitative bounds. Ultra-strong regularity lemmas first appeared in relation to graphs of bounded VC-dimension; see the seminal work of Lov\'asz and Szegedy~\cite{LSz}. Our approach to finding this regularity partition is morally similar to the spectral approach of Frieze and Kannan~\cite{FK} (see also~\cite{T12}), but the good understanding of the spectrum of $G$ coming from Section~\ref{sect:main_lemmas} allows us to obtain a much stronger quantitative result.

Finally, in the last step of the proof, we analyse the regularity partition obtained from the previous step and show that it contains very few induced paths of length 2, i.e. cherries. This shows that the whole graph is close to the union of cliques. We now give the details. 

\begin{lemma}\label{lemma:dens2_finding_low_rank_approximation}
Let $\gamma\in (0, 1/4)$, $\eps>0$, and let $n$ be sufficiently large. If $G$ is an $n$-vertex graph with adjacency matrix $A$ and with $|\lambda_n|\leq n^{\gamma}$, then there is a matrix $B$ of rank $O_{\gamma,\eps}(1)$ such that $\|A-B\|_F^2 \leq \eps n^2$.
\end{lemma}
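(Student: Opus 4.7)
The natural candidate for $B$ is the truncation of the spectral decomposition of $A$ to eigenvalues above some threshold, namely
\[
    B = \sum_{\lambda_i \ge \kappa n} \lambda_i v_i v_i^T
\]
for an appropriately chosen constant $\kappa=\kappa(\gamma,\eps)>0$. With this choice, $\|A-B\|_F^2 = \sum_{\lambda_i < \kappa n} \lambda_i^2$, and the rank of $B$ equals the number of eigenvalues of $A$ above $\kappa n$. Since $\|A\|_F^2 \le n^2$, this number is at most $n^2/(\kappa n)^2 = 1/\kappa^2$, which is a constant depending only on $\gamma$ and $\eps$. So the whole task reduces to choosing $\kappa$ to make $\sum_{\lambda_i < \kappa n} \lambda_i^2 \le \eps n^2$.

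The plan is to apply the recursion machinery of \Cref{lemma:recursion} to the sequence of positive eigenvalues. First, since $\trace(A)=0$, we have $\sum_{\lambda_i>0}\lambda_i = \sum_{\lambda_i<0}|\lambda_i|\le n|\lambda_n|\le n^{1+\gamma}$, which verifies the moment hypothesis. Second, \Cref{lemma:main_smallest_eigenalue} gives $S_T^2 \le 4nS_{T^2/(2n)}$ for all $T \ge 2|\lambda_n|\sqrt{n}$, and by hypothesis $2|\lambda_n|\sqrt{n}\le 2n^{1/2+\gamma}$, so this recursion holds for $T \ge 2n^{1-q}$ with $q := 1/2-\gamma$. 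Since $\gamma<1/4$, we indeed have $0<\gamma<q<1$, so \Cref{lemma:recursion} applies and yields
\[
    \sum_{0 \le \lambda_i \le \kappa n} \lambda_i^2 \le 50\,\kappa^{1-\gamma/q}\, n^2
\]
for every $\kappa\in[0,1]$. Now choose $\kappa = \kappa(\gamma,\eps)>0$ small enough so that $50\kappa^{1-\gamma/q} \le \eps/2$; this is possible because $1-\gamma/q>0$.

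To finish, I combine this with a crude bound on the negative part of the spectrum: $\sum_{\lambda_i<0}\lambda_i^2 \le n\lambda_n^2 \le n^{1+2\gamma} = o(n^2)$, using $\gamma<1/4<1/2$. Hence for $n$ sufficiently large,
\[
    \|A-B\|_F^2 = \sum_{\lambda_i < \kappa n}\lambda_i^2 \le \tfrac{\eps}{2} n^2 + o(n^2) \le \eps n^2,
\]
as desired. There is no real obstacle here beyond correctly setting up the parameter $q$ in \Cref{lemma:recursion}; the heavy lifting has already been done by the main recursive inequality from \Cref{sect:main_lemmas} and its iteration in \Cref{sect:recursion}. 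The only subtlety is that the hypothesis $\gamma<1/4$ is used \emph{exactly} to ensure $q=1/2-\gamma>\gamma$, which in turn ensures the exponent $1-\gamma/q$ in the recursion conclusion is positive, allowing $\kappa$ to be driven to zero as a function of $\eps$ alone.
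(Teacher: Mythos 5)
Your proof is correct and follows essentially the same route as the paper's: truncate the spectral decomposition at $\kappa n$, verify the moment hypothesis via $\sum_{\lambda_i>0}\lambda_i \le n|\lambda_n| \le n^{1+\gamma}$, invoke \Cref{lemma:main_smallest_eigenalue} to feed \Cref{lemma:recursion}, handle the negative spectrum crudely, and bound the rank from $\|A\|_F^2 \le n^2$. The only difference is cosmetic: you take $q = 1/2 - \gamma$ (the largest admissible value) while the paper takes $q = 1/4$; both satisfy $\gamma < q \le 1/2 - \gamma$, and both yield a positive exponent $1 - \gamma/q$, which is all that is needed.
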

\begin{proof}  
    Let $A=\sum_{i=1}^n \lambda_i v_iv_i^T$ be the spectral decomposition of $A$.
    We have 
    $$\sum_{\lambda_i>0}\lambda_i=\sum_{\lambda_i<0}|\lambda_i|\leq |\lambda_n|n\leq n^{1+\gamma}.$$
    Also, \Cref{lemma:main_smallest_eigenalue} implies $4nS_{\frac{T^2}{2n}}\geq S_T^2$ for every $T\geq 2n^{1/2+\gamma}\geq 2|\lambda_n|\sqrt{n}$. Hence, we can apply Lemma~\ref{lemma:recursion} to the sequence of positive eigenvalues with $q:=1/4>\gamma$ to conclude that for every $\kappa \in (0,1)$, we have
    $$\sum_{0<\lambda_i<\kappa n}\lambda_i^2\leq O(\kappa^{1-4\gamma}n^{2}).$$
    Furthermore, $\sum_{\lambda_i<0}\lambda_i^2\leq n|\lambda_n|^2<n^{3/2}$, so 
    $$\sum_{\lambda_i<\kappa n}\lambda_i^2\leq O(\kappa^{1-4\gamma}n^{2})+n^{3/2}.$$
    Hence, we can choose $\kappa$ (depending only on $\gamma$ and $\eps$) such that $\sum_{\lambda_i<\kappa n}\lambda_i^2\leq \eps n^2.$ Having chosen $\kappa$, set $B=\sum_{\lambda_i\geq \kappa n}\lambda_i v_iv_i^T$. 
    It satisfies that
    $$\|A-B\|_{F}^2=\sum_{\lambda_i<\kappa n}\lambda_i^2\leq \eps n^2.$$
    Furthermore, the rank of $B$ is at most $\kappa^{-2}$ as $n^2 \ge \|A\|_F^2 \ge \sum_{\lambda_i \ge \kappa n} \lambda_i^2 \ge \rank(B)\cdot (\kappa n)^2$.
    % $r:=N_H\leq \frac{n^2}{(\kappa n)^2}=\kappa^{-2}$, where in the first inequality we used that  $n^2\geq \sum_{\lambda_i\geq \kappa n}\lambda_i^2\geq (\kappa n)^2 |\{i:\lambda_i>\kappa n\}|$.
\end{proof}

Given a graph $G$, $\delta\in (0,1)$, and two disjoint sets $X,Y\subset V(G)$, the pair $(X,Y)$ is \emph{$\delta$-empty} if there are at most $\delta |X||Y|$ edges between $X$ and $Y$. Also, $(X,Y)$ is \emph{$\delta$-full} if there are at least $(1-\delta) |X||Y|$ edges between $X$ and $Y$. Then $(X,Y)$ is \emph{$\delta$-homogeneous} if it is either $\delta$-empty or $\delta$-full. 
A \emph{$\delta$-regular partition} of $G$ is an equipartition (a partition where all parts share the same size) $V_1,\dots,V_K$ of the vertex set such that  all but at most $\delta K^2$ of the pairs $(V_i,V_j)$ for $1\leq i<j\leq K$ are $\delta$-homogeneous. 

\begin{lemma}\label{lemma:dens2_low_rank_approx}
For every $\delta\in(0,1)$, there exists $\eps>0$ such that the following holds for every positive integer $r$, and every $n$ that is sufficiently large with respect to $\delta,r$. Let $G$ be a graph with adjacency matrix  $A$. Assume that there exists an $n\times n$ symmetric matrix $B$ of rank $r$ such that $\|A-B\|_F^2\leq \eps n^2$. Then $G$ has a $\delta$-regular partition into $K$ parts, where  $1/\delta<K<O_{r,\delta}(1)$.
\end{lemma}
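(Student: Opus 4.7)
The plan is to construct the partition directly from a spectral decomposition of $B$, in the spirit of spectral proofs of Szemer\'edi's regularity lemma, but exploiting $\rank(B)\le r$ to keep the number of parts bounded in terms of $r$ and $\delta$ alone. Write $B=\sum_{i=1}^r \mu_i u_iu_i^T$ with the $u_i$ orthonormal. The triangle inequality gives $\|B\|_F\le \|A\|_F+\sqrt{\eps}\,n\le (1+\sqrt{\eps})n$, so $|\mu_i|\le 2n$. Rescaling by setting $w_i:=\sqrt{n}\,u_i$ and $\nu_i:=\mu_i/n$ yields the pointwise identity $B_{jk}=\sum_{i=1}^r \nu_i w_i(j)w_i(k)$, with $|\nu_i|\le 2$ and $\|w_i\|_2^2=n$.

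I will cluster vertices according to the values of $w_1(j),\dots,w_r(j)$. Pick a large cutoff $L=L(r,\delta)$ and collect into a garbage set $D$ every vertex on which some $|w_i(\cdot)|$ exceeds $L$; Markov's inequality gives $|D|\le rn/L^2$, so taking $L$ large guarantees $|D|\le (\delta/4)n$. On $V(G)\setminus D$, partition $[-L,L]$ into $t=t(r,\delta)$ equal subintervals and assign each vertex a type $\tau(j)\in[t]^r$ encoding the subintervals containing $w_1(j),\dots,w_r(j)$. For any $j\in P_\tau,k\in P_{\tau'}$, each product $w_i(j)w_i(k)$ matches the product of the corresponding interval midpoints up to an additive error $O(L^2/t)$, so $B_{jk}$ lies within $O(rL^2/t)\le \delta/10$ of a block-constant $c^{\tau\tau'}$, provided $t$ is chosen large enough.

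Next, I use the Frobenius bound to turn ``$B$ constant on the block'' into ``$A$ homogeneous on the block''. By averaging, at most an $O(\eps/\delta^2)$-fraction of pairs $(\tau,\tau')$ can have the block-average of $(A_{jk}-B_{jk})^2$ exceeding $\delta^2/10$. For any other block, combining with $|B_{jk}-c^{\tau\tau'}|\le \delta/10$ gives
\[ \frac{1}{|P_\tau||P_{\tau'}|}\sum_{j\in P_\tau,\,k\in P_{\tau'}}(A_{jk}-c^{\tau\tau'})^2=O(\delta^2). \]
Since $A_{jk}\in\{0,1\}$, the left-hand side equals $p(1-c^{\tau\tau'})^2+(1-p)(c^{\tau\tau'})^2$, where $p$ is the block's edge density; minimizing over $c^{\tau\tau'}$ yields $p(1-p)=O(\delta^2)$, hence $p\le \delta$ or $p\ge 1-\delta$. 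Choosing $\eps:=c\delta^3$ for a small absolute constant $c>0$ keeps the non-homogeneous fraction safely below $\delta/2$.

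Finally, I turn $\{P_\tau\}\cup\{D\}$ into an equipartition into $K$ parts by distributing the $\le (\delta/4)n$ vertices of $D$ into the $P_\tau$'s and then subdividing each (slightly enlarged) $P_\tau$ into sub-parts of the common size $\lfloor n/K\rfloor$, for some $K$ chosen just above $t^r/\delta$, so that $1/\delta<K<O_{r,\delta}(1)$. Every sub-block inherits the homogeneity of its parent block, and the parts receiving pieces of $D$ contribute at most another $\delta K^2/2$ non-homogeneous pairs, so the total is below $\delta K^2$. The main obstacle is that the eigenvectors $u_i$ of $B$ need not be smoothly distributed, so a naive level-set partition can be wildly unbalanced; this is precisely why the garbage set $D$ is introduced, quarantining the few heavy coordinates before discretization and then being reincorporated into the equipartition at negligible cost.
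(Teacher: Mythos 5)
Your overall strategy---clustering vertices by the approximate values of rescaled eigenvector coordinates of $B$, quarantining heavy coordinates into a garbage set, discretizing into blocks on which $B$ is nearly constant, and then refining to an equipartition---matches the paper's proof closely, down to the choice of making the discretization scale depend only on $r$ and $\delta$ so that $\eps$ stays independent of $r$. The pre-equipartition part of the argument is fine.

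The final step, however, contains a genuine gap. You assert that ``every sub-block inherits the homogeneity of its parent block,'' and this is false. Homogeneity of the edge density of $A$ does not pass to sub-rectangles: a block with density $\le \delta$ can have all of its edges concentrated inside a small dense sub-rectangle, and once you chop $P_\tau\times P_{\tau'}$ into sub-blocks of size $(n/K)\times(n/K)$ with $K$ as large as $\Theta(t^r/\delta)$, one of these sub-blocks can easily be nearly complete even though the parent was $\delta$-empty. So your argument, which establishes $\delta$-homogeneity of each ``good'' parent block and then declares the sub-blocks homogeneous, does not close. (There is also a secondary issue in the averaging step: since the $P_\tau$'s are wildly unbalanced, the claim that ``at most an $O(\eps/\delta^2)$-fraction of pairs $(\tau,\tau')$'' are bad is a count that is not the right object to control; what one ultimately needs is a count of bad pairs among the equal-sized parts $V_1,\dots,V_K$.)

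The fix is exactly what the paper does, and it is small: the property that \emph{does} descend to sub-blocks is not the homogeneity of $A$ but the near-constancy of $B$, since any $(V_i,V_j)$ with $V_i\subseteq P_\tau$, $V_j\subseteq P_{\tau'}$ still has $|B_{jk}-\eta_{\tau\tau'}|$ uniformly small. One should therefore apply the Frobenius bound \emph{at the sub-block level} after forming the equipartition: if $(V_i,V_j)$ is not $\delta$-homogeneous, then, because $B$ is within $1/3$ (say) of a constant $\eta$ on $V_i\times V_j$, one of the two events ``$A_{jk}=1$ while $\eta<1/2$'' or ``$A_{jk}=0$ while $\eta\ge 1/2$'' occurs on at least a $\delta$-fraction of the sub-block, each contributing $\ge(1/6)^2$ to $(A_{jk}-B_{jk})^2$, so $\|A[V_i\times V_j]-B[V_i\times V_j]\|_F^2\ge\Omega(\delta)\,|V_i||V_j|$. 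Since all the $V_i$ have size $n/K$, the global bound $\|A-B\|_F^2\le\eps n^2$ caps the number of such bad sub-block pairs at $O(\eps K^2/\delta)$, which is $\le\delta K^2/2$ once $\eps=O(\delta^2)$ (still independent of $r$). Your $\eps\propto\delta^3$ is more conservative but also fine.
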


\begin{proof}
We show that $\eps=\delta^2/100$ suffices. Let $B=\sum_{i=1}^r\mu_i w_iw_i^T$ be the spectral decomposition of $B$. Then 
$$\left(\sum_{i=1}^r\mu_i^2\right)^{1/2}=\|B\|_F\leq \|A\|_F+\|B-A\|_F<2n,$$
which shows that $|\mu_i|\leq 2n$ for all $i\in [r]$. Next, we group the coordinates of the vectors $w_1,\dots,w_r$ with respect to how close they are, which then we use to form a partition of $B$ into submatrices that are close to constant matrices. 

Pick $\beta:=10^{-3}\delta^{1/2}r^{-3/2}$.
For $i \in [r]$ and $\ell\in\mathbb{Z}$, let 
$$X_{i,\ell}=\left\{j\in [n]: \frac{\beta}{\sqrt{n}}\ell\leq w_i(j)<\frac{\beta}{\sqrt{n}}(\ell+1)\right\}.$$
That is, for fixed $i\in [n]$, the sets $X_{i,\ell}$ form a partition of the coordinates of $w_i$ into chunks that are close to constant. Next, we show that most coordinates of $w_i$ are covered by $O_{r,\delta}(1)$ of these sets. Set $h:=10^4r^2/\delta$. As $\sum_{j=1}^nw_i(j)^2=1$, the number of $j\in [n]$ not contained in $\bigcup_{\ell=-h}^hX_{i,\ell}$ is at most $n/(h^2\beta^2)< \frac{\delta n}{8r}$. 

Let $I=\{-h,\dots,h\}^r$. For every $\overline{\ell}\in I$, let $X_{\overline{\ell}}=\bigcap_{i\in [r]}X_{i,\overline{\ell}(i)}.$ Then 
\begin{equation}\label{equ:sparse_entries}
    \bigcup_{\overline{\ell}\in I}X_{\overline{\ell}}
    \ge n - r \cdot \frac{\delta n}{8r}
    \geq n\Big(1-\frac{\delta}{8}\Big).
\end{equation}
Thus, the sets $X_{\overline{\ell}}$ form a disjoint covering of all but at most $\delta n/8$ of the indices. 
Next, our goal is to show that if $\overline{\ell}_1,\overline{\ell}_2\in I$, then the submatrix of $B$ induced on $X_{\overline{\ell}_1}\times X_{\overline{\ell}_2}$ is close to a constant matrix. 
We refer to the rectangles $X_{\overline{\ell}_1}\times X_{\overline{\ell}_2}$ as \emph{blocks}.  
Let $$\eta=\eta_{\overline{\ell}_1,\overline{\ell}_2}=\sum_{i=1}^r \mu_i\cdot \frac{\beta^2}{n}\cdot\overline{\ell}_1(i)\overline{\ell}_2(i).$$
Note that for every $(j_1,j_2)\in X_{\overline{\ell}_1}\times X_{\overline{\ell}_2}$, we have $$\left|w_i(j_1)w_i(j_2)-\frac{\beta^2}{n}\overline{\ell_1}(i)\overline{\ell_2}(i)\right| \leq \frac{\beta^2}{n}\cdot 4h,$$
which we get from the general inequality $|ab-cd|\leq |a||b-d|+|d||a-c|$. 
Using that $|\mu_i| \le 2n$,  we have
\begin{equation}\nonumber %\label{equ:non_homogeneous}
    |B_{j_1,j_2}-\eta|\leq \sum_{i=1}^r |\mu_i|\cdot \left|w_i(j_1)w_i(j_2)-\frac{\beta^2}{n}\overline{\ell_1}(i)\overline{\ell_2}(i)\right|\leq \sum_{i=1}^r |\mu_i|\cdot \frac{\beta^2}{n}\cdot 4h\leq 8r\beta^2h <\frac{1}{3}.
\end{equation}
Furthermore, observe that if $X\subset X_{\overline{\ell}_1}$ and $Y\subset X_{\overline{\ell}_2}$ are such that $(X,Y)$ is not $\delta$-homogeneous, then $$\big\|A[X\times Y]-B[X\times Y]\big\|_F^2\geq \frac{\delta}{36}|X|| Y|.$$
Indeed, if $\eta_{\overline{\ell}_1,\overline{\ell}_2}<1/2$, then $A_{j_1,j_2}-B_{j_1,j_2}\geq 1/6$ for every $A_{j_1,j_2}=1$, otherwise $|A_{j_1,j_2}-B_{j_1,j_2}|\geq 1/6$ for every $A_{j_1,j_2}=0$.

Now let $K=|I|/(8\delta)$, and define an  equipartition $V_1,\dots,V_K$  of $V(G)$ as follows. To avoid certain technicalities coming from rounding, we assume that $K$ divides $n$. Let $V^*$ be the set of elements not covered by any of the $X_{\overline{\ell}}$ for $\overline{\ell}\in I$. 
For each $\overline{\ell}\in I$, partition $X_{\overline{\ell}}$ arbitrarily into sets, each of size $n/K$, with at most one exceptional set whose size is less than $n/K$.  Move the elements of the exceptional set to $V^*$. Then finally partition $V^*$ into sets of size $n/K$. Let $V_1,\dots,V_K$ be the collection of all the sets in these partitions. Each $X_{\overline{\ell}}$ contributes at most $n/K$ elements to $V^*$, so in the end we have $|V^*|\leq \delta n/8+|I|\cdot (n/K)\leq \delta n/4$. 
Therefore, at most $\delta K/4$ sets $V_i$ are contained in $V^*$.  We show that $V_1,\dots,V_K$ is a $\delta$-regular partition.  

Assume that $(V_i,V_j)$ is not $\delta$-homogeneous. There are at most $\delta K^2/2$ such pairs where either $V_i\subset V^*$ or $V_j\subset V^*$. On the other hand, if $V_i,V_j\not\in V^*$, then $\|A[V_i\times V_j]-B[V_i\times V_j]\|_F^2\geq \frac{\delta}{36}|V_i||V_j|$. As $\|A-B\|_F^2\leq \eps n^2$, this means that the number of such pairs is at most $36\eps/\delta K^2\leq \delta K^2/2$. Hence, the total number of pairs that are not $\delta$-homogeneous is at most $\delta K^2$, as desired.
\end{proof}

An important feature of Lemma \ref{lemma:dens2_low_rank_approx} that $\eps$ only depends on $\delta$, and not on $r$. 
To continue from this point, we observe that if $X,Y,Z$ are sets of linear sizes such that $(X,Y)$ and $(Y,Z)$ are $\delta$-full, then $(X,Z)$ cannot be $\delta$-empty, assuming $\surp(G)$ is small.

\begin{lemma}\label{lemma:dens2_3parts}
    Let $G$ be a graph on $n$ vertices.
    Let $X,Y,Z\subset V(G)$ be disjoint sets such that $|X|=|Y|=|Z|$ and $(X,Y)$ and $(Y,Z)$ are $\delta$-full, but $(X,Z)$ is $\delta$-empty. Then $\surp(G)\geq (1/4-3\delta)|X|^2$.
\end{lemma}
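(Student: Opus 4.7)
The natural plan is to exhibit a single explicit test vector and plug it into the identity $\surp(G)=\frac{1}{4}\max_{x\in[-1,1]^n}-x^TAx$ established in the proof of \Cref{claim:surplus_and_lambdan}. The density pattern $(X,Y)$ full, $(Y,Z)$ full, $(X,Z)$ empty strongly suggests the two-coloring in which $Y$ lies on one side and $X\cup Z$ on the other. Concretely, I would set $k:=|X|=|Y|=|Z|$ and define $x\in\{-1,0,1\}^n$ by $x_u=+1$ for $u\in X\cup Z$, $x_u=-1$ for $u\in Y$, and $x_u=0$ for all vertices outside $X\cup Y\cup Z$. All contributions from vertices outside $X\cup Y\cup Z$ vanish, so the bilinear form decomposes cleanly over the block structure of $X,Y,Z$.

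Expanding $-x^TAx=-2\sum_{\{u,v\}\in E}x_ux_v$ block by block, edges inside $X$, inside $Y$, inside $Z$, and between $X$ and $Z$ all contribute $-2$ per edge (since the endpoints share a sign), while edges between $X$ and $Y$ or between $Y$ and $Z$ contribute $+2$ per edge. Thus
\[
-x^TAx \;=\; 2e(X,Y)+2e(Y,Z)-2e(X,Z)-2\bigl(e(X)+e(Y)+e(Z)\bigr).
\]
Now I would apply the three hypotheses: $e(X,Y),e(Y,Z)\ge(1-\delta)k^{2}$, $e(X,Z)\le\delta k^{2}$, and the trivial bound $e(X)+e(Y)+e(Z)\le 3\binom{k}{2}\le\tfrac{3}{2}k^{2}$. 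Combining these gives
\[
-x^TAx \;\ge\; 4(1-\delta)k^{2}-2\delta k^{2}-3k^{2} \;=\; (1-6\delta)k^{2}.
\]

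Finally, since $x\in[-1,1]^n$, the identity $\surp(G)=\tfrac{1}{4}\max_{x\in[-1,1]^n}(-x^TAx)$ from \Cref{claim:surplus_and_lambdan} yields $\surp(G)\ge\tfrac{1}{4}(1-6\delta)k^{2}=(\tfrac{1}{4}-\tfrac{3\delta}{2})k^{2}$, which is stronger than the claimed $(\tfrac{1}{4}-3\delta)|X|^{2}$. There is no real obstacle here; the only point worth double-checking is that the zero entries of $x$ are allowed (they are, thanks to the convex extension to $[-1,1]^n$ used in \Cref{claim:surplus_and_lambdan}), and that the internal edges $e(X),e(Y),e(Z)$ are controlled by the trivial clique bound, which is exactly what makes the $3k^{2}/2$ loss admissible.
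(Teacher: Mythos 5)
Your proof is correct and is essentially the paper's argument in a slightly different dress. The paper passes to the induced subgraph $G'=G[X\cup Y\cup Z]$ and computes the surplus of the cut $(Y,X\cup Z)$ directly as $e(G'[Y,X\cup Z])-\tfrac{1}{2}e(G')$; your test vector $x$ (valued $+1$ on $X\cup Z$, $-1$ on $Y$, and $0$ elsewhere) plugged into $\surp(G)=\tfrac14\max_{x\in[-1,1]^n}(-x^TAx)$ evaluates to exactly that same quantity, with the zero entries playing the role of restricting to $G'$. Your bookkeeping is in fact marginally tighter, yielding the constant $\tfrac14-\tfrac{3\delta}{2}$ rather than the paper's $\tfrac14-\tfrac{5\delta}{2}$, but both clear the stated threshold $\tfrac14-3\delta$.
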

\begin{proof}
    Let $G'=G[X\cup Y\cup Z]$, and consider the cut $(Y,X\cup Z)$ in $G'$. This cut has at least $|X|^2(2-2\delta)$ edges. 
    On the other hand, $e(G')\leq \frac{3}{2}|X|^2+2|X|^2+\delta|X|^2\leq (\frac{7}{2}+\delta)|X|^2$. Therefore, 
    \[\surp(G)\geq \surp(G')\ge e\big(G[Y,X\cup Z] \big)-\frac{e(G')}{2}\geq  |X|^2(2-2\delta)-\left(\frac{7}{4}+\frac{\delta}{2}\right)|X|^2\ge|X|^2\left(\frac{1}{4}-3\delta\right).\hfill \qedhere\]
\end{proof}

A graph is the disjoint union of cliques if and only if it does not contain an induced \emph{cherry}, that is, the path of length $2$. Therefore, by the \emph{induced graph removal lemma} \cite{AFKSz}, being close to the disjoint union of cliques is equivalent to having few cherries. For the special case of cherries, one does not need the full power of this lemma, and a simple proof of the following quantitatively stronger bound is given by Alon and Shapira \cite{AS06}.

\begin{lemma}\label{lemma:dens2_removal}
Let $G$ be an $n$-vertex graph containing at most $\eps n^3$ cherries. Then $G$ is $\eps^{c}$-close to the disjoint union of cliques for some absolute constant $c>0$.

Furthermore, if $G$ is $\delta$-close to the union of cliques, then $G$ contains at most $3\delta n^3$ cherries.
\end{lemma}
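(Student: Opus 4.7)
For the ``furthermore'' direction (close to a union of cliques implies few cherries), I would do a direct count. Let $H$ be a disjoint union of cliques that differs from $G$ in a set $M$ of at most $\delta n^2$ pairs. Since $H$ contains no induced $P_3$, every cherry in $G$ must involve at least one pair from $M$. Each pair of $M$ is contained in exactly $n-2$ unordered triples, so the number of triples containing some pair of $M$ --- and hence the number of cherries in $G$ --- is at most $|M|(n-2)\le\delta n^3\le 3\delta n^3$.

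For the main direction, the first step is to localise the cherry bound. Counting each cherry by its middle vertex gives
$$\sum_{v\in V(G)}\overline{e}(G[N(v)])=\#\{\text{cherries of }G\}\le\eps n^3,$$
where $\overline{e}$ is the number of non-edges of the induced subgraph. By Markov, all but a set $B$ of at most $\eps^{1/2}n$ \emph{bad} vertices satisfy $\overline{e}(G[N(v)])\le\eps^{1/2}n^2$, so every good neighbourhood is already close to a clique. Analogously, counting (cherry, edge)-incidences yields $\sum_{uv\in E}|N(u)\triangle N(v)|=O(\eps n^3)$, so at most $O(\eps^{1/2}n^2)$ edges $uv$ are \emph{bad} in the sense that $|N(u)\triangle N(v)|>\eps^{1/2}n$. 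I would delete all edges incident to $B$ together with all bad edges, paying at most $O(\eps^{1/2}n^2)$ modifications and producing a cleaned graph in which every surviving edge connects vertices with $|N_G(u)\triangle N_G(v)|\le\eps^{1/2}n$.

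Next I would partition the good vertices using the approximate-twin relation $u\sim v\iff|N_G(u)\triangle N_G(v)|\le\eps^{1/4}n$, which is approximately transitive by the triangle inequality for the symmetric difference. Selecting representatives $v_1,\dots,v_k$ of well-separated classes $C_1,\dots,C_k$, the target graph is the disjoint union of cliques on these classes (plus $|B|$ isolated vertices). For the within-cluster missing edges, every $x\in C_i$ satisfies $|N(v_i)\setminus N(x)|\le\eps^{1/4}n$, so at most $\eps^{1/4}n$ vertices of $C_i$ (which sits essentially inside $N(v_i)$) fail to be neighbours of $x$; summing over $x$ yields at most $\eps^{1/4}n^2/2$ missing edges. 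For the between-cluster extra edges, an edge $uw$ with $u\in C_i$, $w\in C_j$ and $i\neq j$ produces a cherry $\{v_i,u,w\}$ (because $w\not\sim v_i$ rules out $v_iw\in E$), and amplifying with the $\Omega(|C_i|)$ vertices $x\in C_i$ adjacent to $u$ whose bond to $w$ is absent shows that each such edge witnesses $\Omega(n^{1-o(1)})$ distinct cherries, bounding their total number by $\eps^c n^2$.

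The main obstacle lies in this between-cluster amplification: for an edge $uw$ straddling two clusters, one must exhibit $\Omega(n^{1-o(1)})$ witnesses $x\in C_i$ with $xu\in E$ and $xw\notin E$. Adjacency $xu\in E$ is essentially automatic from $C_i$ being close to a clique, but non-adjacency $xw\notin E$ requires the clusters to be genuinely \emph{separated} --- that is, the representative-to-representative symmetric difference $|N(v_i)\triangle N(v_j)|$ must dominate the within-class radius $\eps^{1/4}n$ by a large enough margin. Ensuring this separation while simultaneously covering all good vertices by a partition into such classes is the delicate combinatorial ingredient, and the calibration of the thresholds ($\eps^{1/2}$ for bad vertices and bad edges, $\eps^{1/4}$ for approximate twins, and the separation gap) is what ultimately determines the polynomial proximity exponent $c$.
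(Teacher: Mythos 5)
Your ``furthermore'' direction is essentially identical to the paper's one-line argument: pick the close disjoint union of cliques $\widetilde{G}$, note that every cherry must contain a pair from $\widetilde{G}\triangle G$, and multiply by the at most $n-2$ triples through each modified pair. Your bound of $\delta n^3$ is in fact slightly sharper than the stated $3\delta n^3$; the extra factor in the lemma is just slack and not a concern.

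For the first (and main) direction, you should be aware that the paper does \emph{not} give a self-contained proof: it simply cites Alon and Shapira~\cite{AS06}, who prove a polynomial induced-removal lemma for $P_3$. Your proposal instead attempts a direct proof via a middle-vertex/edge double count followed by an approximate-twin clustering. The local statistics you extract (most neighbourhoods are near-cliques, most edges have small neighbourhood symmetric difference) are correct and are morally the same starting point that Alon--Shapira use. However, as you yourself flag in the final paragraph, the crucial step --- producing a partition into approximate-twin classes that are mutually \emph{separated} by a margin dominating the within-class radius, and then amplifying each between-cluster edge to $\Omega(n)$ cherries --- is not actually carried out. As written there is a genuine gap: the ``approximate transitivity'' of the relation $u\sim v \iff |N(u)\triangle N(v)|\le \eps^{1/4}n$ does not by itself yield a bona fide partition, nor does it force inter-class representatives to have far-apart neighbourhoods. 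There is also a smaller unaddressed point in the within-cluster count: you implicitly assume $C_i\subseteq N(v_i)$, which does not follow from $|N(x)\triangle N(v_i)|$ being small without also showing $x$ is adjacent to $v_i$. To make this route rigorous you would need to run the iterative refinement/cleaning argument (essentially reproving the $P_3$-specific case of Alon--Shapira's lemma), which is precisely the ``delicate combinatorial ingredient'' you mention but do not supply. In short: your outline is plausible and follows a standard and reasonable path, but as a proof it is incomplete, whereas the paper sidesteps the issue entirely by citation.
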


\begin{proof}
The first part follows from Alon and Shapira \cite{AS06}, so we only prove the second part. Let $\widetilde{G}$ be the disjoint union of cliques that is $\delta$-close to $G$. Then each cherry of $G$ contains at least one edge or non-edge from $\widetilde{G}\Delta G$, so we are  done.
\end{proof}

Now we are ready to prove Theorem \ref{thm:MAIN_SE1}, which we restate here for convenience.

\begin{theorem}
Let $\gamma\in (0, 1/4)$, $\delta>0$, and let $n$ be sufficiently large. If $G$ is an $n$-vertex graph with $|\lambda_n|\leq n^{\gamma}$, then $G$ is $\delta$-close to the vertex-disjoint union of cliques. 
\end{theorem}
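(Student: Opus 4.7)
The plan is to chain together the three main lemmas of this section. First, observe that by \Cref{claim:surplus_and_lambdan}, the hypothesis $|\lambda_n|\leq n^\gamma$ immediately yields $\surp(G)\leq n^{1+\gamma}/4 = o(n^2)$, and this will be the source of ``structural rigidity'' beyond what the spectral approximation alone provides. Fix a small auxiliary parameter $\delta' \ll \delta$ to be specified at the end. Apply \Cref{lemma:dens2_low_rank_approx} with parameter $\delta'$ to obtain a threshold $\eps = \eps(\delta')$, and apply \Cref{lemma:dens2_finding_low_rank_approximation} with this $\eps$ (and parameter $\gamma$) to obtain a symmetric matrix $B$ of rank $r=O_{\gamma,\delta'}(1)$ with $\|A-B\|_F^2\leq \eps n^2$. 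Feeding this into \Cref{lemma:dens2_low_rank_approx} then produces a $\delta'$-regular equipartition $V_1,\ldots,V_K$ of $V(G)$ with $K=O_{r,\delta'}(1)$ a constant. Write $m := n/K$.

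Next, pass to an auxiliary graph $H$ on vertex set $[K]$ by declaring $ij\in E(H)$ iff the pair $(V_i,V_j)$ is $\delta'$-full. Call $(V_i,V_j)$ \emph{bad} if it is not $\delta'$-homogeneous; by regularity, at most $\delta' K^2$ pairs are bad. The key observation is that, modulo bad pairs, $H$ contains no induced path of length two. Indeed, if $ij,jk\in E(H)$ and $ik\notin E(H)$ while all three pairs $(V_i,V_j), (V_j,V_k), (V_i,V_k)$ are $\delta'$-homogeneous, then $(V_i,V_k)$ must be $\delta'$-empty, and \Cref{lemma:dens2_3parts} gives $\surp(G) \geq (1/4-3\delta')m^2 = \Omega_\delta(n^2)$, contradicting $\surp(G) = o(n^2)$ for $n$ large. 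Hence after deleting the at most $\delta' K^2$ edges of $H$ coming from bad pairs, the remaining graph is $P_2$-induced-free and therefore a disjoint union of cliques on some partition $C_1,\ldots,C_L$ of $[K]$.

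Finally, lift this structure back to $G$. Set $U_\ell := \bigcup_{j\in C_\ell}V_j$, and let $\widetilde{G}$ be complete inside each $U_\ell$ and empty between distinct $U_\ell$'s. The symmetric difference $E(G)\triangle E(\widetilde{G})$ splits into three contributions: $\delta'$-full pairs within some $U_\ell$ contribute at most $\delta' m^2$ non-edges of $G$ each, totalling at most $\delta' n^2$; $\delta'$-empty pairs between distinct $U_\ell$'s contribute at most $\delta' n^2$ in total; and bad pairs contribute at most $\delta' K^2\cdot m^2 = \delta' n^2$. Hence $|E(G)\triangle E(\widetilde{G})|\leq 3\delta' n^2$, and setting $\delta' := \delta/3$ (and pulling $\eps$ and $n_0$ back through the chain accordingly) completes the proof.

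The only real obstacle here is bookkeeping the dependence of parameters: the rank $r$ in \Cref{lemma:dens2_finding_low_rank_approximation} depends on the target $\eps$, and $K$ in \Cref{lemma:dens2_low_rank_approx} depends on both $r$ and $\delta'$; one must verify that the forbidden-cherry argument ultimately compares $\surp(G)=O(n^{1+\gamma})$ against a quantity $\Omega(n^2/K^2)$ where $K=K(\delta)$ is a \emph{fixed} constant independent of $n$, so that the contradiction kicks in for all sufficiently large $n$. Once this is set up cleanly, the rest of the argument is routine.
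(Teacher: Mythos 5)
The first two steps of your argument (low-rank approximation via \Cref{lemma:dens2_finding_low_rank_approximation}, then a $\delta'$-regular partition via \Cref{lemma:dens2_low_rank_approx}) exactly match the paper's. The cherry observation via \Cref{lemma:dens2_3parts} is also the same. But the step ``after deleting the at most $\delta' K^2$ edges of $H$ coming from bad pairs, the remaining graph is $P_2$-induced-free'' contains a genuine gap, and it is precisely the step where the paper instead invokes the induced-$P_2$ removal lemma (\Cref{lemma:dens2_removal}).

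First, bad pairs are not edges of $H$: you defined $ij\in E(H)$ iff $(V_i,V_j)$ is $\delta'$-full, while a bad pair is by definition neither $\delta'$-full nor $\delta'$-empty, hence not an $H$-edge. So your deletion step deletes nothing, and the claim reduces to ``$H$ is already cherry-free'', which is false in general. Even under the most charitable reading --- deleting all $H$-edges that participate in some cherry whose non-edge is a bad pair --- the bound $\delta'K^2$ is too optimistic. For instance, if $H$ is the complete graph on $[K]$ with exactly one non-edge $\{1,2\}$, and $(V_1,V_2)$ is the unique bad pair, then every edge incident to $1$ or $2$ (about $2K$ edges) lies in a cherry with non-edge $\{1,2\}$, yet $\delta'K^2$ can be just $1$. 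More fundamentally, the property ``every cherry of $H$ has a bad non-edge'' does not let you extract a union-of-cliques partition by local surgery: the cluster graph can be a long path $1-2-\cdots-K$ with only the $K-2$ distance-two pairs being bad, and then the transitive closure of $E(H)$ is all of $[K]$ even though most pairs $(V_i,V_j)$ are $\delta'$-empty. Taking the connected components of $H$ as your classes $C_\ell$ therefore overshoots badly, and there is no obvious $O(\delta'K^2)$-edge deletion that recovers a union of cliques.

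The fix is precisely the removal lemma. One can either (a) apply Alon--Shapira's induced-$P_2$ removal to the cluster graph $H$ (which has $\leq \delta' K^3$ cherries, hence is $O(\delta'^c K^2)$-edit-close to a union of cliques, and then lift as you do), or (b) count cherries in $G$ directly and apply the removal lemma to $G$ --- the latter is what the paper does: it shows the four cherry-categories each contribute $O(\delta_0 n^3)$ cherries and then cites \Cref{lemma:dens2_removal}. Either way, the removal lemma (with its unavoidable exponent loss) is an essential ingredient that your proof tries to sidestep with an unjustified cleanup of $H$.
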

\begin{proof}
    Let $\delta_0>0$ be a sufficiently small constant, depending only on $\delta$. Let $\eps$ be the constant required so that Lemma~\ref{lemma:dens2_low_rank_approx} would hold with the parameter $\delta_0$. 
    By Lemma~\ref{lemma:dens2_finding_low_rank_approximation}, there is a matrix $B$ of rank $r=O_{\gamma,\eps}(1)$ such that $\|A-B\|_F^2\leq \eps n^2$. Hence, we can apply Lemma \ref{lemma:dens2_low_rank_approx} to conclude that there is a $\delta_0$-regular partition $V_1,\dots,V_K$ for some $K$ with $1/\delta_0< K <O_{r,\delta_0}(1)=O_{\gamma,\delta}(1)$.

    In order to finish the proof, we count cherries. Let $x,y,z$ be the vertices of a cherry with $xy,yz\in E(G), xz\not\in E(G)$, and let $x\in V_i, y\in V_j, z\in V_{k}$. We put this cherry into one of the following categories:
    \begin{itemize}
        \item[(i)] $i,j,k$ are not all distinct,
        \item[(ii)] $(V_i,V_j)$ or $(V_j,V_k)$ or $(V_i,V_k)$ is not $\delta_0$-homogeneous,
        \item[(iii)] $(V_i,V_j)$ or $(V_j,V_k)$ is $\delta_0$-empty,
        \item[(iv)]  $(V_i,V_k)$ is $\delta_0$-full.
    \end{itemize}
    By \Cref{lemma:dens2_3parts}, we cannot have that $(V_i,V_j)$ and $(V_j,V_k)$ are $\delta_0$-full, but $(V_i,V_k)$ is $\delta_0$-empty. Therefore, each cherry belongs to one of the four categories. We observe that the number of cherries belonging to each category is at most $O(\delta_0 n^3)$. 
    Indeed, for (i), there are $O(K^2)$ choices for the set $\{i,j,k\}$, and then there are at most $(n/K)^3$ choices for $x,y,z$, so in total $O(K^2 (n/K)^3)=O(n^3/K)=O(\delta_0 n^3)$.
    For (ii), we use the fact that at most $\delta_0 K^2$ pairs $(V_i,V_j)$ are not $\delta_0$-homogeneous to derive that the number of choices for $(V_i,V_j,V_k)$ is $O(\delta_0 K^3)$. So the number of cherries belonging to (ii) is $O(\delta_0 K^3(n/K)^3)=O(\delta_0 n^3)$.
    For (iii) and (iv), we observe that if we fixed $(V_i,V_j,V_k)$, then there are at most $\delta_0 (n/K)^3$ choices for $x,y,z$. Indeed, if say $(V_i,V_j)$ is $\delta_0$-empty, the pair $(x,y)$ can be chosen from only the $\delta_0 (n/K)^2$ edges between $V_i$ and $V_j$. So the number of cherries belonging to (iii) or (iv) is $O(\delta_0n^3)$.

    In conclusion, the number of cherries in $G$ is $O(\delta_0 n^3)$. But then by Lemma \ref{lemma:dens2_removal}, $G$ is $O(\delta_0)^c$-close to a disjoint union of cliques for some absolute constant $c>0$. We are done by setting $\delta_0>0$ sufficiently small with respect to $\delta$.
\end{proof}

The following immediate corollary of this lemma will be used later.

\begin{corollary}\label{cor:dens2_main}
Let $\gamma\in (0,1/4)$, $p>0$ and $\delta>0$, then the following holds for every sufficiently large $n$. Let $G$ be an $n$-vertex graph of edge density $p$ such that $|\lambda_n|\leq n^{\gamma}$. Then $G$ contains a subgraph on at least $pn/2$ vertices of edge density at least $1-\delta$.
\end{corollary}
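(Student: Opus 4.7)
The plan is to derive this directly from Theorem~\ref{thm:MAIN_SE1} by applying it with a suitably small closeness parameter $\delta'$ depending on $p$ and $\delta$, and then taking the desired subgraph to be the largest clique in the approximating disjoint union of cliques. More precisely, I would set $\delta' := \min(p/6,\, \delta p^2/8)$ and invoke Theorem~\ref{thm:MAIN_SE1} (for $n$ large enough depending on $\gamma, p, \delta$) to obtain a disjoint union of cliques $\widetilde{G}$ on $V(G)$ satisfying $|E(G)\triangle E(\widetilde{G})| \leq \delta' n^2$. Let $n_1 \geq n_2 \geq \cdots$ be the sizes of the cliques of $\widetilde{G}$, and let $C$ be a clique of $\widetilde{G}$ with $|C|=n_1$.

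Next I would lower-bound $n_1$. On the one hand, $\sum_i \binom{n_i}{2} \leq \tfrac{n_1-1}{2}\sum_i n_i = \tfrac{(n_1-1)n}{2}$; on the other hand, the edge count gives $\sum_i\binom{n_i}{2} = e(\widetilde{G}) \geq e(G) - \delta' n^2 = p\binom{n}{2} - \delta' n^2$. Rearranging these yields $n_1 \geq (p - 2\delta')n - p + 1$, which is at least $pn/2$ for $n$ large, since $\delta' \leq p/6$.

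Finally, since $C$ is a clique in $\widetilde{G}$, every non-edge of $G$ inside $C$ lies in $E(\widetilde{G}) \triangle E(G)$, so there are at most $\delta' n^2$ such non-edges. Consequently, the edge density of $G[C]$ is at least $1 - \frac{\delta' n^2}{\binom{n_1}{2}} \geq 1 - \frac{8\delta'}{p^2} \geq 1-\delta$, using $n_1 \geq pn/2$ and $\delta' \leq \delta p^2/8$. Thus $G[C]$ is the required induced subgraph. There is no real obstacle here: all of the depth is absorbed by Theorem~\ref{thm:MAIN_SE1}, and the only care needed is to tune $\delta'$ as a function of $p$ and $\delta$ so that the two quantitative requirements — size at least $pn/2$ and density at least $1-\delta$ — hold simultaneously.
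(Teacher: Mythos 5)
Your approach is identical to the paper's: apply Theorem~\ref{thm:MAIN_SE1} with a closeness parameter $\delta'$ tuned in terms of $p$ and $\delta$, then take the largest clique in the approximating disjoint union of cliques. The only slip is a constant: from $n_1\geq pn/2$ alone one only gets $\binom{n_1}{2}\geq n_1^2/4\geq p^2n^2/16$, so the density estimate should be $1-\tfrac{16\delta'}{p^2}$, which forces $\delta'\leq \delta p^2/16$ (as in the paper) rather than $\delta p^2/8$; this is immediately fixable and does not affect the structure of the argument.
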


\begin{proof}
    Let $\delta_0=\delta p^2/16$. 
    By \Cref{thm:MAIN_SE1}, $G$ is $\delta_0$-close to some graph $H$ that is the disjoint union of cliques. Let $C_1,\dots,C_k$ be the vertex sets of the cliques forming $H$, then 
    $$e(H)=\sum_{i=1}^k\binom{|C_i|}{2}\leq \sum_{i=1}^k\frac{|C_i|^2}{2}\leq \frac{n}{2}\cdot \max_{i\in [k]} |C_i|.$$ 
    As $e(H)\geq e(G)-\delta_0 n^2\geq pn^2/4$, this shows that at least one of the $C_i$'s has size at least $pn/2$.
    Without loss of generality, say $|C_1|\geq pn/2$. 
    Then $G[C_1]$ has at least $\binom{|C_1|}{2}-\delta_0n^2$ edges, so $G[C_1]$ has edge density at least $1-\delta_0n^2/\binom{|C_1|}{2}\ge 1-\delta_0n^2/(|C_1|^2/4)\ge 1-\delta_0n^2/(p^2n^2/16)=1-\delta$, as desired.
\end{proof}

Next, we present the MaxCut version of the previous lemma, whose proof is almost identical. We only highlight the key differences.

\begin{theorem}\label{thm:dens2_main_MaxCut}
Let $\gamma\in (0,1/30)$, $\delta>0$, then the following holds for every sufficiently large $n$. Let $G$ be an $n$-vertex graph such that $\surp(G)\leq n^{1+\gamma}$. Then $G$ is $\delta$-close to a disjoint union of cliques.
\end{theorem}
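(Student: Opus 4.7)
The plan is to follow, almost verbatim, the proof of Theorem~\ref{thm:MAIN_SE1}, substituting the hypothesis $|\lambda_n|\le n^\gamma$ with the surplus hypothesis $\surp(G)\le n^{1+\gamma}$ at each step. The three-stage strategy is unchanged: (i) produce a constant-rank Frobenius approximation of the adjacency matrix $A$; (ii) convert it into an $\delta_0$-regular partition via Lemma~\ref{lemma:dens2_low_rank_approx}; and (iii) count induced cherries, using Lemma~\ref{lemma:dens2_3parts} to rule out the ``full-full-empty'' triangle and conclude via Lemma~\ref{lemma:dens2_removal}.

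The work is concentrated in step (i), which plays the role of the MaxCut analogue of Lemma~\ref{lemma:dens2_finding_low_rank_approximation}. First, pass from the surplus to its semidefinite relaxation via Lemma~\ref{claim:groth}: if $\surp(G)\le n^{1+\gamma}$ then $\surp^*(G)\le O(\surp(G)\log n)\le n^{1+\gamma'}$ for any fixed $\gamma'\in(\gamma,1/30)$ and $n$ large. By Lemma~\ref{lemma:surp_star}(i) this yields $\sum_{\lambda_i>0}\lambda_i=\sum_{\lambda_i<0}|\lambda_i|\le n^{1+\gamma'}$. Lemma~\ref{lemma:main_MaxCut} supplies the recursion $4nS_{T^2/(2n)}\ge S_T^2$ for all $T\ge Cn^{1-1/24+\gamma'/4}$, so we may feed the sequence of positive eigenvalues into Lemma~\ref{lemma:recursion} with $q:=1/24-\gamma'/4$. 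Because $\gamma'<1/30$ we have $\gamma'<q<1$, and the recursion gives
\[
\sum_{0<\lambda_i<\kappa n}\lambda_i^2 \;\le\; O\!\left(\kappa^{1-\gamma'/q}n^2\right) \qquad \text{for every } \kappa\in(0,1).
\]
For the negative tail, Lemma~\ref{lemma:surp_star}(ii) combined with the trivial bound $\Delta^*(G)\le n-1$ yields $\sum_{\lambda_i<0}\lambda_i^2 \le O(\sqrt{n}\cdot \surp^*(G)) = O(n^{3/2+\gamma'}) = o(n^2)$. Fixing $\kappa$ small enough in terms of $\gamma,\eps$, both contributions become at most $\eps n^2/2$, and the matrix $B=\sum_{\lambda_i\ge\kappa n}\lambda_i v_iv_i^T$ has rank $\le\kappa^{-2}=O_{\gamma,\eps}(1)$ and satisfies $\|A-B\|_F^2\le\eps n^2$.

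Steps (ii) and (iii) are essentially identical to their counterparts in the proof of Theorem~\ref{thm:MAIN_SE1}, since Lemma~\ref{lemma:dens2_low_rank_approx} is a purely matrix-theoretic statement with no spectral hypothesis. Pick $\delta_0$ small in terms of $\delta$, let $\eps=\eps(\delta_0)$ be the constant supplied by Lemma~\ref{lemma:dens2_low_rank_approx}, apply the previous step to get $B$ of rank $r=O_{\gamma,\delta}(1)$, and then produce an $\delta_0$-regular partition $V_1,\dots,V_K$ with $1/\delta_0<K<O_{\gamma,\delta}(1)$. Each induced cherry $xyz\in V_i\times V_j\times V_k$ is placed in one of the four categories (i)--(iv) as in the proof of Theorem~\ref{thm:MAIN_SE1}. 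The forbidden configuration, where $(V_i,V_j)$ and $(V_j,V_k)$ are $\delta_0$-full but $(V_i,V_k)$ is $\delta_0$-empty, is ruled out because Lemma~\ref{lemma:dens2_3parts} would give $\surp(G)\ge(1/4-3\delta_0)(n/K)^2=\Omega(n^2)$, contradicting the hypothesis $\surp(G)\le n^{1+\gamma}$ for $n$ large. The standard bookkeeping bounds each category by $O(\delta_0 n^3)$ cherries, so $G$ has at most $O(\delta_0 n^3)$ induced cherries and Lemma~\ref{lemma:dens2_removal} finishes the proof after choosing $\delta_0$ small enough in terms of $\delta$.

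The main quantitative obstacle is the $\sqrt{\Delta^*+1}\le \sqrt{n}$ loss in Lemma~\ref{lemma:surp_star}(ii), but this is harmless: the resulting $O(n^{3/2+\gamma'})$ bound on the negative quadratic tail is $o(n^2)$ whenever $\gamma'<1/2$. The binding constraint $\gamma<1/30$ comes instead from needing $q=1/24-\gamma'/4$ to strictly exceed $\gamma'$ in order to invoke Lemma~\ref{lemma:recursion}, which forces $\gamma'<1/30$ and hence $\gamma<1/30$; exactly the regime in which Lemma~\ref{lemma:main_MaxCut} provides the recursive inequality required by the density-approximation step.
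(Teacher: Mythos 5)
Your proposal follows essentially the same route as the paper's (very terse) proof: pass from $\surp(G)$ to $\surp^*(G)\le n^{1+\gamma'}$ via Lemma~\ref{claim:groth}, obtain the recursive inequality from Lemma~\ref{lemma:main_MaxCut}, feed it into Lemma~\ref{lemma:recursion} to get a constant-rank Frobenius approximation of $A$, control the negative tail by Lemma~\ref{lemma:surp_star}(ii), and then re-run the regularity-plus-cherry-counting steps from Theorem~\ref{thm:MAIN_SE1}, using Lemma~\ref{lemma:dens2_3parts} against $\surp(G)=o(n^2)$ to rule out the full--full--empty configuration. Both the structure and the invoked lemmas match the paper's.

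One technical overreach to flag. You set $q := 1/24-\gamma'/4$, which is the extremal value for which $2n^{1-q}$ and the threshold $Cn^{1-1/24+\gamma'/4}$ of Lemma~\ref{lemma:main_MaxCut} have the \emph{same} exponent. Lemma~\ref{lemma:recursion}'s hypothesis requires $S_T^2\le 4nS_{T^2/2n}$ to hold for every $T\ge 2n^{1-q}$, and its proof applies this at $T=\kappa_0 n=2n^{1-q}$ in the very first inductive step. With your choice of $q$ this is available only if the unspecified absolute constant $C$ in Lemma~\ref{lemma:main_MaxCut} satisfies $C\le 2$, which is not given. The paper sidesteps this by fixing $q=1/30$, which is strictly below $1/24-\gamma'/4$ (equivalent to $\gamma'<1/30$), so that $2n^{1-q}$ eventually dominates $Cn^{1-1/24+\gamma'/4}$ for $n$ large. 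Your choice should be nudged strictly below $1/24-\gamma'/4$; since $\gamma<1/30$ is strict, one can still pick $\gamma'$ so that both $\gamma'<q$ and $1-q>1-1/24+\gamma'/4$ hold. Aside from this boundary adjustment, the argument is correct.
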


\begin{proof}
    Note that \cref{claim:groth} implies $\surp^*(G) = O(\surp(G)\log n) \le n^{1+\gamma'}$  for some constant $\gamma' \in (\gamma,1/30)$.
    One of the key differences compared to the proof of \Cref{thm:MAIN_SE1} is that we use Lemma \ref{lemma:main_MaxCut} to have $4nS_{\frac{T^2}{2n}}\geq S_T^2$ satisfied for every $T\geq n^{1-\frac{1}{24}+\frac{\gamma'}{4}}$. 
    Then, setting $q=1/30$, we know that $\gamma' < q$ and $1-q >1-\frac{1}{24}+\frac{\gamma'}{4}$.
    So we can apply \Cref{lemma:recursion}. Another difference is that we bound $\sum_{0<\lambda_i}\lambda_i^2$ using \Cref{lemma:surp_star} (ii), which gives
    $\sum_{0<\lambda_i}\lambda_i^2\leq O(\sqrt{n}\surp^*(G))\leq O(n^{3/2+\gamma'})=o(n^2)$. 
\end{proof}

Finally, we deduce the immediate corollary of this theorem about finding dense subgraphs. The proof of this is identical to the proof of Corollary \ref{cor:dens2_main}, so we omit it.

\begin{corollary}\label{cor:dens2_main_MaxCut}
Let $\gamma\in (0,1/30)$, $p>0$ and $\delta>0$, then the following holds for every sufficiently large $n$. Let $G$ be an $n$-vertex graph of edge density $p$ such that $\surp(G)\leq n^{1+\gamma}$. Then $G$ contains a subgraph on at least $pn/2$ vertices of edge density at least $1-\delta$.
\end{corollary}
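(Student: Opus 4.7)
The plan is to follow essentially the same strategy as the proof of Corollary~\ref{cor:dens2_main}, but invoking Theorem~\ref{thm:dens2_main_MaxCut} in place of Theorem~\ref{thm:MAIN_SE1}. Set $\delta_0 := \delta p^2/16$, which depends only on $\delta$ and $p$. Since $\surp(G)\leq n^{1+\gamma}$ and $\gamma < 1/30$, Theorem~\ref{thm:dens2_main_MaxCut} (applied with parameter $\delta_0$ in place of $\delta$) guarantees, for $n$ sufficiently large with respect to $\delta$ and $p$, that $G$ is $\delta_0$-close to some graph $H$ that is a disjoint union of cliques.

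Next I would extract a large clique of $H$ from the density assumption on $G$. Writing $C_1,\dots,C_k$ for the vertex sets of the cliques of $H$, the inequality
$$e(H)=\sum_{i=1}^k\binom{|C_i|}{2}\leq \frac{n}{2}\cdot \max_{i\in [k]} |C_i|$$
combined with $e(H)\geq e(G) - \delta_0 n^2 \geq pn^2/4$ (using that $\delta_0 \le p/8$ after possibly shrinking $\delta$) forces some $C_i$, say $C_1$, to satisfy $|C_1|\geq pn/2$.

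Finally, the induced subgraph $G[C_1]$ differs from the clique on $C_1$ by at most $\delta_0 n^2$ edges, so
$$\frac{e(G[C_1])}{\binom{|C_1|}{2}} \;\geq\; 1 - \frac{\delta_0 n^2}{\binom{|C_1|}{2}} \;\geq\; 1 - \frac{\delta_0 n^2}{(pn/2)^2/2} \;=\; 1 - \frac{8\delta_0}{p^2} \;\geq\; 1 - \delta,$$
which gives the desired dense induced subgraph on at least $pn/2$ vertices.

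There is no real obstacle here; the argument is a direct transcription of the smallest-eigenvalue version. The only care required is ensuring the parameter $\delta_0$ in the application of Theorem~\ref{thm:dens2_main_MaxCut} is chosen small enough in terms of both $\delta$ and $p$, which is why $\delta_0 = \delta p^2/16$ works, and checking that $n$ is large enough so that Theorem~\ref{thm:dens2_main_MaxCut} applies for this $\delta_0$.
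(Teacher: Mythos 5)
Your proof is correct and follows exactly the same route the paper intends: the paper explicitly states that the proof of Corollary~\ref{cor:dens2_main_MaxCut} is identical to that of Corollary~\ref{cor:dens2_main} and omits it, and your argument is precisely that transcription with Theorem~\ref{thm:dens2_main_MaxCut} substituted for Theorem~\ref{thm:MAIN_SE1}. (One tiny slip: $\binom{|C_1|}{2}$ is bounded below by $|C_1|^2/4 \ge p^2n^2/16$, not by $(pn/2)^2/2 = p^2n^2/8$, but fixing this still gives density at least $1-\delta$, so the conclusion is unaffected.)
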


\section{Densification --- Phase 3}\label{sect:dens3}

In this section, we show that every $n$-vertex graph of density at least $1-10^{-6}$ and $|\lambda_n|= \widetilde{O}(n^{1/4})$ contains a large induced subgraph with density $1-\widetilde{O}(|\lambda_n|^2/n)$. Here and later, the $\widetilde{O}(.)$ and $\widetilde{\Omega}(.)$ notations hide factors that grow at most poly-logarithmically in $n$. 
Furthermore, we present the analogous result for the surplus. In particular, we prove the following theorem.

\begin{theorem} \label{theorem:dens3_main}
For sufficiently large $n$, any $n$-vertex graph $G$ of density at least $1-10^{-6}$ satisfies the following.
    \begin{itemize}
        \item[(a)] If the smallest eigenvalue $\lambda_n$ satisfies $|\lambda_n| \le n^{1/4}/(\log n)^4$, then $G$ contains an induced subgraph on $\Omega(n/\log n)$ vertices whose density is at least $1-\widetilde{O}\big(|\lambda_n|^2/n\big)$.
        \item[(b)] If $\surp(G) \leq n^{6/5}/(\log n)^{6}$, then $G$ contains an induced subgraph on $\Omega(n/\log n)$ vertices whose density is at least $1- \widetilde{O}\big(\surp(G)^2/n^3\big)$.
    \end{itemize}
\end{theorem}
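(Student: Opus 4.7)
The plan is to prove both parts of \Cref{theorem:dens3_main} by the same three-step strategy: first regularize, then apply a triple Hadamard product identity to a carefully chosen PSD shift of the adjacency matrix, and finally extract the density bound. The only difference between (a) and (b) lies in the PSD shift used and whether one invokes the smallest-eigenvalue assumption directly or routes it through \Cref{lemma:surp_star}.

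First I would regularize. An iterative degree-pigeonhole argument (in the spirit of \Cref{lemma:dens1_high_degrees}) yields an induced subgraph $G_1 \subseteq G$ on $n_1 = \Omega(n/\log n)$ vertices whose maximum and minimum degrees lie within a constant factor of the average, while preserving the hypothesis: Cauchy's interlacing gives $|\lambda_n(G_1)| \le |\lambda_n(G)|$, and surplus is monotone under taking induced subgraphs. Near-regularity is essential because, by \Cref{lemma:max_entry}, it forces the principal eigenvector $v_1$ of $G_1$ to satisfy $v_1(i) = (1\pm o(1))/\sqrt{n_1}$ for every $i$, so that $\lambda_1 v_1 v_1^T$ is well-approximated by the scalar multiple $(d_1/n_1) J$, where $d_1$ is the average degree of $G_1$.

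Next, setting $\mu := |\lambda_n(G_1)|$ in part (a) (and a comparable parameter built from $\surp(G)$ via \Cref{lemma:surp_star} in part (b)), I would analyze the shifted matrix
\[
M := A_1 - \lambda_1 v_1 v_1^T + \mu I,
\]
where $A_1$ denotes the adjacency matrix of $G_1$. The spectral decomposition of $A_1$ shows that $M \succeq 0$, with eigenvalue $\mu$ along $v_1$ and $\lambda_i + \mu \ge 0$ along every $v_i$ for $i \ge 2$. The Schur product theorem (\Cref{theorem: schur}) then yields $M^{\circ 3} = M\circ M\circ M \succeq 0$, and similarly $M^{\circ 2} \succeq 0$. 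Substituting $\lambda_1 v_1 v_1^T$ by $(d_1/n_1) J$ up to small error, the entries of $M$ take essentially only three values: approximately $\mu - 1$ on the diagonal, approximately $\overline{d}_1/n_1$ on edges of $G_1$, and approximately $-1$ on non-edges of $G_1$, where $\overline{d}_1$ is the average degree of $\overline{G_1}$.

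Finally, I would exploit the PSD inequality $\mathrm{tr}(M^{\circ 3} Q) \ge 0$ for suitable PSD test matrices $Q$, including $\mathbf{1}\mathbf{1}^T$ and rank-one matrices built from the top eigenvector of $\overline{G_1}$. A direct computation using the three classes of entries of $M$ shows that the leading contribution from non-edges to $\mathbf{1}^T M^{\circ 3} \mathbf{1}$ is approximately $-2\overline{e}(G_1)$, while the leading contribution from the diagonal is proportional to $n_1 \mu^3$. Testing against $\mathbf{1}$ alone already gives a first bound of the form $\overline{d}_1 \le \widetilde{O}(\mu^3)$; to sharpen this to $\widetilde{O}(\mu^2)$, I would combine it with the Schur-product inequality for $M^{\circ 2}$ and with a more refined test vector isolating the non-edges, producing the required extra cancellation. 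This yields $\overline{d}_1 \le \widetilde{O}(\mu^2)$ and hence density at least $1-\widetilde{O}(|\lambda_n|^2/n)$. For part (b), the same outline applies with the PSD shift calibrated to the surplus via \Cref{lemma:surp_star}(ii)--(iii), yielding $\overline{d}_1 \le \widetilde{O}(\surp(G)^2/n^2)$. The main obstacle throughout is this last refinement: verifying that the small corrections from \Cref{lemma:max_entry} and from the regularization step do not swallow the quantitative gain of the triple Hadamard product inequality, which is precisely what pins down the quantitative hypotheses $|\lambda_n|\le n^{1/4}/(\log n)^4$ and $\surp(G)\le n^{6/5}/(\log n)^6$ in the theorem.
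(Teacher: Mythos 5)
Your opening two steps are aligned with the paper's strategy: regularize (the paper applies \Cref{lemma:dens4_finding_balanced} to $\overline{G}$ to make the \emph{complement} $C$-balanced, not $G_1$ itself, but the intent is the same), and then pass the shifted matrix $B+|\lambda_n|I = A_1-\lambda_1 v_1v_1^T + \mu I$ through the Schur product theorem and test $\mathbf{1}^T M^{\circ 3}\mathbf{1}\ge 0$. That computation is exactly the paper's \Cref{lemma: degree bounds for balanced graphs}, and it delivers $\bar d_1 = O(\mu^3)$, i.e.\ $\mu = \Omega(\bar d_1^{1/3})$, as you correctly identify.

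The gap is in the sharpening from $\mu^3$ to $\mu^2$. You write that you would ``combine it with the Schur-product inequality for $M^{\circ 2}$ and with a more refined test vector isolating the non-edges, producing the required extra cancellation.'' This is not a proof sketch, and more importantly it cannot work as an unconditional statement: $\bar d_1 = O(\mu^2)$ is simply \emph{false} in general, even for balanced graphs. Taking $G_1$ to be the complement of de Caen's equiangular-lines graph on $n_1$ vertices, one has $\bar d_1 \approx n_1^{3/4}$ and $\mu \approx n_1^{1/4}$, so $\bar d_1 \approx \mu^3$ with no hope of $\mu^2$. What rescues the paper is a second, genuinely different argument (its \Cref{lemma:dens4_main_balanced}) that uses Weyl's inequality to transfer eigenvalue information from $G_1$ to $\overline{G_1}$ and then runs a power-mean / Cauchy--Schwarz argument on the first, second, and third moments of $\overline{G_1}$'s spectrum; this yields the non-monotone lower bound $\mu = \Omega\big(C^{-3}\max\{d^{1/3},\min\{n/d, d^{1/2}\}\}\big)$. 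The proof of \Cref{theorem:dens3_main} then does a case split on $d_1$ versus $n_1^{2/3}$: if $d_1 \ge n_1^{2/3}$, the maximum of the two lower bounds forces $\mu = \Omega(n_1^{1/4}/\mathrm{polylog})$, contradicting the hypothesis $|\lambda_n|\le n^{1/4}/(\log n)^4$; if $d_1 < n_1^{2/3}$, the $\min\{d^{1/2}, n/d\}$ branch collapses to $d^{1/2}$ and one reads off $\bar d_1 = \widetilde O(\mu^2)$. Your proposal is missing both the Weyl-inequality/moment argument and the case analysis that exploits the quantitative hypothesis, so it does not close. The same issue recurs in part (b), where you defer entirely to ``calibrating via \Cref{lemma:surp_star}(ii)--(iii)'' without noticing that the corresponding surplus bound is again a maximum of two incomparable estimates.
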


We note that both $1-\widetilde{O}(|\lambda_n|^2/n)$ and $1-\widetilde{O}(\surp(G)^2/n^3)$ are tight up to a poly-logarithmic factor.
Indeed, with high probability, the Erd\H{o}s--R\'enyi graph $G(n,1-p)$ has smallest eigenvalue $-\Theta((pn)^{1/2})$ and surplus $\Theta(n(pn)^{1/2})$ while no induced subgraph has density much larger than $1-p$.
Hence, \Cref{theorem:dens3_main} shows that under moderate conditions on the smallest eigenvalue or the surplus, any dense graph must contain a large induced subgraph of density \textit{very} close to $1$.

We then present the intuition behind this result, in particular focusing on \textit{(a)}. 
To simplify the setup, assume that $G$ is $(n-1-d)$-regular with $d\leq n/10^6$ and has smallest eigenvalue $\lambda_n$. 
Then its complement $\overline{G}$ is $d$-regular with second eigenvalue $|\lambda_n|-1$. 
As discussed in \Cref{sec: intro alon--boppana}, the work of Balla \cite{Balla21}, and R\"aty, Sudakov, Tomon \cite{BRST,RST} asserts that the second eigenvalue of a $d$-regular graph is at least 
\[
    |\lambda_n(G)|-1
    =\lambda_2(\overline{G})
    =\Omega\Big(\max\big\{d^{1/3},\min\{d^{1/2},n/d\}\big\}\Big)
    =   \begin{cases}\Omega(d^{1/2}) &\mbox{ if } d\leq n^{2/3},\\
            \Omega(n/d) &\mbox{ if } d\in [n^{2/3},n^{3/4}],\\
            \Omega(d^{1/3}) &\mbox{ if } d\in [n^{3/4},(1/2-\eps)n].
        \end{cases}
\]
Interestingly, this lower bound $f(d)=\Omega(\max\{d^{1/3},\min\{d^{1/2},n/d\}\})$ is not monotone in $d$. 
However, if $$|\lambda_n(G)|\ll n^{1/4}\approx \min_{n^{2/3}\leq d\leq (1/2-\eps)n} f(d),$$ then we must have $d\leq n^{2/3}$, showing that $|\lambda_n(G)|\geq \Omega(d^{1/2})$, or equivalently, $d\leq O(|\lambda_n|^2)$. 
Hence, $G$ has density $1-\frac{d}{n}\geq 1-O(\frac{|\lambda_n|^2}{n})$, as required. 

Unfortunately, there are several difficulties to deal with graphs that are not regular, which requires significant new ideas.
First, we extend the main results of \cite{RST} on eigenvalues and surplus to graphs that are somewhat regular; this is presented in \Cref{subsec: hadamard bounds,subsec: unified bounds via eigenvalues}. To do this, we employ a novel trick which uses triple Hadamard products.
% Then, we show in \Cref{subsec: regularisation} that any dense $G$ contains a large induced subgraph of $G$ whose complement is somewhat regular.
This allows us to complete the proof in \Cref{subsec:concluding dens3}.
% We prepare the proof of Theorem \ref{theorem: both bounds for balanced graphs} in three steps.

\subsection{Finding balanced subgraphs} \label{subsec: regularisation}

We begin by passing to an induced subgraph of $G$ whose complement is somewhat regular. 
We say a graph $G$ is \emph{$C$-balanced} if $\Delta(G)\leq Cd(G)$.

We note that the problem of finding large $C$-balanced (or $C$-almost-regular) induced subgraphs in general graphs was considered by Alon, Krivelevich and Sudakov in \cite{AKS08}. Lemma~\ref{lemma:dens4_finding_balanced} is similar in spirit to their results, but it controls the density of the resulting graph explicitly, which will be useful later.

\begin{lemma}\label{lemma:dens4_finding_balanced}
    Let $G$ be an $n$-vertex graph of edge density $p\leq 1/5$. Then, $G$ has an induced subgraph $G'$ on $\Omega(n/\log n)$ vertices such that the density of $G'$ is $p'\leq p$, and $G'$ is $C$-balanced with $C=4\log_2 1/p'$.
\end{lemma}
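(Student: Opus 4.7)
The plan is to run an iterative vertex-deletion argument. We start with $H_0 := G$, and inductively define $H_{i+1} := H_i - v_i$ where $v_i$ is a vertex of maximum degree in $H_i$; write $N_i := |V(H_i)|$, $E_i := e(H_i)$, and $p_i := E_i/\binom{N_i}{2}$. A simple check shows that $p_{i+1} \le p_i$ iff $\Delta(H_i) \ge d(H_i)$, which always holds; hence $p_i$ is non-increasing and $p_i \le p$ throughout, securing the density requirement $p' \le p$ for the output. The output will be the first $H_{i^*}$ that is $4\log_2(1/p_{i^*})$-balanced, and the remaining (and whole) task is to show that some such $i^*$ exists with $N_{i^*} \ge n/\log n$.

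To show this, we argue by contradiction: suppose every $H_i$ with $N_i \ge n/\log n$ is unbalanced, so $\Delta(H_i) > 4\log_2(1/p_i)\,d(H_i) = 8\log_2(1/p_i)\,E_i/N_i$. A direct computation from $E_{i+1} = E_i - \Delta(H_i)$ and $p_{i+1} = E_{i+1}/\binom{N_i-1}{2}$ then yields
$$\frac{p_{i+1}}{p_i} \;<\; \frac{N_i - 8\log_2(1/p_i)}{N_i-2} \;\le\; 1 - \frac{6\log_2(1/p_i)}{N_i},$$
where the second inequality is valid because $p \le 1/5$ forces $\log_2(1/p_i) \ge \log_2 5 > 2$. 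Writing $a_i := \log_2(1/p_i)$, this rearranges into the clean multiplicative recursion $a_{i+1} \ge a_i(1 + 6/N_i)$.

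Iterating from $i=0$ up to $i^\dagger := n - \lceil n/\log n \rceil - 1$ and taking logarithms gives
$$\ln\!\frac{a_{i^\dagger}}{a_0} \;\ge\; \sum_{j=0}^{i^\dagger-1}\ln\!\left(1 + \frac{6}{N_j}\right) \;\ge\; 3\sum_{j=0}^{i^\dagger-1}\frac{1}{n-j} \;=\; \Omega(\log \log n),$$
so $a_{i^\dagger} \ge a_0 \cdot (\log n)^{\Omega(1)}$. On the other hand, if $H_{i^\dagger}$ still has at least one edge, then $p_{i^\dagger} \ge 1/\binom{N_{i^\dagger}}{2} = \Omega((\log n)^2/n^2)$, forcing $a_{i^\dagger} \le 2\log_2 n$. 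Combined with $a_0 \ge \log_2 5 > 2$, these two bounds on $a_{i^\dagger}$ are incompatible for large $n$, completing the contradiction. The remaining case, in which some $H_i$ becomes edgeless before $N_i$ drops below $n/\log n$, is actually in our favor: an edgeless graph has $\Delta = d = 0$ and is $C$-balanced for every $C$ (with the natural convention $\log_2(1/0) = +\infty$), so the procedure terminates there with a valid $G'$.

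The main obstacle I anticipate is pinning down the correct constants. The factor $4$ in $C = 4\log_2(1/p')$ is precisely what makes $\Delta(H_i) > 8\log_2(1/p_i)\,E_i/N_i$ on the unbalanced side, which is just enough to outweigh the $N_i - 2$ denominator and deliver the multiplicative growth $a_{i+1}/a_i \ge 1 + 6/N_i$; a smaller constant in the balanced condition would fail to close the recursion. All other steps are routine manipulations once this one-step inequality is set up correctly.
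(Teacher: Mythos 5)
Your proof is correct, and it takes a genuinely different route from the paper's. The paper's argument iterates a macroscopic step --- at each stage it looks for an induced subgraph on a $(1-1/\log_2(1/p_i))$-fraction of the vertices whose density has \emph{halved}, and stops when no such subgraph exists --- followed by a separate cleanup pass that removes the at most $n_k/\log_2(1/p_k)$ vertices of abnormally high degree; the termination criterion is then used to lower bound the final density $p'\geq p_k/2$ and to verify the balancedness inequality directly. Your argument instead deletes one maximum-degree vertex at a time and drives a contradiction: if balancedness fails whenever $N_i\geq n/\log n$, then the one-step density drop yields the multiplicative recursion $a_{i+1}\geq a_i(1+6/N_i)$ on $a_i=\log_2(1/p_i)$, and summing $6/N_i$ over the $\sim n(1-1/\log n)$ deletions produces a factor of $(\log n)^{\Omega(1)}$ in $a_{i^\dagger}$, clashing with the trivial upper bound $a_{i^\dagger}\leq 2\log_2 n$. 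I checked the two key computations: the bound $\frac{p_{i+1}}{p_i}<\frac{N_i-8a_i}{N_i-2}\leq 1-\frac{6a_i}{N_i}$ reduces to $2N_i(a_i-1)+12a_i\geq 0$, valid since $a_i\geq\log_2 5>2$; and the conversion to $a_{i+1}\geq a_i(1+6/N_i)$ uses $-\log_2(1-x)\geq x$. Both are sound. Your approach is more elementary (no post-processing step, no nested maximization), while the paper's approach also delivers a two-sided control $p_k/2\leq p'\leq p_k$ which is not needed for the statement as given but gives a cleaner handle on $p'$; the quantitative conclusion $n' = \Omega(n/\log n)$ is the same. The edgeless edge case you flag is handled the same way the paper handles it (the termination criterion there also allows $p_k=0$), and both rely on the downstream application to short-circuit that degenerate outcome.
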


\begin{proof}
    We may assume that $n$ is sufficiently large as otherwise, we can simply take $G'$ to be a single vertex. Let $G_0=G$ and define the sequence of induced subgraphs $G_0\supset G_1\supset...$ as follows. If the graph $G_i$ has $n_i$ vertices and density $p_i$, and it contains an induced subgraph $G_{i+1}$ on at least $\big(1-\frac{1}{\log_2 1/p_i}\big)n_i$ vertices and density $p_{i+1}=p(G_{i+1})< p_i/2$, then pick this induced subgraph to be the next element of the sequence (if there are several such subgraphs $G_{i+1}$, choose one arbitrarily). 
    If there is no such subgraph, terminate the procedure.
    
    Suppose that subgraphs $G_0\supset G_1\supset \dots \supset G_k$ have been defined in this way. Then for each $i\leq k$, we have $p_i\leq p/2^i$, and therefore the process terminates in at most $k\leq 2\log_2 n$ steps (since we cannot have $p_k\leq p/2^k\leq 1/n^2$ unless $p_k=0$, in which case the process terminates). In addition, $\log_2 1/p_i\ge i+\log_2 1/p$, so $n_{i+1} \ge \big(1-\frac{1}{i+\log_2 1/p}\big) n_i$, which leads to the conclusion 
    \[ 
        n_k\geq n\prod_{i=0}^{k-1} \left(1-\frac{1}{i+\log_2 1/p}\right)  
        = \frac{\log_2 1/p-1}{k-1+\log_2 1/p} \cdot n \ge \frac{n}{k+1} = \Omega\left(\frac{n}{\log n}\right).
    \]
    
    Finally, to define $G'$, remove from $G_k$ all vertices of degree at least $(n_k-1)p_k\log_2 1/p_k$; there are at most $\frac{n_k}{\log_2 1/p_k}$ such vertices.
    Hence, the number of vertices in $G'$ is $n' \ge \big(1-\frac{1}{\log_2 1/p_k}\big)n_k$ and the density of $G'$ is $p' \le p_k$.
    Using that $p_k \le p \le 1/5$, we know that $n' \ge (n_k+1)/2$.
    Moreover, since the process terminated at $G_k$, no induced subgraph of $G_k$ on at least $\big(1-\frac{1}{\log_2 1/p_k}\big)n_k$ vertices can have density less than $p_k/2$, so $p_k/2\le p' \le p_k$.
    Since the function $x\log_2\frac{1}{x}$ is increasing in $(0,1/e)$, we have $\frac{1}{2}p_k\log_2\frac{2}{p_k}\le p'\log_2\frac{1}{p'}$, thereby $p_k\log_2\frac{1}{p_k}\le 2p'\log_2\frac{1}{p'}$.
    Then, the maximum degree of $G'$ satisfies $$
        \Delta(G') \le (n_k-1)p_k\log_2 (1/p_k)
        \le 2(n'-1)\cdot 2p'\log_2 (1/p')
        = d(G')\cdot 4\log_2(1/p').$$
    This completes the proof as $n' \ge (n_k+1)/2=\Omega(n/\log n)$.
\end{proof}

We also use the fact that the density of a $C$-balanced graph is robust under deleting few vertices.

\begin{claim}\label{lemma: balanced graphs are robust}
    Let $G$ be an $n$-vertex graph with density $p$ that is $C$-balanced for some $C \ge 1$.
    Then every induced subgraph on at least $(1-1/4C)n$ vertices has density at least $p/2$.
\end{claim}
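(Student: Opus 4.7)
The plan is a direct edge-counting argument, with no real obstacle beyond bookkeeping. Write $d = d(G) = p(n-1)$ for the average degree, so the $C$-balanced hypothesis yields $\Delta(G) \leq Cd = Cp(n-1)$. Let $G'$ be an induced subgraph on $n' \geq (1 - 1/(4C))n$ vertices, obtained by removing a set $S$ of $k := n - n' \leq n/(4C)$ vertices.

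The key step is to bound the number of edges incident to $S$. Since each vertex in $S$ has degree at most $\Delta(G)$ in $G$, the number of edges with at least one endpoint in $S$ is at most
\[
    k \cdot \Delta(G) \;\leq\; \frac{n}{4C}\cdot Cp(n-1) \;=\; \frac{pn(n-1)}{4} \;=\; \frac{1}{2}\,p\binom{n}{2}.
\]
Since $e(G) = p\binom{n}{2}$, we conclude that
\[
    e(G') \;\geq\; e(G) - k\Delta(G) \;\geq\; p\binom{n}{2} - \frac{1}{2}p\binom{n}{2} \;=\; \frac{1}{2}\,p\binom{n}{2}.
\]

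Finally, since $n' \leq n$ gives $\binom{n'}{2} \leq \binom{n}{2}$, the density of $G'$ is at least
\[
    \frac{e(G')}{\binom{n'}{2}} \;\geq\; \frac{p\binom{n}{2}/2}{\binom{n'}{2}} \;\geq\; \frac{p}{2},
\]
as required. The only thing to be mildly careful about is converting between average degree and edge count via the exact factor $p(n-1) = 2e(G)/n$, which is precisely what makes the constant $1/(4C)$ line up to give the loss of exactly a factor of $2$ in the density.
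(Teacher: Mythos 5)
Your proof is correct and is essentially identical to the paper's: both bound the number of edges meeting the removed set by $k\Delta(G)\le \frac{n}{4C}\cdot Cp(n-1)$, subtract, and divide by $\binom{n'}{2}\le\binom{n}{2}$. Nothing more to add.
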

\begin{proof}
    Let $U\subseteq V(G)$ be any subset of size at least $(1-1/4C)n$.
    Since $G$ is $C$-balanced, the number of edges with an endpoint outside $U$ is at most $\frac{n}{4C}\cdot\Delta(G) \le \frac{n}{4C}\cdot Cp(n-1)=n(n-1)p/4$.
    So $e(G[U]) \ge n(n-1)p/4$ and the density of $G[U]$ is at least $\frac{n(n-1)p/4}{|U|(|U|-1)/2} \ge p/2$.
\end{proof}

\subsection{The smallest eigenvalue and surplus of dense balanced graphs}\label{subsec: hadamard bounds}

In this section, we prove that balanced graphs with a sufficiently high density $1-p$ must satisfy $|\lambda_n|=\Omega((pn)^{1/3})$ and $\surp^*(G)=\Omega((pn)^{1/4})$.
Both bounds are effective as long as $p$ is not too small.
Then, in the next subsection, we show complementary bounds for small $p$.

First, we consider the smallest eigenvalue.

\begin{lemma}\label{lemma: degree bounds for balanced graphs}
   Let $G$ be an $n$-vertex graph with edge density $1-p$, whose complement $\overline{G}$ is $C$-balanced. If $C^2p \le 1/100$, then 
     $$|\lambda_n| = \Omega\big((pn)^{1/3}\big).$$
\end{lemma}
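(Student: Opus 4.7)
The plan is to construct a positive semidefinite matrix $M$ from the adjacency matrix $A$ of $G$, apply the Schur product theorem to its threefold Hadamard power, and extract the eigenvalue bound by evaluating $\mathbf{1}^{T}M^{\circ 3}\mathbf{1}$. This realizes the ``triple Hadamard product of positive semidefinite shifts of the adjacency matrix'' promised in the introduction.

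First, I would pin down the principal eigenvector $v_1$ of $A$. The hypothesis $C^{2}p\le 1/100$ forces $p\le 1/10$ and $\overline{\Delta}=\Delta(\overline G)\le Cpn$, so \Cref{lemma:max_entry} applies and yields $v_1(i)=(1+O(Cp))/\sqrt{n}$ for every $i$. Combining this with the elementary bounds $(1-p)(n-1)\le \lambda_1\le n-1$ gives the uniform approximation
\[
    \lambda_1 v_1(i)v_1(j)=1+O(Cp) \qquad \text{for all } i,j\in [n].
\]

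Next, I set $M:=|\lambda_n|I+A-\lambda_1 v_1v_1^{T}$. Using the spectral decomposition of $A$, the eigenvalues of $M$ are $|\lambda_n|$ on $v_1$ and $|\lambda_n|+\lambda_k\ge 0$ on $v_k$ for $k\ge 2$, so $M\succeq 0$; two applications of \Cref{theorem: schur} then yield that $M^{\circ 3}$ is positive semidefinite, hence $\sum_{i,j}M_{ij}^{3}=\mathbf{1}^{T}M^{\circ 3}\mathbf{1}\ge 0$. The approximation above gives the entry asymptotics $M_{ii}=|\lambda_n|-1+O(Cp)$, $M_{ij}=O(Cp)$ for $ij\in E(G)$ with $i\ne j$, and $M_{ij}=-1+O(Cp)$ for $ij\in E(\overline G)$.

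Third, I would sum the cubes. The diagonal contributes $n(|\lambda_n|-1+O(Cp))^{3}$. The $nd$ ordered pairs coming from edges of $\overline G$ (where $d=p(n-1)$) each contribute $-1+O(Cp)$, totalling $-nd(1+O(Cp))$. The ordered pairs coming from edges of $G$ contribute at most $O(n^{2}(Cp)^{3})=O(nd\cdot C^{2}p)$ in absolute value, which the hypothesis $C^{2}p\le 1/100$ bounds by $nd/100$. Plugging these into $\sum_{i,j}M_{ij}^{3}\ge 0$ and dividing by $n$ yields $(|\lambda_n|-1+O(Cp))^{3}\ge d/2$, hence $|\lambda_n|=\Omega(d^{1/3})=\Omega((pn)^{1/3})$.

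The main obstacle is tracking the first-order corrections produced by the fact that $G$ is not exactly regular. In the regular case one would have $\lambda_1 v_1v_1^{T}=J$ on the nose, and the shift $-\lambda_1 v_1v_1^{T}$ is chosen precisely so that $M_{ij}$ collapses to nearly zero on every edge of $G$; in our setting this cancellation holds only up to $O(Cp)$. The balancedness hypothesis $C^{2}p\le 1/100$ is exactly what is needed to keep both the edge corrections and the eigenvector corrections well below a constant fraction of the leading mass $nd$ supplied by the edges of $\overline G$, which is what permits the clean cubing-and-rearranging at the end.
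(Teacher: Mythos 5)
Your proposal is correct and is essentially the paper's own argument: the matrix you call $M=|\lambda_n|I+A-\lambda_1 v_1v_1^{T}$ is exactly the paper's $B+|\lambda_n|I$, and both proofs apply the Schur product theorem to the Hadamard cube, evaluate $\mathbf{1}^{T}M^{\circ 3}\mathbf{1}\ge 0$, and use \Cref{lemma:max_entry} to control $\lambda_1 v_1(i)v_1(j)=1+O(Cp)$ before separating the diagonal, $\overline{G}$-edge, and $G$-edge contributions. The only differences are cosmetic (the paper expands $(B+|\lambda_n|I)^{\circ 3}$ binomially and carries explicit constants rather than $O(\cdot)$'s, and it dispatches the degenerate case $p<1/n$ up front), but the structure and the key estimates are identical.
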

\begin{proof}
    We may assume $p \ge 1/n$ as otherwise $|\lambda_n| \ge 1 \ge (pn)^{1/3}$. Let $A$ be the adjacency matrix of $G$ with eigenvalues $\lambda_1\geq \dots\geq \lambda_n$ and corresponding orthonormal basis of eigenvectors $v_1,\dots,v_n$. 
    Set $B=A-\lambda_1v_1v_1^T$. 
    The key idea of the proof is to consider the following triple Hadamard product:
    $$D= \big(B+|\lambda_n|I \big)^{\circ 3}=B^{\circ 3}+3|\lambda_n|B\circ B\circ I+3|\lambda_n|^2B\circ I+|\lambda_n|^3 I.$$
    As $B+|\lambda_n|I=|\lambda_n|v_1v_1^T+\sum_{i=2}^n (\lambda_i+|\lambda_n|)v_iv_i^T$, we have that $B+|\lambda_n|I$ is positive semidefinite. 
    Therefore, $D$ is also positive semidefinite by the Schur product theorem (\Cref{theorem: schur}). 

    Write $\bar{\Delta}$ for the maximum degree $\overline{G}$; then $\bar{\Delta} \le Cpn$.
    By \Cref{lemma:max_entry}, for every vertex $i\in V(G)$, 
    \begin{equation} \label{eq: scale of v1}
        0\le \frac{1-3Cp}{\sqrt{n}} \le \frac{1-3\bar{\Delta}/n}{\sqrt{n}} \le v_1(i) \le \frac{1+2p+2/n}{\sqrt{n}} \le \frac{1+4p}{\sqrt{n}}
    \end{equation}
    We now evaluate the terms of $\mathds{1}^T D \mathds{1}$, where $\mathds{1}$ is the all-ones vector.
    
    For the main term $\mathds{1}^T B^{\circ 3} \mathds{1}$, note that $$
    B_{i,j}=\begin{cases}
        1-\lambda_1v_1(i)v_1(j) &\mbox{if }ij\in E(G),\\
        -\lambda_1v_1(i)v_1(j) &\mbox{if }ij\notin E(G).
    \end{cases}$$
    Using that $\lambda_1 \ge d(G)= (1-p)(n-1), C \ge 1,p \ge 1/n$ and $Cp \le C^2p \le 1/100$, we have for every $i,j\in V(G)$ that  $$
        1-\lambda_1v_1(i)v_1(j)
        \le 1 - (1-p)(n-1) \cdot \frac{(1-3Cp)^2}{n}
        \le p + 6Cp + \frac{1}{n}
        \le 8Cp
        \le 1/2.
    $$
    This further shows $\lambda_1v_1(i)v_1(j) \ge \frac{1}{2}$.
    Hence, using that $C^3p^2\le (C^2p)^2 \le 10^{-4}$, we get
    \begin{equation}    \nonumber
      \begin{aligned}
        \mathds{1}^T B^{\circ 3}\mathds{1}
        &= \sum_{i \sim j} (1-\lambda_1v_1(i)v_1(j))^3 - \sum_{i\not\sim j}(\lambda_1v_1(i)v_1(j))^3 \\
        &\le n^2\cdot (8Cp)^3 - \big(pn(n-1)+n\big)\cdot\frac{1}{8}
        \le 512C^3p^3n^2 - \frac{pn^2}{8}
        \le -\frac{pn^2}{16}.
      \end{aligned}
    \end{equation}
    For the other terms, we observe that $B_{i,i}=-\lambda_1v_1(i)^2 \in [-2,0]$ using (\ref{eq: scale of v1}).
    Hence, 
    \begin{equation} \nonumber
        \mathds{1}^T \big(3|\lambda_n|B\circ B\circ I+3|\lambda_n|^2B\circ I+|\lambda_n|^3I \big) \mathds{1}  = O\big(n|\lambda_n|^3\big).        
    \end{equation}
    In conclusion, we showed that $$
        0 \le \mathds{1}^T D \mathds{1}
        \le -\frac{pn^2}{16} + O\big(n|\lambda_n|^3\big).
    $$
    This gives $|\lambda_n| = \Omega\big((pn)^{1/3}\big)$, as desired.
\end{proof}

Now we consider the surplus.

\begin{lemma}
   Let $G$ be an $n$-vertex graph with edge density $1-p$, whose complement $\overline{G}$ is $C$-balanced. If $C^2p \le 1/100$, then 
        $$\surp^*(G) = \Omega\big(C^{-3/4}n(pn)^{1/4} \big).$$
\end{lemma}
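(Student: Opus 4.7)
The plan is to adapt the triple Hadamard product strategy of \Cref{lemma: degree bounds for balanced graphs}, replacing the scalar shift $|\lambda_n|I$ with the ``negative part'' $E:=\sum_{\lambda_i<0}|\lambda_i|v_iv_i^T$, whose size is controlled by $Q:=\surp^*(G)$ via \Cref{lemma:surp_star}. Keeping $B=A-\lambda_1v_1v_1^T$, the matrix $B+E=\sum_{i\ge 2,\,\lambda_i>0}\lambda_i v_iv_i^T$ is positive semidefinite, so by the Schur product theorem $(B+E)^{\circ 3}$ is positive semidefinite. Testing against $\mathds{1}$ and expanding the Hadamard cube yields
\[ 0\le \mathds{1}^T B^{\circ 3}\mathds{1}+3\mathds{1}^T(B^{\circ 2}\circ E)\mathds{1}+3\mathds{1}^T(B\circ E^{\circ 2})\mathds{1}+\mathds{1}^T E^{\circ 3}\mathds{1}, \]
where the first (main) term is at most $-pn^2/16$ by the calculation already carried out in the proof of \Cref{lemma: degree bounds for balanced graphs}. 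The plan is to bound the three error terms as functions of $Q$ and rearrange.

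The key obstacle is that the entries of $E$ are not a priori bounded: although $\sum_i E_{i,i}\le Q$ by \Cref{lemma:surp_star}(i), this mass could concentrate on very few coordinates, destroying control of the cubic term $\mathds{1}^T E^{\circ 3}\mathds{1}$. To handle this I adopt the truncation trick used in \Cref{lemma:main_MaxCut}: fix $\beta:=KQC/n$ for a sufficiently large absolute constant $K$, let $J:=\{i:E_{i,i}>\beta\}$ (so $|J|\le Q/\beta=n/(KC)$), and replace $B,E$ by their compressions $\tilde{B},\tilde{E}$ onto the subspace $Y$ of vectors vanishing on $J$. Positive semidefiniteness is preserved by compression, and the PSD property of $E$ then gives $|\tilde{E}_{i,j}|\le \sqrt{E_{i,i}E_{j,j}}\le \beta$ for all $i,j$. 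Since deleting the $|J|\le n/(KC)$ vertices of $J$ destroys at most $O(|J|Cpn)=O(pn^2/K)$ of the $\Theta(pn^2)$ non-adjacent ordered pairs, for $K$ large enough the calculation of \Cref{lemma: degree bounds for balanced graphs} carries over to yield $\mathds{1}^T \tilde{B}^{\circ 3}\mathds{1}\le -\Omega(pn^2)$.

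The three remaining terms are then bounded by Cauchy--Schwarz, using the Frobenius estimate $\|E\|_F^2\le O(\sqrt{Cpn}\,Q)$ from \Cref{lemma:surp_star}(ii) together with $\|B^{\circ 2}\|_F=O(\sqrt{p}\,n)$ (which follows from the edge/non-edge split already used in the previous proof):
\begin{align*}
|\mathds{1}^T(\tilde{B}^{\circ 2}\circ \tilde{E})\mathds{1}|&\le \|B^{\circ 2}\|_F\|E\|_F=O\bigl(C^{1/4}p^{3/4}n^{5/4}Q^{1/2}\bigr),\\
|\mathds{1}^T(\tilde{B}\circ \tilde{E}^{\circ 2})\mathds{1}|&=O(\|E\|_F^2)=O\bigl(\sqrt{Cpn}\,Q\bigr),\\
|\mathds{1}^T \tilde{E}^{\circ 3}\mathds{1}|&\le \beta\|E\|_F^2=O\bigl(C^{3/2}\sqrt{p/n}\,Q^2\bigr),
\end{align*}
where the middle line splits by adjacency to use $|B_{i,j}|=O(Cp)$ on edges and $|B_{i,j}|\le 2$ on non-edges. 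Since at least one of these three bounds must dominate $\Omega(pn^2)$, I obtain three candidate lower bounds on $Q$; the weakest comes from the cubic term and gives $Q^2\gtrsim p^{1/2}n^{5/2}/C^{3/2}$, i.e. $Q=\Omega(C^{-3/4}n(pn)^{1/4})$, as required. The most delicate step is the calibration of $\beta$: it must be large enough that $|J|$ is negligible (so the main negative term survives) and small enough that the cubic error $\beta\|E\|_F^2$ still produces the target bound. Fortunately, the choice $\beta\asymp QC/n$ resolves both constraints at once.
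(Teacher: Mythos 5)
Your proposal is essentially the same argument as the paper's: both set $B=A-\lambda_1 v_1v_1^T$, $E=\sum_{\lambda_i<0}|\lambda_i|v_iv_i^T$, apply the Schur product theorem to $(B+E)^{\circ 3}$, truncate the ``spike'' coordinates where $E_{i,i}$ is large, bound the main cubic term via the balanced hypothesis, and estimate the three error terms through $\|E\|_F^2 = O(\sqrt{Cpn}\,\surp^*(G))$. Your compression onto the subspace of vectors vanishing on $J$ is literally the same as the paper's choice of testing against $\mathds{1}_U$ with $U=[n]\setminus J$, since $\mathds{1}^T(\tilde B+\tilde E)^{\circ 3}\mathds{1}=\mathds{1}_U^T(B+E)^{\circ 3}\mathds{1}_U$. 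The individual error-term estimates differ cosmetically (you apply Cauchy--Schwarz via $\|B^{\circ 2}\|_F=O(\sqrt{p}\,n)$; the paper splits by adjacency on each term) but lead to the same exponents.

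There is, however, one genuine gap at the very end: the claim that ``the weakest [candidate lower bound] comes from the cubic term'' is only valid when $Cpn\geq 1$. Your three cases give $Q\gtrsim C^{-1/2}n(pn)^{1/2}$ (from the linear and quadratic terms) and $Q\gtrsim C^{-3/4}n(pn)^{1/4}$ (from the cubic term), and the ratio of the former to the latter is $(Cpn)^{1/4}$; so when $Cpn<1$ the argument only delivers $Q=\Omega(C^{-1/2}n(pn)^{1/2})$, which is \emph{weaker} than the target $\Omega(C^{-3/4}n(pn)^{1/4})$. The paper closes this exact hole by a preliminary observation: since $p\leq 1/100$, $G$ has $\Omega(n)$ non-isolated vertices, so by Erd\H{o}s--Gy\'arf\'as--Kohayakawa $\surp^*(G)\geq \surp(G)=\Omega(n)$; combining this with the contradiction hypothesis $\surp^*(G)=o(C^{-3/4}n(pn)^{1/4})$ forces $pn=\omega(C^3)$, in particular $Cpn=\omega(1)$, so the cubic term really does dominate. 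You should add this step (or equivalently note that when $Cpn<1$ one has $C^{-3/4}n(pn)^{1/4}<n=O(\surp^*(G))$ directly) to make the argument complete.
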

\begin{proof}
    Set $S:= \surp^*(G)$ and assume for contradiction that $S= o\big(C^{-3/4}n(pn)^{1/4}\big)$.
    Since $p \le C^2p \le 1/100$, there are $\Omega(n)$ non-isolated vertices in $G$.
    Then, a classical result of Erd\H{o}s, Gy\'arf\'as, and Kohayakawa \cite{EGyK} asserts $\surp(G) = \Omega(n)$, so $\surp^*(G) \ge \surp(G) = \Omega(n)$.
    Hence, $S= o\big(C^{-3/4}n(pn)^{1/4}\big)$ implies $pn = \omega(1)$ and $C = o\big((pn)^{1/3})\big)$.
    
    Let $A$ be the adjacency matrix of $G$ with eigenvalues $\lambda_1\geq \dots\geq \lambda_n$ and corresponding orthonormal basis of eigenvectors $v_1,\dots,v_n$.
    Define $B=A-\lambda_1v_1v_1^T$ and $E=\sum_{\lambda_i<0}|\lambda_i| v_iv_i^T$. Then, the matrices $E$ and $B+E=\sum_{\lambda_i>0,i\neq 1}\lambda_i v_iv_i^T$ are positive semidefinite. Consider 
    $$D=(B+E)^{\circ 3}=B^{\circ 3}+3B\circ B\circ E+3B\circ E\circ E+E^{\circ 3}.$$
    As $B+E$ is positive semidefinite, so is $D$ by the Schur product theorem (\Cref{theorem: schur}). 
    Next, we identify a set of well-behaved vertices $U$, and carefully evaluate the product 
    \begin{equation} \label{eq: expansion of cut--hadamard}
        0 \leq \mathds{1}_U^T \,D\, \mathds{1}_U
        = \mathds{1}_U^T\,B^{\circ 3}\,\mathds{1}_U+ 3\cdot \mathds{1}_U^T\big(B\circ B\circ E\big)\mathds{1}_U + 3\cdot \mathds{1}_U^T\big(B\circ E\circ E\big)\mathds{1}_U + \mathds{1}_U^T \,E^{\circ 3}\, \mathds{1}_U.
    \end{equation}

    Let $\bar{d}, \bar{\Delta}$ be the average degree and the maximum degree of $\overline{G}$, respectively, so we have $\bar{\Delta} \le C\bar{d}$.
    Let $U$ be the set of vertices $i \in [n]$ such that $E_{i,i} \le \frac{4CS}{n}$.
    Recall from \Cref{lemma:surp_star} (i) that $$ \trace(E)= \sum_{\lambda_i < 0} |\lambda_i| \le \surp^*(G)=S.$$
    So at most $n/4C$ vertices $i \in [n]$ have $E_{i,i} > \frac{4CS}{n}$. This means $|U| \ge (1-\frac{1}{4C})n$.
    By \Cref{lemma: balanced graphs are robust}, the density of $\overline{G}[U]$ is at least $p/2$.
    Moreover, \Cref{lemma:surp_star} (ii) and our assumption that $S=o\big(n\bar{d}^{\,1/4}C^{-3/4}\big)$ imply 
    \begin{equation} \label{eq: maxcut F-norm upper bound}
        \|E\|_F^2 = \sum_{\lambda_i < 0} \lambda_i^2
        = O\Big({\bar{\Delta}}^{1/2}\surp^*(G)\Big) 
        = O\big(C^{1/2}\bar{d}^{\,1/2} S\big)
        = o\big(n\bar{d}^{\,3/4}C^{-1/4}\big).
    \end{equation}
    
    In order to bound (\ref{eq: expansion of cut--hadamard}), we start with the main term $\mathds{1}_U^T \,B^{\circ 3}\, \mathds{1}_U$.
    Repeating the same analysis as in the proof of \Cref{lemma: degree bounds for balanced graphs}, for all $i,j \in [n]$, we have 
    $$\frac{1-3Cp}{\sqrt{n}} \le v_1(i) \le \frac{1+4p}{\sqrt{n}}\;\;\text{ and }\;\;
        1 - \lambda_1v_1(i)v_1(j) \le 8Cp.$$
    Using that $p \le C^2p \le 1/100$, for every  $ij \in E(G)$, we have 
    $$ -1 \le -9p \le 1-(1+4p)^2\le B_{i,j} = 1-\lambda_1v_1(i)v_1(j) \le 8Cp \le 1,$$
    while for any $ij \not\in E(G)$, using that $p \le Cp \le C^2p \le 1/100$, we have 
    $$-2 \le -(1+4p)^2\le B_{i,j} = -\lambda_1v_1(i)v_1(j) \le 8Cp - 1 \le -\frac{1}{2}.$$
    Since $C^3p^2 \le (C^2p)^2 \le 10^{-4}$, 
    \begin{equation}    \nonumber
      \begin{aligned}
        \mathds{1}^T_U B^{\circ 3}\mathds{1}_U
        &= \sum_{i,j\in U,i \sim j} (1-\lambda_1v_1(i)v_1(j))^3 - \sum_{i,j\in U,i\not\sim j}(\lambda_1v_1(i)v_1(j))^3 \\
        &\le |U|^2 \cdot (8Cp)^3 - \frac{(p/2)|U|(|U|-1)+|U|}{8}
        \le 216C^3p^3|U|^2-\frac{p}{16}|U|^2
        = -\Omega\big(p|U|^2\big)
      \end{aligned}
    \end{equation}
    In addition, $|U| \ge (1-\frac{1}{4C})n =\Omega(n)$, so $\mathds{1}^T_U B^{\circ 3}\mathds{1}_U= -\Omega(pn^2)=-\Omega(n\bar{d})$.
    
    We complete the proof by the following claim bounding the rest of the terms of (\ref{eq: expansion of cut--hadamard}).
    \begin{claim}
    \begin{itemize}
        \item[(1)] $\mathds{1}_U^T\big(B\circ B\circ E\big)\mathds{1}_U=o(n\bar{d}),$
        \item[(2)] $\mathds{1}_U^T\big(B\circ E\circ E\big)\mathds{1}_U=o(n\bar{d}),$
        \item[(3)] $\mathds{1}_U^T\,E^{\circ 3}\,\mathds{1}_U=o(n\bar{d}).$
        \end{itemize}
    \end{claim}
    
    \begin{proof}
    
        \begin{itemize}

        \item[(1)] Write $X$ for the set of pairs $(i,j) \in U^2$ where $ij \in E(G)$ and $Y=(U\times U)\setminus X$. As discussed above, $|B_{i,j}| = O(Cp)$ for all $(i,j) \in X$.
        So, by the Cauchy--Schwartz inequality, 
        \begin{equation} \nonumber
          \begin{aligned}
            \sum_{(i,j) \in X} B_{i,j}^2E_{i,j}
            = O(C^2p^2) \sum_{(i,j) \in X}|E_{i,j}|
            = O(C^2p^2) \cdot n \Big(\sum_{(i,j) \in X}|E_{i,j}|^2\Big)^{1/2} 
            = O\big(C^2p^2n \|E\|_F \big).
          \end{aligned}
        \end{equation}
        Using that  $C^2p^2\le (C^2p)^{3/2}p^{1/2}\le p^{1/2}$, we see that the above is at most $O\big(p^{1/2}n \|E\|_F \big)$.
        Moreover, $|Y| \le n^2p+n \le 2n^2p$ and $|B_{i,j}| \le 2$ for all $(i,j) \in Y$.
        Again, by the Cauchy--Schwarz inequality, 
        \begin{equation} \nonumber
          \begin{aligned}
            \sum_{(i,j) \in Y} B_{i,j}^2E_{i,j}
            \le 4 \sum_{(i,j) \in Y}|E_{i,j}|
            \le 4 \cdot \Big(|Y|\sum_{(i,j) \in Y}|E_{i,j}|^2\Big)^{1/2} 
            = O\big(p^{1/2}n \|E\|_F\big).
          \end{aligned}
        \end{equation}
        Altogether, we have 
        \begin{equation} \nonumber
          \begin{aligned}
            \mathds{1}_U^T\big(B\circ B\circ E\big)\mathds{1}_U
            = \sum_{(i,j) \in X} B_{i,j}^2E_{i,j} + \sum_{(i,j) \in Y} B_{i,j}^2E_{i,j}
            = O\big(p^{1/2}n\|E\|_F\big)
            = O\big((n\bar{d})^{1/2} \|E\|_F\big).
          \end{aligned}
        \end{equation}
        By (\ref{eq: maxcut F-norm upper bound}), $\|E\|_F^2 = o(n\bar{d}^{\,3/4}C^{-1/4})=o(n\bar{d}^{\,3/4})$, so $$
            \mathds{1}_U^T\big(B\circ B\circ E\big)\mathds{1}_U
            = o\big( (n\bar{d})^{1/2} \cdot n^{1/2}\bar{d}^{\,3/8})\big)
            = o(n\bar{d}).
        $$
        
        \item[(2)] As $|B_{i,j}| \le 2$ holds for all $i,j \in U$ and $\|E\|_F^2 = o\big(n\bar{d}^{\,3/4}\big)$, we have
        \begin{equation} \nonumber
            \mathds{1}_U^T\big(B\circ E\circ E\big)\mathds{1}_U 
            = \sum_{i,j \in U} B_{i,j} E_{i,j}^2
            \le 2 \|E\|_F^2 
            = o\big(n\bar{d}^{\,3/4} \big).
        \end{equation}
        
        \item[(3)]  Recall that $E_{i,i} \le 4CS/n=o\big((C\bar{d})^{1/4}\big)$ for all $i \in U$. As $E$ is positive semidefinite, this implies that $|E_{i,j}|\leq 4CS/n =o\big((C\bar{d})^{1/4}\big)$ for all $i,j \in U$.
        This, together with (\ref{eq: maxcut F-norm upper bound}), completes the proof as 
        \[
            \mathds{1}_U^T\,E^{\circ 3}\,\mathds{1}_U
            = \sum_{i,j \in U} E_{i,j}^3
            = \max_{i,j\in U} |E_{i,j}|\cdot \sum_{i,j \in U} E_{i,j}^2
            = o\big((C\bar{d})^{1/4}\big)\cdot \|E\|_F^2
            = o(n\bar{d}).
            \hfill\qedhere
        \]
        \end{itemize}
    \end{proof}

     With these estimates, we can rewrite  (\ref{eq: expansion of cut--hadamard}) as $0 \le \mathds{1}^T_U\,D\,\mathds{1}_U \le -\Omega(n\bar{d})+o(n\bar{d})$; a contradiction.
\end{proof}

\subsection{Further bounds on balanced graphs}\label{subsec: unified bounds via eigenvalues}

In this subsection, we prove that if $G$ is a balanced graph of sufficiently large density $1-p$, then  $|\lambda_n|\geq \frac{\surp^*(G)}{n}=\Omega(\min\{p^{-1},(pn)^{1/2}\})$, where the first inequality holds by \Cref{claim: surp* lambdan}. Note that this beats the lower bound for $|\lambda_n|$ from the previous section when $p\ll n^{-1/4}$, and beats the lower bound for the surplus when $p\ll n^{-1/5}$.  

\begin{lemma}\label{lemma:dens4_main_balanced}
    Let $G$ be an $n$-vertex graph with edge density $1-p$, whose complement $\overline{G}$ is $C$-balanced. 
    If $C^2p \le 1/100$, then 
    $$\surp^*(G)= \Omega\Big(C^{-3}\cdot \min\{np^{-1},n(pn)^{1/2}\}\Big).$$
    In particular,
    $$|\lambda_n|=\Omega\Big(C^{-3}\cdot\min\{p^{-1},(pn)^{1/2}\}\Big).$$
\end{lemma}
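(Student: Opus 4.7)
The plan is to reduce the statement to a lower bound on $\sum_{\lambda_i<0}\lambda_i^2$ and then finish via \Cref{lemma:surp_star}(ii). Writing $t:=pn$, since $\Delta^*(G)\le \bar\Delta\le Ct$, a lower bound of the form $\sum_{\lambda_i<0}\lambda_i^2 = \Omega(tn/C^{O(1)})$ plugged into \Cref{lemma:surp_star}(ii) yields
$$\surp^*(G)\;\gtrsim\;\frac{1}{\sqrt{Ct+1}}\sum_{\lambda_i<0}\lambda_i^2\;=\;\Omega\!\Big(\tfrac{n\sqrt{t}}{C^{O(1)}}\Big)\;=\;\Omega\!\Big(\tfrac{n(pn)^{1/2}}{C^{O(1)}}\Big).$$
Since $n(pn)^{1/2}\ge \min\{np^{-1},\,n(pn)^{1/2}\}$ trivially, this proves the surplus bound once the polynomial $C$-loss is absorbed into the $C^{-3}$ in the statement by careful bookkeeping. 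The ``in particular'' for $|\lambda_n|$ then follows from $|\lambda_n|\ge \surp^*(G)/n$, which is \Cref{claim: surp* lambdan}.

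To prove the lower bound on $\sum_{\lambda_i<0}\lambda_i^2$, I mimic the triple Hadamard product scheme of the preceding subsections. Let $v_1$ be the principal eigenvector of $A$, set $B:=A-\lambda_1 v_1 v_1^T$ and let $E:=\sum_{\lambda_i<0}|\lambda_i|\,v_iv_i^T\succeq 0$ be the negative part of $A$. Then $B+E=\sum_{\lambda_i>0,\,i\ne 1}\lambda_i v_iv_i^T\succeq 0$, so by the Schur product theorem $D:=(B+E)^{\circ 3}\succeq 0$. By \Cref{lemma:max_entry}, together with $C^2p\le 1/100$, $\lambda_1v_1(i)v_1(j)=1+O(Cp)$ uniformly, so $B$ is approximately $-I-\bar A$ entry-wise: $B_{ij}=-1+O(Cp)$ on $E(\bar G)\cup\mathrm{diag}$, and $|B_{ij}|=O(Cp)$ on $E(G)$.

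Let $S:=\surp^*(G)$ and, exactly as in the proof of \Cref{lemma:main_MaxCut}, set $U:=\{i\in[n]:E_{ii}\le 4CS/n\}$. Since $\mathrm{tr}(E)\le S$ by \Cref{lemma:surp_star}(i), we have $|U|\ge n(1-\tfrac{1}{4C})$, and also $|E_{ij}|\le \sqrt{E_{ii}E_{jj}}\le 4CS/n$ for all $i,j\in U$. Evaluate $0\le \mathbf{1}_U^T D\,\mathbf{1}_U=\mathbf{1}_U^T B^{\circ 3}\mathbf{1}_U + 3\,\mathbf{1}_U^T(B^{\circ 2}\circ E)\mathbf{1}_U+3\,\mathbf{1}_U^T(B\circ E^{\circ 2})\mathbf{1}_U+\mathbf{1}_U^T E^{\circ 3}\mathbf{1}_U$. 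The main term is bounded above by $-\Omega(tn)$: the contribution from $E(\bar G[U])\cup\mathrm{diag}$ is $-(1-O(Cp))(2e(\bar G[U])+|U|)\le -\Omega(tn)$, while the contribution from $E(G)$ is $O(C^3p^3)\,n^2$, which is negligible under $C^2p\le 1/100$. The three cross terms are controlled by Cauchy--Schwartz together with $\|B\|_F,\|B\circ B\|_F=O(\sqrt{tn})$ and the bound $|E_{ij}|\le 4CS/n$ on $U\times U$, giving $|\mathbf{1}_U^T(B^{\circ 2}\circ E)\mathbf{1}_U|=O(\sqrt{tn}\,\|E\|_F)$, $|\mathbf{1}_U^T(B\circ E^{\circ 2})\mathbf{1}_U|=O(Cp\,\|E\|_F^2+\tfrac{CS}{n}\sqrt{tn}\,\|E\|_F)$ and $|\mathbf{1}_U^T E^{\circ 3}\mathbf{1}_U|\le \tfrac{4CS}{n}\|E\|_F^2$.

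Combining these estimates yields the inequality
$$\Omega(tn)\;\le\;O\!\Big(\sqrt{tn}\,\|E\|_F\;+\;\tfrac{CS}{n}\sqrt{tn}\,\|E\|_F\;+\;\big(Cp+\tfrac{CS}{n}\big)\|E\|_F^2\Big),$$
from which a case analysis finishes the proof. Either $\|E\|_F^2=\sum_{\lambda_i<0}\lambda_i^2\ge \Omega(tn/C^{O(1)})$, in which case we are done, or $\|E\|_F^2$ is smaller, in which case \Cref{lemma:surp_star}(ii) forces $S\lesssim C^{O(1)}\|E\|_F^2/\sqrt{Ct}$ to be correspondingly small, and substituting back every right-hand term becomes smaller than $tn/2$, contradicting the displayed inequality. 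I expect the main technical obstacle to be precisely this last step: one must track the polynomial power of $C$ accumulated in the cross-term bounds carefully enough that it fits inside the stated $C^{-3}$ factor, and one must also separately dispose of the easy degenerate case $t\lesssim 1$ where the statement is essentially trivial from the edge-surplus bound of Edwards.
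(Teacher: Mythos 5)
Your plan cannot work, because the unconditional bound $\surp^*(G)=\Omega\big(C^{-O(1)}n(pn)^{1/2}\big)$ that you try to establish is false when $p\gtrsim n^{-1/3}$, which is exactly the regime where the $np^{-1}$ branch of the $\min$ is operative. To see this, take any $d$-regular graph $\overline G$ with $d\approx n^{3/4}$ and $\lambda_2(\overline G)=O(n/d)=O(n^{1/4})$ (such graphs exist, essentially matching the lower bounds of \cite{Balla21,RST} discussed in \Cref{sec: intro alon--boppana}). Since $\overline G$ is regular, $|\lambda_n(G)|=1+\lambda_2(\overline G)=O(n^{1/4})$, so by \Cref{claim: surp* lambdan} we get $\surp^*(G)\le n|\lambda_n|=O(n^{5/4})=O(np^{-1})$, whereas $n(pn)^{1/2}=\Theta(n^{11/8})$ is strictly larger. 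The $\min$ in the statement is therefore essential, and your opening observation that ``$n(pn)^{1/2}\ge \min\{np^{-1},n(pn)^{1/2}\}$ trivially'' is logically sound but cannot be fed a true premise. For the same example $\|E\|_F^2\le n|\lambda_n|^2=O(n^{3/2})\ll tn=n^{7/4}$, so the claim $\sum_{\lambda_i<0}\lambda_i^2=\Omega(tn/C^{O(1)})$ is also false.

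This falsity manifests as an unremovable obstruction in your error analysis. You also have \Cref{lemma:surp_star}(ii) backwards: it gives $\|E\|_F^2\le O(\sqrt{Ct}\,S)$, not $S\lesssim \|E\|_F^2/\sqrt{Ct}$. Using it the correct way, if you assume $S$ is small, the term $\tfrac{CS}{n}\|E\|_F^2$ in your display is $O\big(\tfrac{CS}{n}\cdot\sqrt{Ct}\,S\big)=O\big(C^{3/2}\sqrt{t}\,S^2/n\big)$; demanding this be $o(tn)$ forces only $S=o\big(nt^{1/4}/C^{3/4}\big)$, which is the $(pn)^{1/4}$ threshold of the preceding unlabeled lemma in \Cref{subsec: hadamard bounds}, not a $(pn)^{1/2}$ bound. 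The triple Hadamard machinery genuinely caps out at $(pn)^{1/4}$; the obstruction is at \emph{large} $t$, not the degenerate $t\lesssim1$ case you flag. The paper's actual proof of \Cref{lemma:dens4_main_balanced} takes a wholly different route: no Hadamard products, but instead Weyl's inequality (\Cref{lemma:dens4_weyl}) to pass to the moments $P_k=\sum_{i\neq1,\mu_i>0}\mu_i^k$ and $N_k=\sum_{\mu_i<0}|\mu_i|^k$ of $\overline G$'s spectrum, combined with all three parts of \Cref{lemma:surp_star} and a Cauchy--Schwarz inequality between moment sequences. If $N_2\le pn^2/8$ then $P_2\gtrsim pn^2$, giving the $n\sqrt{pn}$ bound via part (ii); otherwise either $P_3\ge N_3/2$ gives the same via part (iii), or $N_3\le 2\bar\Delta^3$ and then $N_1\ge N_2^2/N_3\gtrsim n/(C^3p)$ gives $\surp^*(G)\ge P_1\gtrsim n/(C^3p)$ via part (i). The $np^{-1}$ branch comes from this last case and is simply not visible in a Hadamard-product argument.
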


To establish these lower bounds, it is more convenient to work with the complement of $G$. 
Unfortunately, as $G$ is not necessarily regular, there is no simple formula to express the eigenvalues of $\overline{G}$ in terms of those of $G$. 
However, we can use Weyl's inequality to establish the following inequality.

\begin{lemma}\label{lemma:dens4_weyl}
    Let $G$ be an $n$ vertex graph with eigenvalues $\lambda_1\geq \dots\geq \lambda_n$, and let $\mu_1\geq \dots\geq \mu_n$ be the eigenvalues of the complement of $G$. 
    For each $i=1,2\dots,n-1$, we have $$1+\mu_{i+1}\leq -\lambda_{n+1-i}.$$
\end{lemma}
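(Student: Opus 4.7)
My plan is to apply Weyl's inequality to the identity $\overline{A}+I = J - A$, where $A$ is the adjacency matrix of $G$, $\overline{A}$ is that of the complement, and $J$ is the all-ones matrix. Viewing the right-hand side as a sum of the two Hermitian matrices $J$ and $-A$ lets me control the eigenvalues of $\overline{A}+I$, which are exactly $\mu_1+1 \ge \cdots \ge \mu_n+1$.

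The key observation is that $J$ has a very degenerate spectrum: its eigenvalues (in decreasing order) are $n, 0, 0, \dots, 0$, so the second-largest eigenvalue is already $0$. Meanwhile, the eigenvalues of $-A$ in decreasing order are $-\lambda_n \ge -\lambda_{n-1} \ge \cdots \ge -\lambda_1$, i.e. the $k$-th largest eigenvalue of $-A$ equals $-\lambda_{n+1-k}$. Recall Weyl's inequality: if $X$ and $Y$ are $n \times n$ real symmetric matrices with eigenvalues $\alpha_1 \ge \cdots \ge \alpha_n$ and $\beta_1 \ge \cdots \ge \beta_n$ respectively, then for every $s,t \ge 1$ with $s+t-1 \le n$, the $(s+t-1)$-th eigenvalue of $X+Y$ satisfies $\lambda_{s+t-1}(X+Y) \le \alpha_s + \beta_t$.

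I apply this with $X=J$, $Y=-A$, and $s=2$, $t=i$ (so $s+t-1 = i+1 \le n$), which gives
\[
\lambda_{i+1}(J-A) \;\le\; \alpha_2(J) + \beta_i(-A) \;=\; 0 + (-\lambda_{n+1-i}).
\]
Since $J-A = \overline{A}+I$ and this matrix shares an orthonormal eigenbasis with $\overline{A}$ (the identity merely shifts all eigenvalues by $1$), we have $\lambda_{i+1}(J-A) = \mu_{i+1} + 1$. Substituting gives precisely $1 + \mu_{i+1} \le -\lambda_{n+1-i}$, which is the claimed inequality.

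There is no real obstacle here: the entire argument is a one-line application of Weyl's inequality once one notices that $J$ has rank one, so choosing $s=2$ annihilates the contribution from $J$ and isolates the desired shift. The only subtlety is simply bookkeeping for the reversal of order when passing from eigenvalues of $A$ to those of $-A$.
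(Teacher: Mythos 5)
Your proof is correct and uses essentially the same approach as the paper: apply Weyl's inequality to a decomposition involving the all-ones matrix $J$ and $-A$, exploiting that after the top eigenvalue the contribution from the $J$-part vanishes. The only cosmetic difference is where the identity-matrix shift is absorbed — the paper takes $Y = J - I$ so that $X + Y = \overline{A}$ directly and uses $\lambda_2(J-I) = -1$, whereas you take $X = J$ with $\lambda_2(J) = 0$, obtain a bound on the eigenvalues of $J - A = \overline{A}+I$, and then subtract $1$ at the end. Both variants are one-line applications of the same inequality and yield the same conclusion.
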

\begin{proof}
    Weyl's inequality states that if $X$ and $Y$ are $n\times n$ symmetric matrices, and $1\leq i,j\leq n$ and $i+j\leq n+1$, then $$\lambda_{i+j-1}(X+Y)\leq \lambda_i(X)+\lambda_j(Y),$$
    where $\lambda_1(X)\geq \dots\geq \lambda_n(X)$ denote the eigenvalues of a matrix $X$. 
    Let $A$ be the adjacency matrix of $G$, and then $-A+J-I$ is the adjacency matrix of $\overline{G}$.
    Set $X=-A$ and $Y=J-I$.
    We have $\lambda_i(X)=-\lambda_{n+1-i}$, $\lambda_1(Y)=n-1$, $\lambda_i(Y)=-1$ for $i=2,\dots,n$, and $\lambda_i(X+Y)=\mu_i$. 
    Hence, applying the above inequality with $j=2$, we get for $i=1,2,\dots,n-1$, 
    \[\mu_{i+1}=\lambda_{i+1}(X+Y) \le \lambda_i(X) + \lambda_2(Y) = -\lambda_{n+1-i}-1. \hfill\qedhere\]
\end{proof}

\begin{proof}[Proof of \Cref{lemma:dens4_main_balanced}]
    We focus on the lower bounds for $\surp^*(G)$ since that of $\lambda_n$ follows from \Cref{claim: surp* lambdan}, i.e. $|\lambda_n|n \ge \surp^*(G)$.
    
    Let $A$ be the adjacency matrix of $G$ with eigenvalues $\lambda_1\geq \dots\geq \lambda_n$, and let $B$ be the adjacency matrix of $\overline{G}$ with eigenvalues $\mu_1\geq\dots\geq \mu_n$.
    Write $\Bar{\Delta}$ for the maximum degree of $\overline{G}$, so $\mu_1\leq \Bar{\Delta}\leq Cpn$. We may assume that $p>0$ and thus $\Bar{\Delta}\geq 1$, as otherwise the statement is trivial. For $k=1,2,3$, set $$P_k=\sum_{i\neq 1, \mu_i>0}\mu_i^k\mbox{\ \ \ and\ \ \ } N_k=\sum_{\mu_i<0}|\mu_i|^k.$$ 
    By \Cref{lemma:dens4_weyl}, whenever $\mu_{i+1}\geq 0$, we also have $\lambda_{n+1-i}\leq -\mu_{i+1}-1<0$. 
    Combined with \Cref{lemma:surp_star}, this shows that
    \begin{align} 
        \surp^*(G) 
            &\geq \sum_{\lambda_i<0}|\lambda_i| 
            \geq \sum_{i\neq 1, \mu_i>0}\mu_i = P_1, \label{eq: first moment complement}\\
        \surp^*(G) 
            &= \Omega\bigg(\Bar{\Delta}^{-1/2}\sum_{\lambda_i<0}|\lambda_i|^2\bigg)
            =\Omega\bigg({\Bar{\Delta}^{-1/2}}\sum_{i\neq 1, \mu_i>0}\mu_i^2\bigg) 
            = \Omega\Big(\Bar{\Delta}^{-1/2}P_2\Big), \label{eq: second moment complement}\\
        \surp^*(G) &
            = \Omega\bigg(\Bar{\Delta}^{-1}\sum_{\lambda_i<0}|\lambda_i|^3\bigg)
            = \Omega\bigg(\Bar{\Delta}^{-1}\sum_{i \neq 1, \mu_i>0}\mu_i^3\bigg)
            = \Omega\Big(\Bar{\Delta}^{-1}P_3\Big). \label{eq: third moment complement}
    \end{align}
    We show that these three inequalities together with some simple identities suffice to prove the lemma. 
    
    First, assume that $N_2\leq \frac{1}{8}pn^2$. 
    Note that $\mu_1^2+P_2+N_2=\|B\|_F^2$ is twice the number of edges of $\overline{G}$, so $\mu_1^2+P_2+N_2=2p\binom{n}{2}\ge pn^2/2$.
    Hence, using that $C^2p\le 1/100$, we get $$P_2\geq pn^2/2-\mu_1^2-N_2\geq pn^2/2-C^2p^2n^2-pn^2/8\geq pn^2/4.$$
    But then, (\ref{eq: second moment complement}) implies $\surp^*(G)=\Omega(C^{-1/2}p^{1/2}n^{3/2})$, and we are done.
    
    In the rest of the proof, we may assume $N_2\geq \frac{1}{8}pn^2$. 
    By the inequality between the quadratic and cubic mean, we have
    $$\left(\frac{N_2}{n}\right)^{1/2}\leq \left(\frac{N_3}{n}\right)^{1/3}$$
    which gives $N_3\geq N_2^{3/2}n^{-1/2}\geq p^{3/2}n^{5/2}/64.$
    
    Next, consider the quantity $T=N_3-P_3$. 
    Observe that $\mu_1^3-T=\sum_{i=1}^n\mu_i^3$ is six-times the number of triangles of $\overline{G}$. 
    In particular, $\mu_1^3-T$ it is nonnegative, showing that $T\leq \mu_1^3\leq \Bar{\Delta}^3$.
    If $N_3\geq 2T$, or equivalently, $P_3\geq N_3/2$. 
    We are done as (\ref{eq: third moment complement}) implies $$\surp^*(G)
        = \Omega\Big(\bar{\Delta}^{-1}{P_3}\Big)
        = \Omega\Big(\bar{\Delta}^{-1}{N_3}\Big)
        = \Omega\Big(C^{-1}p^{1/2}n^{3/2}\Big).$$
    
    Finally, if $N_3\leq 2T$, then $\Bar{\Delta}^3\geq T\geq N_3/2$.
    By the Cauchy--Schwartz inequality applied to the sequences $(|\mu_i|^3)_{\mu_i<0}$ and $(|\mu_i|)_{\mu_i<0}$, we have $N_1N_3\geq N_2^2$, which gives 
    $$N_1\geq \frac{N_2^2}{N_3}\geq \frac{(pn^2/8)^2}{2\Bar{\Delta}^3}\geq \frac{n}{128C^3p}.$$
    But $0=\trace(B)=\mu_1+P_1-N_1$, from which $$P_1=N_1-\mu_1\geq \frac{n}{128C^3p}-\Bar{\Delta}\ge \frac{n}{128C^3p} - Cpn \ge \frac{n}{256C^3p}.$$
    Here, we also used that $pC^2 \le 1/100$.
    This completes the proof by (\ref{eq: first moment complement}), i.e. $\surp^*(G)\geq P_1$.
\end{proof}

\subsection{Completing the proof}\label{subsec:concluding dens3}

We can summarize the results of \Cref{subsec: hadamard bounds,subsec: unified bounds via eigenvalues} in the following theorem.

\begin{theorem} \label{theorem: both bounds for balanced graphs}
    Let $G$ be an $n$-vertex graph with edge density $1-p$, whose complement $\overline{G}$ is $C$-balanced of average degree $d=d(\overline{G})$. If $C^2p \le 1/100$, then 
    \begin{enumerate}
        \item[(a)] the smallest eigenvalue $\lambda_n$ satisfies $|\lambda_n| = \Omega\Big(C^{-3}\cdot \max\big\{\min\{n/d, d^{1/2}\}, d^{1/3}\big\}\Big),$
        \item[(b)] and the surplus satisfies $\surp^*(G) = \Omega\Big(C^{-3}n\cdot \max\big\{\min\{n/d, d^{1/2}\}, d^{1/4} \big\}\Big).$
    \end{enumerate}
\end{theorem}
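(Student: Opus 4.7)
The plan is to simply assemble the three preceding results of Subsections \ref{subsec: hadamard bounds} and \ref{subsec: unified bounds via eigenvalues} into one statement, after translating all bounds from the variable $p$ into the variable $d = d(\overline{G})$. Since $d = p(n-1)$, we may assume $d \ge 1$ (otherwise $\overline{G}$ has no edges and (a), (b) are vacuous), and then $pn \le d+1 \le 2d$ while $d \le pn$, so $pn = \Theta(d)$. Consequently $(pn)^{1/k} = \Theta(d^{1/k})$ for $k=2,3,4$ and $p^{-1} = \Theta(n/d)$. All three lemmas being combined require exactly the hypothesis $C^2 p \le 1/100$, which is the one we have.

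For part (a), Lemma \ref{lemma: degree bounds for balanced graphs} yields $|\lambda_n| = \Omega\bigl((pn)^{1/3}\bigr) = \Omega(d^{1/3})$, while Lemma \ref{lemma:dens4_main_balanced} yields $|\lambda_n| = \Omega\bigl(C^{-3}\min\{p^{-1},(pn)^{1/2}\}\bigr) = \Omega\bigl(C^{-3}\min\{n/d, d^{1/2}\}\bigr)$. Since $C \ge 1$ we may harmlessly insert the factor $C^{-3}$ in front of $d^{1/3}$ and take the maximum of the two bounds, which gives
\[
  |\lambda_n| = \Omega\!\Bigl(C^{-3}\cdot \max\bigl\{\min\{n/d, d^{1/2}\},\, d^{1/3}\bigr\}\Bigr).
\]

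For part (b), the unnamed lemma of Subsection \ref{subsec: hadamard bounds} gives $\surp^*(G) = \Omega\bigl(C^{-3/4} n (pn)^{1/4}\bigr) = \Omega\bigl(C^{-3/4} n\,d^{1/4}\bigr)$, while Lemma \ref{lemma:dens4_main_balanced} gives $\surp^*(G) = \Omega\bigl(C^{-3}\min\{np^{-1}, n(pn)^{1/2}\}\bigr) = \Omega\bigl(C^{-3} n \min\{n/d, d^{1/2}\}\bigr)$. Using $C^{-3/4} \ge C^{-3}$ and taking the maximum yields
\[
  \surp^*(G) = \Omega\!\Bigl(C^{-3} n \cdot \max\bigl\{\min\{n/d, d^{1/2}\},\, d^{1/4}\bigr\}\Bigr).
\]

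There is essentially no hard step: the proof is a one-line combination, and the only thing to be careful about is that the regime-dependent lower bounds from Subsection \ref{subsec: hadamard bounds} (which dominate when $p \gg n^{-1/4}$ for $|\lambda_n|$ and when $p \gg n^{-1/5}$ for $\surp^*$) and the regime-dependent lower bounds from Lemma \ref{lemma:dens4_main_balanced} (which dominate for smaller $p$) together cover every value of $d$ in the allowable range $d \le n/(100 C^2)$. Bookkeeping the $\min$--$\max$ structure and pulling a common factor of $C^{-3}$ out of both bounds is the entire content of the proof.
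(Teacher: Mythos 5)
Your proof is correct and matches the paper's intent exactly: the paper introduces this theorem with the sentence ``We can summarize the results of \Cref{subsec: hadamard bounds,subsec: unified bounds via eigenvalues} in the following theorem'' and gives no further argument, and your one-line combination of \Cref{lemma: degree bounds for balanced graphs}, the unnamed surplus lemma, and \Cref{lemma:dens4_main_balanced}, translated through $pn = d\cdot\tfrac{n}{n-1} = \Theta(d)$ and $p^{-1}=\Theta(n/d)$, is precisely the intended bookkeeping. One trivial nit: $d<1$ does not imply $\overline{G}$ has no edges, but this does not matter --- the identity $pn=d(1+\tfrac{1}{n-1})$ gives $pn=\Theta(d)$ for every $d>0$, so the reduction to $d\ge 1$ is unnecessary (and the $d=0$ case is vacuous as you say).
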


Combining \Cref{lemma:dens4_finding_balanced} and \Cref{theorem: both bounds for balanced graphs}, we now prove the main theorem of this section. 

\begin{proof}[Proof of Theorem \ref{theorem:dens3_main}]
(a) 
    Applying \Cref{lemma:dens4_finding_balanced} to $\overline{G}$, we get that $G$ contains an induced subgraph $G_1$ on $n_1 = \Omega(n/\log n)$ vertices of density $1-p_1\geq 1-10^{-6}$, and whose complement $\overline{G_1}$ is $C_1$-balanced for $C_1=4\log_2 1/p_1$.
    We are done if $G_1$ is a clique.
    Otherwise, $p_1\geq 1/\binom{n_1}{2}$ implying that $C_1 = O(\log n_1)$. 
    We claim that $G_1$ is the desired subgraph, i.e. $p_1 = \widetilde{O}\big(|\lambda_n|^2/n\big)$, by applying \Cref{theorem: both bounds for balanced graphs} (note that this is valid as as $C_1^2p_1\le 16p_1(\log_2 1/p_1)^2 \le 1/100$ for all $p_1\in (0, 10^{-6})$).

    Let $\lambda_{\min}$ be the smallest eigenvalue of $G_1$. 
    Cauchy's interlacing theorem guarantees $|\lambda_{\min}| \le |\lambda_n|\leq n^{1/4}/(\log n)^4$. 
    Let $d_1 = d(\overline{G_1})$.
    Suppose $d_1 \ge n_1^{2/3}$.
    It is easy to check that $\max\big\{\min\{n_1/d_1, d_1^{1/2}\}, d_1^{1/3}\big\} \ge n_1^{1/4}$.
    Indeed, $d_1^{1/3} \ge n_1^{1/4}$ if $d_1 \ge n_1^{3/4}$ and $\min\{n_1/d_1, d_1^{1/2}\} \ge n_1^{1/4}$ if $n_1^{2/3} \le d_1 \le n_1^{3/4}$.
    So \Cref{theorem: both bounds for balanced graphs} implies $|\lambda_{\min}| = \Omega\big(C_1^{-3}\cdot n_1^{1/4}\,\big)$.
    Recalling that $n_1 = \Omega(n/\log n), C_1=O(\log n_1)$ and $n$ is sufficiently large, this bound gives $|\lambda_{\min}|=\Omega(n^{1/4}/(\log n)^{13/4}\,\big) > n^{1/4}/(\log n)^4$; this is impossible.

    Now we know that $d_1 \le n_1^{2/3}$.
    In this case, $d_1^{1/3} \le d_1^{1/2} \le n_1/d_1$, so \Cref{theorem: both bounds for balanced graphs} implies $|\lambda_{\min}| = \Omega\big(C_1^{-3}\cdot d_1^{1/2}\,\big)$.
    Recall that $|\lambda_{\min}| \le |\lambda_n|$.
    It must be that $C_1^{-3}\cdot d_1^{1/2} = O\big(|\lambda_n|\big)$, i.e. $d_1 = O\big(|\lambda_n|^2 C_1^{6}\,\big) = O\big(|\lambda_n|^2 (\log n)^6\big)$.
    This completes the proof as $p_1 = d_1/(n_1-1) = O\big(|\lambda_n|^2 (\log n)^7/n\big)$.

\medskip

(b) 
    By \Cref{lemma:dens4_finding_balanced} on $\overline{G}$, $G$ has an induced subgraph $G_1\subset G$ on $n_1=\Omega(n/\log n)$ vertices of density $1-p_1 \ge 1-10^{-6}$ and with a $C_1$-balanced complement, where $C_1=O(\log n)$. 
    By \Cref{claim:groth}, $\surp^*(G_1) \le \surp^*(G) = O\big(\surp(G)\cdot \log n\big)=O(n^{6/5}/(\log n)^5)$.
    Write $d_1 = d(\overline{G_1})$.
    If $d_1 \ge n_1^{2/3}$, one can check that $\max\big\{\min\{n_1/d_1, d_1^{1/2}\}, d_1^{1/4}\big\} \ge n_1^{1/5}$, 
    so \Cref{theorem: both bounds for balanced graphs} yields $\surp^*(G_1)= \Omega\big(C_1^{-3}n_1\cdot n_1^{1/5}\big) = \Omega\big(n_1^{6/5}/C_1^3\big)=\Omega\big(n^{6/5}/(\log n)^{6/5+3}\big)>n^{6/5}/(\log n)^5$; this is impossible.
    So $d_1 \le n_1^{2/3}$ must hold.
    But then, \Cref{theorem: both bounds for balanced graphs} implies $\surp^*(G_1)= \Omega\big(C_1^{-3}n_1\cdot d_1^{1/2}\big)$.
    As $\surp^*(G_1) \le \surp^*(G)=O(\surp(G)\cdot \log n)$, we get $d_1 = O\big(C_1^6\surp(G)^2(\log n)^2/n_1^2\big)$.
    So, $p_1=d_1/(n_1-1)=\widetilde{O}(\surp(G)^2/n^3)$, as desired.
    % Since $\surp^*(G_1) \le \surp^*(G)$, Theorem~\ref{theorem: both bounds for balanced graphs} (b) yields $\surp^*(G) \geq \Omega(C_1^{-3}n_1d_1^{1/4})$, which implies that $$d_1\leq O\big((\surp^*(G)\cdot (\log n)^4/n_1)^4\big)\leq n^{4/5}.$$ By a similar calculation as before, we conclude that $\surp^*(G)\ll C_1^{-3}n^2/d$, implying that the minimum in Theorem~\ref{theorem: both bounds for balanced graphs}(b) must be attained by the term $C_1^{-3} n_1d_1^{1/2}$, giving $\surp^*(G)\geq C_1^{-3} n_1d_1^{1/2}$, from where it follows that $d_1\leq \widetilde{O}(\surp^*(G)^2/n^2)$, completing the proof. 
\end{proof}

\section{Large cliques from small eigenvalues or surplus}\label{sect:cliques}

In this section, we combine our densification steps to prove that graphs with large smallest eigenvalue (or with small surplus) contain large cliques. 
This section contains two main results: one result covers graphs whose density is polynomially close to $n$, while the other covers graphs whose average degree could be much smaller than $n$.

\begin{theorem}\label{thm:clique1}
Let $\gamma,\eps,\rho > 0$ such that $\rho< 1/2$, $\eps+6\gamma<1$ and $\rho/\eps+ 2\gamma/(1-\eps-4\gamma) < 1$. Then for sufficiently large $n$, any $n$-vertex graph $G$ of edge density at least $n^{-\rho}$ and smallest eigenvalue $|\lambda_n|\leq n^{\gamma}$ contains a clique of size at least $n^{1-\eps - 2\gamma}$.
\end{theorem}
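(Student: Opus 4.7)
The plan is to chain the three densification phases (Lemma~\ref{lemma:dens1_main}, Corollary~\ref{cor:dens2_main}, Theorem~\ref{theorem:dens3_main}(a)) and conclude by applying Turán's theorem to the complement of the resulting near-complete subgraph. Throughout, I will use Cauchy's interlacing theorem to propagate the bound $|\lambda_n| \leq n^\gamma$ down to every induced subgraph obtained along the way.

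First I would apply Lemma~\ref{lemma:dens1_main} directly to $G$: the three hypotheses on $(\gamma,\eps,\rho)$ required there are literally the hypotheses of the present theorem, so the lemma produces an induced subgraph $G_1 \subseteq G$ with $n_1 := |V(G_1)| \geq n^{1-\eps}$ and edge density at least some constant $c_1 = c_1(\gamma,\eps,\rho) > 0$. Next, since $\eps + 6\gamma < 1$ implies $\gamma < (1-\eps)/6 < (1-\eps)/4$, the smallest eigenvalue of $G_1$ satisfies $|\lambda_n(G_1)| \le n^\gamma \le n_1^{\gamma/(1-\eps)}$ with $\gamma/(1-\eps) < 1/4$, so Corollary~\ref{cor:dens2_main} applies to $G_1$ (with $\delta = 10^{-7}$, say) and yields $G_2 \subseteq G_1$ on $n_2 \geq c_1 n_1/2 = \Omega(n^{1-\eps})$ vertices with density at least $1 - 10^{-7}$.

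For Phase~3, note that $|\lambda_n(G_2)| \le n^\gamma \le n_2^{1/4}/(\log n_2)^4$ for $n$ large, since $\gamma < (1-\eps)/4$. Thus Theorem~\ref{theorem:dens3_main}(a) applied to $G_2$ gives an induced subgraph $G_3 \subseteq G_2$ on $n_3 = \Omega(n_2/\log n_2) = \widetilde{\Omega}(n^{1-\eps})$ vertices, whose density is at least $1 - \widetilde{O}\bigl(|\lambda_n(G_2)|^2/n_2\bigr) \geq 1 - \widetilde{O}\bigl(n^{2\gamma - (1-\eps)}\bigr)$. Hence the complement $\overline{G_3}$ has average degree at most $n_3 \cdot \widetilde{O}\bigl(n^{2\gamma - (1-\eps)}\bigr) = \widetilde{O}(n^{2\gamma})$. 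By Turán's theorem (or a simple greedy argument), $\overline{G_3}$ contains an independent set of size $\widetilde{\Omega}\bigl(n_3 / n^{2\gamma}\bigr) = \widetilde{\Omega}\bigl(n^{1-\eps-2\gamma}\bigr)$, which is precisely a clique in $G_3 \subseteq G$.

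The only real obstacle is cosmetic: the bound produced by the pipeline is $\widetilde{\Omega}(n^{1-\eps-2\gamma})$, carrying polylogarithmic losses from Phases~2 and~3, whereas the statement asks for the clean bound $n^{1-\eps-2\gamma}$. I will handle this by exploiting the strictness of the three hypotheses. Since $\rho < 1/2$, $\eps + 6\gamma < 1$, and $\rho/\eps + 2\gamma/(1-\eps-4\gamma) < 1$ are all strict, I can pick $\eps' \in (0,\eps)$ satisfying the same three inequalities (with $\eps$ replaced by $\eps'$), then run the entire argument with $\eps'$ in place of $\eps$. The resulting clique has size $\widetilde{\Omega}(n^{1-\eps' - 2\gamma})$, which is $\geq n^{1-\eps-2\gamma}$ for all sufficiently large $n$, since $\eps - \eps' > 0$ is a constant that dominates any polylogarithmic factor. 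Once this perturbation is set up, the remainder of the proof is direct bookkeeping through the three lemmas stated in earlier sections.
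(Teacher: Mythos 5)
Your proof is correct and takes essentially the same approach as the paper: chain Lemma~\ref{lemma:dens1_main}, Corollary~\ref{cor:dens2_main}, and Theorem~\ref{theorem:dens3_main}(a), then apply Tur\'an's theorem, using Cauchy interlacing to propagate the eigenvalue bound. The paper simply performs the $\eps\mapsto\eps'$ perturbation at the outset rather than as a final patch, but the argument is the same.
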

\begin{proof}
    % In this proof, we put together the three phases of densification that we have presented thus far.
    % Before we do this, we choose 
    Choose a parameter $\eps'<\eps$ for which the inequalities $$\rho< 1/2,\quad \eps'+6\gamma<1 \text{\ \ and\ \ } \rho/\eps'+ 2\gamma/(1-\eps'-4\gamma) < 1$$ are still satisfied. 
    This can be done since all the inequalities are strict.
    
    As the first step, we apply \Cref{lemma:dens1_main}, which states that if the above inequalities on the parameters $\gamma, \eps'$ and $\rho$ are satisfied and $n$ is sufficiently large, there exists an induced subgraph $G_1\subseteq G$ on at least $n_1\geq n^{1-\eps'}$ vertices with density $\Omega(1)$ (where the hidden constant may depend on $\gamma, \rho$ and $\eps'$). 
    Denote the smallest eigenvalue of $G_1$ by $\lambda^{(1)}$; then Cauchy's interlacing theorem implies $|\lambda^{(1)}|\leq |\lambda_n|\leq n^{\gamma}$.
    
    Next, applying the second phase of densification to $G_1$, that is, \Cref{cor:dens2_main}, we get that $G_1$ contains an induced subgraph $G_2$ on  $n_2= \Omega(n_1)= \Omega(n^{1-\eps'})$ vertices with edge density at least $1-10^{-6}$. 
    The smallest eigenvalue $\lambda^{(2)}$ of $G_2$ still satisfies $|\lambda^{(2)}|\leq |\lambda^{(1)}|\leq n^{\gamma}$.
    
    Finally, we apply the third phase of densification to $G_2$, that is, \Cref{theorem:dens3_main}.
    This theorem states that if $|\lambda^{(2)}|\leq n_2^{1/4}/(\log n)^4$, then $G_2$ contains an induced subgraph $G_3$ on $n_3=\Omega(n_2/\log n_2)$ vertices with edge density $1-p_3$, where $p_3\le |\lambda^{(2)}|^2/n_2^{1-o(1)}$. 
    The condition on $\lambda^{(2)}$ is easy to verify since $|\lambda^{(2)}|\leq n^{\gamma}= o\big(n^{(1-\eps')/4}/(\log n)^4\big) <  n_2^{1/4}/(\log n_2)^4$. 
    Here, we used $n_2=\Omega(n^{1-\eps'})$ and $4\gamma < 1-\eps'$, which follows directly from $\eps'+6\gamma<1$. 
    
    To conclude the proof, we apply Tur\'an's theorem to $G_3$. 
    This guarantees $G_3$ has a clique of size $$\min\Big(n_3,\Omega\big(p_3^{-1}\big)\Big)
        \ge \frac{n_2^{1-o(1)}}{|\lambda^{(2)}|^2}
        \geq \frac{n^{1-\eps'-o(1)}}{n^{2\gamma}}
        \geq n^{1-\eps'-2\gamma-o(1)}.$$
    Since $\eps'<\eps$, we conclude that $G$ contains a clique of size at least $n^{1-\eps-2\gamma}$, if $n$ is sufficiently large.
\end{proof}

Here, the bound $n^{1-2\gamma-\eps}$ is close to optimal (when $\rho,\eps$ are both small). Indeed, the Erd\H{o}s-R\'enyi random graph $G(n,p)$ with edge probability $p=1-n^{2\gamma-1}$ has no clique of size larger than $n^{1-2\gamma+o(1)}$, and its smallest eigenvalue satisfies $|\lambda_n|=\Theta(n^\gamma)$ with high probability. We now shift our focus to sparse graphs, with the goal of proving Theorem~\ref{thm:MAIN_SE3}, which we restate here. 

\begin{theorem}
Let $\gamma\in (0,1/10)$, and let $d$ be sufficiently large. Then, every graph $G$ of average degree $d$ whose smallest eigenvalue satisfies $|\lambda_n|\leq d^\gamma$ contains a clique of size at least $d^{1-4\gamma}$.
\end{theorem}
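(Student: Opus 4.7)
My strategy is to reduce \Cref{thm:MAIN_SE3} to the dense case \Cref{thm:clique1} via the ``Phase 0'' densification step flagged in the proof overview. Given $G$ with average degree $d$ and $|\lambda_n|\le d^\gamma$, I first extract an induced subgraph on $d$ vertices with edge density $\Omega(d^{-\gamma})$, and then apply \Cref{thm:clique1} to it.

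For Phase 0, pick any vertex $x$ of degree at least $d$ and a set $S\subseteq N(x)$ of size exactly $d$. Consider the test vector $v\in\mathbb{R}^{V(G)}$ with $v(x)=\alpha$, $v(y)=1$ for $y\in S$, and $v=0$ elsewhere, where $\alpha$ is a free real parameter. A direct computation gives $v^{T}Av = 2\alpha d + 2 e(G[S])$ and $\|v\|_2^2 = \alpha^2 + d$, so the Rayleigh bound $v^{T}Av \ge \lambda_n\|v\|_2^2$ produces
\[
2 e(G[S]) \;\ge\; -|\lambda_n|\alpha^2 - 2\alpha d - |\lambda_n| d.
\]
The right-hand side is maximized at $\alpha = -d/|\lambda_n|$, which yields $e(G[S]) \ge d(d-|\lambda_n|^2)/(2|\lambda_n|) \ge d^2/(4|\lambda_n|)$, using $d \ge 2|\lambda_n|^2 = 2 d^{2\gamma}$, valid for $\gamma < 1/2$ and $d$ sufficiently large. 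Hence $G[S]$ is an induced subgraph on $d$ vertices with edge density at least $1/(2|\lambda_n|) \ge \tfrac12 d^{-\gamma}$.

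To finish, apply \Cref{thm:clique1} to $G' := G[S]$. Cauchy's interlacing theorem gives $|\lambda_{|S|}(G')| \le |\lambda_n| \le |S|^{\gamma}$. Take parameters $n := |S| = d$, $\gamma' := \gamma$, $\rho := \gamma + \delta$, and $\eps := 2\gamma - \delta$ for a sufficiently small constant $\delta > 0$. The first two hypotheses $\rho < 1/2$ and $\eps + 6\gamma < 1$ are trivial for $\gamma < 1/10$, while the third,
\[
\frac{\gamma + \delta}{2\gamma - \delta} \;+\; \frac{2\gamma}{1 - (2\gamma - \delta) - 4\gamma} \;<\; 1,
\]
tends to $\tfrac12 + 2\gamma/(1-6\gamma)$ as $\delta \to 0^+$, which is strictly less than $1$ \emph{exactly} because $\gamma < 1/10$. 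Therefore, for small enough $\delta$, \Cref{thm:clique1} produces in $G'$ a clique of size at least $|S|^{1 - \eps - 2\gamma} = d^{1 - 4\gamma + \delta} \ge d^{1 - 4\gamma}$, as required.

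The only genuinely new ingredient is Phase 0: selecting the right test vector so that a single Rayleigh inequality forces the neighborhood of a high-degree vertex to be dense. The weight $\alpha = -d/|\lambda_n|$ is chosen precisely to balance the two negative error terms $-|\lambda_n|\alpha^2$ and $-2\alpha d$, converting them into a net contribution of order $d^2/|\lambda_n|$ toward $e(G[S])$. Everything downstream is a clean invocation of the previously established dense result, with the cutoff $\gamma < 1/10$ surfacing naturally as the boundary $\tfrac12 + 2\gamma/(1-6\gamma) = 1$ of the third hypothesis of \Cref{thm:clique1}.
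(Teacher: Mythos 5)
Your proof is correct and follows essentially the same route as the paper: the same Phase 0 test vector (yours is the paper's $v(x)=1$, $v(y)=\lambda_n/d$ rescaled by $-d/|\lambda_n|$, so the Rayleigh bound is identical), the same conclusion that $G[S]$ has density $\Omega(d^{-\gamma})$, and the same reduction to Theorem~\ref{thm:clique1}. The only cosmetic differences are that you make the choice of the test-vector coefficient explicit via optimization over $\alpha$, and you take $\eps = 2\gamma - \delta$ instead of the paper's $\eps = 2\gamma$ with $\rho = \gamma + \kappa$ (both choices satisfy the three hypotheses of Theorem~\ref{thm:clique1} for $\gamma < 1/10$ and yield a clique of size at least $d^{1-4\gamma}$).
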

\begin{proof}
    We will reduce to \Cref{thm:clique1} by finding an induced subgraph $G_0\subseteq G$ whose average degree is polynomially close to the number of its vertices. 
    To do this, let $x$ be a vertex of maximum degree in $G$, and let $S$ be a set of $d$ neighbours of $x$. 
    We claim that $e(G[S])\geq d^2/4|\lambda_n|$, and then will take $G_0=G[S]$.
    
    To prove the claim, define a vector $v\in \mathbb{R}^{n}$ such that $v(x)=1$, $v(y)=\frac{\lambda_n}{d}$ for $y\in S$, and $v(z)=0$ if $z\not\in S\cup\{x\}$. Since $\lambda_n$ is the smallest eigenvalue of the adjacency matrix $A$, we must have $v^TAv\geq \lambda_n\|v\|_2^2$. 
    For this $v$, we also have 
    $v^T A v=2\lambda_n+2\frac{\lambda_n^2}{d^2}e(G[S])$ and $\|v\|_2^2=1+d\frac{\lambda_n^2}{d^2}\leq 3/2$.
    Combining these two observations, we have
    \[2\lambda_n+2\frac{\lambda_n^2}{d^2}e(G[S])=v^T A v\geq \lambda_n \|v\|_2^2\geq \frac{3}{2}\lambda_n.\]
    Therefore, we conclude that $e(G[S])\geq \frac{d^2}{4|\lambda_n|}$, as needed.
    
    Now, define $G_0=G[S]$; it has $d$ vertices and density at least $d^{-\gamma}/2$. 
    Hence, \Cref{thm:clique1} applies to $G_0$ with parameters $\rho=\gamma+\kappa$ and $\eps=2\gamma$, where $\kappa>0$ is an arbitrary small constant, assuming $d$ is sufficiently large. 
    These parameters clearly satisfy $\rho<1/2$ and $\eps+6\gamma=8\gamma<1$ (using $\gamma<1/10$) and we also have $\rho/\eps+2\gamma/(1-\eps-4\gamma)=1/2+2\gamma/(1-6\gamma)+\kappa/\eps<1$ for $\kappa$ sufficiently small with respect to~$\gamma$. 
    Hence, we conclude that $G_0$ contains a clique of size $d^{1-\eps-2\gamma}=d^{1-4\gamma}$. This completes the proof.
\end{proof}

Now we prove the MaxCut version of \Cref{thm:clique1}, and use it to deduce \Cref{thm:MAIN_MC1}.

\begin{theorem}\label{thm:clique_MaxCut}
  Let $\gamma\in (0,1/60)$ and $\delta>0$, then there exist $\rho>0$ such that the following holds for every sufficiently large $n$. Let $G$ be an $n$-vertex graph of edge density at least $n^{-\rho}$ such that $\surp(G)\leq n^{1+\gamma}$. Then $G$ contains a clique of size at least $n^{1-2\gamma-\delta}$.
\end{theorem}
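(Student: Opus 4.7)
The plan is to imitate the proof of Theorem~\ref{thm:clique1}, performing the three densification phases in sequence but using their surplus counterparts at each step. Fix a small auxiliary parameter $\eps'>0$ depending on $\gamma$ and $\delta$; it will suffice to take $\eps'<\min\{\delta/4,\,(1-5\gamma)/6\}$ and a corresponding $\gamma'\in(\gamma,1/30)$ with $1+\gamma<(1-\eps')(1+\gamma')$. Let $\rho=\rho(\gamma,\eps')>0$ be the threshold supplied by Lemma~\ref{lemma:dens1_main_surplus}. Throughout the argument we use that the surplus is monotone on induced subgraphs, so $\surp(G_i)\le n^{1+\gamma}$ at every stage.

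First I would apply Lemma~\ref{lemma:dens1_main_surplus} (Phase~1) to $G$ to obtain an induced subgraph $G_1\subseteq G$ on $n_1\ge n^{1-\eps'}$ vertices of edge density at least a constant $c_1=c_1(\gamma,\eps')>0$. Next, since $\surp(G_1)\le n^{1+\gamma}\le n_1^{1+\gamma'}$ by our choice of $\gamma'$, Corollary~\ref{cor:dens2_main_MaxCut} (Phase~2, with density $c_1$ and closeness $10^{-6}$) yields $G_2\subseteq G_1$ on $n_2\ge c_1n_1/2=\Omega(n^{1-\eps'})$ vertices of edge density at least $1-10^{-6}$. The inequality $\eps'<(1-5\gamma)/6$ ensures $n^{1+\gamma}\le n_2^{6/5}/(\log n_2)^6$, so Theorem~\ref{theorem:dens3_main}(b) (Phase~3) applies and produces $G_3\subseteq G_2$ on $n_3=\Omega(n_2/\log n_2)=\widetilde\Omega(n^{1-\eps'})$ vertices of edge density
\[
1-\widetilde O\!\left(\frac{\surp(G_2)^2}{n_2^3}\right)\;\ge\;1-\widetilde O\!\left(n^{-1+2\gamma+3\eps'}\right).
\]
Tur\'an's theorem applied to $G_3$ then provides a clique of size $\min\!\bigl(n_3,\,\widetilde\Omega(n^{1-2\gamma-3\eps'})\bigr)=\widetilde\Omega(n^{1-2\gamma-3\eps'})$, which exceeds $n^{1-2\gamma-\delta}$ for $n$ large enough to absorb the polylogarithmic factors, using $3\eps'\le 3\delta/4$.

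The main obstacle is just careful parameter bookkeeping: $\eps'$ must be small enough that the hypothesis $\surp\le n^{1+\gamma}$ on $G$ translates, after the polynomial shrinkage in Phase~1, into the hypothesis $\surp(G_i)\le n_i^{1+\gamma'}$ required by Phases~2 and~3, while simultaneously the resulting loss $3\eps'$ in the final exponent stays below the target slack $\delta$. The binding constraint on $\gamma$ is $\gamma<1/60$ coming from Phase~1 (Lemma~\ref{lemma:dens1_main_surplus}); the weaker bounds $\gamma<1/30$ from Phase~2 and the implicit $\gamma<1/5$ from Phase~3 are automatic, so all three stages are compatible in the entire allowed range of~$\gamma$.
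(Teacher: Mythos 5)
Your proposal is correct and follows exactly the approach the paper takes: the paper's proof of this theorem is a one-line reference to the proof of Theorem~\ref{thm:clique1}, substituting Lemma~\ref{lemma:dens1_main_surplus}, Corollary~\ref{cor:dens2_main_MaxCut}, and Theorem~\ref{theorem:dens3_main}(b) for their smallest-eigenvalue counterparts, which is precisely what you do. Your parameter bookkeeping (choosing $\eps'$ small and $\gamma'\in(\gamma,1/30)$ so that $\surp(G_i)\le n^{1+\gamma}$ transfers to $\surp(G_i)\le n_i^{1+\gamma'}$, and so that $3\eps'<\delta$ absorbs the accumulated losses) correctly fills in the details the paper leaves implicit.
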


\begin{proof}
The proof of this is essentially identical to the proof of \Cref{thm:clique1}. The only difference is that we cite the MaxCut versions of our main densification results:  \Cref{lemma:dens1_main_surplus}, \Cref{cor:dens2_main_MaxCut}, and \textit{(b)} of \Cref{theorem:dens3_main}. 
\end{proof}

\begin{proof}[Proof of \Cref{thm:MAIN_MC1}]
    Let $\gamma=\delta/2$, and let $\rho$ be the constant in \Cref{thm:clique_MaxCut}. 
    We show that $\eps=\min\{\rho/4,\gamma/2\}$ suffices. 
    Suppose for contradiction that $\surp(G)< m^{0.5+\eps}$.
    Let $n$ be the number of vertices in $G$. 
    We may assume that $G$ contains no isolated vertices. 
    Then, a result of Erd\H{o}s, Gy\'arf\'as, and Kohayakawa \cite{EGyK} shows that $\surp(G)\ge\frac{n}{6}$, so $m>(n/6)^{1/(0.5+\eps)}>n^{2-4\eps}$.
    Then edge density of $G$ is at least $n^{-4\eps}\ge n^{-\rho}$. 
    As $m \le n^2$, $\surp(G)\leq m^{0.5+\eps}\leq n^{1+2\eps}\leq n^{1+\gamma}$.
    We get the contradiction as \Cref{thm:clique_MaxCut} guarantees that $G$ contains a clique of size $n^{1-2\gamma-\delta}=n^{1-2\delta}\geq m^{1/2-\delta}$.
\end{proof}

\section{Edit distance from the union of cliques}\label{sect:stab}

\Cref{thm:MAIN_SE1,thm:dens2_main_MaxCut} become meaningless once the graph $G$ has density at most $n^{-c}$ for any small constant $c>0$, as then $G$ is already $o(1)$-close to the empty graph. 
However, in this section we prove \Cref{thm:MAIN_SE2}, which deals with these graphs, under a somewhat stronger condition on the smallest eigenvalue.

The proof works as follows. 
Using \Cref{thm:clique1}, we can repeatedly pull out ``large'' cliques in $G$ as long as the rest graph has ``many'' edges.
Then, we show that the union of these cliques induce almost all edges in $G$, so it is sufficient to consider this disjoint union of cliques.
Moreover, we show that between any two cliques, it is either very sparse or very dense.
We use this to derive that $G$ itself must resemble a disjoint union of cliques.

We start with the following simple lemma, which will be used to argue that a dense graph with large smallest eigenvalue cannot induce sparse subgraphs ($G[Y]$ in the following statement).
In fact, we can prove a stronger statement in terms of the surplus (recall that $\surp(G) \le n|\lambda_n|/4$ from \Cref{claim:surplus_and_lambdan}).

\begin{lemma}\label{lemma:inbalanced_partition}
Let $G$ be a graph on $n$ vertices. Let $X\cup Y$ be a partition of $V(G)$, and let $b=e(G[X,Y])$ and $c=e(G[Y])$. Then $\surp(G)\geq \frac{b^2}{4n^2}-c$.
\end{lemma}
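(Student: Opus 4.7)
The plan is to apply the continuous/SDP reformulation of surplus used in the proof of \Cref{claim:surplus_and_lambdan}, namely
\[
    \surp(G) = \frac{1}{4}\max_{x\in[-1,1]^n} -x^TAx,
\]
with a very specific rank-one test vector. I would set $x_v = t$ for $v\in X$ and $x_v = -1$ for $v\in Y$, where $t\in[0,1]$ is a parameter to be tuned. Expanding $x^TAx=2\sum_{uv\in E}x_ux_v$ according to whether the edge lies inside $X$, inside $Y$, or across the cut, and writing $a=e(G[X])$, one gets the clean identity
\[
    -x^TAx = 2\bigl(tb - t^2a - c\bigr).
\]

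The expression on the right is concave in $t$ and easily handled. The choice I have in mind is simply $t=b/n^2$, which lies in $[0,1]$ since $b\le |X||Y|\le n^2/4$. Plugging it in and using the trivial bound $a\le \binom{n}{2}<n^2/2$ gives
\[
    -x^TAx \;\ge\; 2\!\left(\frac{b^2}{n^2} - \frac{a\,b^2}{n^4} - c\right)
    \;\ge\; 2\!\left(\frac{b^2}{n^2}\!\cdot\!\frac{1}{2} - c\right)
    \;=\; \frac{b^2}{n^2} - 2c.
\]
Dividing by $4$ delivers $\surp(G)\ge \frac{b^2}{4n^2} - \frac{c}{2}$, which is already a hair stronger than what the lemma asks for.

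I don't anticipate any real obstacle: the entire argument is a one-line optimisation once the identity from \Cref{claim:surplus_and_lambdan} is in hand, the only nontrivial input being the replacement of the discrete maximum over $\{-1,1\}^n$ by a continuous one over $[-1,1]^n$ (which is exactly what makes the flexible choice $t=b/n^2$ legal). If one wanted to sharpen the constants further, one could instead optimize $t$ exactly by taking $t^*=\min\{1, b/(2a)\}$, splitting into the two cases $b\le 2a$ and $b>2a$; the first case produces $\frac{b^2}{8a}-\frac{c}{2}\ge \frac{b^2}{4n^2}-\frac{c}{2}$ via $a\le n^2/2$, while the second produces $\frac{b}{4}-\frac{c}{2}$, which also dominates $\frac{b^2}{4n^2}-c$ since $b\le n^2$. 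Either version gives the stated bound.
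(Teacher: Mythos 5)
Your proof is correct; the computation of $-x^TAx$ with the test vector that equals $t$ on $X$ and $-1$ on $Y$ is right, the choice $t=b/n^2\in[0,1]$ is legitimate, and the bound $a<n^2/2$ closes it out with room to spare. The approach is essentially the same as the paper's: the paper takes a random cut where each vertex of $X$ lands on one side with probability $1/2+p$ and all of $Y$ lands on the other side, with $p=b/(4a)$; in your parametrization this is exactly the test vector with $t=2p=b/(2a)$, since $\mathbb{E}[x_ux_v]=t_ut_v$ for independent $\pm1$ coordinates with biases $t_u$. So the paper's probabilistic argument is your continuous-relaxation argument in expectation form, and the paper's two cases (use the cut $(X,Y)$ directly when $a\le b/2$, otherwise the biased random cut) are precisely your alternative exact-optimization route with $t^*=\min\{1,b/(2a)\}$. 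The one genuine difference is your main line: by fixing the slightly suboptimal $t=b/n^2$ you dodge the case split entirely, at the cost of being off from the optimum by a constant factor that the lemma doesn't care about. Both are fine; yours is marginally cleaner.
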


\begin{proof}
If $a=e(G[X])$ satisfies $a\leq b/2$, then $\surp(G)$ is at least $$e(G[X,Y])-\frac{e(G)}{2}=b-\frac{a+b+c}{2}= \frac{b-a-c}{2} \ge \frac{b}{4}-\frac{c}{2}\ge \frac{b^2}{4n^2}-c,$$ as desired.

Otherwise, we have $b < 2a$ and we can take $p=b/(4a) \in [0,1/2)$. 
Let $U$ be a random subset of $X$, where each vertex is included independently with probability $1/2+p$, and consider the cut $(U, (X\setminus U)\cup Y)$. Each edge in $G[X]$ has probability $1/2-2p^2$ of being cut, and each edge between $X$ and $Y$ is cut with probability $1/2+p$. Therefore, the expected size of this cut is $a(1/2-2p^2)+b(1/2+p)$, showing that the expected surplus is 
\[a\Big(\frac{1}{2}-2p^2\Big)+b\Big(\frac{1}{2}+p\Big)-\frac{a+b+c}{2}=bp-2ap^2-\frac{c}{2}=\frac{b^2}{8a}-\frac{c}{2} \ge \frac{b^2}{4n^2}-c,\]
where we have used that $a=e(G[X])\leq n^2/2$ in the last step.
\end{proof}

Next, we show that a graph with large smallest eigenvalue contains a collection of large cliques such that almost all edges are contained in the subgraph induced by the union of these cliques.

\begin{lemma}\label{lemma:many_cliques}
Let $\gamma\in (0,1/6)$, then there exists $\alpha>0$ such that the following holds for sufficiently large $n$. Let $G$ be a graph on $n$ vertices such that $|\lambda_n|\leq n^{\gamma}$. Then there exists $X\subset V(G)$ such that the number of edges not in $G[X]$ is at most $n^{2-\alpha}$, and $G[X]$ can be partitioned into cliques of size $\sqrt{n}$. 
\end{lemma}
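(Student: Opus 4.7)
The plan is to iteratively pull out cliques of size $\sqrt{n}$ using \Cref{thm:clique1}, and then bound the remaining edges via \Cref{lemma:inbalanced_partition}. Fix a sufficiently small constant $\alpha > 0$ depending only on $\gamma$. Starting from $G_0 := G$, at step $i$ we stop if $e(G_i) \le n^{2-\alpha}$; otherwise, we apply \Cref{thm:clique1} to $G_i$ to obtain a clique of size at least $\sqrt{n}$, take any $\sqrt{n}$ of its vertices as $K_{i+1}$, and set $G_{i+1} := G_i - K_{i+1}$. Letting $s$ be the terminal step and $X := K_1 \cup \dots \cup K_s$, the vertex partition $X = K_1 \sqcup \dots \sqcup K_s$ realises $G[X]$ as partitioned into cliques of size $\sqrt{n}$, which gives the partition claim.

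For the edge count, write $Y := V(G) \setminus X$; the edges not in $G[X]$ are exactly $e(G[Y]) + e(X,Y)$. The first summand is at most $n^{2-\alpha}$ by the stopping rule. For the second, \Cref{lemma:inbalanced_partition} applied to the partition $(X,Y)$, together with $\surp(G) \le |\lambda_n| n/4 \le n^{1+\gamma}/4$ from \Cref{claim:surplus_and_lambdan}, yields
\[
    e(X,Y)^2 \;\le\; 4n^2\bigl(\surp(G) + e(G[Y])\bigr) \;\le\; 4n^2\bigl(n^{1+\gamma}/4 + n^{2-\alpha}\bigr) \;=\; O\bigl(n^{4-\alpha}\bigr)
\]
whenever $\alpha < 1-\gamma$. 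Hence $e(X,Y) = O(n^{2-\alpha/2})$, and the total number of edges not in $G[X]$ is at most $n^{2-\alpha'}$ for any $\alpha' < \alpha/2$ and $n$ sufficiently large.

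The main technical check is that \Cref{thm:clique1} indeed applies at every iteration and produces a clique of size at least $\sqrt{n}$. While $e(G_i) > n^{2-\alpha}$, the vertex count satisfies $n_i \ge \sqrt{2}\,n^{1-\alpha/2}$, the density of $G_i$ is at least $2n^{-\alpha} \ge n_i^{-\rho}$ for suitably small $\rho$, and Cauchy interlacing yields $|\lambda_n(G_i)| \le n^\gamma \le n_i^{\gamma/(1-\alpha/2)}$, which remains strictly below $1/6$ for $\alpha$ small enough since $\gamma < 1/6$. One then selects $\eps > 0$ small so that the triple $(\gamma/(1-\alpha/2), \eps, \rho)$ fulfils the hypotheses of \Cref{thm:clique1}, and the resulting clique in $G_i$ has size at least
\[
    n_i^{1-\eps-2\gamma/(1-\alpha/2)} \;\ge\; n^{(1-\alpha/2)\bigl(1-\eps-2\gamma/(1-\alpha/2)\bigr)} \;\ge\; \sqrt{n},
\]
where the last inequality reduces to $(1-\alpha/2)\bigl(1-\eps-2\gamma/(1-\alpha/2)\bigr) \ge 1/2$ and holds once $\alpha,\eps$ are taken small. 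The only real obstacle is the coordination of the parameters $\alpha,\eps,\rho$ to uphold all hypotheses of \Cref{thm:clique1} throughout the iteration, but the slack afforded by $\gamma < 1/6$ makes this a routine parameter juggle.
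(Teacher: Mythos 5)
Your proof is correct and uses the same two key ingredients as the paper---iterated applications of \Cref{thm:clique1} to extract $\sqrt{n}$-cliques, and \Cref{lemma:inbalanced_partition} to control the surplus---but it routes them differently. The paper first deletes low-degree vertices (losing $\le n^{2-2\alpha}$ edges), then takes $X$ to be a \emph{maximal} disjoint union of $\sqrt{n}$-cliques, and uses \Cref{lemma:inbalanced_partition} to prove that $G[Y]$ is dense enough to contain another $\sqrt{n}$-clique whenever $|Y|>n^{1-2\alpha}$, yielding a contradiction; the edge bound then follows from $|Y|\le n^{1-2\alpha}$. You instead stop the extraction when $e(G[Y])\le n^{2-\alpha}$ (so density of $G_i$ comes from the edge count rather than from a minimum-degree condition), and you apply \Cref{lemma:inbalanced_partition} directly to the partition $(X,Y)$ to bound $e(X,Y)$ rather than to bound $|Y|$. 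This saves the low-degree pruning and the reductio, at the price of needing to check the density/eigenvalue hypotheses of \Cref{thm:clique1} at every step; that check goes through exactly as you sketch, with $n_i\ge\sqrt{2}\,n^{1-\alpha/2}$ and $\gamma'=\gamma/(1-\alpha/2)<1/6$ for $\alpha$ small. One small slip in wording: where you write ``for suitably small $\rho$'' in justifying $2n^{-\alpha}\ge n_i^{-\rho}$, you mean ``for $\alpha$ chosen small relative to $\rho$''---the inequality $\alpha\le\rho(1-\alpha/2)$ is a constraint on $\alpha$ given $\rho$, not vice versa; the logical order is $\gamma\to(\eps,\rho)\to\alpha$, and then the exponent perturbation $\gamma\mapsto\gamma/(1-\alpha/2)$ is absorbed by the slack in the strict inequalities of \Cref{thm:clique1} because $\gamma<1/6$.
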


\begin{proof}
Choose any constants $\gamma_0,\delta_0>0$ such that $\gamma_0>\gamma$, $2\gamma_0+\delta_0<3/8$ and $6\gamma_0+\delta_0 < 1$. 
Then, there exists a constant $\rho \in (0,1/2)$ such that $\rho/\delta_0+2\gamma_0/(1-\delta_0-4\gamma_0) < 1$.
By Theorem \ref{thm:clique1}, every $n_0$-vertex graph of edge density at least $n_0^{-\rho}$ and smallest eigenvalue at least $-n_0^{\gamma_0}$ contains a clique of size $n_0^{1-2\gamma_0-\delta_0}\ge n_0^{5/8}$ as long as $n_0$ is sufficiently large.

We show that $\alpha=\min\{1/16,\rho/5,(1-\gamma/\gamma_0)/2\}$ suffices. 
Repeatedly delete vertices of degree less than $d=n^{1-2\alpha}$, and let $G_0$ be the resulting graph.
Note that we removed at most $dn$ edges. 
We are done if $G_0$ is empty, so we may assume $G_0$ has minimum degree at least $d$.
Let $X$ be the maximal subset of $V(G)$ which can be partitioned into disjoint cliques of size $\sqrt{n}$. 

The goal is now to show that the number of edges not in $X$ is at most $n^{2-\alpha}$. To do that, we let $Y=V(G_0)\backslash X$ be the set of remaining vertices, and we show that $|Y| \le n^{1-2\alpha}$. This would be sufficient to complete the proof since the number of edges of $G$ not in $G[X]$ is at most 
$$dn+e(G[X,Y])+e(G[Y])\leq dn+|Y|n\leq n^{1-\alpha}.$$

Suppose, for the sake of contradiction, that $|Y|> n^{1-2\alpha}$. 
By maximality of $X$, the set $Y$ contains no clique of size $\sqrt{n}$, and we will ultimately derive the contradiction to this.

\begin{claim}
The edge density of $G[Y]$ is at least $|Y|^{-\rho}$.
\end{claim}
\begin{proof}
    Following the notation in the \Cref{lemma:inbalanced_partition}, let $b=e(G_0[X,Y])$ and $c=e(G_0[Y])$.
    As $G_0$ has minimum degree $d$, we have $b+2c\geq d|Y|$. 
    If $c \ge \frac{1}{4}d|Y|$, then the density of $G[Y]$ is at least $\frac{1}{4}d|Y|/\binom{|Y|}{2} \ge \frac{d}{2|Y|}\ge\frac{n^{1-2\alpha}}{2|Y|}>|Y|^{-5\alpha}\ge |Y|^{-\rho}$.
    Now, we assume that $c < \frac{1}{4}d|Y|$, so $b > \frac{1}{2}d|Y|$.
    By \Cref{lemma:inbalanced_partition}, we have
    $$\surp(G)\geq \surp(G_0)\geq \frac{b^2}{4n^2}-c.$$
    On the other hand, $\surp(G)\ge \frac{1}{4}|\lambda_n|n \leq n^{1+\gamma}$ by \Cref{claim:surplus_and_lambdan}, so     
    $$c\geq \frac{b^2}{4n^2}-n^{1+\gamma}> \frac{d^2|Y|^2}{16n^2}-n^{1+\gamma}\geq \frac{d^2|Y|^2}{20n^2}.$$
    In the last inequality we used that $d, |Y|\geq n^{1-2\alpha}$.
    Hence, $G[Y]$ has at least $e(G[Y])\geq d^2|Y|^2/(20n^2)\ge |Y|^2n^{-4\alpha}/20\geq |Y|^{2-5\alpha} \ge |Y|^{2-\rho}$ edges, which shows that $G_I$ has edge density at least $|Y|^{-\rho}$. 
\end{proof}

By Cauchy's interlacing theorem, the smallest eigenvalue of $G[Y]$ is at least that of $G$, i.e. we have $|\lambda_n(G[Y])|\leq |\lambda_n(G)|\leq n^\gamma$. 
Since $|Y|> n^{1-2\alpha}$, this shows $|\lambda_n(G[Y])|\leq |Y|^{\gamma/(1-2\alpha)}\le |Y|^{\gamma_0}$.

But then, as discussed in the beginning, \Cref{thm:clique1} guarantees that $Y$ contains a clique of size $|Y|^{5/8}>n^{(5/8)(1-2\alpha)}\geq \sqrt{n}$, contradicting the assumption that $Y$ contains no clique of size $\sqrt{n}$.
Therefore, we must have $|Y|\leq n^{1-2\alpha}$, and as discussed above, this finishes the proof.
\end{proof}

Next, we show that the graph between two cliques must be either very dense or very sparse. 

\begin{lemma}\label{lemma:bipartite_eigenvalue}
Let $G$ be an $n$-vertex graph with the smallest eigenvalue $\lambda_n$ and let $X,Y\subset V(G)$ be disjoint cliques of the same size. Then $G[X,Y]$ has either at most $O(|\lambda_n|^2 |X|)$ edges, or at least $|X|^2-O(|\lambda_n|^2 |X|)$ edges.
\end{lemma}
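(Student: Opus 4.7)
The plan is to run a \emph{gadget} computation generalising the eigenvector used in \Cref{claim:H_k}. For each $y\in Y$, let $d_y := |N(y)\cap X|$, $k := |X|$ and $\beta := d_y/k$, and consider the induced subgraph $G[X\cup\{y\}]$: the clique $X$ together with $y$ attached to $A := N(y)\cap X$. I would test the vector $v$ defined by $v_y = c$ (for a parameter $c\ge 0$), $v_a = -(1-\beta)$ on $A$, and $v_{a'} = \beta$ on $X\setminus A$. A direct computation gives
\[
v^T M v \;=\; -\beta(1-\beta)k\,(1+2c) \quad\text{and}\quad \|v\|_2^2 \;=\; c^2 + \beta(1-\beta)k.
\]
Setting $S := \beta(1-\beta)k = d_y(k-d_y)/k$ and optimising over $c\ge 0$ yields $c^2+c=S$ and Rayleigh quotient $v^T M v/\|v\|_2^2 = -(c+1)$. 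Cauchy's interlacing then gives $\lambda_n \le \lambda_n(G[X\cup\{y\}]) \le -(c+1)$, so $|\lambda_n|^2 \ge (c+1)^2 \ge S$, i.e.\ $d_y(k-d_y)\le |\lambda_n|^2 k$. Combined with the elementary inequality $\min(d_y,k-d_y)\cdot k/2 \le d_y(k-d_y)$, setting $D := 2|\lambda_n|^2$ gives $\min(d_y,k-d_y)\le D$ for every $y\in Y$. The same argument applied symmetrically to $G[\{x\}\cup Y]$ yields $\min(d_x,k-d_x)\le D$ for each $x\in X$, where $d_x := |N(x)\cap Y|$.

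Next, partition $Y = Y_L \cup Y_H$ with $Y_L := \{y : d_y\le D\}$ and $Y_H := \{y : d_y\ge k-D\}$, set $s_L := |Y_L|$ and $s_H := |Y_H|$, and analogously $X = X_L\cup X_H$ with $t_L := |X_L|$ and $t_H := |X_H|$. One may assume $k > 4D$, since otherwise $e(G[X,Y]) \le k^2 \le 4Dk = O(|\lambda_n|^2|X|)$ and the lemma holds trivially. The key combinatorial step is a double count of edges between $X_H$ and $Y_L$: on the $Y_L$-side, $e(X_H,Y_L) \le \sum_{y\in Y_L} d_y \le s_L D$; on the $X_H$-side, each $x\in X_H$ has at most $D$ non-neighbours in $Y$, so if $s_L > D$, at least $s_L - D$ of its $Y$-neighbours lie in $Y_L$, giving $e(X_H,Y_L)\ge t_H(s_L-D)$. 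Together these force $t_H\le 2D$ (hence $t_L\ge k-2D$) whenever $s_L\ge 2D$; by the symmetric double count of edges between $Y_H$ and $X_L$, $s_H\ge 2D$ forces $t_L\le 2D$. The two conclusions are incompatible when $k > 4D$, so at least one of $s_L, s_H$ is at most $2D$.

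If $s_L\le 2D$, then $s_H\ge k-2D$ and every $y\in Y_H$ contributes at least $k-D$ edges, so $e(G[X,Y])\ge (k-2D)(k-D)\ge k^2 - 3kD = |X|^2 - O(|\lambda_n|^2|X|)$. If instead $s_H\le 2D$, then $e(G[X,Y])\le s_L D + s_H k\le kD + 2Dk = O(|\lambda_n|^2|X|)$. Either way one of the two conclusions of the lemma holds. The main technical point is the eigenvalue inequality $d_y(k-d_y)\le |\lambda_n|^2 k$; once this is in hand the remainder is routine double counting, and the degenerate regime in which $\beta(1-\beta)k$ is small (so that $d_y$ or $k-d_y$ is already $O(1)$) is automatically absorbed into the threshold $D$.
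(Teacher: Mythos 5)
Your proof is correct, and it follows the same two-step outline as the paper's: first show that for each $y\in Y$ the quantity $\min(d_y,|X|-d_y)$ is $O(|\lambda_n|^2)$, then finish with a two-sided double count over the low-degree/high-degree partition of each side. The substantive difference is in the first step. The paper fixes $k=2|\lambda_n|^2$ and invokes Claim~\ref{claim:H_k}: by Cauchy interlacing, $G$ cannot contain the forbidden induced subgraph $H_k$ (a $2k$-clique plus a vertex adjacent to exactly $k$ of its vertices), so any vertex looking at a clique of size $\ge 2k$ has either $\le k$ neighbours or $\le k$ non-neighbours there. You instead compute, for each $y$ and the clique $X$, the Rayleigh quotient of a test vector tailored to $\beta=d_y/k$ on $G[X\cup\{y\}]$; optimizing over the free parameter $c$ gives the exact value $-(c+1)$ with $c^2+c=d_y(k-d_y)/k$, whence $d_y(k-d_y)\le|\lambda_n|^2 k$ after interlacing. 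Your test vector specializes (up to scaling) to the one in Claim~\ref{claim:H_k} when $\beta=1/2$, so the two arguments are closely related, but yours yields a cleaner closed-form quantitative bound directly in terms of $d_y$ and avoids the ``pick $k$ neighbours and $k$ non-neighbours'' step, at the minor cost of the one-line optimization over $c$. The combinatorial finish is the same in spirit, just spelled out with explicit thresholds $D$ and $2D$.
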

\begin{proof}
Recall that in Section~\ref{sect:chowla}, we identified a graph $H_k$, consisting of a clique of size $2k$ and an additional vertex with $k$ neighbours in the clique, with the property that no graph $G$ with at $\lambda_n\geq -\sqrt{k/2}$ contains $H_k$ as an induced subgraph (see Claim~\ref{claim:H_k} and the subsequent discussion). Thus, if we set $k=2|\lambda_n|^2$, the graph $G$ does not contain $H_k$ as an induced subgraph. 
We may assume that $|X|=|Y|\geq 4k$, otherwise the statement is trivial.
Then each vertex in $X$ has either at most $k$ neighbours or at most $k$ non-neighbours in $Y$. 
Moreover, each vertex in $Y$ has at most $k$ neighbours or at most $k$ non-neighbours in $X$.  
Let $X_0\subset X$ be the set of vertices with at most $k$ neighbours, and let $X_1=X\setminus X_0$, and define $Y_0,Y_1\subset Y$ analogously. 
Suppose that $X_0$ and $Y_1$ both have size at least $2k$. 
If the number of edges between $X_0$ and $Y_1$ is at least $|X_0||Y_1|/2$, then there is a vertex in $X_0$ with at least $|Y_1|/2\geq k$ neighbours in $Y_1$, contradiction. 
On the other hand, if the number of edges between $X_0$ and $Y_1$ is at most $|X_0||Y_1|/2$, then there is a vertex in $Y_0$ with at least $|X_0|/2\geq k$ non-neighbours, contradiction. 
Therefore, it must hold that at least one of $X_0$ or $Y_1$ has size at most $2k$.
If $|X_0|\leq 2k$, then $G[X,Y]$ has at least $|X_1|(|X|-k)\geq |X|^2-3k|X|=|X|^2-O(|\lambda_n|^2|X|)$ edges. 
Otherwise, if $|Y_1|\leq 2k$, then $G[X,Y]$ has at most $|Y_0|k+|Y_1||X|\leq 3k |X|=O(|\lambda_n|^2|X|)$ edges. 
\end{proof}

We are ready to prove Theorem \ref{thm:MAIN_SE2}, which we restate here for convenience.

\begin{theorem}
For every $\gamma\in (0, 1/6)$, there exists $\alpha>0$ such that for every sufficiently large $n$ we have the following. If $G$ is an $n$-vertex graph with $|\lambda_n|\leq n^{\gamma}$, then $G$ is $n^{-\alpha}$-close to the vertex-disjoint union of cliques.
\end{theorem}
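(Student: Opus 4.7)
The plan is to combine \Cref{lemma:many_cliques} and \Cref{lemma:bipartite_eigenvalue} to build a disjoint union of cliques $\widetilde{G}$ that is $n^{-\alpha}$-close to $G$. First apply \Cref{lemma:many_cliques} to obtain $\alpha_0 = \alpha_0(\gamma) > 0$, a subset $X \subseteq V(G)$, and a partition $X = C_1 \sqcup \cdots \sqcup C_k$ into cliques of size $\sqrt{n}$, with at most $n^{2-\alpha_0}$ edges of $G$ outside $G[X]$; in particular $k \le \sqrt{n}$. For every pair $i \ne j$, \Cref{lemma:bipartite_eigenvalue} yields that $e(G[C_i, C_j])$ is either at most $K$ or at least $n-K$, where $K = O(|\lambda_n|^2\sqrt{n}) = O(n^{1/2+2\gamma})$. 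Call the pairs \emph{dense} and \emph{sparse} respectively, and define the auxiliary graph $R$ on $[k]$ whose edges are the dense pairs.

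For any partition $\Pi = \{P_1, \ldots, P_m\}$ of $[k]$, let $\widetilde{G}_\Pi$ be the graph that is a disjoint union of cliques on the sets $\bigcup_{i \in P_\ell} C_i$, together with the vertices of $V(G) \setminus X$ as isolated vertices. A routine count, separating edges by whether they are inside $G[X]$ or not, gives
\[
    |E(G) \triangle E(\widetilde{G}_\Pi)|
    \;\le\; (b_\Pi + c_\Pi)\,n \;+\; 2K\binom{k}{2} \;+\; n^{2-\alpha_0},
\]
where $b_\Pi$ counts dense pairs across $\Pi$ and $c_\Pi$ counts sparse pairs lying within a common part of $\Pi$. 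Since $K\binom{k}{2} = O(n^{3/2+2\gamma}) = O(n^{2-\alpha_1})$ for $\alpha_1 = 1/2 - 2\gamma > 1/6$ (using $\gamma < 1/6$), it suffices to construct $\Pi$ with $b_\Pi + c_\Pi \le n^{1-\alpha_2}$ for some $\alpha_2 > 0$; the theorem then follows with $\alpha = \min(\alpha_0, \alpha_1, \alpha_2)/2$.

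The main task, and main obstacle, is thus to show that $R$ is polynomially close (in its own edit distance) to a disjoint union of cliques. I would proceed by a greedy peeling: take $P_1$ to be a maximum clique of $R$, remove it, and iterate to obtain $P_2, P_3, \ldots$, so that every $P_\ell$ is a clique of $R$ by construction, giving $c_\Pi = 0$. It remains to bound $b_\Pi$. A single dense crossing creates an induced cherry in $R$ and, via \Cref{lemma:dens2_3parts}, only $(1/4 - o(1))n$ of surplus, which is far from contradicting the trivial $\surp(G) \le |\lambda_n|n/4 \le n^{1+\gamma}/4$ coming from \Cref{claim:surplus_and_lambdan}; one therefore has to aggregate many crossings into a single large cut. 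For a fixed super-clique $P_\ell$, take $X' = \bigcup_{i \in P_\ell} C_i$ and let $Y' = \bigcup_{j \in B} C_j$ where $B$ consists of indices $j \notin P_\ell$ having many dense neighbours inside $P_\ell$. Each such $j \in B$ contributes $\Omega(|P_\ell|\,n)$ to $e(G[X',Y'])$, while $e(G[Y'])$ can be bounded using the maximality of $P_\ell$ (which limits the density of $R$-edges inside $B$, hence of $G$-edges inside $Y'$). Feeding the resulting estimates into $\surp(G) \ge e(G[X',Y'])^2/(4n^2) - e(G[Y'])$ from \Cref{lemma:inbalanced_partition} and comparing with $\surp(G) \le n^{1+\gamma}/4$ forces $|B|$ to be polynomially smaller than $k$. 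Summing over the rounds $\ell = 1, 2, \ldots$ of the peeling then gives the desired polynomial bound $b_\Pi \le n^{1-\alpha_2}$, completing the proof.
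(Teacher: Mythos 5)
You correctly identify the two main ingredients (\Cref{lemma:many_cliques} and \Cref{lemma:bipartite_eigenvalue}), the auxiliary graph on the cliques, and the fact that showing the auxiliary graph is cherry-free would finish the argument. However, there is a genuine gap exactly at the point you flag as the ``main obstacle.'' You apply \Cref{claim:surplus_and_lambdan} to the \emph{whole} graph $G$, observe that a single cherry in the auxiliary graph only contributes surplus $(1/4-o(1))n$ via \Cref{lemma:dens2_3parts}, and conclude this cannot contradict $\surp(G)\le |\lambda_n|n/4 \le n^{1+\gamma}/4$. You are right that it cannot --- but the paper does not compare against $\surp(G)$. Instead, it restricts attention to the induced subgraph $H=G[C_i\cup C_j\cup C_k]$, which has only $3\sqrt{n}$ vertices. \Cref{lemma:dens2_3parts} gives $\surp(H)\ge (1/4-o(1))|C_i|^2 = \Omega(n)$, and then \Cref{claim:surplus_and_lambdan} applied to $H$ (not to $G$) forces $|\lambda_{\min}(H)| \ge 4\surp(H)/|V(H)| = \Omega(n/\sqrt{n}) = \Omega(\sqrt{n})$. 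By Cauchy interlacing, $|\lambda_n(G)| \ge |\lambda_{\min}(H)| = \Omega(\sqrt{n})$, which contradicts $|\lambda_n|\le n^\gamma$ for any $\gamma<1/2$, let alone $\gamma<1/6$. Thus a single cherry is already impossible, and the entire aggregation/greedy-peeling machinery you propose is unnecessary.

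Beyond being unnecessary, the aggregation route as sketched is not clearly sound: bounding $e(G[Y'])$ ``using maximality of $P_\ell$'' does not straightforwardly bound the density of $G$-edges inside $Y'$, since sparse $R$-pairs can still contribute up to $K\approx n^{1/2+2\gamma}$ edges each, and the cliques $C_j$ inside $Y'$ contribute $\binom{\sqrt n}{2}$ edges apiece regardless of $R$. You would need substantially more care to make \Cref{lemma:inbalanced_partition} deliver a polynomial bound on $b_\Pi$, and it is not obvious it can be done. The lesson here is that passing to a small induced subgraph before invoking the surplus--eigenvalue inequality converts a modest absolute surplus into an eigenvalue bound that is strong relative to the subgraph's size, which is then transferred back to $G$ by interlacing. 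That one observation replaces the whole second half of your argument.
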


\begin{proof}
    Let $\alpha_0=\alpha_0(\gamma)>0$ be the constant guaranteed by \Cref{lemma:many_cliques}. 
    We show that $\alpha=\min\{1/7,\alpha_0/2\}$ works.
    
    By \Cref{lemma:many_cliques}, there exists a set $X\subset V(G)$ such that $X$ can be partitioned into the union of cliques of size $\sqrt{n}$, and $G$ has at most $n^{2-\alpha_0}$ edges not in $G[X]$.
    Let $C_1,\dots,C_I$ be the cliques of size $\sqrt{n}$ partitioning $X$; then $I=|X|/\sqrt{n}$.  
    \Cref{lemma:bipartite_eigenvalue} implies that the bipartite graph between $C_i$ and $C_j$ has either at most $O(\sqrt{n}|\lambda_n|^2)$, or at least $n-O(\sqrt{n}|\lambda_n|^2)$ edges. 
    If $n$ is sufficiently large, $O(|\lambda_n|^2/\sqrt{n})<n^{-1/6}$. Define the auxiliary graph $\Gamma$ on vertex set $\{1,\dots,I\}$, where we connect $i$ and $j$ if $G[C_i,C_j]$ has density at least $1-n^{-1/6}$.
    
    \begin{claim}
    $\Gamma$ contains no cherry, i.e. if $ij, jk\in E(\Gamma)$, then $ik\in E(\Gamma)$ as well.
    \end{claim}
    
    \begin{proof}
        If there is a triple $(C_i,C_j,C_k)$ such that $G[C_i,C_j], G[C_j,C_k]$ have density at least $1-n^{-1/6}$, but $G[C_i,C_k]$ has density at most $1-n^{-1/6}$, then we can apply \Cref{lemma:dens2_3parts}. 
        This lemma shows that $\surp(G[C_i\cup C_j\cup C_k])\geq (1/4-3n^{-1/6})|C_i|^2\geq n/8$. 
        Then, \Cref{claim:surplus_and_lambdan} implies the smallest eigenvalue of $G[C_i\cup C_j\cup C_k]$ is at most $-n^{1/2}/6$, which is a contradiction since $|\lambda_n(G[C_i\cup C_j\cup C_k])|\leq|\lambda_n(G)|=n^\gamma$. 
    \end{proof}
    
    Recall that graphs containing no cherry are the disjoint union of cliques. 
    Therefore, we can partition $V(\Gamma)$ into sets $I_1,\dots,I_\ell$ such that $\Gamma[I_a]$ is a clique and there are no edges between $I_a$ and $I_b$ in $\Gamma$. 
    But this gives a partition of $X$ into sets $Y_1,\dots,Y_\ell$ by setting $Y_a=\bigcup_{i\in I_a}C_i$. 
    Define $\widetilde{G}$ to be the graph on vertex set $V(G)$, where $Y_1,\dots,Y_\ell$ are cliques, and all edges of $\widetilde{G}$ are contained in one of these cliques.
    
    We prove that $\widetilde{G}$ is $n^{-\alpha}$-close to $G$. For $1\leq i<j\leq I$, $G[C_i,C_j]$ and $\widetilde{G}[C_i,C_j]$ differ by at most $n\cdot n^{-1/6}=n^{5/6}$ edges.
    Therefore, $G[X]$ and $\widetilde{G}[X]$ differ by at most $$\binom{|X|/\sqrt{n}}{2}\cdot n^{5/6}\leq n^{11/6}$$
    edges. 
    Furthermore, there are at most $n^{2-\alpha_0}$ edges of $G$ not in $G[X]$, so $G$ and $\widetilde{G}$ differ by at most $n^{2-\alpha_0}+n^{11/6}\leq n^{2-\alpha}$ edges. 
    This finishes the proof.
\end{proof}

\section{Further directions}

We conclude our paper by discussing some open problems. One of our main contributions is \Cref{thm:MAIN_SE3}, which shows that every graph of average degree $d$ with $|\lambda_n|\leq d^\gamma$, where $\gamma\in (0, 1/10)$, contains a clique of size $d^{1-4\gamma}$.
The constants $1/10$ and $4$ are likely not optimal, and it would be interesting to understand the limits of this theorem.  
Yang and Koolen~\cite{YK} conjecture that if $d$ is exponentially larger than $\lambda_n$, then $G$ contains a clique of size $\Omega(d/|\lambda_n|^2)$. We believe this can be already true if  $|\lambda_n|=O(d^\gamma)$ for some small $\gamma$.

\begin{question}
    Let $G$ be an $n$-vertex graph of average degree $d$ and $|\lambda_n| = \Theta(d^\gamma)$ for some $\gamma\in (0, 1/2)$.
    How small can its clique number $\omega(G)$ be? 
\end{question}

Moreover, despite the substantial progress presented in this paper, finding precise exponents for the MaxCut problem in $H$-free graphs remains open. In particular, it is still an intriguing problem to determine the largest exponent $\alpha_r$ such that every $K_r$-free graph $G$ with $m$ edges satisfies $\surp(G)\geq \Omega(m^{\alpha_r})$. The work of Alon \cite{AlonMaxCut} shows that $\alpha_3=4/5$, but the value of $\alpha_4$ is already unknown. Following our calculations, we established that $\alpha_r>0.51$ for all fixed $r \ge 3$, improving the barrier $1/2$. However, the celebrated conjecture of Alon, Bollob\'as, Krivelevich and Sudakov \cite{ABKS} asserts that $\alpha_r>3/4$.

\section*{Acknowledgements}
We are grateful to Ilya Shkredov for bringing Chowla's problem to our attention, and pointing out connections to our earlier results. Moreover, we thank Benjamin Bedert for sharing his manuscript on Chowla's problem and for an insightful conversation about it. Finally, we would like to thank Igor Balla, Clive Elphick, Victor Falgas-Ravry, Jacob Fox,
Lianna Hambardzumyan, Jack Koolen, Anqi Li, Nitya Mani, Benny Sudakov, and  Quanyu Tang  for many valuable discussions on various parts of this project.

\appendix

\section{Chowla's cosine problem in finite groups} \label{sec: chowla for groups}

In Section~\ref{subsec:intro_chowla} we discussed Chowla's cosine problem and its relation to Littlewood's $L_1$-problem over the integers. Variants of these two problems have also been studied in the setting of discrete abelian groups. Namely, in 2009, Green and Konyagin \cite{GK09} studies how small the $L_1$-norm of a dense set $A\subseteq \mathbb{Z}/p\mathbb{Z}$ can be. If $\widehat{\mathds{1}_A}$ denotes the Fourier transform of the indicator function of $A$ over $\mathbb{Z}/p\mathbb{Z}$, they showed that $\sum_{r}|\widehat{\mathds{1}_A}(r)|\geq (\log p)^{1/3-o(1)}$, which was later improved by Sanders \cite{S10} to $(\log p)^{1/2-o(1)}$. For sparser sets $A\subseteq \mathbb{Z}/p\mathbb{Z}$, a similar question has been studied by Schoen \cite{S17} and Konyagin and Shkredov \cite{KS16}. 

Paralleling the extensions of the Littlewood $L_1$-problem, Sanders \cite{Sanders} extended Chowla's problem to discrete abelian groups $\Gamma$ as follows. For a symmetric subset $A\subset \Gamma$, one can define 
$$M_{\Gamma}(A)=\sup_{y\in \widehat{\Gamma}} -\widehat{\mathds{1}_A}(y),$$
where $\widehat{\Gamma}$ is the dual group of $\Gamma$.
%where $\widehat{f}:\Gamma\rightarrow\mathbb{C}$ is the discrete Fourier transform of a function $f:\Gamma\rightarrow\mathbb{C}$, and $\mathds{1}_A$ is the indicator function of $A$.\am{we already said what $\widehat{\mathds{1}_A}(y)$ is in the previous paragraph} 
Note that since $A$ is a symmetric set, $\widehat{\mathds{1}_A}$ is a real function. 

In this language, Chowla's cosine problem asks to show that for every $A\subseteq \bZ$ we have $M_{\bZ}(A)\geq \Omega(\sqrt{|A|})$, but it is also natural to ask how small $M_{\Gamma}(A)$ can be in a general group $\Gamma$.
One can quickly observe that $M_{\Gamma}(A)$ need not go to infinity with the size of $A$. 
Indeed, if $A$ is a subgroup of $\Gamma$, then $M_{\Gamma}(A)=0$. 
On the other hand, Sanders (\cite[Theorem 1.3]{Sanders}) proved that if $A$ is far from a subgroup of $\Gamma$, then $M_{\Gamma}(A)$ is necessarily large. 
Formally, he proved that for every $\delta>0$ there exists $c(\delta)>0$ such that if $M_\Gamma(A)\leq |\Gamma|^{c(\delta)}$, then there is some subgroup $H<\Gamma$ satisfying  $|H\triangle A|\leq \delta |\Gamma|$. 
Noting that the Fourier coefficients of $\mathds{1}_A$ correspond to the eigenvalues of the Cayley graph $\Cay(\Gamma,A)$ generated by $A$, we can use our main results to improve these bounds when $\Gamma$ is finite, and to extend them to non-abelian groups as well.

Recall, the Cayley graph $\Cay(\Gamma,A)$, where $\Gamma$ is a finite group and $A\subseteq \Gamma$ is symmetric, has vertex set $\Gamma$ and two vertices $x,y\in \Gamma$ are adjacent if and only if $xy^{-1} \in A$.
We define $M_\Gamma(A) := \max-\lambda$, where the maximum is taken among all eigenvalues of $\Cay(\Gamma,A)$, which then coincides with the earlier definition for finite abelian groups. 
As discussed above, $M_\Gamma(A)=0$ if $A$ is a subgroup of $\Gamma$, so $M_\Gamma(A)$ is small if $A$ is close to a subgroup of $\Gamma$.
Here, we show the opposite: if $M_\Gamma(A)$ is small, then $A$ must be close to a subgroup of $\Gamma$.

\begin{theorem}\label{thm:chowla_group}
Let $\delta,\gamma>0$, then the following holds for every sufficiently large finite group  $\Gamma$. 
Let $A\subset \Gamma$ such that $A=A^{-1}$. 
If $M_\Gamma(A)\leq |\Gamma|^{\gamma}$ and $\gamma\in (0, 1/4)$ then there exists a subgroup $H<\Gamma$ such that $$|H\triangle A|\leq \delta |\Gamma|.$$
Moreover, if $\alpha>0$ is sufficiently small as a function of $\gamma$ and $M_{\Gamma}(A)< |\Gamma|^{\gamma}$ for $\gamma\in (0, 1/6)$, then there exists a subgroup $H<\Gamma$ such that
$$|H\triangle A|\leq |\Gamma|^{1-\alpha}.$$
\end{theorem}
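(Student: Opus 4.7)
Let $n = |\Gamma|$ and consider the Cayley graph $G = \Cay(\Gamma, A)$, which is $|A|$-regular with smallest eigenvalue equal to $-M_\Gamma(A) \ge -n^\gamma$. The plan has three stages: (i) import the spectral structural theorems to deduce that $G$ is close to a disjoint union of cliques; (ii) translate this structural closeness into an approximate multiplicative closure of $B := A \cup \{e\}$ via a cherry count; (iii) extract an honest subgroup of $\Gamma$ from the near-closure condition. For (i), apply \Cref{thm:MAIN_SE1} when $\gamma \in (0, 1/4)$, or \Cref{thm:MAIN_SE2} when $\gamma \in (0, 1/6)$, to conclude that $G$ is $\eta$-close to a disjoint union of cliques, where $\eta$ can be taken arbitrarily small as a function of $\delta$ in the first case, and $\eta \le n^{-\alpha_0}$ for some $\alpha_0 = \alpha_0(\gamma) > 0$ in the second.

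For (ii), the second part of \Cref{lemma:dens2_removal} bounds the number of induced cherries in $G$ by $3\eta n^3$. Each cherry $x\text{-}y\text{-}z$ in the Cayley graph records a triple with $xy^{-1}, yz^{-1} \in A$ and $xz^{-1} \notin B$; writing $a = xy^{-1}$ and $b = yz^{-1}$ and counting the $n$ choices for the base $x$ gives
\[
\#\{(a, b) \in A \times A : ab \notin B\} \le 3\eta n^2.
\]
Hence $B$ is symmetric, contains $e$, and most products of pairs from $B$ remain inside $B$. If $|A| < \delta n / 2$, take $H = \{e\}$ and we are done; otherwise $|B| \ge \delta n/2$, and Cauchy--Schwarz applied to the above bound yields near-maximal multiplicative energy $E(B) \ge |B|^3\bigl(1 - O(\eta/\delta^2)\bigr)$.

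For (iii), the task is to convert this statistical near-subgroup structure into a genuine subgroup $H \le \Gamma$ with $|A \triangle H|$ small. In the abelian case this is the Freiman-type rigidity underlying Sanders's argument in \cite{Sanders}; in our non-abelian setting one invokes the quantitative theory of approximate groups. A non-abelian Balog--Szemer\'edi--Gowers step extracts a large subset $B' \subseteq B$ with doubling $|B'B'| \le \bigl(1 + O(\eta/\delta^2)\bigr)|B'|$, and a rigidity theorem in the spirit of \cite{tao} produces a subgroup $H \le \Gamma$ with $|B' \triangle H| \le O_\delta(\eta^c)\, n$ for some absolute $c > 0$. The symmetry $B = B^{-1}$ combined with $e \in B$ selects the coset containing the identity, so $H$ itself (rather than a translate) is close to $B$; the bound transfers from $B'$ back to $B$ and $A$. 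The \textbf{main obstacle} is this final extraction: producing polynomial-in-$\eta$ control on $|B \triangle H|$ in a possibly non-abelian $\Gamma$ is technically the most delicate step. For the first (qualitative) assertion, taking $\eta$ small in terms of $\delta$ suffices; for the polynomial moreover part, one must track the quantitative dependence through the approximate-group machinery, which pins down $\alpha$ as a small constant multiple of $\alpha_0$.
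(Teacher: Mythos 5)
Your stages (i) and (ii) follow the paper exactly: applying \Cref{thm:MAIN_SE1} or \Cref{thm:MAIN_SE2} to $\Cay(\Gamma,A)$ and then converting the cherry count from \Cref{lemma:dens2_removal} into the bound $\#\{(a,b)\in A\times A : ab\notin A\}\leq O(\eta n^2)$ (after adjusting by including $1_\Gamma$, which changes eigenvalues only by $\pm 1$). The gap is in stage (iii). You flag the final extraction as the ``main obstacle'' and then outsource it to non-abelian Balog--Szemer\'edi--Gowers plus approximate-group rigidity, without carrying it out. This does not constitute a proof. Worse, the reduction to ``near-maximal multiplicative energy $E(B)\geq |B|^3(1-O(\eta/\delta^2))$'' throws away useful structure: the hypothesis you have derived is pointwise — almost every ordered pair from $A$ has its product inside $A$ — and this is stronger and more directly exploitable than an energy lower bound. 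Going through BSG is both unnecessary and quantitatively lossy (BSG passes to a dense subset and changes constants badly), so even if filled in, your route would likely not recover the polynomial dependence needed for the second assertion.

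What the paper does instead is self-contained and avoids all of this. Define $N(x)=|(xA)\triangle A|$; then $\sum_{x\in A}N(x)=2N\leq 2\eta|A|^2$ (suitably normalized), so the set $B=\{x\in A: N(x)\leq \sqrt{2\eta}\,|A|\}$ has $|B|\geq(1-\sqrt{2\eta})|A|$. A one-line triangle-inequality argument shows $|(x_1x_2 A)\triangle A|\leq 2\sqrt{2\eta}\,|A|$ for $x_1,x_2\in B$, whence a double-counting gives $|B\cdot B|< \tfrac{3}{2}|B|$. At this point one invokes Freiman's $3/2$-theorem — which, crucially, holds in \emph{arbitrary} (not just abelian) groups; this is exactly what the \cite{tao} reference you mention establishes — to conclude that $H:=B\cdot B^{-1}=B\cdot B$ is an honest subgroup, and then $|A\triangle H|\leq O(\sqrt{\eta}\,|A|)$. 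You appear to believe the non-abelian case forces you into the general machinery of approximate groups; it does not, because you are in the regime of doubling strictly below $3/2$, where the structure is rigid in any group. That observation is the missing idea.
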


The main idea of the proof is to show that $\Cay(\Gamma,A)$ is close to the disjoint union of cliques if and only if $A$ is close to a subgroup of $\Gamma$. This is proved in the following lemma.

\begin{lemma}\label{lemma:chowla_stability}
Let $\Gamma$ be a group and let $A\subset \Gamma$, $A=A^{-1}$, such that the number of pairs $(x,y)\in A\times A$ such that $xy\not\in A$ is at most $\eps |A|^2$. Then there exists a subgroup $H<\Gamma$ such that $|H\triangle A|\leq O(\eps^{1/2} |A|)$.
\end{lemma}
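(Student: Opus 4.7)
The plan is to identify a large ``core'' subset $A^* \subseteq A$ whose elements almost left-stabilize $A$, show that $A^*$ has doubling very close to $1$, and then invoke a non-commutative Freiman-type theorem to promote $A^*A^*$ to a subgroup close to $A$.

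First, I would define $f \colon \Gamma \to \mathbb{Z}_{\ge 0}$ by $f(h) := |hA \cap A|$ and record the two basic identities $\sum_{h \in \Gamma} f(h) = |A|^2$ (by double counting) and $\sum_{a \in A} f(a) \ge (1-\eps)|A|^2$ (the hypothesis rewritten). Setting $A^* := \{ a \in A : f(a) \ge (1-\sqrt{\eps})|A|\}$, Markov's inequality gives $|A \setminus A^*| \le \sqrt{\eps}|A|$. The symmetry $f(a^{-1}) = f(a)$, which follows from $A = A^{-1}$ by a change of variables, then gives $A^* = (A^*)^{-1}$.

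Next, I would bound the doubling of $A^*$. For any $c_1, c_2 \in A^*$, multiplying $c_1 c_2 A \cap A$ on the left by $c_1^{-1}$ gives $f(c_1c_2) = |c_2 A \cap c_1^{-1}A|$; intersecting with $A$ and applying the inequality $|X \cap Y| \ge |X \cap Z| + |Y \cap Z| - |Z|$ (with $Z = A$) yields
\[
  f(c_1 c_2) \;\ge\; f(c_2) + f(c_1^{-1}) - |A| \;\ge\; (1-2\sqrt{\eps})|A|.
\]
Hence $A^*A^* \subseteq H_2 := \{h \in \Gamma : f(h) \ge (1-2\sqrt{\eps})|A|\}$, and combining with $\sum_h f(h) = |A|^2$ forces $|H_2| \le |A|/(1-2\sqrt{\eps}) = (1 + O(\sqrt{\eps}))|A|$. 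In particular, the doubling ratio $|A^*A^*|/|A^*| \le (1 + O(\sqrt{\eps}))/(1-\sqrt{\eps})$ falls strictly below $3/2$ once $\eps$ is below an absolute constant.

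At this stage I would invoke a non-commutative Freiman-type theorem (e.g.\ Tao's elementary version \cite{tao}): a symmetric set $B \ni 1$ in any group with $|BB| < \tfrac{3}{2}|B|$ forces $BB$ to be a subgroup. Applied to $B = A^*$ (after the trivial reduction of adjoining $1$ to $A$ if necessary, which alters $A$ by at most one element and is absorbed in the error term), this yields a finite subgroup $H := A^*A^* \le \Gamma$ with $|H| \le (1 + O(\sqrt{\eps}))|A|$. Since $A^* \subseteq H$, we have $|A \cap H| \ge (1-\sqrt{\eps})|A|$, and therefore
\[
  |H \triangle A| = (|H| - |A \cap H|) + (|A| - |A \cap H|) \;\le\; O(\sqrt{\eps})|A|,
\]
as required. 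For $\eps$ above a small absolute constant the conclusion is vacuous and any choice of subgroup $H$ works with a sufficiently large implicit constant. The hard part will be the Freiman step, which extracts an actual subgroup from the small-doubling estimate on $A^*$: all of the other steps are elementary double-counts and inclusion--exclusion, but this final passage uses a genuine structural theorem about sets of small doubling in arbitrary groups, and it is precisely where the sharp $\sqrt{\eps}$ rate (rather than a weaker polynomial) enters the conclusion.
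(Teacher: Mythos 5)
Correct, and essentially the same approach as the paper: both filter $A$ to a core of near-stabilizers (your $A^*$, the paper's $B$), derive the $<\tfrac{3}{2}$ doubling of the core via a triangle-inequality/inclusion--exclusion estimate, and conclude with Freiman's $3/2$-theorem. Your bookkeeping with $f(h)=|hA\cap A|$ and the global identity $\sum_h f(h)=|A|^2$ is a cosmetic reformulation of the paper's use of $N(x)=|(xA)\triangle A|=2(|A|-f(x))$ and its double count of $\sum_{x\in B\cdot B}N(x)$.
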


In the proof, we use an old theorem of Freiman \cite{freiman} on sets of very small doubling, sometimes referred to as Freiman's $3/2$-theorem. See also the blog of Tao \cite{tao} for a short proof. Given subsets $A,B$ of a group $\Gamma$, we write $A\cdot B=AB=\{xy: x\in A, y\in B\}$.

\begin{theorem}[Freiman's $3/2$-theorem]\label{thm:chowla_3/2}
Let $\Gamma$ be a group and let $A\subset \Gamma$ such that $|AA^{-1}|< \frac{3}{2}|A|$. Then $A A^{-1}$ and $A^{-1} A$ are both subgroups of $\Gamma$.
\end{theorem}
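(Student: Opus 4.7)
Set $H := AA^{-1}$. Since $e\in H$ and $H = H^{-1}$, proving that $H$ is a subgroup reduces to closure $H\cdot H\subseteq H$, and it suffices to prove the stronger statement $HA \subseteq A$: for then $h_1 h_2 a_0\in H(HA)\subseteq A$ gives $h_1 h_2 = (h_1 h_2 a_0)a_0^{-1}\in AA^{-1}=H$ for any $h_1,h_2\in H$ and $a_0\in A$. Equivalently, letting $K := \{g\in\Gamma : gA=A\}$ be the left-stabilizer of $A$ (a subgroup), the containment $K\subseteq H$ is automatic (for $k\in K$, $k = (ka_0)a_0^{-1}\in AA^{-1}$), and the task becomes to upgrade this to $H=K$. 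This will in fact force $|A| = |K|$, so $A = Ka_0$ is a single left coset, whence $A^{-1}A = a_0^{-1}Ka_0$ is also a subgroup, yielding both halves of the conclusion simultaneously.

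The hypothesis $|H| < \tfrac{3}{2}|A|$ delivers the crucial density estimate: for any $a, b\in A$, the sets $Aa^{-1}, Ab^{-1}$ lie in $H$ and each have size $|A|$, so inclusion–exclusion gives
\[
|Aa^{-1}\cap Ab^{-1}|\ \ge\ 2|A|-|H|\ >\ |A|/2.
\]
Setting $t = b^{-1}a \in A^{-1}A$ and right-multiplying by $a$, this reads $|A\cap At| > |A|/2$ for every $t\in A^{-1}A$; summing over $t$ the identity $\sum_{t\in A^{-1}A}|A\cap At| = |A|^2$ then forces $|A^{-1}A| < 2|A|$. Moreover the representation function $r(h) := |hA\cap A| = |\{(x,y)\in A^2:xy^{-1}=h\}|$ satisfies $\sum_{h\in H}r(h) = |A|^2$ with $r(h)\le|A|$, equality precisely when $h\in K$.

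The plan is then an inductive argument on $|A|$: since $A$ is a disjoint union of right $K$-cosets, one reduces to a smaller set $\bar A$ in an appropriate coset-space whose left-stabilizer is trivial, preserving the ratio $|\bar A\bar A^{-1}|/|\bar A| = |H|/|A| < \tfrac{3}{2}$. This isolates the base case: \emph{if $A$ has trivial left-stabilizer and $|A|\ge 2$, then $|AA^{-1}|\ge \tfrac{3}{2}|A|$}. In this base case, distinct $a\in A$ yield distinct ``sheets'' $Aa^{-1}\subseteq H$, and a careful pigeonhole — tracking how many fresh elements each successive sheet contributes to the union $H = \bigcup_{a\in A}Aa^{-1}$ — should force $|H|\ge \tfrac{3}{2}|A|$. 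The main obstacle will be executing this counting with enough precision to extract the strict inequality, since the bound is sharp: at the boundary, $A = \{0,1\}\subset \bZ/5\bZ$ has $|A-A| = 3 = \tfrac{3}{2}|A|$ yet $A-A = \{0,1,4\}$ is not a subgroup of $\bZ/5\bZ$, so any averaging argument that does not exploit the strictness of $|H|< \tfrac{3}{2}|A|$ is bound to fail.
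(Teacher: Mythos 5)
The central step of your plan is not salvageable: you reduce the theorem to the stronger claim $HA\subseteq A$ (equivalently, that $H=AA^{-1}$ coincides with the left stabilizer $K$ and $A$ is a single coset of $H$), and your induction rests on the base case ``if $A$ has trivial left stabilizer and $|A|\ge 2$, then $|AA^{-1}|\ge \tfrac32|A|$''. Both statements are false. Take $A=\{1,2,3\}\subset \mathbb{Z}/4\mathbb{Z}$: then $|A-A|=|\mathbb{Z}/4\mathbb{Z}|=4<\tfrac32\cdot 3$, the left stabilizer of $A$ is trivial, $A$ is not a coset of $A-A$, and $(A-A)+A=\mathbb{Z}/4\mathbb{Z}\not\subseteq A$. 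More generally, $A=H_0b\setminus\{b_0\}$ for any finite subgroup $H_0$ with $|H_0|\ge 4$ satisfies $|AA^{-1}|=|H_0|=|A|+1<\tfrac32|A|$, yet $K=\{e\}$ and $HA\not\subseteq A$. So the strengthening you aim for is not a theorem (the hypothesis only forces $A$ to sit \emph{inside} a coset of a subgroup, not to fill one), the base case of the induction fails, and no amount of care in the final pigeonhole can fix this. Two further issues: passing to ``the coset space'' of $K$ requires $K$ to be normal for $\bar A\bar A^{-1}$ to be meaningful, which you do not have in a general group, and the decisive counting step is in any case only asserted (``should force''), not carried out. Note also that the paper itself does not prove this statement; it cites Freiman and Tao's blog, so there is no internal proof to match.

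The two estimates you did establish are correct and are exactly what a short proof needs, but they must be used differently. From $|A\cap At|>|A|/2$ for every $t\in A^{-1}A$: given $s,t\in A^{-1}A$, the sets $A\cap As$ and $A\cap At$ are subsets of $A$ of size more than $|A|/2$, hence meet; an element $x=a_1s=a_2t$ with $a_1,a_2\in A$ gives $st^{-1}=a_1^{-1}a_2\in A^{-1}A$, so $K:=A^{-1}A$ is closed under $(s,t)\mapsto st^{-1}$ and is therefore a subgroup. Since $a^{-1}b\in K$ for all $a,b\in A$, we have $A\subseteq a_0K$ for any fixed $a_0\in A$; set $A'=a_0^{-1}A\subseteq K$. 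Your bound $|A^{-1}A|<2|A|$ gives $|A'|+|kA'|=2|A|>|K|$ for each $k\in K$, so $A'\cap kA'\neq\emptyset$ and hence $A'A'^{-1}=K$; consequently $AA^{-1}=a_0\bigl(A'A'^{-1}\bigr)a_0^{-1}=a_0Ka_0^{-1}$ is a subgroup as well. No induction and no claim that $A$ is a full coset of $AA^{-1}$ is needed — and, as the example shows, none holds.
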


\begin{proof}[Proof of Lemma \ref{lemma:chowla_stability}]
We may assume that $1/|A| \le \eps<1/1000$, otherwise the statement is trivial. Also, we can assume that the identity $1_\Gamma\in A$, as adding $1_\Gamma$ does not increase the number of pairs $(x,y)\in A\times A$ with $xy^{-1}\not\in A$, and it only changes the size of $A$ by $1$.

Let $N$ be the number of pairs $(x,y)\in A\times A$ such that $xy\not\in A$. 
Also, for every $x\in A$, let $$N(x)=|(xA)\triangle A|=|(xA)\backslash A|+|A\backslash (xA)|=|(xA)\backslash A|+|(x^{-1}A)\backslash A|.$$
Hence, we have $$
    N= \sum_{x \in A} |(xA)\setminus A|
    = \frac{1}{2}\sum_{x\in A}N(x).
$$
Therefore, $\frac{1}{|A|}\sum_{x\in A}N(x)\leq 2\eps |A|$. 
Let $\delta=(2\eps)^{1/2}$, and define $$B=\{x\in A: N(x)\leq \delta |A|\}.$$
Then by simple averaging, we have $|B|\geq (1-2\eps/\delta)|A|=(1-\delta)|A|$. 
We also note that $B=B^{-1}$ as $N(x)=N(x^{-1})$, and $1_\Gamma\in B$. 
Our goal is to show to apply Freiman's 3/2-theorem to the set $B$, and so we now show that $|BB^{-1}|=|B\cdot B|\leq \frac{3}{2}|B|$.

Observe that for every $x_1,x_2\in B$, we can use the triangle inequality to write
$$|(x_1x_2A)\triangle A|\leq |(x_1A) \triangle A|+|(x_1x_2A)\triangle (x_1A)|\leq 2\delta |A|.$$
In particular, for every $x\in B\cdot B$, we have $|(xA)\triangle A|\leq 2\delta |A|$. 
Therefore, $$\sum_{x\in B\cdot B} |(xA)\triangle A|\leq 2\delta |A||B\cdot B|.$$
On the other hand, $\sum_{x\in B\cdot B} |(xA)\triangle A|$ counts the number of pairs $(x,y)\in (B\cdot B)\times A$ such that $xy\not\in A$ or $y\not\in xA$. 
For every fixed $y$, the number of such pairs is clearly lower bounded by $|B\cdot B|-|A|$. Therefore, we can also write
$$\sum_{x\in B\cdot B} |(xA)\triangle A|\geq |A|(|B\cdot B|-|A|).$$
Comparing the lower and upper bounds on $\sum_{x\in B\cdot B} |(xA)\triangle A|$, we get
$$2\delta |A||B\cdot B|\geq |A|(|B\cdot B|-|A|),$$
from which
$$|B\cdot B|\leq \frac{1}{1-2\delta} |A|<(1+4\delta)|A|.$$
Since $|B|\geq (1-\delta)|A|$ and $\delta$ is sufficently small, we conclude that $|B\cdot B|< 3|B|/2$. 
By \Cref{thm:chowla_3/2}, $B\cdot B^{-1}=B\cdot B$ is a subgroup of $\Gamma$. Since $1_\Gamma\in B$, we have $B\subset B\cdot B$, so $|A\cap (B\cdot B)|\geq |B|$. In conclusion
$$|A\triangle (B\cdot B)|\leq |A|+|B\cdot B|-2|B|\leq  6\delta |A|,$$
showing that $H=B\cdot B$ suffices.
\end{proof}

With \Cref{lemma:chowla_stability} in our hands, \Cref{thm:chowla_group} follows almost immediately from \Cref{thm:MAIN_SE1,thm:MAIN_SE2}.

\begin{proof}[Proof of \Cref{thm:chowla_group}]
    We start with the first part of the theorem. 
    We may assume that $|A|\geq \delta |\Gamma|$, otherwise the statement is trivial by choosing $H=\{1_\Gamma\}$. Also, fix a parameter $\beta\ll \delta^4$.
    
    Let $G=\Cay(\Gamma,A)$, and let $\lambda_n=-M_\Gamma(A)$ be the smallest eigenvalue of $G$, $n=|\Gamma|$. 
    We may assume that $1_\Gamma\not\in A$, by noting that removing $1_\Gamma$ shifts the eigenvalues by $-1$. Therefore, $G$ is a simple graph with no loops. 
    If $n$ is sufficiently large as a function of $\delta$, \Cref{thm:MAIN_SE1} shows that the inequality $|\lambda_n|\leq n^{\gamma}$ for $\gamma\in (0, 1/4)$ implies that $G$ is $\beta$-close to a disjoint union of cliques. 
    But then $G$ contains at most $3\beta n^3$ induced cherries by \Cref{lemma:dens2_removal}. Recall, a cherry is a triple of vertices $u, v, w\in G$ such that $uv, vw\in E(G)$, but $uw\notin E(G)$.
    Since $G=\Cay(\Gamma, A)$, every cherry corresponds to a pair $x=uv^{-1}\in A, y=vw^{-1}\in A$ for which we have $xy=uw^{-1}\notin A$, and each such pair $(x, y)\in A$ corresponds to exactly $n$ cherries. Hence, there are at most $6\beta n^2\leq 6(\beta/\delta^2)|A|^2$ pairs $(x,y)\in A\times A$ such that $xy\not\in A$. 
    By \Cref{lemma:chowla_stability}, then $|A\triangle H|\leq O(\beta^{1/2}/\delta |A|)\leq \delta n$ for some subgroup $H<\Gamma$, since $\beta\ll\delta^4$.
    
    The second part of the theorem follows essentially in the same manner, but we cite \Cref{thm:MAIN_SE2} instead of \Cref{thm:MAIN_SE1}. We omit the details.
\end{proof}

\section{Stability of graphs with small MaxCut}\label{sect:MaxCut_stability}

One of the main results of our paper, Theorem~\ref{thm:MAIN_SE2} shows that every graph with $|\lambda_n|\leq n^\gamma$, for $\gamma\in (0, 1/6)$, is $n^{-\alpha}$-close to a disjoint union of cliques. Here, we present a variant of this result concerning graphs with no large MaxCut, that is, we show that graphs with small maximum cut are $n^{-\alpha}$-close to a disjoint union of cliques.

\begin{theorem}\label{thm:MAIN_MC2}
    There exist absolute constants $\eps, \alpha>0$ such that the following holds for every sufficiently large $n$. 
    If $G$ is an $n$-vertex and $m$-edge graph with no cut of size larger than $\frac{m}{2}+m^{\frac{1}{2}+\eps}$, then $G$ is $n^{-\alpha}$-close to a disjoint union of cliques.
\end{theorem}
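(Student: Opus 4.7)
The plan is to mimic the three-step argument used to prove Theorem~\ref{thm:MAIN_SE2}, substituting the MaxCut analogues of the key ingredients. First, if $m \leq n^{2-\alpha}$, then $G$ is $n^{-\alpha}$-close to the empty graph, which is a (trivial) disjoint union of cliques. Otherwise $m > n^{2-\alpha}$, so the edge density of $G$ is at least $n^{-\alpha}$ and the hypothesis becomes $\surp(G) \leq m^{1/2+\eps} \leq n^{1+2\eps}$. Fix a target clique size $s = n^{1/2+2\eps}$, chosen so that $s^2$ polynomially exceeds the surplus bound $n^{1+2\eps}$. For $\eps$ and $\alpha$ small enough, Theorem~\ref{thm:clique_MaxCut} applied with $\gamma = 2\eps$ supplies cliques of size $n^{1-4\eps-\delta_0} \gg s$ in any sufficiently dense induced subgraph of $G$.

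Next, following the scheme of Lemma~\ref{lemma:many_cliques}, I would greedily extract $s$-cliques: remove vertices of degree below $n^{1-2\alpha}$, then iteratively pull out $s$-cliques from the residual. Whenever the leftover vertex set $Y$ is large, Lemma~\ref{lemma:inbalanced_partition} in the form $\surp(G) \geq b^2/(4n^2) - c$ forces either $G[Y]$ to be dense enough to reapply Theorem~\ref{thm:clique_MaxCut}, or $Y$ itself to be small. The outcome is a set $X = C_1 \sqcup \cdots \sqcup C_I$ of disjoint $s$-cliques such that all but at most $n^{2-\alpha_0}$ edges of $G$ lie inside $G[X]$, for some $\alpha_0 > 0$ depending on $\eps$ and $\alpha$. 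The next step is to establish a bipartite density dichotomy: for every pair, $G[C_i, C_j]$ has either at most $\delta s^2$ or at least $(1-\delta)s^2$ edges, for an appropriate $\delta = n^{-\Omega(\alpha)}$. Form the auxiliary graph $\Gamma$ on $[I]$ whose edges mark the dense pairs. If $\Gamma$ contained an induced cherry $i, j, k$, Lemma~\ref{lemma:dens2_3parts} applied to $(C_i, C_j, C_k)$ would yield $\surp(G[C_i \cup C_j \cup C_k]) \geq (1/4 - 3\delta)s^2 \gg n^{1+2\eps}$, contradicting the surplus assumption (the polynomial gap between $s^2$ and $n^{1+2\eps}$ is exactly what the choice of $s$ buys). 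So $\Gamma$ is cherry-free and decomposes into cliques $I_1, \ldots, I_\ell$; setting $Y_a = \bigcup_{i \in I_a} C_i$, the approximating graph $\widetilde G$ whose cliques are $Y_1, \ldots, Y_\ell$ satisfies $|E(G) \triangle E(\widetilde G)| \leq O(\delta I^2 s^2) + n^{2-\alpha_0} \leq n^{2-\alpha}$ after tuning the parameters.

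The main obstacle is the pairwise density dichotomy, the MaxCut analogue of Lemma~\ref{lemma:bipartite_eigenvalue}. The spectral proof there forbids the induced subgraph $H_k$ via $|\lambda_n(H_k)| \geq \sqrt{k/2}$; however, in the MaxCut world only $\surp(H_k) = \Theta(k)$ is available, so the bound $\surp(G) \leq n^{1+2\eps}$ only rules out $H_k$ for $k \gg n^{1+2\eps}$, which is vacuous at our clique scale $s = n^{1/2+O(\eps)}$. To get around this, I expect one needs a collective argument: either extract many vertex-disjoint induced copies of $H_k$ from a ``bad'' pair of cliques so that their surplus contributions add, or apply Lemma~\ref{lemma:inbalanced_partition} at a coarser scale---for instance, first produce an $o(1)$-approximation $V_1, \ldots, V_\ell$ via Theorem~\ref{thm:dens2_main_MaxCut} and then use $\surp(G) \geq \big(\sum_{b \neq a} f_{ab}\big)^2/(4n^2) - e(G[V \setminus V_a])$ with $X = V_a$ to trade the total between-component edge count against within-component defects. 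Sharpening such a trade-off into a polynomial $n^{-\alpha}$ bound is the delicate step that replaces the tightness of Lemma~\ref{lemma:bipartite_eigenvalue} in the spectral setting.
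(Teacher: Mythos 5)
Your skeleton correctly mirrors the paper's architecture: reduce to $m>n^{2-\alpha}$ (so $\surp(G)\leq n^{1+O(\eps)}$), extract a vertex set $X$ partitioned into large vertex-disjoint cliques covering almost all edges, establish a bipartite density dichotomy between any two such cliques, observe via Lemma~\ref{lemma:dens2_3parts} that the auxiliary ``dense-pair'' graph is cherry-free, and conclude. You have also put your finger on exactly the right place where the argument is not a routine transfer: the MaxCut analogue of Lemma~\ref{lemma:bipartite_eigenvalue}. But that is precisely where your proposal stops short, and neither of the two routes you sketch actually closes it.

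On the ``collective $H_k$'' idea: surplus is monotone under passing to induced subgraphs, but it is \emph{not} superadditive over a union of vertex-disjoint induced subgraphs that have edges between them, so stacking many disjoint $H_k$'s does not let you add their surpluses. And since $\surp(H_k)=\Theta(k)$ while your cliques have size $s=n^{1/2+2\eps}$, a single $H_k$ at the relevant scale contributes surplus $O(s)=O(n^{1/2+2\eps})$, far below the available budget $n^{1+2\eps}$; you would need a genuinely new mechanism, not mere accumulation. The second suggestion (running Lemma~\ref{lemma:inbalanced_partition} at a coarser scale after first invoking Theorem~\ref{thm:dens2_main_MaxCut}) only yields an $o(1)$-quality statement, and upgrading that to a polynomial $n^{-\alpha}$ bound is the entire content of what needs to be proved. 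There is also a parameter incompatibility: with $s=n^{1/2+2\eps}$, the surplus of $H=G[C_i\cup C_j]$ is only bounded by $n^{1+2\eps}\approx s^{2-\Theta(\eps)}$, which is nearly the trivial $O(s^2)$ bound, so any Frobenius-norm control you could extract (say via Lemma~\ref{lemma:surp_star}) would be vacuous at that scale. The cliques must be taken much larger, of size $n^{1-\Theta(\gamma)}$, which is what Lemma~\ref{lemma:many_cliques_MaxCut} provides.

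The paper's actual resolution of the dichotomy (Lemma~\ref{lemma:bipartite_MaxCut}) is a genuinely new ingredient and worth understanding, since you won't find it by adapting the spectral $H_k$-avoidance argument. Writing $H=G[C_i\cup C_j]$ with $|C_i|=|C_j|=k\geq n^{1-3\gamma}$, one looks at the \emph{complement} $\overline{H}$, which is bipartite with matrix $\left(\begin{smallmatrix} 0 & B\\ B^T & 0\end{smallmatrix}\right)$. Combining Lemma~\ref{lemma:surp_star}(ii) with Weyl's inequality (Lemma~\ref{lemma:dens4_weyl}) controls the sum of squares of all but the two extreme eigenvalues of $\overline{H}$, which after a short computation equals $2\|B-2\lambda_1uv^T\|_F^2$; this shows $B$ is well approximated by a rank-one matrix. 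A separate combinatorial lemma (Lemma~\ref{lemma:rank1_approximation}) upgrades a real rank-one approximation of a Boolean matrix to a \emph{Boolean} rank-one approximation $xy^T$, i.e.\ a combinatorial rectangle. If that rectangle were neither almost all of $[k]\times[k]$ nor almost nothing, one exhibits three sets $X_0,Y_0,Z_0$ to which Lemma~\ref{lemma:dens2_3parts} applies, giving a surplus that beats the budget. This is a more global, linear-algebraic route than forbidding a single small subgraph, and it is what makes the clique scale $k=n^{1-3\gamma}$ (rather than $n^{1/2+\Theta(\eps)}$) essential, since the error $\widetilde{O}(k^{11/6})$ must fall below $k^{2-\Theta(\alpha)}$.
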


The proof of this theorem mostly follows the same strategy as the proof of \Cref{thm:MAIN_SE2}. We therefore recommend the readers to familiarize themselves with the arguments of \Cref{sect:stab} before reading this section. 
In short, the proof proceeds in two steps --- first, we show that the graph can be partitioned into large vertex-disjoint cliques, as the following lemma shows.

\begin{lemma}\label{lemma:many_cliques_MaxCut}
    There exist absolute constants $\alpha, \gamma>0$ such that the following holds for every sufficiently large $n$. 
    Let $G$ be a graph on $n$ vertices such that $\surp(G)\leq n^{1+\gamma}$. Then there exists $X\subset V(G)$ such that the number of edges not in $G[X]$ is at most $n^{2-\alpha}$, and $G[X]$ can be partitioned into cliques of size $n^{1-3\gamma}$. 
\end{lemma}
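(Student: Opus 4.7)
My plan is to closely mirror the proof of \Cref{lemma:many_cliques}, substituting \Cref{thm:clique_MaxCut} in place of \Cref{thm:clique1} and using the surplus hypothesis $\surp(G) \leq n^{1+\gamma}$ in place of the smallest eigenvalue bound.

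First, I would fix parameters. Pick $\gamma \in (0, 1/60)$ sufficiently small, and choose $\gamma_0 \in (\gamma, 1/60)$ and $\delta_0 > 0$ so that $(1-2\alpha)(1 - 2\gamma_0 - \delta_0) > 1 - 3\gamma$ for suitably small $\alpha > 0$. \Cref{thm:clique_MaxCut} then supplies some $\rho > 0$ for which every $n_0$-vertex graph with density at least $n_0^{-\rho}$ and surplus at most $n_0^{1+\gamma_0}$ contains a clique of size $n_0^{1-2\gamma_0-\delta_0}$. Set $\alpha \ll \min(\rho, \gamma_0 - \gamma)$. One may assume $G$ has at least $n^{2-\alpha}$ edges; otherwise taking $X = \emptyset$ already satisfies the conclusion. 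I would then iteratively delete vertices of degree less than $d := n^{1-2\alpha}$, losing at most $dn = n^{2-2\alpha}$ edges, and call the resulting graph $G_0$, which has minimum degree at least $d$. Let $X$ be a maximal subset of $V(G_0)$ admitting a partition into disjoint cliques of size $s := n^{1-3\gamma}$, and set $Y := V(G_0) \setminus X$; then $G[Y]$ contains no clique of size $s$.

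The crux of the argument will be to derive a contradiction from the assumption $|Y| > n^{1-2\alpha}$. Writing $b := e(G_0[X,Y])$ and $c := e(G_0[Y])$, the minimum degree of $G_0$ forces $b + 2c \geq d|Y|$. If $c \geq d|Y|/4$, then $G[Y]$ has density at least $d/(2|Y|) \geq n^{-4\alpha}$. Otherwise $b > d|Y|/2$, and \Cref{lemma:inbalanced_partition} combined with $\surp(G) \leq n^{1+\gamma}$ yields
\[
c \geq \frac{b^2}{4n^2} - \surp(G) \geq \frac{d^2|Y|^2}{16n^2} - n^{1+\gamma} \geq \frac{d^2|Y|^2}{20n^2},
\]
provided $\alpha$ is small enough relative to $\gamma$. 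Either way, $G[Y]$ has edge density at least $n^{-8\alpha}/20 \geq |Y|^{-\rho}$, since $\alpha \ll \rho$. Moreover $\surp(G[Y]) \leq \surp(G) \leq n^{1+\gamma} \leq |Y|^{1+\gamma_0}$, using $|Y| \geq n^{1-2\alpha}$ and $\alpha \ll \gamma_0 - \gamma$. Applying \Cref{thm:clique_MaxCut} to $G[Y]$ then produces a clique of size at least $|Y|^{1-2\gamma_0-\delta_0} \geq n^{(1-2\alpha)(1-2\gamma_0-\delta_0)} \geq n^{1-3\gamma} = s$, contradicting the maximality of $X$. Consequently $|Y| \leq n^{1-2\alpha}$, and the number of edges of $G$ not contained in $G[X]$ is at most $dn + n|Y| \leq 2n^{2-2\alpha} \leq n^{2-\alpha}$ for large $n$.

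The main obstacle will be orchestrating the parameter inequalities. One must have $3\gamma > 2\gamma_0 + \delta_0$ so that the clique size guaranteed by \Cref{thm:clique_MaxCut} beats the target $n^{1-3\gamma}$ (after absorbing the loss from passing to the subgraph $Y$ of size at least $n^{1-2\alpha}$), and simultaneously $\alpha \ll \rho$ so that the $n^{-8\alpha}$ density bound on $G[Y]$ suffices to invoke \Cref{thm:clique_MaxCut}. The slack from choosing $\gamma_0$ just above $\gamma$ and $\delta_0$ arbitrarily small accommodates both, at the price of forcing $\gamma$ and $\alpha$ to be small absolute constants.
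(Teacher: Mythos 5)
Your proof is correct and matches the paper's approach exactly: the paper states that \Cref{lemma:many_cliques_MaxCut} is proved "almost identically" to \Cref{lemma:many_cliques}, substituting \Cref{thm:clique_MaxCut} for \Cref{thm:clique1} and pulling out cliques of size $n^{1-3\gamma}$ in place of $\sqrt{n}$, and you have carried out precisely this adaptation with the parameter bookkeeping ($\gamma_0$ slightly above $\gamma$, small $\delta_0$, $\alpha \ll \min(\rho, \gamma_0-\gamma)$) filled in correctly.
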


\begin{proof}
    The proof this is almost identical to the proof of Lemma \ref{lemma:many_cliques}. 
    The only difference is that we use \Cref{thm:clique_MaxCut} to pull out cliques of size $n^{1-3\gamma}$ instead of using \Cref{thm:clique1} to pull out cliques of size $\sqrt{n}$. 
    We omit further details.
\end{proof}

In the second step of the proof, we analyse the edges between pairs of cliques coming from \Cref{lemma:many_cliques_MaxCut}, with the goal of showing that any two cliques induce an almost empty or almost complete bipartite graph. 
This step is analogous to \Cref{lemma:bipartite_eigenvalue}, whose proof relied on finding a simple forbidden subgraph $H_k$. 

Unfortunately, the graphs of small surplus no longer avoid such a simple forbidden structure. Instead, we show that any two large cliques in $G$ induce a graph which is very close to a complement of a complete bipartite graph (perhaps on a smaller vertex set). For the precise statement, see \Cref{lemma:bipartite_MaxCut}. 
We prepare the proof the following lemma.

A \emph{Boolean matrix} is a matrix with only zero and one entries. First, we show that if a Boolean matrix is approximated by a rank one matrix, then it is also approximated by a rank one Boolean matrix, or equivalently, a combinatorial rectangle.

\begin{lemma}\label{lemma:rank1_approximation}
Let $A$ be an $n\times n$ Boolean matrix, and let $\delta\geq 0$. If there exist $u,v\in \mathbb{R}^n$ such that $\|A-uv^T\|_F^2\leq \delta n^2$, then there exist $x,y\in \{0,1\}^n$  such that $\|A-xy^T\|_F^2\leq O(\delta^{1/3}n^2)$.
\end{lemma}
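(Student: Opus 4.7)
The plan is to obtain $x,y$ by thresholding $u$ and $v$, and the proof breaks into three pieces: a nonnegativity reduction, a scale normalization, and a rounding step whose error comes from balancing two opposing contributions.

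First, I would reduce to the case $u,v \ge 0$. For any $(i,j)$ with $u_iv_j < 0$, the identity
$(A_{ij} - u_iv_j)^2 - (A_{ij} - |u_i||v_j|)^2 = -4A_{ij}u_iv_j \ge 0$ (valid since $A_{ij}\ge 0$)
shows that replacing $u,v$ by $|u|,|v|$ can only decrease $\|A - uv^T\|_F^2$. Next, since $uv^T$ is invariant under the rescaling $u \mapsto \alpha u$, $v \mapsto v/\alpha$, I would renormalize to $\|u\|_2 = \|v\|_2 =: s$. Because $\|uv^T\|_F = \|u\|_2\|v\|_2 = s^2$ and $\|uv^T\|_F \le \|A\|_F + \sqrt{\delta}\,n \le 2n$ (we may assume $\delta \le 1$, else the conclusion is trivial via $x=y=0$), this gives the key a priori bound $s^2 \le 2n$.

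Second, set $c := \delta^{1/6}$, $X := \{i : u_i \ge c\}$, $Y := \{j : v_j \ge c\}$, and take $x := \mathds{1}_X$, $y := \mathds{1}_Y$. I would split $\|A - xy^T\|_F^2 = T_1 + T_2$, where $T_1 = |\{(i,j) \in X\times Y : A_{ij} = 0\}|$ counts false positives and $T_2 = |\{(i,j) \notin X\times Y : A_{ij} = 1\}|$ counts false negatives. Every pair in $T_1$ has $u_iv_j \ge c^2$, hence contributes at least $c^4$ to $\|A - uv^T\|_F^2$; this gives $T_1 \le \delta n^2/c^4$. For $T_2$, pairs with $u_iv_j \le 1/2$ contribute $\ge 1/4$ each, so their count is $\le 4\delta n^2$. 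The remaining $T_2$-pairs have $u_iv_j > 1/2$; if in addition $i \notin X$ (so $u_i < c$), then $v_j > 1/(2c)$, and Markov's inequality combined with $\|v\|_2^2 \le 2n$ yields $|\{j : v_j > 1/(2c)\}| \le 8c^2 n$. Multiplying by $|X^c| \le n$, and adding the symmetric contribution from $j \notin Y$, gives at most $16c^2 n^2$ extra pairs.

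Summing these bounds yields $\|A - xy^T\|_F^2 \le \delta n^2/c^4 + 4\delta n^2 + 16c^2 n^2$, and the choice $c = \delta^{1/6}$ equalizes the two leading terms at $\delta^{1/3}n^2$, giving the desired $O(\delta^{1/3})n^2$. The main obstacle is the false-negative regime where $u_iv_j \approx 1$ but one factor, say $u_i$, falls just below the threshold $c$: the rank-one structure then forces $v_j > 1/(2c)$ to be correspondingly large, and one must rule out too many such pairs. It is precisely the a priori $\ell^2$-bound $\|v\|_2^2 \le 2n$ coming from the scale normalization, combined with Markov, that controls this count. The exponent $1/3$ in the conclusion is forced by balancing $\delta/c^4$ against $c^2$.
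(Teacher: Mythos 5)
Your proof is correct and takes essentially the same approach as the paper: nonnegativity reduction, balanced normalization giving $\|u\|_2^2 = \|v\|_2^2 \le 2n$, thresholding at $\delta^{1/6}$, and a Markov bound on the large entries of $v$ to control the false negatives near the threshold. The bookkeeping is reorganized slightly (you batch all small-product false negatives together rather than splitting by which coordinate fails, saving a constant factor), but this is cosmetic.
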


\begin{proof}
    Without loss of generality, we may assume that $\delta\leq 1$. 
    Furthermore, we may assume that $u$ and $v$ has nonnegative entries, as replacing every entry with the absolute value does not increase $\|A-uv^T\|_F^2$. 
    Observe that $\|u\|_2^2\|v\|_2^2=\|uv^T\|_F^2$, which shows that $$\|u\|_2\|v\|_2\leq \|A\|_F+\sqrt{\delta}n\leq 2n.$$
    We may rescale $u$ and $v$ such that $\|u\|_2=\|v\|_2\leq \sqrt{2n}$. 
    Let $\eta=\delta^{1/6}$, and define $x,y\in \{0,1\}^n$ such that 
    $$
        x_i=\mathds{1}_{u_i \ge \eta}
        \;\;\text{ and }\;\;
        y_i=\mathds{1}_{v_i \ge \eta}
        \quad \text{ for all } 1 \le i \le n.
    $$
 
     We show that $xy^T$ is a good approximation of $A$. 
     Note that $\|A-xy^T\|_F^2$ is the number of pairs $(i,j)$ such that $A_{i,j}\neq x_iy_j$. 
     We count these pairs in three cases, and upper bound each case by $O(\delta^{1/3}n^2)$.
 \begin{description}
     \item[Case 1.] $A_{i,j}=1$ and $x_i=0$.

     In this case, we have $u_i< \eta$. If $v_j\leq 1/(2\eta)$, then $(A_{i,j}-u_iv_j)^2>1/4$, so there are at most $4\delta n^2$ such pairs $(i,j)$. On the other hand, the number of $j$ such that $v_j\geq 1/(2\eta)$ is at most $8\eta^2 n$, as $\|v\|_2^2=\sum_{j=1}^nv_j^2\leq 2n$. Therefore, the number of $(i,j)$ such that $A_{i,j}=1$ and $x_i=0$ is at most $$4\delta n^2+8\eta^2n^2=4\delta n^2+8\delta^{1/3}n^2=O(\delta^{1/3}n^2).$$

    \item[Case 2.] $A_{i,j}=1$ and $y_j=0$.
    
    This is symmetric to the previous case, so the number of such pairs is also at most $O(\delta^{1/3}n^2)$.
     
     \item[Case 3.] $A_{i,j}=0$ and $x_i=y_j=1$.

     In this case, $u_i\geq \eta$ and $v_j\geq \eta$, so $(A_{i,j}-u_iv_j)^2\geq \eta^4$. Thus, the total number of pairs $(i,j)$ in this case is at most $\delta n^2/\eta^4=\delta^{1/3}n^2$.\qedhere
 \end{description}
\end{proof}
% Next, we prove a simple technical lemma which shows that the union of two cliques has large surplus as long as the two cliques are not too disjoint, and do not overlap too much. 

% \begin{lemma}\label{lemma:union_of_cliques}
% Let $G$ be a graph such that $V(G)=C_1\cup C_2$ and $E(G)=\binom{C_1}{2}\cup \binom{C_2}{2}$. Let $|C_1\setminus C_2|=a$, $|C_2\setminus C_1|=b$ and $|C_1\cap C_2|=c$. Then $$\surp(G)\geq \frac{1}{4}\min\{a^2,b^2,c^2\}.$$ 
% \end{lemma}

% \begin{proof}
% Let $A=C_1\setminus C_2$, $B=C_2\setminus C_1$, and $C=C_1\cap C_2$. We may assume that $a=b$. Otherwise, if, say $a\leq b$, we remove vertices of $B\setminus C$ until its size is exactly $a$. Then it is enough to show that the resulting graph has surplus at least $\frac{1}{4}\min\{a^2,c^2\}$.

% The number of edges of $G$ is 
% $$e(G) = 2\binom{a+c}{2}-\binom{c}{2}
%     = (a+c)(a+c-1)-\frac{c(c-1)}{2} 
%     = a^2+2ac + c^2 - a - c - \frac{c^2-c}{2}
%     < a^2+2ac+\frac{c^2}{2}.$$ 
% If $c\leq a$, then define the cut $(U,V)$ such that $U$ is some $(a+c)/2$ element subset of $A$ together with some $(a+c)/2$ element subset of $B$. The number of edges in this cut is $(a+c)^2/2$. Hence, the surplus of $G$ is at least $\frac{1}{2}(a+c)^2-\frac{1}{2}e(G)>c^2/4$.

% If $c\geq a$, then define the cut $(U,V)$ such that $U$ is some $(a+c)/2$ element subset of $C$. Then the number of edges in this cut is $\frac{a+c}{2}\cdot \frac{c-a}{2}+2\frac{a+c}{2}\cdot a=\frac{c^2}{4}+\frac{3}{4}a^2+ac$. Therefore, the surplus is at least $a^2/4$.
% \end{proof}

Now we are ready to prove the counterpart of \Cref{lemma:bipartite_eigenvalue}: if the surplus is small, then between two disjoint cliques of the same size, it is either very sparse or very dense.

\begin{lemma}\label{lemma:bipartite_MaxCut}
    Let $\alpha, \gamma>0$ be sufficiently small constants, and let $G$ be an $n$-vertex graphs with $\surp(G)\leq n^{1+\gamma}$, and let $X, Y\subset V(G)$ be disjoint cliques of the size $|X|=|Y|\geq n^{1-3\gamma}$. 
    Then $G[X, Y]$ has either at most $O(|X|^{2-\alpha})$ edges or at least $|X|^2-O(|X|^{2-\alpha})$.
\end{lemma}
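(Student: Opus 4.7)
The plan is to apply Theorem~\ref{thm:MAIN_MC2} to the induced subgraph $H := G[X \cup Y]$, and then use the resulting clique-partition structure to pin down the bipartite density between $X$ and $Y$. Write $k := |X| = |Y|$ and $N := 2k$. Since surplus is monotone under taking induced subgraphs, $\surp(H) \le \surp(G) \le n^{1+\gamma}$, and since $H$ contains two disjoint cliques of size $k \ge n^{1-3\gamma}$, the number of edges of $H$ satisfies $m_H \ge k(k-1) \ge N^2/5$. From $k \ge n^{1-3\gamma}$ we have $n \le k^{1/(1-3\gamma)}$, so a direct calculation shows that, whenever $\gamma$ is sufficiently small relative to the absolute constant $\eps$ of Theorem~\ref{thm:MAIN_MC2}, the inequality $\surp(H) \le m_H^{1/2+\eps}$ holds. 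Theorem~\ref{thm:MAIN_MC2} then implies that $H$ is $N^{-\alpha}$-close to some graph $H' = C_1 \sqcup \cdots \sqcup C_\ell$ which is a disjoint union of cliques, for an absolute constant $\alpha>0$.

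Next, I would argue that each of $X$ and $Y$ is almost entirely contained in a single~$C_i$. Setting $a_i := |X \cap C_i|$, the fact that $X$ is a clique in $H$ means that every pair $(u,v) \in X \times X$ whose endpoints lie in distinct $C_i$ contributes a missing edge to $H' \triangle H$. Hence $\tfrac{1}{2}(|X|^2 - \sum_i a_i^2) \le N^{2-\alpha}$, and an elementary estimate (splitting into the cases where the largest $a_i$ is at least $k/2$ or not) gives $\max_i a_i \ge k - O(k^{1-\alpha})$. Let $C$ denote the clique of $H'$ containing most of $X$, and let $C'$ be the analogous clique for $Y$.

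The conclusion then splits into two cases. If $C = C'$, then $H'$ contains the complete bipartite graph between $X \cap C$ and $Y \cap C$, contributing $(k - O(k^{1-\alpha}))^2 = k^2 - O(k^{2-\alpha})$ edges; since $H$ and $H'$ differ in at most $N^{2-\alpha} = O(k^{2-\alpha})$ entries overall, this forces $e_H(X, Y) \ge k^2 - O(k^{2-\alpha})$. If instead $C \ne C'$, then $H'$ has no edges between $X \cap C$ and $Y \cap C'$, so the edges of $G[X, Y]$ are confined to pairs incident to the $O(k^{1-\alpha})$ exceptional vertices in $X \setminus C$ or $Y \setminus C'$ (contributing at most $O(k^{2-\alpha})$) plus the at most $N^{2-\alpha}$ entries in which $H$ and $H'$ differ inside $(X \cap C) \times (Y \cap C')$, yielding $O(k^{2-\alpha})$ edges in total. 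The main technical obstacle is simply choosing $\gamma$ small enough that the surplus hypothesis on $H$ lies in the applicability range of Theorem~\ref{thm:MAIN_MC2}; the lemma's $\alpha$ is then inherited, up to constants, from the absolute constant produced by that theorem.
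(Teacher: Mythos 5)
There is a fatal circularity in this proposal: you are invoking Theorem~\ref{thm:MAIN_MC2} to prove Lemma~\ref{lemma:bipartite_MaxCut}, but in the paper Theorem~\ref{thm:MAIN_MC2} is \emph{proved from} Lemma~\ref{lemma:bipartite_MaxCut}. Its proof reads ``By Lemma~\ref{lemma:many_cliques_MaxCut}, there is a set $X\subset V(G)$ which can be partitioned into the union of cliques of size $n^{1-3\gamma}$\ldots\ By Lemma~\ref{lemma:bipartite_MaxCut}, the bipartite graph between any two of these cliques is $n^{-\alpha/2}$-close to either complete or empty.'' So Lemma~\ref{lemma:bipartite_MaxCut} is precisely the ingredient of Theorem~\ref{thm:MAIN_MC2} that your proof assumes. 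The weaker result that \emph{is} available at this point in the paper, Theorem~\ref{thm:dens2_main_MaxCut}, only gives $\delta$-closeness for a fixed constant $\delta$ (i.e.\ $o(1)$-closeness), and that is too weak: your argument that each of $X$ and $Y$ sits in a single clique of $H'$ up to $O(k^{1-\alpha})$ exceptions genuinely uses the polynomial error bound $N^{-\alpha}$, so you cannot substitute the constant-$\delta$ version.

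To do this non-circularly, the paper works directly with the bipartite structure rather than first extracting a global clique partition. It sets $H = G[X\cup Y]$, compares the spectrum of $\overline{H}$ (a bipartite graph) to that of $H$ via Weyl's inequality (Lemma~\ref{lemma:dens4_weyl}), and combines this with the surplus--eigenvalue bounds (Lemma~\ref{lemma:surp_star} and Claim~\ref{claim:groth}) to show that the biadjacency block of $\overline{H}$ is close in Frobenius norm to a rank-one matrix. Then Lemma~\ref{lemma:rank1_approximation} upgrades this to a \emph{Boolean} rank-one (combinatorial rectangle) approximation, and Lemma~\ref{lemma:dens2_3parts} rules out the possibility that this rectangle is nontrivial. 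That route never touches Theorem~\ref{thm:MAIN_MC2}, so it breaks the cycle. If you want to rescue your approach, you would need an independent proof that graphs of small surplus are \emph{polynomially} close to unions of cliques that does not pass through this lemma, which is not available in the paper.

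Aside from the circularity, the combinatorial bookkeeping you give (the estimate $\max_i a_i \ge k - O(k^{1-\alpha})$, the case split $C = C'$ versus $C \ne C'$, and the verification that $\surp(H) \le m_H^{1/2+\eps}$ for $\gamma$ small) is all correct and cleanly presented; the structure would work if the cited theorem were legitimately available.
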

\begin{proof}
Let $|X|=|Y|=k$, and let $H=G[X\cup Y]$. For simplicity, we denote the vertices of $X$ by $1, \dots, k$ and the vertices of $Y$ by $k+1, \dots, 2k$. Since $\surp(H)\leq \surp(G)\leq n^{1+\gamma}$ (cf. \Cref{sect:prelim}), we have $\surp(H)\leq k^{\frac{1-\gamma}{1-3\gamma}}\leq k^{1+5\gamma}$.

Let $A$ be the adjacency matrix of $H$ with eigenvalues $\lambda_1\geq \dots\geq \lambda_{2k}$. Furthermore, let $M$ be the adjacency matrix of $\overline{H}$, which is a bipartite graph, and let $\mu_1\geq \dots\geq \mu_{2k}$ be the eigenvalues of $M$. 
Note that $\overline{H}$ is bipartite, so $\mu_i=-\mu_{2k+1-i}$ for $i\in [2k]$. 
    
We can use Lemma~\ref{lemma:surp_star} \textit{(ii)} to obtain a lower bound on the surplus of $H$ based on its eigenvalues, and Lemma~\ref{lemma:dens4_weyl} to relate it to the eigenvalues of $\overline{H}$. Concretely, we have
$$\surp^*(H)
    = \Omega\left(\frac{1}{\sqrt{k}}\sum_{\lambda_i<0}\lambda_i^2\right)
    = \Omega\left(\frac{1}{\sqrt{k}}\sum_{i\neq 1,\mu_i>0}\mu_i^2\right)
    =\Omega\left(\frac{1}{\sqrt{k}}\sum_{i\neq 1,2k}\mu_i^2\right).$$
In addition, recall that $\surp(H) = \Omega(\surp^*(G)/\log k)$ from \Cref{claim:groth}.
Recall that $\surp(H)\leq k^{1+5\gamma}$. 
We acquire $\sum_{i\neq 1, 2k}\mu_i^2 \le k^{3/2+5\gamma+o(1)}$.

On the other hand, we can express $\sum_{i\neq 1, 2k}\mu_i^2$ as follows. The matrix $M$ has the form $M=\begin{pmatrix}0 & B \\ B^T & 0\end{pmatrix}$ with an appropriate $k\times k$ matrix $B$. The principal eigenvector of $M$ can be written as $v_1=(u,v)$, where $u,v\in\mathbb{R}^k$ correspond to the two vertex classes of $H$. Then the eigenvector corresponding to the smallest eigenvalue $\lambda_{2k}=-\lambda_1$ is $v_{2k}=(u,-v)$, and we have
\begin{align*}
    \sum_{i\neq 1,2k}\mu_i^2
    &=\big\|M-\lambda_1v_1v_1^T-\lambda_{2k}v_{2k}v_{2k}^T \big\|_F^2\\
    &=\left\|\begin{pmatrix} 0 & B \\ B^T & 0\end{pmatrix}
        -\lambda_1\begin{pmatrix} uu^T & uv^T \\ vu^T & vv^T\end{pmatrix}
        +\lambda_1\begin{pmatrix} uu^T & -uv^T \\ -vu^T & vv^T\end{pmatrix}\right\|_F^2
    =2\big\|B-2\lambda_1uv^T \big\|_F^2.
\end{align*}
This means $\big\|B-2\lambda_1vu^T \big\|_F^2\le \frac{1}{2}\sum_{i\neq 1,2k}\mu_i^2 \le  k^{3/2+5\gamma+o(1)}$, i.e. $B$ is well-approximated by a rank-1 matrix.
By \Cref{lemma:rank1_approximation}, there exist $x,y\in \{0,1\}^k$ such that $\|B-xy^T\|_F^2 \le k^{11/6+5\gamma/3+o(1)} \leq k^{2-\alpha}/2$ (as $k$ is large enough). 
The matrix $xy^T$ naturally corresponds to a bipartite graph $H'$ with two parts $\{1,\dots,k\}$ and $\{k+1,\dots,2k\}$ whose edges are all that cross $X=\{i\in [k]: x_i=1\}$ and $Y=\{j+k: j\in [k], y_j=1\}$.

Consider the complement $\overline{H'}$. 
Since $\|B-xy^T\|_F^2\leq k^{2-\alpha}/2$, the graphs $\overline{H}$ and $H'$ differ in at most $k^{2-\alpha}/2$ edges. Equivalently, $H$ and $\overline{H'}$ differ in at most $k^{2-\alpha}/2$ edges. 
Hence, we are done if $H'$ has at most $k^{2-\alpha}/2$ edges, or at least $k^2-k^{2-\alpha}/2$ edges: in the former case, $G[X,Y]=H$ has at least $|X|^2-O(|X|^{2-\alpha})$ edges while in the latter case, $G[X,Y]=H$ has at most $O(|X|^{2-\alpha})$ edges.
% Therefore, if $\widetilde{H}$ either has at most $k^{2-\alpha}/2$ edges, or at least $k^2-k^{2-\alpha}/2$ edges, then we are done. 
% Note that $e(\widetilde{H})=k^2-|X||Y|$, and $\|B-xy^T\|_F^2$ is the number of edges $\widetilde{H}$ differs from $H$. Therefore, if $e(\widetilde{H})\leq k^{2-\alpha}/2$, then $e(H)\leq e(\widetilde{H})+\|B-xy^T\|_F\leq k^{2-\alpha}/2 + O(k^{11/6+\gamma/3+o(1)}) \le k^{2-\alpha}$, so we are done. We can proceed similarly if $e(\widetilde{H})\geq k^2-k^{2-\alpha}/2$. 

We are left to show that $k^{2-\alpha}/2\le e(H')\leq k^2-k^{2-\alpha}/2$ is impossible.
In this case, since $e(H') = |X||Y|$, we know $k^{2-\alpha}/2\le |X||Y|\leq k^2-k^{2-\alpha}/2$.
This implies $|X|,|Y| \ge k^{1-\alpha}/2$.
In addition, $$
    k^2 = |X||Y| + (k-|X|)|Y| + |X|(k-|Y|) + (k-|X|)(k-|Y|) \le k^2-k^{2-\alpha}/2 + k(2k-|X|-|Y|),
$$
so $|\{1,\dots,2k\} \setminus (X\cup Y)|=2k-|X|-|Y| \ge k^{1-\alpha}/2$.
Pick $X_0\subseteq X, Y_0\subseteq Y, Z_0 \subseteq V(H) \setminus (X\cup Y)$ be sets of size exactly $k^{1-\alpha}/2$.
% In this case, as $k^2-k^{2-\alpha}/2\geq e(\widetilde{H})=k^2-|X||Y|$, we have $|X|,|Y|\geq k^{1-\alpha}/2$. Let $X_0\subseteq X, Y_0\subseteq Y$ be sets of size exactly $k^{1-\alpha}/2$. Also, as $e(\widetilde{H})\geq k^{2-\alpha}/2$, there are at least $k^{2-\alpha}/2$ missing edges from the complete bipartite graph between $X$ and $Y$. Since the number of missing edges is also at most $k(k-|X|)+k(k-|Y|)=k|V(G)\backslash (X\cup Y)|$, we find that $|V(G)\setminus (X\cup Y)|\geq k^{1-\alpha}/2$. Finally, let $Z_0\subseteq V(G)\backslash (X\cup Y)$ be a set of size $k^{1-\alpha}/2$.
Since the pairs $(X_0, Z_0)$ and $(Y_0, Z_0)$ are complete in $\overline{H'}$ while the pair $(X_0, Y_0)$ is empty in $\overline{H'}$, \Cref{lemma:dens2_3parts} guarantees $\surp(\overline{H'})\geq \frac{1}{4}|X_0|^2=k^{2-2\alpha}/16$. 
As discussed above, $H$ and $\overline{H'}$ differ by at most $\|B-xy^T\|_F^2=O(k^{11/6+5\gamma/3+o(1)})$ edges.
For sufficiently small $\gamma,\alpha>0$, we have 
$$\surp(H)\geq \surp(\overline{H'})-O(k^{11/6+5\gamma/3+o(1)})\geq \Omega(k^{2-2\alpha})>k^{1+5\gamma}.$$
This contradicts our assumption that $\surp(H) \le k^{1+5\gamma}$.
\end{proof}

Now we are ready to prove the main theorem of this section --- \Cref{thm:MAIN_MC2}. 
Recall that this theorem states: if $G$ has no cut of size $m/2+m^{1/2+\eps}$, then $G$ is $n^{-\alpha}$-close to a disjoint union of cliques. 
The proof is essentially the same as that of \Cref{thm:MAIN_SE2}, and therefore we only briefly outline it.

% \begin{theorem}\label{thm:stability_MaxCut}
% Let $\gamma\in (0,1/60)$, then there exists $\eps>0$ such that the following holds. Let $G$ be an $n$-vertex graph such that $\surp^*(G)\leq n^{1+\gamma}$. Then $G$ is $n^{-\alpha}$-close to the disjoint union of cliques.
% \end{theorem}
\begin{proof}[Proof of \Cref{thm:MAIN_MC2}.]
We may assume $m \ge n^{2-\alpha}$ since otherwise the statement is trivial.
Then, it suffices to show that for small enough constants $\alpha, \gamma>0$, whenever $\surp(G)\leq n^{1+\gamma}$, $G$ is $O(n^{-\alpha/2})$-close to a disjoint union of cliques. 

By Lemma~\ref{lemma:many_cliques_MaxCut}, there is a set $X\subset V(G)$ which can be partitioned into the union of cliques of size $n^{1-3\gamma}$, and $G$ has at most $n^{2-\alpha}$ edges not in $G[X]$. By Lemma~\ref{lemma:bipartite_MaxCut}, the bipartite graph between any two of these cliques is $n^{-\alpha/2}$-close to either complete or empty. Thus, one can define an auxiliary graph on these cliques, where two cliques are adjacent if the induced bipartite graph is almost complete. Due to Lemma~\ref{lemma:dens2_3parts}, the auxiliary graph has no induced cherries, meaning that it is a disjoint union of cliques.

Therefore, the graph $G[X]$ is $n^{2-\alpha/2}$-close to a disjoint union of cliques, and since $X$ misses at most $n^{2-\alpha}$ edges of $G$, the whole graph $G$ must also be polynomially close to a disjoint union of cliques.
\end{proof}

\section{Bisection width}

The \emph{bisection width} of a graph is defined as the minimum number of edges crossing a balanced partition of the vertex set, and it is denoted by $\bw(G)$.
As a natural dual to the maximum cut, this parameter is also of central interest in theoretical computer science \cite{HLRW,HRW,Karger}, probabilistic \cite{BNP,DDSW,DMS, DSW} and extremal graph theory \cite{Alon93,RST,RT24}.

It is convenient to measure the bisection width via the \emph{deficit}, which is defined as 
$$\dfc(G)=e(G)\left(\frac{1}{2}+\frac{1}{2n-2}\right)-\bw(G).$$ By the uniform random balanced cut, the deficit is always non-negative, and if $G$ is a regular graph that is neither empty nor complete, then $\dfc(G)=\Omega(n)$, see e.g. \cite{RST}. This is optimal if $G$ is a Tur\'an graph.

A classic result of Alon \cite{Alon93} states that if $G$ is $d$-regular, and $d=O(n^{1/9})$, then  $\dfc(G)=\Omega(\sqrt{d}n)$, which is optimal for random $d$-regular graphs. 
Recently, R\"aty, Sudakov, and Tomon \cite{RST} greatly extended this bound by showing that 
$$\dfc(G)=\begin{cases}\Omega(d^{1/2}n) &\mbox{ if } d\leq n^{2/3},\\
                        \Omega(n^2/d) &\mbox{ if } d\in [n^{2/3},n^{4/5}],\\
                        \widetilde{\Omega}(d^{1/4}n) &\mbox{ if } d\in [n^{4/5},(1/2-\eps)n].
            \end{cases}$$
These results are sharp for $d\in [1,n^{3/4}]$, and there are $d$-regular graphs for $d\approx n/3$ with deficit $O(n^{4/3})$. For $d=n(1-1/r)$, where $r$ is a positive integer, the Tur\'an graph $T_r(n)$ shows that we cannot hope for a bound better than $\Omega(n)$. R\"aty, Sudakov, and Tomon \cite{RST} conjectured that Tur\'an graphs are the only obstruction to large deficit. Using the terminology of \emph{positive discrepancy}, they conjectured that if $\dfc(G)=o(n^{5/4})$, then $G$ is $o(1)$-close to a Tur\'an graph. We prove that this conjecture holds qualitatively, by establishing the bisection width analogue of our MaxCut result (Theorem \ref{thm:MAIN_MC2}).

\begin{theorem}\label{thm:MAIN_BW1}
There exists $\eps>0$ such that the following holds for every sufficiently large $n$. Let $G$ be an $n$-vertex $d$-regular graph.
If the bisection width of $G$ is more than $\frac{dn}{4}-n^{1+\eps}$, then $G$ is $n^{-\eps}$-close to a Tur\'an graph. Thus, if $\dfc(G)\leq n^{1+\eps}$, then
$$\frac{d}{n}\in \left\{1-\frac{1}{r}:r\in \mathbb{Z}^+\right\}+[-n^{-\eps},n^{\eps}].$$ 
\end{theorem}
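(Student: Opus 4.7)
}
The plan is to reduce the bisection width condition on $G$ to a smallest-eigenvalue condition on the complement $\bar G$, and then apply the quantitative stability result \Cref{thm:MAIN_SE2} to $\bar G$, paralleling the derivation of \Cref{thm:MAIN_AB} from \Cref{thm:MAIN_SE1}.

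First, I would translate the hypothesis by complementation. Since $G$ is $d$-regular, $\bar G$ is $(n-1-d)$-regular with $e(\bar G) = n(n-1-d)/2$. For any balanced partition $(U,V)$ with $|U|=|V|=n/2$, we have $e_G(U,V) + e_{\bar G}(U,V) = n^2/4$, so
$$\text{MaxBis}(\bar G) \;=\; \frac{n^2}{4} - \bw(G) \;\leq\; \frac{n(n-d)}{4} + n^{1+\eps} \;=\; \frac{e(\bar G)}{2} + O(n^{1+\eps}).$$
Thus the ``balanced surplus'' of $\bar G$, namely $\text{MaxBis}(\bar G) - e(\bar G)/2$, is at most $O(n^{1+\eps})$.

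Next, I would convert this balanced surplus bound into a bound on $|\lambda_n(\bar G)|$, using a balanced-cut analogue of the SDP toolkit from \Cref{sect:MaxCut}. Define $\surp^*_{\text{bal}}(\bar G)$ by maximising $-\langle A_{\bar G}, X\rangle$ over $X \succeq 0$ with $X_{ii} \le 1$ and with the additional constraint $\langle J, X\rangle = 0$. A Grothendieck-type rounding argument in the spirit of Charikar--Wirth \cite{CW04} and \Cref{claim:groth} gives $\text{MaxBis}(\bar G) - e(\bar G)/2 = \Omega\big(\surp^*_{\text{bal}}(\bar G)/\log n\big)$. Moreover, since $\bar G$ is regular, the eigenvector $v_n$ of $A_{\bar G}$ corresponding to $\lambda_n(\bar G)$ is orthogonal to $\mathbf{1}$, so the matrix $v_n v_n^T$ (suitably rescaled) is a feasible witness for the balanced SDP, yielding $\surp^*_{\text{bal}}(\bar G) = \Omega(|\lambda_n(\bar G)|\, n)$ in analogy with \Cref{claim: surp* lambdan}. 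Combining these bounds gives $|\lambda_n(\bar G)| = O(n^{\eps}\log n) \leq n^{2\eps}$ for large $n$, or equivalently $\lambda_2(G) \leq n^{2\eps}$.

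Finally, provided that $2\eps < 1/6$, applying \Cref{thm:MAIN_SE2} to $\bar G$ yields that $\bar G$ is $n^{-\alpha}$-close to a disjoint union of cliques for some $\alpha = \alpha(\eps) > 0$. Complementing, $G$ is $n^{-\alpha}$-close to a complete multipartite graph $H$. The $d$-regularity of $G$ together with the edit distance $n^{-\alpha}$ forces the parts of $H$ to be approximately equal in size: all parts have sizes within $O(n^{1-\alpha/2})$ of $n/r$, where $r$ is the number of parts. Adjusting these to be exactly balanced produces the Turán graph $T_r(n)$ at additional cost $O(n^{2-\alpha/2})$ to the edit distance, and implies $|d/n - (1-1/r)| = O(n^{-\alpha/2})$. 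Taking $\eps$ sufficiently small compared to $\alpha$ gives the conclusion with the claimed exponent.

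The main obstacle is the second step. The unbalanced analogue \Cref{claim: surp* lambdan} gives $\surp^*(\bar G) \ge |\lambda_n(\bar G)|\,n$ trivially via the spectral decomposition, but the additional linear constraint $\langle J, X\rangle = 0$ requires the witness to be orthogonal to $\mathbf{1}$, which is precisely where regularity of $\bar G$ is essential. Verifying that the natural spectral witness remains feasible for the balanced SDP, and that the Grothendieck-type rounding (which is what converts the SDP value into the integer optimum) tolerates the balancedness constraint without incurring more than a polylogarithmic loss, is where the proof requires the most care.
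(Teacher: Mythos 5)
Your first step (translating the bisection-width hypothesis on $G$ into a balanced-surplus bound $O(n^{1+\eps})$ on $\bar G$) is correct, and the final step (regularity forces a complement of a disjoint union of cliques to be close to a Tur\'an graph) is also fine. But the middle step has a genuine gap that cannot be repaired along the proposed route. You invoke an inequality $\surp^*_{\text{bal}}(\bar G) = \Omega(|\lambda_n(\bar G)|\, n)$ ``in analogy with Claim~\ref{claim: surp* lambdan},'' but Claim~\ref{claim: surp* lambdan} goes in the \emph{opposite} direction: it gives $\surp^*(G) \leq |\lambda_n| n$. The lower bound you need is false. Plugging $X = \alpha v_n v_n^T$ into the SDP with the constraint $X_{ii} \leq 1$ forces $\alpha \leq 1/\|v_n\|_\infty^2$, so $-\langle A, X\rangle = \alpha|\lambda_n| = |\lambda_n| / \|v_n\|_\infty^2$, which equals $\Omega(|\lambda_n| n)$ only when $v_n$ is flat, i.e.\ $\|v_n\|_\infty = O(1/\sqrt{n})$ --- a property the bottom eigenvector need not have (Lemma~\ref{lemma:smooth_eigenvectors} controls eigenvectors of \emph{large} eigenvalues, and the bound $\|v_n\|_\infty \leq \sqrt{n}/|\lambda_n|$ is useless when $|\lambda_n|$ is small). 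Indeed, the strongest lower bound on $\surp^*$ available via the spectral witness is Lemma~\ref{lemma:surp_star}(i), $\surp^*(\bar G) \geq \sum_{\lambda_i<0} |\lambda_i|$, and this can be far smaller than $|\lambda_n| n$: a disjoint union of a large clique and a single small copy of $H_k$ has $|\lambda_n| = \Theta(\sqrt{k})$ but surplus $O(\sqrt{n}+k^{3/2})$. Consequently $|\lambda_n(\bar G)| \leq n^{2\eps}$ does \emph{not} follow from $\surp^*(\bar G) = O(n^{1+\eps})$, and Theorem~\ref{thm:MAIN_SE2} cannot be applied.

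The paper avoids this entirely by routing through the surplus stability result, Theorem~\ref{thm:MAIN_MC2}, rather than the eigenvalue stability result Theorem~\ref{thm:MAIN_SE2}. The bridge is the discrepancy framework of R\"aty, Sudakov, and Tomon \cite{RST}: for regular graphs, $\dfc(G)=\Theta(\disc^+(G))$ and $\surp(G) = \Theta(\disc^-(G))$, together with the identity $\disc^+(G) = \disc^-(\overline G)$, give $\dfc(G) = \Theta(\surp(\overline G))$. A small deficit of $G$ thus directly yields a small surplus of $\overline G$, which is exactly the hypothesis of Theorem~\ref{thm:MAIN_MC2}; no eigenvalue bound is needed or derivable. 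The moral is that the bisection-width/surplus condition is strictly weaker than an eigenvalue bound, so the implication must go through the weaker (surplus-based) stability machinery, which is what the paper built \Cref{lemma:main_MaxCut} and its descendants for.
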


\begin{proof}
    Define the \emph{positive discrepancy} of a graph $G$ of edge density $p$ as 
    $$\disc^{+}(G)=\max_{U\subset V(G)} e(G[U])-p\binom{|U|}{2},$$
    and define the negative discrepancy as 
    $$\disc^{-}(G)=\max_{U\subset V(G)} p\binom{|U|}{2}-e(G[U]).$$
    It was proved in \cite[Lemma 2.6]{RST} that if $G$ is regular, then $\surp(G)=\Theta(\disc^{-}(G))$ and $\dfc(G)=\Theta(\disc^{+}(G))$. Moreover, $\disc^{+}(G)=\disc^{-}(\overline{G})$. 
    Therefore, the theorem follows from Theorem \ref{thm:MAIN_MC2} after taking complement of $G$, and noting that if a \emph{regular graph} $G$ is close to a complement of a disjoint union of cliques, then $G$ is close to a Tur\'an graph.
\end{proof}

\end{document}